\documentclass[a4paper,11pt,reqno]{amsart}
\usepackage{amsmath,amsfonts,enumerate,amssymb,amsthm,color}
\usepackage{pgf,tikz}

\textheight 224 true mm
\textwidth 157 true mm
\voffset=-15mm
\hoffset=-18mm

\title{Elementary abelian groups of rank $5$ are DCI-groups}

\author{Yan-Quan~Feng}
\address{Y.-Q.~Feng,
Department of Mathematics, Beijing Jiaotong University, Beijing 100044 PR China}
\email{yqfeng@bjtu.edu.cn}

\author{Istv\'an~Kov\'acs}
\address{I.~Kov\'acs,
IAM and FAMNIT, University of Primorska, Glagolja\v{s}ka 8, 6000 Koper, Slovenia}
\email{istvan.kovacs@upr.si}

\thanks{{\it 2010 Mathematics Subject Classification.} 05C25, 05C60, 20B25. \\
\indent {\it Key words and phrases.} Cayley graph, isomorphism, CI-group,
2-closed permutation group, Schur ring.
\\ [+0.75ex] Y.-Q.~Feng was supported in part by the National Natural Science Foundation of China (11571035, 11231008) and the 111 Project of China (B16002). I.~Kov\'acs was supported in part by
the Slovenian Research Agency (research program P1-0285, and
research projects N1-0032, N1-0038, J1-5433 and J1-6720) and is grateful
to Beijing Jiaotong University for hospitality.}

\newtheorem{thm}{Theorem}[section]
\newtheorem{lem}[thm]{Lemma}
\newtheorem{prop}[thm]{Proposition}
\newtheorem{cor}[thm]{Corollary}
\theoremstyle{definition}
\newtheorem{defi}[thm]{Definition}
\newtheorem{hyp}[thm]{Hypothesis}
\theoremstyle{remark}
\newtheorem{rem}[thm]{Remark}

\def\A{\mathcal{A}}
\def\B{\mathcal{B}}

\def\id{\mathrm{id}}
\def\O{\mathbf{O}_\theta}
\def\Q{\mathbb{Q}}
\def\un{\underline}
\def\teq{\approx_2}
\def\Z{\mathbb{Z}}

\DeclareMathOperator{\Aut}{Aut}
\DeclareMathOperator{\Bs}{Bsets}
\DeclareMathOperator{\Cay}{Cay}

\DeclareMathOperator{\Iso}{Iso}
\DeclareMathOperator{\Orb}{Orb}
\DeclareMathOperator{\rad}{rad}
\DeclareMathOperator{\rank}{rank}

\DeclareMathOperator{\Sym}{Sym}
\DeclareMathOperator{\Fun}{Fun}
\newcommand{\sg}[1]{\langle #1 \rangle}
\newcommand{\sgg}[1]{\langle\!\langle {#1}  \rangle\!\rangle}

\begin{document}

\begin{abstract}
In this paper, we show that the group $\Z_p^5$ is a DCI-group for any odd prime $p,$
that is, two Cayley digraphs $\Cay(\Z_p^5,S)$ and $\Cay(\Z_p^5,T)$
are isomorphic if and only if $S=T^\varphi$ for some automorphism
$\varphi$ of the group $\Z_p^5$.
\end{abstract}

\maketitle

\section{Introduction}

Let $H$ be a finite group and $S$ be a subset of $H$. The
{\em Cayley digraph} $\Cay(H,S)$ is the digraph that has vertex set $H,$ and arc set $\{ (x,sx) : x \in H, s\in S\}$.  It follows from the definition that $\Aut(\Cay(H,S))$ contains $H_R,$ the group of all {\em right translations} $H_R=\{h_R : h \in H\},$ where
$x^{h_R}=xh, \, x \in H$. Also, $\Cay(H,S)$ is loopless if the identity element $1 \notin S,$ and it is regarded as an undirected graph when $S$ is an inverse-closed set, that is, $S=S^{-1}=\{x^{-1} : x \in S\}$.

Two Cayley digraphs $\Cay(H,S)$ and $\Cay(H,T)$ are called {\em Cayley isomorphic} if $T=S^\varphi$ for some automorphism $\varphi \in \Aut(H)$.  It is trivial to show that Cayley isomorphic Cayley digraphs are isomorphic as digraphs. The converse, however, does not hold in general. There are examples of Cayley digraphs which are isomorphic but not Cayley isomorphic. A subset $S \subseteq H$ is called a {\em CI-subset} if for any $T \subseteq H,$ the isomorphism
$\Cay(H,T) \cong \Cay(H,S)$ implies that $T=S^\varphi$ for some $\varphi \in \Aut(H)$. The group $H$ is a {\em DCI-group} if each of its subsets are CI-subsets, and a {\em CI-group} if each of its  inverse-closed subsets are CI-subsets. Motivated by a problem posed by \'Ad\'am in \cite{A}, Babai and Frankl~\cite{BF} asked the following question: Which are the CI-groups?
Although the candidates of CI-groups have been reduced to a restricted list \cite{DMS,LLP}, which was obtained by accumulating the work of several mathematicians, it is considered to be difficult to confirm that a particular group is a CI-group. We refer the reader to the survey paper \cite{L2} for most results on CI- and DCI-groups.

One of the crucial steps towards the classification of all CI-groups is to answer which elementary
abelian $p$-groups are CI-groups (see also \cite[Question~8.3]{L2}). It is known that the group
$\Z_p^n$ is a CI-group in each of the following cases: $n=1$ \cite{Dj,ET,T};
$n=2$ \cite{AN,G}; $n=3$ \cite{AN,D}; $n=4$ and $p=2$ \cite{CL}; $n=4$ and $p>2$ \cite{HM}
(a proof for $n=4$ with no condition on $p$ was given recently in \cite{Mo}); $n=5$ and $p=2$ \cite{CL}; and $n=5$ and $p=3$ \cite{Sp9}. On the other hand, some examples of groups $\Z_p^n$ are also known which are not CI-groups,
and in each case the rank $n \ge 6$. Nowitz~\cite{N} found a non CI-subset of $\Z_2^6,$ and more recently, Spiga~\cite{Sp9} constructed a non-CI subset of $\Z_3^8$. Constructions of non-CI subsets of $\Z_p^n$ where $n$ is expressed as a function in $p$ were the subject of the papers \cite{Mu,So,Sp7}. The best bound is due to Somlai~\cite{So}, which says that $\Z_p^n$ is not a CI-group if $n \ge  2p+3$. The question whether $\Z_p^5$ is a CI-group for any odd prime $p$ is mentioned in \cite{L2} as a crucial task for classifying CI-groups (see Section~8.4 and Problem~8.10). The goal of this paper is to complete this task by proving the following theorem:

\begin{thm}\label{1}
The group $\Z_p^5$ is a DCI-group for any odd prime $p$.
\end{thm}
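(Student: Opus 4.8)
The plan is to recast the statement in the language of Schur rings (S-rings) and $2$-closed permutation groups, and then to run an induction on the rank. Write $H=\Z_p^5$. By Babai's criterion, $H$ is a DCI-group if and only if, for every $2$-closed group $G$ with $H_R\le G\le\Sym(H)$, any two regular subgroups of $G$ isomorphic to $H$ are conjugate in $G$. Since the $2$-closed overgroups of $H_R$ are precisely the automorphism groups $\Aut(\A)$ of the S-rings $\A$ over $H$---the basic sets of $\A$ being the orbits of the point stabilizer $G_1$---it suffices to show that every such $\A$ is a \emph{CI-S-ring}, that is, all regular subgroups of $\Aut(\A)$ isomorphic to $H$ form a single conjugacy class (one of which is $H_R$).

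First I would dispose of the \emph{decomposable} S-rings. If $\A$ splits as a nontrivial tensor (direct) product, or more generally as a generalized wreath product over a pair of $\A$-subgroups $1\ne L\le K\lneq H$, then all the factors and sections that arise are S-rings over elementary abelian groups of rank at most $4$. Invoking the results quoted in the Introduction, that $\Z_p^n$ is a (D)CI-group for every $n\le 4$, together with the compatibility of conjugacy of regular subgroups with these product constructions, one concludes by induction that $\A$ is a CI-S-ring. The work here is mostly bookkeeping: one must check that an arbitrary regular subgroup $R\cong H$ respects the decomposition, so that its induced images on the factors and quotients are again regular and can be simultaneously conjugated onto those of $H_R$.

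The heart of the matter is the \emph{indecomposable} case, in which $\A$ admits no such decomposition. Here the point stabilizer $G_1\le\mathrm{GL}(5,p)$ has few orbits on $H$, and when $\A$ is primitive the group $G$ is affine primitive, so the O'Nan--Scott theorem confines $G_1$ to a short list of essentially irreducible linear groups. For each candidate I would analyze the regular subgroups directly: in the generic situation $G$ lies inside $\mathrm{A\Gamma L}(1,p^5)$ or an $\mathrm{AGL}(5,p)$-type group possessing a single class of regular elementary abelian subgroups, whence conjugacy is immediate, while the remaining linear groups are treated one at a time. More delicate are the indecomposable yet imprimitive S-rings, which carry a nontrivial block system without factoring as a wreath product; these entail a careful interplay between the subspace lattice of $H$ (subspaces of dimensions $1,2,3,4$) and the cyclotomic structure of $\A$.

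I expect the principal obstacle to be exactly this last family of indecomposable imprimitive S-rings, which is also where the rank bound is felt. For $n\le 4$ the intermediate-dimensional structure is too meager to create an obstruction, whereas for $n\ge 6$ genuine non-CI subsets already exist, such as Nowitz's subset of $\Z_2^6$ recorded in the Introduction; rank $5$ sits exactly at the threshold, so the argument can afford no slack. In each surviving configuration one must still produce a conjugating element, typically by realizing the regular subgroup as the image of $H_R$ under an automorphism of $\A$ assembled from the conjugacies already known on the sections. Controlling these gluings uniformly across every pattern of block dimensions is the crux of the proof.
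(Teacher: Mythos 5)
Your opening reduction---Babai's criterion plus the S-ring dictionary---matches the paper's starting point, but after that there are genuine gaps. The first is your treatment of the decomposable case as ``mostly bookkeeping.'' The CI property is \emph{not} inherited by (generalized) wreath products: knowing that $\Z_p^n$ is a DCI-group for all $n\le 4$ does not imply that an $E/F$-wreath product $\A=\A_E\wr_{E/F}\A_{H/F}$ over $\Z_p^5$ is a CI-S-ring, because even when the induced isomorphisms on the sections $\A_E$ and $\A_{H/F}$ can be normalized by automorphisms of those sections, these normalizations need not glue to a single element of $\Aut(\A)\Aut(H)$; the whole difficulty is in the gluing, and wreath-type constructions are precisely where non-CI behaviour first appears in higher rank. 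This case is where the bulk of the paper's work lies: it needs Theorem~\ref{T-teq-min} (indecomposable Schurian $p$-S-rings over $\Z_p^4$ are $\teq$-minimal, proved via Lemmas~\ref{L-kernel}--\ref{L-normal}), Proposition~\ref{P-teq}, and then the long case analysis of Section~5 (Lemmas~\ref{L-bfly1}--\ref{L-gwp}), in which explicit automorphisms $\varphi\in\Aut(H)$ are constructed coset by coset to straighten a given $f\in\Iso_0(\A)$.

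The second gap is that you skip the reduction that makes both cases tractable, and as a result your indecomposable case attacks the wrong target. The paper does not deal with arbitrary $2$-closed overgroups of $H_R$: combining Sylow's theorem with Morris's result (Proposition~\ref{M}) it reduces Theorem~\ref{1} to Theorem~\ref{3}, i.e.\ to showing that $V(H,A)$ is CI for every \emph{$p$-group} $A<\Aut(H)$ with $|C_{H_R}(A)|\ge p^2$ (Proposition~\ref{P-iff}). After this reduction every S-ring in sight is a $p$-S-ring, hence admits a full chain of $\A$-subgroups (Proposition~\ref{P-p-S1}) and is never primitive; so the O'Nan--Scott analysis you propose addresses a configuration that simply does not occur, while the case you yourself identify as the crux---indecomposable but imprimitive---is left with no method at all. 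The paper handles exactly that case in Section~6 by exploiting the centralizer condition: bounds on $|C_H(x)|$ for $x\in N_{\Aut(\A)}(H_R)$ (Lemma~\ref{L-center2}), the production of a subgroup $L\le\Aut(\A)\cap\Aut(H)$ with $|L|=p^2$ and $|C_H(L)|=p^3$, and the proof that the associated S-ring $V(H,L)$ is CI with $|\Aut(V(H,L))|=p^8$ (Lemmas~\ref{L-L1}--\ref{L-L3}), which yields a contradiction with non-CI-ness. None of this machinery, nor any substitute for it, is visible in your outline.
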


Our starting point is the following group theoretical criterion due to Babai~\cite{B}: A subset $S \subseteq H$ is a CI-subset if and only if any two regular subgroups of $\Aut(\Cay(H,S))$ isomorphic to
$H$ are conjugate in $\Aut(\Cay(H,S))$. Recall that, the group $H_R$ of right translations is always contained in $\Aut(\Cay(H,S))$. Motivated by this criterion, the following
definition was introduced by Hirasaka and Muzychuk~\cite{HM}: A permutation group $G \le \Sym(\Omega)$ containing a fixed subgroup $F$ is {\em $F$-transjugate} if for each $g\in \Sym(\Omega),$ the condition
$g^{-1}F g \le G$ implies that $g^{-1}F g$ and $F$ and are conjugate in $G$. In this context, 
Babai's result can be rephrased as to say that a subset $S \subseteq H$ is a CI-subset if and
only if the group $\Aut(\Cay(H,S))$ is $H_R$-transjugate.
It is well-known that $\Aut(\Cay(H,S))$ is a $2$-closed permutation group for any $S \subseteq H$
(for the definition of a $2$-closed permutation group, see Section~2.1). Following \cite{HM}, we say that $H$ is a {\em CI$^{(2)}$-group} if all $2$-closed subgroups of $\Sym(H)$ containing $H_R$ are $H_R$-transjugate. Clearly, if $H$ is a CI$^{(2)}$-group, then it is necessarily a DCI-group. In fact, instead of Theorem~\ref{1} we prove the following slightly more general theorem:

\begin{thm}\label{2}
The group $\Z_p^5$ is a CI$^{(2)}$-group for any odd prime $p$.
\end{thm}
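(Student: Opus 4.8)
The plan is to recast Theorem~\ref{2} in the language of Schur rings (S-rings) and then reduce, by a structural decomposition plus induction on the rank, to a short list of cases that can be attacked by a Sylow argument. Recall that a $2$-closed group $G$ with $H_R\le G\le\Sym(H)$, $H=\Z_p^5$, is recovered from the orbits of the point stabiliser $G_1$: these orbits are the basic sets of an S-ring $\A$ over $H$, and $G=\Aut(\A)$. Under this dictionary the $H_R$-transjugacy of $G$ says precisely that every regular subgroup of $\Aut(\A)$ isomorphic to $\Z_p^5$ is conjugate to $H_R$ in $\Aut(\A)$; I will call such an $\A$ a \emph{CI-S-ring}. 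Thus it suffices to prove that every S-ring over $\Z_p^5$ is a CI-S-ring.

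First I would dispose of the decomposable S-rings by induction. If $\A$ is a nontrivial tensor product $\A_1\otimes\A_2$ arising from a decomposition $\Z_p^5=H_1\times H_2$, or a (generalised) wreath product attached to a chain of $\A$-subgroups, then the $\A$-subgroups give a $G$-invariant block structure; a regular $K\cong\Z_p^5$ induces regular subgroups on the factors and sections, which are S-rings over $\Z_p^k$ with $k<5$. These are CI-S-rings by the known cases of rank at most $4$ (notably \cite{HM}), so one obtains conjugating elements on each piece, and the remaining work is to glue them into a single element of $\Aut(\A)$ carrying $K$ to $H_R$. This reduces the problem to S-rings that are indecomposable under these product operations, for which I would control $G=\Aut(\A)$ through its $\A$-subgroup lattice and, in the extreme, through its primitive constituents.

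For an indecomposable S-ring whose group is primitive I would invoke the classification of primitive permutation groups of degree $p^5$ containing a regular elementary abelian subgroup (O'Nan--Scott together with CFSG). The $2$-transitive groups have $2$-closure $\Sym(H)$, where two regular copies of $\Z_p^5$ are automatically conjugate because any two regular permutation groups isomorphic to a fixed group are conjugate in the symmetric group; these $\A$ are trivially CI. In the affine case $G_1\le\mathrm{GL}(5,p)$ is a transitive linear group, and Hering's theorem, combined with the fact that $5$ is prime (which kills the symplectic, $G_2$, and sporadic families), forces either $\mathrm{SL}(5,p)\trianglelefteq G_1$ (again $2$-transitive) or $G_1\le\Gamma\mathrm{L}(1,p^5)$, so that $\A$ is a cyclotomic S-ring over $\mathbb{F}_{p^5}$; the remaining product-action possibility is the Hamming-type S-ring, with $G=S_p\wr S_5$. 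The universal tool for all these cases is a Sylow reduction: embedding $K$ and $H_R$ in a common Sylow $p$-subgroup $P$ of $G$, it suffices to conjugate them inside $N_G(P)$, and whenever the $p$-part of $|G|$ equals $p^5$ one has $H_R=K=P$ and conjugacy is immediate.

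The hard part, and the main obstacle, is exactly the configurations in which $P$ strictly contains $H_R$. This forces attention on the coincidences at the small primes, above all $p=5$: there the Galois group of $\mathbb{F}_{5^5}$ in the semilinear case, and the factor $S_5$ in the Hamming case, each contribute an extra factor of $p$, so that $H_R$ has index $p$ in $P$ and one must analyse the $p$-group $P$ together with the $\A$-invariant constraints on it to show that all its regular elementary abelian subgroups of rank $5$ fuse to $H_R$ in $G$. Carrying out this $p$-local analysis uniformly---pinning down $\Aut(\A)$ so that it is no larger than anticipated, and ruling out the stray regular subgroups that the field structure of $\mathbb{F}_{p^5}$ can produce---is where the argument is most delicate and is the step I expect to be the genuine bottleneck.
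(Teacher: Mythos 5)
Your translation of the CI$^{(2)}$-property into S-ring language is correct, and your split into decomposable versus indecomposable S-rings matches the paper's broad outline, but both halves of your plan contain genuine gaps. In the decomposable case you treat the ``gluing'' of conjugating elements on factors and sections as routine leftover work, when it is in fact the main difficulty: CI-ness does \emph{not} pass naively through tensor or generalized wreath decompositions. If your induction worked as stated it would prove that $\Z_p^n$ is CI for every $n$, contradicting the known non-CI groups $\Z_3^8$ and $\Z_p^n$ for $n\ge 2p+3$ \cite{Sp9,So}. What makes the induction work in the paper is a prior reduction you do not have (Proposition~\ref{P-iff}): combining Babai's criterion, the fact that $2$-closures of $p$-groups are $p$-groups, and Morris's result (Proposition~\ref{M}), the problem is reduced to S-rings of the special form $V(H,A)$ with $A<\Aut(H)$ a \emph{$p$-group} satisfying $|C_{H_R}(A)|\ge p^2$; this forces all basic sets to have $p$-power size and provides a large fixed subgroup, both of which are used at every step of the subsequent analysis. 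Moreover, even granting the reduction, CI-ness of a quotient is not a strong enough inductive hypothesis: the paper's inheritance statement (Proposition~\ref{P-teq}) requires the quotient $\A_{H/K}$ to be a \emph{$\teq$-minimal} CI-S-ring, and establishing $\teq$-minimality for the indecomposable Schurian $p$-S-rings over $\Z_p^4$ consumes the whole of Section~4 (Theorem~\ref{T-teq-min}).

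In the indecomposable case your plan rests on the identification of ``indecomposable'' with ``primitive (or essentially primitive)'', and that identification is false. Indecomposability here means only that $\A$ is not a nontrivial $E/F$-wreath product; every proper nontrivial $\A$-subgroup still yields a block system for $\Aut(\A)$, so indecomposable S-rings typically have highly imprimitive automorphism groups --- for instance the exceptional S-ring over $\Z_p^3$ (row 6 of Table~1) is indecomposable yet $|\Aut(\A)|=p^4$. Worse, after the correct reduction every group in sight is a $p$-group acting on $p^5$ points, hence never primitive, so the O'Nan--Scott/CFSG/Hering analysis you propose ($2$-transitive groups, $\mathrm{\Gamma L}(1,p^5)$, Hamming schemes) addresses only cases that either do not arise or are trivially CI, and your proposed bottleneck (the coincidences at $p=5$) is not the real one. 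The genuine content of the indecomposable case in the paper is a combinatorial analysis of indecomposable $p$-S-rings with thin radical of order at least $p^2$: centralizer bounds for automorphisms normalizing $H_R$ (Lemmas~\ref{L-center2} and~\ref{L-p7}), the construction of a subgroup $L\le\Aut(\A)\cap\Aut(H)$ with $|L|=p^2$ and $|C_H(L)|=p^3$, and a direct proof that $V(H,L)$ is CI by computing $|\Aut(V(H,L))|=p^8$ and counting its regular subgroups (Lemmas~\ref{L-L1}--\ref{L-L3}). None of this machinery, nor any substitute for it, appears in your outline.
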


We prove Theorem~\ref{2} following the so called S-ring approach (S-ring is the abbreviation of {\em Schur ring}, and
for a definition, see Section~2.2). Roughly speaking, S-rings are certain subalgebras of the group algebra $\Q H$ which were introduced by  Schur~\cite{Sch} in order to study permutation groups containing a regular subgroup isomorphic to $H$. The usage of S-rings in the investigation of CI-groups was proposed by Klin and P\"oschel~\cite{KP80,KP81}.
For a concise survey on S-rings and their applications in combinatorics, we refer the reader to \cite{MPon}.
\medskip

We finish the introduction with a brief outline of the paper:
Section~2 contains preliminary material, especially, a thorough introduction to S-ring theory.
We intend to keep our text as self-contained as possible.
In Sections~3, we turn to S-rings over elementary abelian $p$-groups of arbitrary rank. In particular, an equivalent condition will be derived for the group $\Z_p^n$ to be a CI$^{(2)}$-group in terms of its S-rings (Proposition~\ref{P-iff}). We remark that, this condition is obtained by combining together several results
proved in \cite{HM,Mo,Sp7}. Based on this equivalence, Theorem~\ref{2} will be reformulated in a statement
involving a particular class of S-rings over the group $\Z_p^5$ (Theorem~\ref{3}).
Then, in Section~4, we derive a property of S-rings over $\Z_p^4$ which will be needed when dealing with S-rings over $\Z_p^5$.
The proof of Theorem~\ref{3} will be divided into two parts depending on whether the S-rings in question are
decomposable or not (for a definition of a decomposable S-ring, see Section~2.2). The decomposable S-rings will be handled in Section~5, while the indecomposable ones in Section~6.

\section{Preliminaries}

All groups in this paper are finite.
In this section we collect all concepts and facts needed in this paper.

\subsection{Permutation groups}

Let $G \le \Sym(\Omega)$ be a permutation group of a finite  set
$\Omega$. For $\omega \in \Omega,$ we denote by $G_\omega$ the {\em stabilizer} of $\omega$ in $G,$ and by $\omega^G$ the
{\em $G$-orbit} of $\omega$.
For a subset $\Delta \subseteq \Omega$ and permutation $\gamma \in \Sym(\Omega),$
we say that $\gamma$ fixes $\Delta$ if $\Delta^\gamma=\Delta,$ and
that $\gamma$ fixes $\Delta$ pointwise if $\omega^\gamma=\omega$ for all $\omega \in \Delta$. The {\em setwise stabilizer}
and {\em pointwise stabilizer} of $\Delta$ in $G$ will be denoted by $G_{\{\Delta\}}$ and $G_\Delta,$ resp.,
that is, $G_{\{\Delta\}}=\{g\in G : \Delta^g=\Delta \}$ and $G_\Delta=\{g\in G : \omega^g=\omega, \,
\omega\in \Delta \}$.
The set of all $G$-orbits is denoted by $\Orb(G,\Omega)$. Suppose that $G$ is transitive on $\Omega$. If $\delta=\{\Delta_1,\ldots,\Delta_n\}$ is a {\em block system} for $G,$ then we write $G_\delta$ for the kernel of the action of $G$ on $\delta,$ and $G^\delta$ for the permutation group of $\delta$ induced by $G$.

Two permutation groups  $H,G \le \Sym(\Omega)$ are said to be {\em $2$-equivalent}, denoted by $H \teq G,$ if $\Orb(H,\Omega^2)=\Orb(G,\Omega^2),$ see \cite{W69}.
The equivalence class of $G$ contains a largest subgroup, which is called the {\em $2$-closure} of $G$, denoted by $G^{(2)}$.
The group $G$ is called {\em $2$-closed} if $G^{(2)}=G$.

\begin{prop}\label{P-center-G2}
For any $G \le \Sym(\Omega),$ $Z(G) \le Z(G^{(2)})$.
\end{prop}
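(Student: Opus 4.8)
The plan is to exploit the standard description of the $2$-closure in terms of orbitals. Since $G \le G^{(2)}$ always holds, we have $Z(G) \subseteq G^{(2)}$, so it suffices to prove that every $z \in Z(G)$ commutes with every element of $G^{(2)}$. For this I would use the characterization that, for $h \in \Sym(\Omega)$, one has $h \in G^{(2)}$ if and only if $h$ fixes setwise every $G$-orbit on $\Omega^2$; equivalently, for every pair $(\alpha,\beta) \in \Omega^2$ there exists $g \in G$ (depending on the pair) with $\alpha^g = \alpha^h$ and $\beta^g = \beta^h$. This follows directly from the definition of $\teq$, because $G^{(2)}$ coincides with the intersection of the automorphism groups of the orbital digraphs of $G$: it is a group, it contains $G$, it is $2$-equivalent to $G$, and any subgroup $2$-equivalent to $G$ preserves all $G$-orbits on $\Omega^2$ and hence lies inside it.

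Now fix $z \in Z(G)$ and $h \in G^{(2)}$. To show $zh = hz$ it is enough to check that $\omega^{zh} = \omega^{hz}$ for every $\omega \in \Omega$. The key idea is to apply the characterization above not to an arbitrary pair, but to the specially chosen pair $(\omega, \omega^z)$: there exists $g \in G$ with $\omega^g = \omega^h$ and $(\omega^z)^g = (\omega^z)^h$. Using that $z$ is central in $G$, so that $z$ and $g$ commute as permutations, the left-hand side of the second equality rewrites as $(\omega^z)^g = \omega^{zg} = \omega^{gz} = (\omega^g)^z = (\omega^h)^z = \omega^{hz}$, while the right-hand side is $(\omega^z)^h = \omega^{zh}$. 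Comparing the two yields $\omega^{zh} = \omega^{hz}$, as required. Since $\omega$ was arbitrary, $z$ commutes with $h$; and as $h$ ranges over $G^{(2)}$ while $z \in G \le G^{(2)}$, this gives $z \in Z(G^{(2)})$, completing the argument.

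I do not expect a serious obstacle here: the entire argument hinges on the single observation that the orbital approximation $g$ of $h$ should be extracted for the pair $(\omega, \omega^z)$ rather than for a general pair, after which the centrality of $z$ does all the work. The only point requiring genuine care is the bookkeeping of the right-action convention $\omega^{gh} = (\omega^g)^h$ when commuting $z$ past $g$; once that is pinned down, the computation is immediate and no further structural input about $G$ or $\Omega$ is needed.
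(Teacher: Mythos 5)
Your proof is correct and follows essentially the same route as the paper's own argument: for $z \in Z(G)$, $h \in G^{(2)}$ and $\omega \in \Omega$, both proofs apply the $2$-equivalence $\Orb(G,\Omega^2)=\Orb(G^{(2)},\Omega^2)$ to the specific pair $(\omega,\omega^z)$ to extract $g \in G$ agreeing with $h$ there, and then use centrality of $z$ to conclude $\omega^{zh}=\omega^{hz}$. The only cosmetic difference is that you also justify the orbital characterization of $G^{(2)}$, which the paper takes directly from the definition of $2$-equivalence.
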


\begin{proof}
Let  $g_1 \in Z(G), \, \gamma \in G^{(2)}$ and
$\omega \in \Omega$.  Since  $\Orb(G,\Omega^2)=\Orb(G^{(2)},\Omega^2)$, we have $(w,w^{g_1})^G= (w,w^{g_1})^{G^{(2)}},$ and so there exists $g \in G$, depending on $w$ and $w^{g_1},$ such that $(w,w^{g_1})^g= (w,w^{g_1})^\gamma$.
Then $\omega^{g_1\gamma}=\omega^{g_1g}=\omega^{gg_1}=
\omega^{\gamma g_1}$.  As $\omega$ was chosen arbitrarily
from $\Omega$ and $\gamma$ from $G^{(2)},$
it follows that $g_1 \in Z(G^{(2)}),$ hence
$Z(G) \le Z(G^{(2)}),$ and the assertion follows.
\end{proof}

A transitive permutation group $G$ and its $2$-closure $G^{(2)}$ have the same block systems (see \cite[Theorem~4.11]{W69}).

\begin{prop}{{\rm (\cite[Proposition~2.1]{HM})}}\label{HM1}
Let $G \le \Sym(\Omega)$ be a transitive permutation group,
and let $\delta$ be a block system for $G$. Then
\begin{enumerate}[(i)]
\item $(G^{(2)})^\delta \le (G^\delta)^{(2)}$.
\item If $G$ is $2$-closed and $F^\delta$ is $2$-closed for some
$F \le G,$ then $FG_\delta$ is also $2$-closed.
\end{enumerate}
\end{prop}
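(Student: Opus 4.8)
The plan is to verify both containments by transferring orbit information between the point action on $\Omega$ and the induced action on the block system $\delta$, exploiting throughout that $2$-equivalent groups share their orbits on $\Omega^2$.

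For part (i), I would take an arbitrary $\gamma \in G^{(2)}$ and show that the permutation $\gamma^\delta$ it induces on $\delta$ lies in $(G^\delta)^{(2)}$, i.e.\ that $\gamma^\delta$ preserves every $G^\delta$-orbit on $\delta \times \delta$. (The induced permutation is well defined because $G$ and $G^{(2)}$ have the same block systems, as recalled before the proposition.) Fix two blocks $\Delta,\Delta' \in \delta$ and pick representatives $\alpha \in \Delta$, $\beta \in \Delta'$. Since $\Orb(G,\Omega^2)=\Orb(G^{(2)},\Omega^2)$, the pair $(\alpha^\gamma,\beta^\gamma)$ lies in the same $G$-orbit as $(\alpha,\beta)$, so there is $g \in G$ with $\alpha^g=\alpha^\gamma$ and $\beta^g=\beta^\gamma$. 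Because each point lies in a unique block, comparing the blocks containing these points gives $\Delta^{g^\delta}=\Delta^{\gamma^\delta}$ and $(\Delta')^{g^\delta}=(\Delta')^{\gamma^\delta}$; hence $g^\delta \in G^\delta$ carries $(\Delta,\Delta')$ to $(\Delta^{\gamma^\delta},(\Delta')^{\gamma^\delta})$. As $\Delta,\Delta'$ were arbitrary, $\gamma^\delta \in (G^\delta)^{(2)}$, which proves (i).

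For part (ii), first note that $G_\delta \trianglelefteq G$, so $FG_\delta$ is a subgroup of $G$; by monotonicity of the $2$-closure under inclusion together with the hypothesis $G^{(2)}=G$, one gets $(FG_\delta)^{(2)} \le G^{(2)}=G$. Thus every $\gamma \in (FG_\delta)^{(2)}$ already lies in $G$ and in particular preserves the partition $\delta$, so $\gamma^\delta$ is defined. I would then run the orbit-transfer argument of part (i) for the group $FG_\delta$: since $\gamma$ preserves the $FG_\delta$-orbits on $\Omega^2$ and $G_\delta$ acts trivially on $\delta$ (whence $(FG_\delta)^\delta=F^\delta$), the same comparison of blocks shows that $\gamma^\delta$ preserves every $F^\delta$-orbit on $\delta\times\delta$, i.e.\ $\gamma^\delta \in (F^\delta)^{(2)}=F^\delta$, the final equality being the hypothesis that $F^\delta$ is $2$-closed. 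It then remains to lift this back: choosing $f \in F$ with $f^\delta=\gamma^\delta$, the element $f^{-1}\gamma$ lies in $G$ and acts trivially on $\delta$, hence $f^{-1}\gamma \in G_\delta$ and $\gamma=f(f^{-1}\gamma) \in FG_\delta$. This yields $(FG_\delta)^{(2)} \le FG_\delta$, and the reverse inclusion being automatic, $FG_\delta$ is $2$-closed.

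The two orbit comparisons are routine once one remembers that each point lies in a unique block. The step needing the most care is the reduction in (ii): one should not apply part (i) to $FG_\delta$ naively, since $FG_\delta$ need not be transitive (its image $F^\delta$ on $\delta$ may be intransitive), so the block-system hypothesis of (i) can fail; instead the orbit-transfer computation is carried out directly for $FG_\delta$, which only uses that $FG_\delta \le G$ preserves the partition $\delta$. The other delicate point is to first secure $\gamma \in G$—via $G$ being $2$-closed—so that $\gamma^\delta$ makes sense at all, and then to use that $G_\delta$ is exactly the kernel of the action on $\delta$ in order to convert $\gamma^\delta \in F^\delta$ into the factorization $\gamma \in FG_\delta$.
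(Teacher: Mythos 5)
Your proof is correct. Note, however, that there is no in-paper argument to compare it against: the paper states this proposition as a quotation of \cite[Proposition~2.1]{HM} and gives no proof, so your write-up has to stand on its own merits, which it does. In (i) you use the standard characterization of the $2$-closure as the group of all permutations fixing each orbit on ordered pairs setwise, together with the fact recalled in the paper just before the proposition (from Wielandt) that $G$ and $G^{(2)}$ have the same block systems, so that $\gamma^\delta$ is well defined; the block-transfer computation, resting on the fact that each point lies in a unique block, is routine and correct. In (ii) you correctly identify and avoid the two genuine traps: you first secure $\gamma \in G$ via the monotonicity $(FG_\delta)^{(2)} \le G^{(2)} = G$ (monotonicity itself is standard, since the orbits of a subgroup on $\Omega^2$ refine those of the larger group, so any permutation fixing the finer orbits setwise fixes the coarser ones), and you rerun the orbit-transfer argument directly for $FG_\delta$ rather than citing (i), which is necessary because $FG_\delta$ need not be transitive and (i) is stated for transitive groups. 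The identification $(FG_\delta)^\delta = F^\delta$ via the triviality of $G_\delta$ on $\delta$, the use of the hypothesis $(F^\delta)^{(2)} = F^\delta$, and the final factorization $\gamma = f\,(f^{-1}\gamma) \in FG_\delta$ through the kernel $G_\delta$ are all exactly right.
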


The following statement is given as Exercise~5.8 in \cite{W69}.
It also appears as 5.1.~Proposition in the preprint \cite{MPos}, where the authors give a
proof. Regarding the fact that \cite{MPos} is a university preprint, we also present a proof.

\begin{prop}{{\rm (cf.~\cite{W69})}}\label{W}
If $G \le \Sym(\Omega)$ is a $p$-group, then
$G^{(2)}$ is also a $p$-group.
\end{prop}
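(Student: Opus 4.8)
The plan is to prove, by strong induction on $|\Omega|$, that $G^{(2)}$ is a $p$-group for every $p$-group $G\le\Sym(\Omega)$, reducing first to the transitive case. If $G$ is intransitive, with orbits $\Omega_1,\dots,\Omega_k$, then every element of $G^{(2)}$ fixes each $\Omega_i$ setwise, because $G$ and $G^{(2)}$ have the same orbits on $\Omega$; hence restriction embeds $G^{(2)}$ into $\prod_{i=1}^{k}\Sym(\Omega_i)$. The point I would check is that the restriction of $G^{(2)}$ to $\Omega_i$ lies in $\bigl(G|_{\Omega_i}\bigr)^{(2)}$: any $g\in G$ carrying a pair from $\Omega_i^2$ to a pair in $\Omega_i^2$ must fix $\Omega_i$, so the $G$-orbitals meet $\Omega_i^2$ in exactly the orbitals of $G|_{\Omega_i}$, and an element preserving all $G$-orbitals restricts to one preserving all orbitals of $G|_{\Omega_i}$. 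Since each $G|_{\Omega_i}$ is a $p$-group on fewer than $|\Omega|$ points, induction makes $\bigl(G|_{\Omega_i}\bigr)^{(2)}$ a $p$-group, and so $G^{(2)}$, a subgroup of a product of $p$-groups, is a $p$-group.

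For transitive $G$ we have $|\Omega|=p^a$ by the orbit--stabilizer theorem. When $a\le1$ the assertion is clear; in particular for $|\Omega|=p$ the group is regular cyclic and preserving one of its orbitals already forces a translation, so $G^{(2)}=G$. When $a\ge2$ the group $G$ is imprimitive: as $|G:G_\omega|=p^a\ge p^2$, there is a subgroup $H$ with $G_\omega<H<G$, and the orbit $\omega^H$ generates a nontrivial block system $\delta$, which is also a block system for $G^{(2)}$. For the action on $\delta$, Proposition~\ref{HM1}(i) gives $(G^{(2)})^\delta\le(G^\delta)^{(2)}$; since $G^\delta$ is a $p$-group on $|\delta|<|\Omega|$ points, induction shows $(G^\delta)^{(2)}$, and hence $(G^{(2)})^\delta$, is a $p$-group.

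The crux is then to bound the kernel $K=(G^{(2)})_\delta$. Writing $\delta=\{\Delta_1,\dots,\Delta_m\}$, each element of $K$ fixes every block setwise, so restriction embeds $K$ into $\prod_{i}\Sym(\Delta_i)$, and it suffices to show each $K|_{\Delta_i}$ is a $p$-group. Here I would use the observation that two pairs lying inside a single block $\Delta_i$ that are in the same $G$-orbital are automatically in the same orbit of the setwise stabilizer $G_{\{\Delta_i\}}$, since any $g\in G$ carrying one pair to the other fixes $\Delta_i$. Consequently an element of $K$, preserving every $G$-orbital, restricts on $\Delta_i$ to an element of $\bigl(G_{\{\Delta_i\}}|_{\Delta_i}\bigr)^{(2)}$; as $G_{\{\Delta_i\}}|_{\Delta_i}$ is a $p$-group on $|\Delta_i|<|\Omega|$ points, this $2$-closure is a $p$-group by induction, and therefore $K$ is a $p$-group. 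From $|G^{(2)}|=|(G^{(2)})^\delta|\cdot|K|$ it then follows that $G^{(2)}$ is a $p$-group. I expect this kernel step to be the main obstacle: the tempting shortcut $(G^{(2)})_\omega=(G_\omega)^{(2)}$ is false, because $2$-closure does not control orbits on triples, so one is forced to work through the block structure and exploit the coincidence of the within-block $G$-orbitals with the orbitals of the block stabilizer.
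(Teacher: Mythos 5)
Your proof is correct, but the crux is handled by a genuinely different mechanism than the paper's. Both arguments reduce intransitive groups to transitive ones in the same way, pass to a nontrivial block system $\delta$, and control the quotient action via Proposition~\ref{HM1}(i) plus an inductive hypothesis; the divergence is in how the kernel of the action on $\delta$ is tamed. The paper runs a minimal-counterexample argument on the group order: it takes $P \le Z(G)$ of order $p$ and uses the $P$-orbits as blocks (so every block has size exactly $p$), picks a hypothetical element $g \in G^{(2)}$ of prime order $q \ne p$, shows via Proposition~\ref{HM1}(i) that $g$ fixes every block setwise, and then invokes Proposition~\ref{P-center-G2} (i.e.\ $Z(G) \le Z(G^{(2)})$) to conclude that $g$ centralizes $P$, hence is semiregular on each block of size $p$, hence trivial --- a contradiction. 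You instead run a strong induction on the degree $|\Omega|$, take an arbitrary nontrivial block system, and control the kernel combinatorially: your observation that the $G$-orbitals inside a block $\Delta$ coincide with the orbitals of $G_{\{\Delta\}}|_\Delta$ (any $g \in G$ carrying a pair of $\Delta^2$ to a pair of $\Delta^2$ must fix $\Delta$, by the block property) places the kernel, block by block, inside $\bigl(G_{\{\Delta\}}|_\Delta\bigr)^{(2)}$, which is a $p$-group by induction on the smaller degree. Your route buys self-containedness and uniformity: it needs neither Proposition~\ref{P-center-G2} nor any split into abelian and non-abelian cases, and the same double induction (quotient and blocks) works for any block system; your closing remark that $(G^{(2)})_\omega \ne (G_\omega)^{(2)}$ in general correctly identifies why some such detour through block stabilizers is forced. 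The paper's route buys brevity: granted Proposition~\ref{P-center-G2}, which it proves and uses elsewhere anyway, the centralizer-plus-semiregularity argument kills the kernel in two lines with no second induction.
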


\begin{proof}
If $G$ is intransitive, then $G^{(2)}$ is a subdirect product of the $2$-closures
of the transitive components of $G$. Thus, it is sufficient to prove the theorem
for transitive groups $G \le \Sym(\Omega)$.
Suppose to the contrary that $G$ is a counterexample to the proposition
whose order is the smallest possible.  If $G$ is abelian, then it is regular on $\Omega$.
By Proposition~\ref{P-center-G2}, $G^{(2)}$  centralizes $G,$ and it follows that
$G^{(2)}=G$. Thus, we may assume that $G$ is non-abelian, and so 
$|G| \ge p^3$. 
The center $Z(G)$ is nontrivial. Let $P \le Z(G)$ such that $|P|=p,$ and let
$g \in G^{(2)}$ be an element of order $q$  for some prime $q \ne p$.  Then $\Orb(P,\Omega)$ is a block system of $G$ on $\Omega$.
Let us consider the natural action of $G$ on $\Orb(P,\Omega)$.
To simplify notation, we write $\delta=\Orb(P,\Omega)$. The permutation group $G^\delta \le \Sym(\delta)$ induced by
$G$ is transitive on $\delta$ and has order less than $|G|$. By the  minimality  of $G$ the group $(G^{\delta})^{(2)}$ is a $p$-group. Thus $(G^{(2)})^\delta$ is also
a $p$-group, see Proposition~\ref{HM1}(i),  which implies that $g$
acts on $\delta$ as the identity permutation.
Equivalently, $g$ fixes any $P$-orbit $\Delta \in \delta$.
By Proposition~\ref{P-center-G2},  $P \le Z(G^{(2)}),$ hence $g$ centralizes $P$. This implies that $g$ is
semiregular on $\Delta$. Since $g$ has order $q$ and $|\Delta|=p,$ $g$ fixes
pointwise $\Delta,$ and as this is true for any $\Delta \in \delta,$ $g$ is the identity permutation of $\Omega,$ a contradiction.
\end{proof}

\begin{prop}{{\rm( \cite[Proposition~3.6(ii)]{HM} )}}\label{HM2}
Let $H$ be an abelian $p$-group whose order $|H| \ge p^3,$ and
let $G \le \Sym(H)$ with $G \ge H_R$. If there exists a $G_1$-orbit $T$ such
that $|T|=p$ and $\sg{T}=H,$ then $|G|=p \cdot |H|$.
\end{prop}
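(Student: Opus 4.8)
The plan is to prove the equivalent upper bound $|G_1|\le p$: since $H_R$ is transitive, orbit--stabilizer gives $|G|=|H|\cdot|G_1|$, while $|G_1|\ge|T|=p$ is immediate from the transitivity of $G_1$ on $T$. Because $|G|=p\cdot|H|$ forces $G$ to be a $p$-group, the argument must simultaneously show that $G$ has no prime divisor other than $p$ and that the point stabilizer is as small as possible; I would obtain both by analysing a Sylow $p$-subgroup and then inducting on $|H|$.

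First I would localize at $p$. Let $P$ be a Sylow $p$-subgroup of $G$ with $H_R\le P$; then $P_1=P\cap G_1$ is a Sylow $p$-subgroup of $G_1$. I claim $P_1$ is already transitive on $T$. Indeed, the two-point stabilizer $G_1\cap G_t$ has index $|T|=p$ in $G_1$, so it cannot contain a full Sylow $p$-subgroup of $G_1$; hence $P_1\not\le G_1\cap G_t$, and since $|T|=p$ is prime this means $P_1$ has no fixed point on $T$ and is therefore transitive. Consequently $P_1$ induces on $T$ a transitive $p$-subgroup of $\Sym(T)$ with $|T|=p$, that is, a cyclic group of order $p$ generated by a $p$-cycle (the Sylow $p$-subgroup of the symmetric group of degree $p$).

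Next I would exploit a central translation. As $H$ is abelian, $H_R$ is self-centralizing in $\Sym(H)$, so $Z(P)\le C_{\Sym(H)}(H_R)=H_R$; choosing $z=c_R\in Z(P)$ of order $p$ yields a central semiregular subgroup $\sg{z}=M_R$ with $M=\sg{c}\le H$ of order $p$. Its orbits form a $P$-invariant block system $\delta$, and since $z$ is central the set $\Fix(h)$ is a union of $M$-cosets for every $h\in P$; in particular any $h\in P$ fixing a point $x$ fixes the whole coset $xM$ pointwise. Passing to $\bar H=H/M$, the induced group $P^\delta$ again contains $(\bar H)_R$ and has $\bar T$ as a stabilizer orbit with $\sg{\bar T}=\bar H$, which sets up the induction on $|H|$. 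The kernel $P_\delta$ of the block action is the delicate object: on each block, $h\in P_\delta$ commutes with the regular cyclic group $\sg{z}$ restricted to that block, whose centralizer in the symmetric group of the block is itself; hence $P_\delta$ embeds into a direct product of copies of $\Z_p$ and is elementary abelian. A complementary reduction modulo a maximal $P$-invariant subgroup $L$ of index $p$ shows, using that the only transitive $p$-group on $p$ points is the regular $\Z_p$, that the stabilizer in $P^{\delta_L}$ is trivial, so $P_1\le P_{\delta_L}$ and all of $T$ lies in a single coset of $L$; this pins down how $T$ meets the blocks.

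The main obstacle is controlling this kernel $P_\delta$, equivalently proving that the pointwise stabilizer of $\{1\}\cup T$ is trivial and that no $p'$-element survives. The induction disposes of the quotient cleanly, but the generating hypothesis $\sg{T}=H$ must be converted into information about $P_\delta$ itself, and the difficulty is that $G_1$ need not act linearly on $H$, since we do not know a priori that $H_R\trianglelefteq G$. My intended route is to regard the elementary abelian group $P_\delta$ as a module for the cyclic group that $P_1$ induces on the blocks, and to use that any $h\in P_\delta$ in the stabilizer of $\{1\}\cup T$ acts trivially on every block meeting $\{1\}\cup T$; combined with the $p$-cycle action on $\bar T$, the coset condition coming from $L$, and $\sg{T}=H$, this should force $h$ to act trivially on all blocks, i.e.\ $h=1$. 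Carrying out this module computation -- and thereby collapsing the stabilizer to order $p$ and $G$ to a $p$-group -- is the crux; once it is settled, $G_1\cong\Z_p$ acts regularly on $T$ and $|G|=p\cdot|H|$ follows.
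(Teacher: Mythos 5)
First, a point of context: the paper never proves Proposition~\ref{HM2} at all — it imports it verbatim from \cite[Proposition~3.6(ii)]{HM} — so your argument has to stand on its own, and it does not. It is a plan whose decisive step you yourself defer (``this \emph{should} force $h$ to act trivially on all blocks\dots is the crux''), and that step is not a postponed computation: as the statement is quoted here, for an \emph{arbitrary} abelian $p$-group, it is false, and it fails exactly in the configuration your propagation cannot handle. Take $H=\Z_{p^2}\times\Z_p$ and $T=(1,0)+\sg{(0,1)}$, so $|T|=p$ and $\sg{T}=H$. In $\Cay(H,T)$ there is an arc from $(a,b)$ to every vertex with first coordinate $a+1$, so every permutation $(a,b)\mapsto(a+k,\sigma_a(b))$ with $k\in\Z_{p^2}$ and arbitrary $\sigma_a\in\Sym(\Z_p)$ is an automorphism; these form a group $G\ge H_R$ of order $p^2(p!)^{p^2}$ in which $T$ is a $G_1$-orbit, yet $|G|\ne p|H|$. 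The proposition is really a statement about elementary abelian groups (the setting of \cite{HM}, and the only way it is used in this paper): there $T$ cannot lie inside a single coset of an order-$p$ subgroup $M$, since then $\sg{T}\le\sg{t,M}$ would have order at most $p^2<|H|$. Your sketch never invokes $|H|\ge p^3$ and never excludes this coset case — which is precisely the case $|\bar T|=1$, where your kernel constraint is vacuous; in the example above $P_\delta\cap P_1\cong\Z_p^{p^2-1}$. So the missing idea is the elementary abelian structure itself, without which the crux is simply false.

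Suppose one does add the hypothesis $H\cong\Z_p^n$. Then $T$ meets $p$ distinct $M$-cosets in one point each, an element of $P_\delta\cap P_1$ fixes each singleton $T\cap B$ and hence is trivial on every block meeting $\{1\}\cup T$, and this constraint, combined with the translation-invariance of $P_\delta$ and the fact that the semigroup generated by $\bar T$ is all of $\bar H$, does force $P_\delta\cap P_1=1$; that part of your plan can be completed. But two genuine gaps remain. First, your induction has no base: for $|H|=p^3$ the quotient $\bar H$ has order $p^2$, where the statement is false — $\Z_p\wr\Z_p\le\Sym(\Z_p^2)$ contains a regular $\Z_p^2$ and has a point-stabilizer orbit of size $p$ that generates $\Z_p^2$, yet has order $p^{p+1}$ — so the rank-three case requires a self-contained argument, and that case is the actual substance of the proposition (it is what makes $|\Aut(\A)|=p^4$ in Lemma~\ref{L-exp}). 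Second, nothing in your outline touches elements of order prime to $p$: your blocks $\delta$ are only $P$-invariant (the translation $z$ is central in $P$, not in $G$), so the entire Sylow-plus-induction apparatus lives inside $p$-groups, while the danger is exactly a transitive subgroup of $\Sym(T)\cong S_p$ of order $pd$ with $d\mid p-1$, which is what occurs in the rank-two counterexamples. The standard way to exclude these is not elementary group theory: one first identifies $V(H,G_1)$ with the transitivity module of $H_R\sg{x}$ for a single automorphism $x$ of order $p$, so that $G\le G^{(2)}=\Aut(V(H,G_1))$ is a $p$-group by Proposition~\ref{W}. In its present form the proposal consists of correct preliminary reductions, but the theorem's two real difficulties — the rank-three base case and the elimination of $p'$-elements — are untouched, and the statement you set out to prove is, as quoted, not even true.
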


Finally, we recall a recent result of Morris~\cite{Mo}.
Let $H \cong \Z_p^n$ for an arbitrary prime $p$.
Assume that $G \le \Sym(H)$ is a $p$-group such that
$$
G=\big\langle H_R,\pi^{-1} H_R\pi \big\rangle \text{ for some } \pi \in \Sym(H).
$$
Let $P$ be a Sylow $p$-subgroup of $\Sym(H)$ with $G \le P$.
Then $P$ is permutation isomorphic to  the iterated
wreath product $\Z_p \wr \cdots \wr \Z_p$
($n$ copies of $\Z_p$), and this shows that $P$
admits block systems $\delta_0,\ldots,\delta_{n-1}$ such that
$\delta_i$ has blocks of size $p^{i+1},$ and if $0 \le i < j \le n-1,$ then
each class of $\delta_i$ is contained in a class of $\delta_j$.
Since $H_R$ is abelian, the kernel $(H_R)_{\delta_i}$ has order
$p^{i+1}$.  In particular, there exist  $\tau_0, \, \tau_1 \in H_R$
such that $(H_R)_{\delta_0}=\sg{\tau_0}$ and $(H_R)_{\delta_1}=\sg{\tau_0,\tau_1}$. Note that, we can write $\delta_0=\Orb(\sg{\tau_0},H)$ and $\delta_1=\Orb(\sg{\tau_0,\tau_1},H)$.

\begin{prop}{{\rm (\cite[Corollary~3.2]{Mo})}}\label{M}
With the above notation, there exists $\psi \in G^{(2)}$
such that $\psi$ commutes with $\tau_0,$ and
$\psi^{-1}\pi^{-1} H_R \pi \psi$ contains $\tau_1$.
\end{prop}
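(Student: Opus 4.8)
The plan is to localize the statement to the bottom two levels of the wreath structure, to solve the resulting conjugacy problem separately on each block of $\delta_1$, and then to worry about the one genuinely global constraint, namely membership in $G^{(2)}$. Write $K=\pi^{-1}H_R\pi$ and let $Q=G^{(2)}_{\delta_1}$ be the kernel of the $G^{(2)}$-action on $\delta_1$. Since $G^{(2)}$ has the same block systems as $G$, every $\psi\in G^{(2)}$ preserves $\delta_1$, so $(K^{\psi})_{\delta_1}=(K_{\delta_1})^{\psi}$. As $\tau_1\in(H_R)_{\delta_1}\le Q$ fixes each block of $\delta_1$ setwise, the containment $\tau_1\in K^{\psi}$ is equivalent to $\tau_1\in(K_{\delta_1})^{\psi}$. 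Hence it suffices to produce $\psi\in G^{(2)}$ with $[\psi,\tau_0]=1$ and $\tau_1\in(K_{\delta_1})^{\psi}$; this only involves the two groups $(H_R)_{\delta_1}=\sg{\tau_0,\tau_1}$ and $K_{\delta_1}$, each of order $p^2$, semiregular, and therefore acting regularly on every block $B\in\delta_1$ (since $|B|=p^2$), each carrying a distinguished bottom generator ($\tau_0$, resp.\ the generator of $K_{\delta_0}$) that moves points only inside the $\delta_0$-subblocks of $B$.

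Next I would solve the problem on a single block. Fix $B\in\delta_1$; on $B$ both $\sg{\tau_0,\tau_1}$ and $K_{\delta_1}$ induce regular elementary abelian groups of order $p^2$ preserving the partition of $B$ into its $p$ subblocks of $\delta_0$. The restriction $\tau_1|_B$ and a suitable $\kappa|_B\in K_{\delta_1}|_B$ (any element outside $K_{\delta_0}$) are both fixed-point-free elements of order $p$ that induce a single $p$-cycle on the $p$ subblocks. Writing $B$ in aligned coordinates so that $\tau_0|_B$ is the uniform within-subblock shift, a permutation commutes with $\tau_0|_B$ exactly when it has the form $(i,j)\mapsto(i+f(j),g(j))$ with $g$ a permutation of the subblocks; a short computation shows one can solve for such $f$ and $g$ conjugating $\kappa|_B$ to $\tau_1|_B$, the solvability resting only on the facts that $\kappa$ has order $p$ (which forces the relevant telescoping sum to vanish) and that the step-size is invertible mod $p$. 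Thus on each block there is a local conjugator $\psi_B$, automatically commuting with $\tau_0$, with $\tau_1|_B\in(K_{\delta_1}|_B)^{\psi_B}$. Since $\tau_0$ acts within the subblocks of every block in the same way, the $\tau_0$-commutation is a purely local condition, and assembling the $\psi_B$ into a single permutation $\psi$ of $H$ produces one that commutes with $\tau_0$ globally.

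The hard part will be to guarantee that the assembled $\psi$ lies in $G^{(2)}$, equivalently that it preserves every $2$-orbit of $G$; this is the only requirement that couples the otherwise independent per-block choices. One cannot circumvent this by asking $\psi$ to centralize all of $H_R$: as $H_R$ is regular abelian it is self-centralizing in $\Sym(H)$, so that demand would force $\psi\in H_R$ and yield nothing. The essential point is to exploit precisely the extra room in $G^{(2)}$ beyond $H_R$, which is why the hypothesis asks only for commutation with $\tau_0$. To control membership I would use that $G^{(2)}$ is the automorphism group of the coherent configuration of $G$-orbitals, that $G^{(2)}$ is a $p$-group lying in the iterated wreath product (Proposition~\ref{W}), and that the orbital relations of such a group respect the nested chain $\delta_0\subset\delta_1\subset\cdots$, so that orbital-preservation can be analysed level by level with Proposition~\ref{HM1} relating the kernel and quotient actions. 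The local data of the previous step must then be chosen compatibly with these orbital relations, simultaneously across all blocks; carrying out this coordination—showing that a globally orbital-preserving choice always exists—is the main obstacle and is where the substance of the argument lies.
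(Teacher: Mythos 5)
Your proposal establishes only the easy half of the statement, and the half it leaves open is the entire content of the proposition. Note first that the paper itself gives no proof here: Proposition~\ref{M} is imported wholesale from Morris \cite[Corollary~3.2]{Mo}, so what you are really attempting is a reproof of Morris's theorem. Your reduction to the blocks of $\delta_1$ and the per-block conjugation are fine as far as they go, but they produce only some $\psi\in C_{\Sym(H)}(\tau_0)$ with $\tau_1\in\psi^{-1}\pi^{-1}H_R\pi\psi$ --- a statement that is essentially trivial, since two regular elementary abelian groups of order $p^2$ on a block of size $p^2$, sharing the bottom shift and the subblock partition, are always conjugate by a $\tau_0$-centralizing permutation. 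The nontrivial assertion is that $\psi$ can be found inside $G^{(2)}$, where $G=\sg{H_R,\pi^{-1}H_R\pi}$, and this you explicitly defer (``the main obstacle \dots where the substance of the argument lies''). Nothing in your construction controls the $2$-orbits of $G$ that join \emph{distinct} blocks of $\delta_1$: the per-block freedom (your functions $f$ and $g$) is exactly what must be coordinated against these cross-block orbital relations, and no reason is given that a compatible global choice exists --- one cannot patch kernel-restrictions block by block and stay inside a $2$-closure in general. This coordination is what Morris's inductive analysis of $2$-closed $p$-groups actually accomplishes, so the proposal, as it stands, has a genuine gap rather than a complete alternative argument.

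A secondary point: your local solvability tacitly assumes that $\kappa|_B$ commutes with $\tau_0|_B$, i.e.\ that in your aligned coordinates $\kappa|_B$ has the form $(i,j)\mapsto(i+c(j),j+s)$. This is not just convenient but necessary, since conjugation by a $\tau_0$-centralizing $\psi_B$ preserves the centralizer of $\tau_0|_B$, and $\tau_1|_B$ lies in it. The commutation does hold, but only because $\tau_0\in Z(P)\le\pi^{-1}H_R\pi$ (so that $K_{\delta_0}=\sg{\tau_0}$ and $K$ centralizes $\tau_0$) --- precisely the fact the paper derives in the paragraph \emph{following} Proposition~\ref{M}; your setup, which treats the generator of $K_{\delta_0}$ as possibly different from $\tau_0$, does not supply it. This is fixable, but it should be made explicit before the coordinate computation is legitimate.
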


Let us consider once more the above groups $G=\sg{H_R,\, \pi^{-1} H_R \pi}$ and $P,$ where $P$
is a Sylow $p$-subgroup of $\Sym(H)$ with $G \le P$.
Then $Z(P) \leq \pi^{-1} H_R \pi$ because $\pi^{-1} H_R \pi$ is
abelian and regular on $H$. Similarly, $Z(P)\leq H_R $. On the other hand, $P_{\delta_0} \lhd P$ and $\tau_0 \in P_{\delta_0}$, implying $P_{\delta_0}\cap Z(P)\not =1$ because $P$ is $p$-group. Then $P_{\delta_0}\cap Z(P)\leq  H_R$ implies $P_{\delta_0}\cap Z(P)=
\sg{\tau_0},$ hence $\tau_0 \in P_{\delta_0}\cap Z(P)\leq
\pi^{-1} H_R \pi$. Proposition~\ref{M} together with the condition
that $\psi$ centralizes $\tau_0$ shows that
$\tau_0,\tau_1 \in \psi^{-1}\pi^{-1} H_R \pi \psi,$ and hence
$|C_{H_R}(\psi^{-1}\pi^{-1} H_R \pi \psi)|\geq p^2$.
This inequality will be used later.

\subsection{S-rings}

In this subsection, we give the definition of an S-ring, and
review several basic properties.
Let $H$ be a finite group with identity element $1,$ and let $\Q H$ denote the group algebra of $H$ over the rational number field. For a subset $T \subseteq H,$ we define
the $\Q H$-element $\un{T}$ as the formal sum $\un{T}=\sum_{h\in H}a_h h$ with $a_h=1$ if $h\in T,$ and $a_h=0$ otherwise. We remark
that the $\Q H$-element $\un{T}$ is traditionally called {\em simple quantity}, see \cite{W64}.

By a {\em Schur-ring} over $H$ ({\em S-ring} for short) we mean a subalgebra $\A \subseteq \Q H,$ which can be associated with a partition $\pi$ of $H$ satisfying the following conditions:
\begin{itemize}
\item  The set $\{1\}$ belongs to $\pi$.
\item For every $T\in \pi$, the set $T^{-1}$ belongs to  $\pi$.
\item $\A$ is spanned by the $\Q H$-elements
$\underline{T}, \, T \in \pi$.
\end{itemize}

The elements (classes) of $\pi$ are also called the {\em basic sets} of $\A$, and
from now on we will use the notation $\Bs(\A)$ for $\pi$.
The cardinality $|\Bs(\A)|$ is called the {\em rank} of $\A$.
The concept of S-ring is due to Wielandt~\cite{W64}, which was motivated by the following result of Schur~\cite{Sch}:

\begin{thm}{{\rm (cf.~\cite[Theorem~24.1]{W64})}}
Let $H$ be a finite group, and let $G \le \Sym(H)$ with $H_R \le G$.
Then the $\Q H$-elements $\un{T}, \, T \in  \Orb(G_1,H)$ span an S-ring
over $H$.
\end{thm}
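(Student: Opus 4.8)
The plan is to take $\pi = \Orb(G_1,H)$ as the candidate partition and let $\A$ be the $\Q$-span of the simple quantities $\un{T}, \, T \in \pi$, and then to verify the three defining conditions of an S-ring. Two of these are immediate. Since $G_1$ fixes the identity $1$ of $H$, the singleton $\{1\}$ is a $G_1$-orbit, so $\{1\} \in \pi$. For inverse-closedness I would exploit the right regular action: because $H_R \le G$, applying the translation $(x^{-1})_R$ shows that every pair $(x,y)$ lies in the same $G$-orbit as $(1,yx^{-1})$, so the $G$-orbits on $H \times H$ (the orbitals) are parametrized by the $G_1$-orbits via $T \leftrightarrow \Delta_T := \{(x,y) : yx^{-1} \in T\}$. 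Transposing pairs carries orbitals to orbitals, and a short entry computation gives that the transpose of $\Delta_T$ is $\Delta_{T^{-1}}$; hence $T^{-1}$ is again a single $G_1$-orbit, that is, $T^{-1} \in \pi$.

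The crux, and the only genuinely nontrivial point, is that $\A$ is closed under multiplication. My plan is to realize $\A$ inside the centralizer (commutant) algebra of $G$. For each orbital $\Delta$ let $A_\Delta$ be its $H \times H$ adjacency matrix over $\Q$; the span $V$ of all $A_\Delta$ is precisely the algebra of matrices commuting with every permutation matrix of $G$, and as such it is closed under matrix multiplication (its structure constants being the nonnegative integer orbital intersection numbers). I would then set up the linear map $\Phi : \Q H \to M_H(\Q)$ determined by sending $h$ to the permutation matrix of the left translation $x \mapsto hx$; equivalently $(\Phi(h))_{x,y}=1$ iff $yx^{-1}=h$, so that $\Phi(\un{T})=A_{\Delta_T}$. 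A direct check of entries yields $\Phi(h_1)\Phi(h_2)=\Phi(h_2h_1)$, so $\Phi$ is an injective linear anti-homomorphism whose image is the full centralizer algebra of $H_R$.

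Putting these together, since every orbital equals some $\Delta_T$, the bijection $T \leftrightarrow \Delta_T$ gives $\Phi(\A)=V$, and $\Phi$ restricts to a linear anti-isomorphism $\A \to V$. For $a,b \in \A$ we then have $\Phi(ab)=\Phi(b)\Phi(a) \in V$ because $V$ is multiplicatively closed, and injectivity of $\Phi$ forces $ab \in \A$; thus $\A$ is a subalgebra and the verification is complete. I expect this multiplicativity step to be the main obstacle, since unlike the other two conditions it does not reduce to a one-line symmetry argument but rests on Schur's observation that the coefficients $|\{(t,s)\in T\times S : ts=h\}|$ are constant as $h$ runs over a fixed $G_1$-orbit; the commutant-algebra detour is precisely what makes this transparent. (Alternatively one could establish the constancy directly, by using some $g \in G_1$ with $h^g=h'$ to biject the factorizations of $h$ with those of $h'$, but the matrix formulation packages all such bijections simultaneously.)
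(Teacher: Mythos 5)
Your proof is correct. Note first that there is no internal proof to compare against: the paper quotes this statement from Wielandt (cf.~\cite[Theorem~24.1]{W64}) and uses it as a black box, so the comparison must be with the classical treatment. Your argument is essentially Schur's original centralizer-ring proof. The orbital correspondence $T \leftrightarrow \Delta_T$, available because $H_R \le G$ forces every $G$-orbit on $H^2$ to meet $\{1\} \times H$, settles inverse-closure; and the embedding $\Phi$ of $\Q H$ into $M_H(\Q)$ by left translations identifies the span of the $\un{T}$ anti-isomorphically with the centralizer algebra of $G$, which is multiplicatively closed for the trivial reason that the commutant of any set of matrices is an algebra. The individual steps all check out: $\Phi$ is injective (the coefficient $a_h$ is the $(1,h)$ entry of the image matrix), its image is exactly the commutant of $H_R$ (a matrix commutes with all right translations iff its $(x,y)$ entry depends only on $yx^{-1}$), and the order reversal $\Phi(h_1)\Phi(h_2)=\Phi(h_2h_1)$ is harmless, since an injective anti-homomorphism onto a multiplicatively closed subspace still pulls products back into $\A$. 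Your care with left versus right translations is the one place where a blind attempt could easily go wrong: with the hypothesis $H_R \le G$, the $G$-invariant of a pair $(x,y)$ is $yx^{-1}$, so one must embed $\Q H$ via left translations and accept the anti-homomorphism; the distinction disappears only for abelian $H$, which is all the paper needs, but your argument covers general $H$ as the statement requires. Wielandt's own proof is the parenthetical alternative you mention: for $g \in G_1$, the factorizations $h=st$ counted by the coefficient of $h$ in $\un{S}\cdot\un{T}$ biject with those of $h^g$, which works because the arc set of $\Cay(H,T)$ is the orbital $\Delta_T$ and hence $G$-invariant. Both routes rest on the same key fact, the orbital correspondence; the matrix formulation buys multiplicative closure ``for free,'' while the direct count is more elementary but requires one to observe explicitly that $G$ acts as automorphisms of every $\Cay(H,T)$.
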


The S-ring in the above theorem is also called the
{\em transitvity module} of $G_1,$ denoted by
$V(H,G_1)$. We note that, there exist S-rings which do not arise as transitivity
modules. Given an arbitrary S-ring $\A$ over a group $H,$ we say that
$\A$ is {\em Schurian} if $\A=V(H,K_1)$ for some permutation group $K \le \Sym(H)$ with $H_R \le K,$ and that $\A$ is {\em non-Schurian} otherwise.

\begin{rem} It should be noted that the pair $( H, \{ \Cay(H,T) : T\in \Bs(\A)\} )$
forms a  {\em Cayley (association) scheme} in the sense of \cite{MPon}.
Thus, S-ring theory can be regarded as a part of the theory of association schemes,
and several concepts defined for S-rings can be understood in this context.
\end{rem}

Let $\A$ be any S-ring over a group $H$.
A subset $S \subseteq H$ (subgroup $K \le H$, resp.) is called an {\em $\A$-subset} ({\em $\A$-subgroup}, resp.) if $\un{S} \in \A$ ($\un{K} \in \A$, resp.). The \emph{radical} of a subset $S \subseteq H$ is the subgroup of $H$ defined as
$$
\rad(S)=\{h \in H : hS=Sh=S\}.
$$
In other words, $\rad(S)$ is the largest subgroup $E \le H$ for
which $S$ is equal to the union of some left $E$-cosets and also
some right $E$-cosets.
If $S$ is an $\A$-subset, then both groups $\rad(S)$ and $\sg{S}$  are $\A$-subgroups
(see \cite[Propositions~23.5 and 23.6]{W64}).
If $K,L \le H$ are two  $\A$-subgroups, then it can be easily checked that both
$K \cap L$ and $\sg{K \cup L}$ are $\A$-subgroups.
The {\em thin radical} of $\A$ is defined as
$$
\O(\A)=\{ g \in G : \{g\} \in \Bs(\A) \}.
$$
The following simple, but useful property is a simple observation:
\begin{equation}\label{Eq-eT}
\text{ If } e\in \O(\A) \text{ and } T \in \Bs(\A), \text{ then both sets }eT \text{ and } Te \text{ are in }\Bs(\A).
\end{equation}
Here $eT=\{et: t \in T\}$ and $Te=\{te : t \in T\}$. It follows that  
the thin radical $\O(\A)$ is an $\A$-subgroup.

Let $K \le H$ be an $\A$-subgroup. Then, for any basic set $T \in \Bs(\A)$ there exist positive integers $k$ and $k'$ such that
\begin{equation}\label{Eq-T/K}
|Kh \cap T|=k \text{ and } |hK \cap T|=k' \text{ \ for all } h \in T.
\end{equation}
These can be verified by considering the products
$\un{K}\cdot \un{T}$ and  $\un{T}\cdot \un{K}$.
Here and in what follows the symbol $\cdot$ denotes the
multiplication of  $\Q H$. Since both products belong to $\A,$ these
can be expressed as a linear combination of the simple quantities
$\un{T'}, \, T' \in \Bs(\A)$. The coefficient by
$\un{T}$ is equal to $k$ in the case of $\un{K}\cdot\un{T},$ and
it is equal to $k'$ in the case of $\un{T}\cdot\un{K}$.

The subalgebra $\Q  K \cap \A$ is an S-ring over an $\A$-subgroup $K,$ denoted by
$\A_K,$ which is called the {\em S-subring of $\A$ induced by $K$}.

Assume, in addition, that $K \trianglelefteq H$ for an $\A$-subgroup $K$.
For a subset $S \subseteq H,$ we let $S/K=\{Kh,\, h \in S\}$.
Let $T_1,T_2 \in \Bs(\A)$ such that $KT_1 \cap KT_2 \ne \emptyset,$ or equivalently,
$k_1t_1=k_2t_2$ holds for some $k_i \in K$ and $t_i\in T_i$ ($i=1,2$).
This shows that, the coefficient $a_{T_1} >0$ by the linear combination
$\un{K}\cdot \un{T_2}=\sum_{T \in \Bs(\A)}a_T \un{T}$. This implies
that $T_1 \subseteq KT_2,$ and hence $KT_1 \subseteq KT_2$.
Similarly, $b_{T_2} >0$ by $\un{K}\cdot \un{T_1}=\sum_{T \in \Bs(\A)}b_T \un{T},$
hence $T_2 \subseteq KT_1,$ and so $KT_2 \subseteq KT_1$ also holds.
We conclude that $KT_1=KT_2,$ and thus the sets $KT, T \in \Bs(\A)$ form a
partition of $H,$ and consequently, the sets $T/K, \, T\in \Bs(\A)$ form a partition of
$H/K$. The corresponding $\Q \, H/K$-elements $\un{T/K}$ span an S-ring over $H/K,$ which is called the {\em quotient of $\A$ by $K,$} denoted by $\A_{H/K}$.

Assume that  $H=E \times F$ is the internal direct product of its subgroups $E$ and $F,$
and that $\A$ is an S-ring over $H$ such that both
$E$ and $F$ are $\A$-subgroups. Since $E \cap F=\{1\},$
it follows that $\un{XY}=\un{X} \cdot \un{Y}$ for any subsets $X \subseteq E$ and $Y \subseteq F$. A straightforward computation yields that the simple quantities
$\un{RS},\, R \in \Bs(\A_E), \, S \in \Bs(\A_F)$ span an S-ring over $H$.
The latter S-ring is called the {\em tensor product} of $\A_E$ with $\A_F,$
denoted by $\A_E \otimes \A_F$.  Clearly, $\A_E \otimes \A_F=\A_F \otimes \A_E,$
and $\A_E \otimes \A_F \subseteq \A$. The following lemma can be
easily shown using Eq.~\eqref{Eq-eT}.

\begin{lem}\label{L-tensor}
Let $\A$ be an S-ring of the internal direct product $H=E \times F$ such that
both $E$ and $F$ are $\A$-subgroups. If $\A_E=\Q E$ or $\A_F=\Q F,$
then $\A=\A_E \otimes \A_F$.
\end{lem}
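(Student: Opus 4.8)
The plan is to show that every basic set of $\A$ is already a basic set of the tensor product $\A_E \otimes \A_F$; since the inclusion $\A_E \otimes \A_F \subseteq \A$ always holds, this will give the desired equality $\A=\A_E \otimes \A_F$. Because the tensor product is symmetric, I may assume without loss of generality that $\A_E=\Q E$. Recall that the basic sets of $\A_E \otimes \A_F$ are exactly the products $RS$ with $R \in \Bs(\A_E)$ and $S \in \Bs(\A_F)$; under the assumption $\A_E=\Q E$ the factor $R$ ranges over all singletons $\{e\}$, $e \in E,$ so that these basic sets are precisely the sets $eS$ with $e \in E$ and $S \in \Bs(\A_F)$. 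Hence it suffices to prove that every $T \in \Bs(\A)$ is of the form $eS$ for some $e \in E$ and $S \in \Bs(\A_F)$.

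The key observation is that $\A_E=\Q E$ forces $E \subseteq \O(\A),$ so that left translation by an element of $E$ permutes $\Bs(\A)$ by Eq.~\eqref{Eq-eT}. Given $T \in \Bs(\A),$ I would fix an arbitrary $t \in T$ and write $t=e_0f_0$ with $e_0 \in E$ and $f_0 \in F,$ using $H=E \times F$. Since $e_0 \in \O(\A)$ and $\O(\A)$ is a subgroup, also $e_0^{-1}\in \O(\A),$ whence $S:=e_0^{-1}T \in \Bs(\A)$ by Eq.~\eqref{Eq-eT}. By construction $f_0=e_0^{-1}t \in S,$ so the basic set $S$ contains the element $f_0 \in F$.

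It remains to argue that $S \subseteq F$. Here I would invoke the elementary fact that an $\A$-subgroup is a disjoint union of basic sets: as $F$ is an $\A$-subgroup, $\un{F} \in \A$ is a $0/1$-combination of the simple quantities $\un{T'}$, $T' \in \Bs(\A),$ which forces $F=\bigsqcup_{T' \subseteq F} T'$; consequently every basic set either lies inside $F$ or is disjoint from $F$. Since $S$ meets $F$ in $f_0,$ we obtain $S \subseteq F,$ and therefore $S \in \Bs(\A_F)$. Then $T=e_0S=\{e_0\}\,S$ displays $T$ as a basic set of $\A_E \otimes \A_F,$ which finishes the argument. The proof is short, and the only point demanding a little care---the "main obstacle", such as it is---is the step from "$S$ meets $F$" to "$S \subseteq F$", which relies precisely on recognizing $F$ as a union of basic sets rather than on any computation.
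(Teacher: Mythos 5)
Your proof is correct and follows exactly the route the paper intends: the paper gives no written proof but states that the lemma ``can be easily shown using Eq.~\eqref{Eq-eT}'', and your argument---reducing to $\A_E=\Q E$, noting $E\subseteq\O(\A)$, translating a basic set $T$ by $e_0^{-1}\in\O(\A)$ via Eq.~\eqref{Eq-eT}, and using that the $\A$-subgroup $F$ is a union of basic sets---is precisely that fleshed-out argument. No gaps; the verification that $S\subseteq F$ is handled correctly.
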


The following result is also known as Schur's first theorem on multipliers (see \cite{MPon}).

\begin{thm}{\rm (cf.\ \cite[Theorem~23.9(a)]{W64})}\label{T-1st-multi}
Let $\A$ be an S-ring over an abelian group $H,$ $T \in \Bs(\A)$ be any basic set, and suppose that $k$ is an integer coprime to $|H|$.
Then the set  $T^{(k)}:=\{ h^k : h \in T\}$ is a basic set  of $\A$.
\end{thm}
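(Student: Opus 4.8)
The plan is to reduce to the case where $k=p$ is a prime not dividing $|H|$, and then to exploit the Frobenius-type congruence $\un{T}^{\,p}\equiv\un{T^{(p)}}\pmod p$. First I would record the reduction. Since $H$ is abelian and $\gcd(k,|H|)=1$, the power map $\sigma_k\colon h\mapsto h^k$ is an automorphism of $H$ and $T^{(k)}=T^{\sigma_k}$. Because $h^{|H|}=1$ for every $h\in H$, the set $T^{(k)}$ depends only on the residue of $k$ modulo $|H|$, so we may assume $1\le k<|H|$. Writing $k=p_1\cdots p_r$ as a product of primes, each $p_i$ divides $k$ and is therefore coprime to $|H|$, and since $\sigma_{p_1}\cdots\sigma_{p_r}=\sigma_k$ we have
\[
T^{(k)}=\big(\cdots\big(T^{(p_1)}\big)^{(p_2)}\cdots\big)^{(p_r)}.
\]
Hence it suffices to treat the case $k=p$, a prime with $p\nmid|H|$.

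Fix such a prime $p$. The key computation is that, in the integral group ring $\Z H$,
\[
\un{T}^{\,p}\equiv\un{T^{(p)}}\pmod{p}.
\]
Indeed, expanding $\un{T}^{\,p}=\big(\sum_{h\in T}h\big)^{p}$ by the multinomial theorem and using that $H$ is abelian, every term carries a multinomial coefficient $p!/\prod_h m_h!$ which is divisible by $p$ unless a single $m_h$ equals $p$; the surviving terms are exactly $\sum_{h\in T}h^p$. As $p\nmid|H|$, the automorphism $\sigma_p$ is injective, so the elements $h^p$ ($h\in T$) are pairwise distinct and $\sum_{h\in T}h^p=\un{T^{(p)}}$, which yields the congruence.

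Now $\un{T}\in\A$ and $\A$ is a subalgebra, so $\un{T}^{\,p}\in\A$; as a product of elements with nonnegative integer coefficients it has nonnegative integer coefficients, and membership in $\A$ forces these to be constant on each basic set. Thus $\un{T}^{\,p}=\sum_{T'\in\Bs(\A)}c_{T'}\,\un{T'}$ with $c_{T'}\in\Z_{\ge 0}$. Reducing modulo $p$ and comparing with the congruence above, the coefficient of $g$ in $\un{T^{(p)}}$ equals $c_{T'}\bmod p$, where $T'$ is the basic set containing $g$. Hence any basic set $T'$ meeting $T^{(p)}$ satisfies $c_{T'}\equiv 1\pmod p$, which forces every element of $T'$ to lie in $T^{(p)}$, since an element outside would force $c_{T'}\equiv 0\pmod p$. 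Consequently each basic set meeting $T^{(p)}$ is contained in $T^{(p)}$, i.e.\ $T^{(p)}$ is a union of basic sets.

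It remains to upgrade this to the statement that $T^{(p)}$ is a \emph{single} basic set, and this is the step that needs the extra argument. Since $\sigma_p$ is a bijection of $H$, the family $\{T^{(p)}:T\in\Bs(\A)\}$ is again a partition of $H$ with exactly $|\Bs(\A)|$ parts, and the previous paragraph shows that each of its parts is a union of parts of $\Bs(\A)$; that is, it is a coarsening of $\Bs(\A)$. A coarsening of a partition of a finite set that has the same number of parts must coincide with it, so $\{T^{(p)}:T\in\Bs(\A)\}=\Bs(\A)$, and in particular $T^{(p)}\in\Bs(\A)$. The only genuinely delicate point is this last one: the congruence alone gives merely that $T^{(p)}$ is a union of basic sets, and it is the counting (coarsening) argument that rules out proper unions and completes the proof.
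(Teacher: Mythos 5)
Your proof is correct. Note that the paper itself gives no proof of Theorem~\ref{T-1st-multi}: it is stated with a citation to Wielandt (\cite[Theorem~23.9(a)]{W64}), this being Schur's classical ``first theorem on multipliers.'' Your argument --- reduction to a prime $k=p$ coprime to $|H|$, the Frobenius-type congruence $\un{T}^{\,p}\equiv\un{T^{(p)}}\pmod p$ in $\Z H$, the observation that coefficients of elements of $\A$ are constant on basic sets (which forces each basic set meeting $T^{(p)}$ to lie inside it), and the final counting step --- is in substance the standard Schur--Wielandt proof, so there is no real divergence to report. The only point where accounts differ is the finish: many sources close instead by applying the union-of-basic-sets claim to the inverse multiplier $k'$ with $kk'\equiv 1 \pmod{|H|}$, noting that $(T^{(k)})^{(k')}=T$ forces $T^{(k)}$ to contain no more than one basic set; your alternative --- that $\{T^{(p)}:T\in\Bs(\A)\}$ is a coarsening of the partition $\Bs(\A)$ with the same number of parts, hence equal to it --- is equally valid and arguably cleaner, since it avoids invoking the claim for a second exponent.
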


Finally, we recall the concept of {\em $E/F$-wreath product} after \cite{MPon}.
This was defined in \cite{LM} under the name {\em wedge product}, and independently in \cite{EP} under the name {\em generalized wreath product}.
Let $\A$ be an S-ring over a group $H$. If there exist $\A$-subgroups $E$ and $F$ such that
$$
F \le E, \; F \triangleleft H, \text{ and }
F \le  \rad(T) \text{ for all } T\in \Bs(\A), \, T \subset H \setminus E,
$$
then we say that $\A$ is an {\em $E/F$-wreath product} and
write $\A=\A_E \wr_{E/F} \A_{H/F}$. Note that, the S-ring $\A$ can be reconstructed uniquely from the S-rings  $\A_E$ and $\A_{H/F}$.  In the particular case when $E=F,$
we use term {\em wreath product}, and write
$\A_E \wr \A_{H/E}$ for  $\A_E \wr_{E/E} \A_{H/E}$.
 In what follows we say that $\A$ is {\em decomposable} if it can be decomposed as $\A=\A_E \wr_{E/F} \A_{H/F}$ where $E \ne H$ and $F \ne \{1\},$ and that $\A$ is
{\em indecomposable} otherwise.

\subsection{Automorphisms of S-rings}

Let $\A \subseteq \Q H$ be an S-ring over a group $H$.
By an {\em automorphism} of $\A$ we mean a permutation of $H$ that is an automorphism of all Cayley graphs $\Cay(H,T), \, T \in \Bs(\A)$.
This definition is due to Klin and P\"oschel~\cite{KP80} (see also \cite{MPon}). The group of all automorphisms of $\A$ will be denoted by $\Aut(\A),$ that is,
$$
\Aut(\A)=\bigcap_{T \in \Bs(\A)}\Aut(\Cay(H,T)).
$$

In what follows, we write $\Aut(\A)_1$ for the stabilizer $(\Aut(\A))_1$.
Note that, as a permutation group of $H,$ $\Aut(\A)$ is $2$-closed.
Moreover, if $\A=V(H,G_1)$ for some $G \le \Sym(H)$ with $G \ge H_R,$ then
$\Aut(V(H,G_1))=G^{(2)}$. If $K \le G$ is a subgroup with $H_R \le  K,$ then
$V(H,K_1) \supseteq V(H,G_1)$. Also, given two S-rings $\A$ and $\B$ of the same group $H,$ the inequality $\B \subseteq \A$ implies that
$\Aut(\B) \ge \Aut(\A)$.

For two arbitrary S-rings $\A,\B \subseteq \Q H,$ their intersection $\A \cap \B$
is also an S-ring over $H$ (cf. \cite{HM,MPon}).  Therefore, given any subset
$S \subset H,$ it is possible to define the S-ring
$\sgg{S}:=\cap_{\A^*}\A^*,$ where $\A^*$ runs over the set of all S-rings over
$H$ that contain $\un{S}$. Then, the following identity holds:
\begin{equation}\label{Eq-auto}
\Aut(\Cay(H,S))=\Aut(\sgg{S}).
\end{equation}
Indeed, let $G=\Aut(\Cay(H,S))$ and $\A=V(H,G_1)$.
The fact that $G \ge \Aut(\sgg{S})$ follows if we observe
that $S$ can be expressed as $S=\cup_{i=1}^{k}T_i$ for some basic sets
$T_i \in \Bs(\sgg{S}),$ and thus
$\Aut(\sgg{S}) \le \bigcap_{i=1}^{k}\Aut(\Cay(H,T_i)) \le G$.
On the other,  since $G$ is $2$-closed, $G=\Aut(\A)$. Also, as any element of
$G_1$ maps $S$ to itself,  it follows that $\un{S} \in \A$. This implies in turn that
$\sgg{S} \subseteq \A,$ and so $\Aut(\sgg{S}) \ge \Aut(\A)=G,$
and Eq.~\eqref{Eq-auto} follows.

Suppose that $K \le H$ is an $\A$-subgroup and write $G=\Aut(\A)$.
Then any element of the stabilizer $G_1$ maps $K$ to itself. This implies that
the setwise stabilizer $G_{\{K\}}$ factorizes as
$G_{\{K\}}=G_1 K_R$. In particular,  $G_1 \le G_{\{K\}},$ and hence the
$G_{\{K\}}$-orbit of $1$ is a block for $G$ (see \cite[Theorem~1.5A]{DM}). The latter orbit is $K,$ and we conclude that the induced block system $\delta=\{K^g : g \in G\}$
is equal to  the set $H/K$ of all right cosets of $K$ in $H$.

Finally, we point out a relation between $\Aut(\A)$ and the thin radical $\O(\A)$.  For $h \in H,$ the {\em left translation} $h_L \in \Sym(H)$ is the permutation acting as $x^{h_L}=h^{-1}x, \, x\in H$.
If $\A$ is a Schurian S-ring over $H,$ then its thin radical $\O(\A)$  satisfies the following:
\begin{equation}\label{Eq-thin}
\O(\A)=\{h \in H : h_L \in C_{\Sym(H)}(\Aut(\A)) \}.
\end{equation}
Indeed, if $h \in \O(\A)$ then every
$g \in \Aut(\A)$ acts as an automorphism of
$\Cay(H,\{h\})$. It is straightforward to check that this implies that $g$ and
$(h^{-1})_L$ commute, and so
$h_L \in C_{\Sym(H)}(\Aut(\A))$. On the other hand, if $h_L \in C_{\Sym(H)}(\Aut(\A))$ and
$g \in \Aut(\A)_1,$  then $(h^{-1})^g=1^{h_L g}=1^{g h_L}=h^{-1}$. Therefore, the  orbit of $h^{-1}$ under $\Aut(\A)_1$ is equal to the set $\{h^{-1}\}$. Now, since $\A$  is Schurian, its basic sets are the
$\Aut(\A)_1$-orbits on $H,$ in particular, $h^{-1} \in \O(\A),$ and this implies that
$h \in \O(\A)$ as well.

\subsection{Isomorphisms of S-rings}

Let $\A$ be an S-ring over a group $H$ and $\B$ be an S-ring over a group $K$. A bijection $f  : H \to K$ is called an {\em (combinatorial) isomorphism}
between $\A$ and $\B$ if
$$
\big\{ \Cay(H,T)^f : T \in \Bs(\A) \big\}=\big\{ \Cay(K,S) : S \in \Bs(\B) \big\}.
$$
Here $\Cay(H,T)^f$ is the image of the digraph $\Cay(H,T)$ under $f,$
that is, it has vertex set $K$ and arc set
$\{(x^f,y^f) :  x,y \in H \text{ and }
yx^{-1} \in T\}$.

It follows from the definition that $f$ induces a bijection
$f^* : \Bs(\A) \to \Bs(\B)$ defined by $T^{f^*}=S$ for $T \in \Bs(\A)$
exactly when $\Cay(H,T)^f=\Cay(K,S)$.
We say that $f$ is {\em normalized} if $f$ maps the identity element
$1_H$ to the identity element $1_K$. In the special case when $f$ is
an isomorphism from $H$ to $K,$ we  call $f$ a {\em Cayley isomorphism}.
Notice that, when $f$ is a normalized isomorphism
of $\A,$ then $T^{f^*}=T^f$ holds for all $T \in \Bs(\A)$.
It is well-known that the linear map defined by $\un{T} \mapsto \un{T^{f^*}}$ is an algebra isomorphism between the $\Q$-algebras $\A$ and $\B$ (cf. also \cite{HM,MPon}). Using this fact, it is not hard to show that $(ST)^f=S^f T^f$ holds for any normalized isomorphism $f$ of $\A$ and any basic sets $S,T \in \Bs(\A)$.  Some properties are listed below.

\begin{prop}{\rm (\cite[Proposition~2.7]{HM})}\label{HM3}
Let $f : \A \to \B$ be a normalized isomorphism from an
S-ring $\A$ over a group $H$ to an S-ring $\B$ over a group $K,$
and let $E \le H$ be an $\A$-subgroup. Then,
\begin{enumerate}[(i)]
\item the image $E^f$ is a $\B$-subgroup of $K$. Moreover, the restriction
$f_E : E \to E^f$ is an isomorphism between $\A_E$ and $\A_{E^f}$.
\item For each $h\in H,$ $(Eh)^f=E^f h^f$.
\item If $E \trianglelefteq H$ and $E^f \trianglelefteq K,$
then the mapping $f^{H/E} : H/E \to K/E^f,$
defined by $(Eh)^{f^{H/E}}:=E^f h^f$ is a normalized isomorphism between
$\A_{H/E}$ and $\B_{K/E^f}$.
\end{enumerate}
\end{prop}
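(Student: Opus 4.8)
The plan is to handle the three parts in turn, using throughout the two facts recorded just before the statement: that a normalized isomorphism $f$ induces a bijection $f^*:\Bs(\A)\to\Bs(\B)$ with $T^{f^*}=T^f$ and $\Cay(H,T)^f=\Cay(K,T^f)$, and that $(ST)^f=S^fT^f$ for basic sets $S,T$. I would first note that this product rule extends verbatim to arbitrary $\A$-subsets: if $S=\bigcup_iS_i$ and $T=\bigcup_jT_j$ are unions of basic sets, then $ST=\bigcup_{i,j}S_iT_j$, and applying $f$ set-wise gives $(ST)^f=\bigcup_{i,j}(S_iT_j)^f=\bigcup_{i,j}S_i^fT_j^f=S^fT^f$. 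For (i), since $E$ is an $\A$-subgroup one has $\un{E}=\sum_{T\subseteq E}\un{T}$, so $E^f=\bigcup_{T\subseteq E}T^f$ is a union of basic sets of $\B$, whence $\un{E^f}\in\B$ and $E^f$ is a $\B$-subset. As $E$ is a subgroup, $EE=E$, so $E^fE^f=(EE)^f=E^f$ by the extended product rule; a nonempty finite subset of $K$ closed under multiplication is a subgroup, so $E^f$ is a $\B$-subgroup. For the ``moreover'' clause, the basic sets of $\A_E$ are exactly the basic sets of $\A$ contained in $E$, and likewise for the induced S-subring $\B_{E^f}$; since $f_E=f|_E$ is a bijection $E\to E^f$ sending each such $T$ to $T^f\subseteq E^f$, and $\Cay(E,T)^{f_E}=\Cay(E^f,T^f)$ is the restriction of $\Cay(H,T)^f=\Cay(K,T^f)$, the map $f_E$ is an isomorphism between $\A_E$ and $\B_{E^f}$.

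For (ii) I would argue with connected components. Because $E=E^{-1}$, the digraph $\Cay(H,E)$ is undirected, and its connected components are precisely the cosets $Eh$, on each of which it is a clique. Writing $E$ as a union of basic sets as above, $f$ carries $\Cay(H,E)$ isomorphically onto $\Cay(K,E^f)$, whose components are the cosets $E^fk$. A graph isomorphism maps components to components; the component of $h$ is $Eh$ and that of $h^f$ is $E^fh^f$, and since $h^f\in(Eh)^f$ this forces $(Eh)^f=E^fh^f$.

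For (iii), normality of $E$ and $E^f$ permits forming the quotient S-rings. By (ii) the map $f^{H/E}$ is well defined and bijective (if $Eh_1=Eh_2$ then $E^fh_1^f=(Eh_1)^f=(Eh_2)^f=E^fh_2^f$), and it is normalized because $(E)^{f^{H/E}}=E^f$. It remains to verify the isomorphism condition on basic sets. Fix $T\in\Bs(\A)$; the set $ET$ is an $\A$-subset and, applying (ii) coset-wise, $(ET)^f=E^fT^f$. The quotient adjacency reads $Ex\to Ey$ in $\Cay(H/E,T/E)$ iff $yx^{-1}\in ET$, and $E^fx^f\to E^fy^f$ in $\Cay(K/E^f,T^f/E^f)$ iff $y^f(x^f)^{-1}\in E^fT^f$; since $f$ is a genuine isomorphism $\Cay(H,ET)\to\Cay(K,E^fT^f)$, one has $yx^{-1}\in ET\iff y^f(x^f)^{-1}\in E^fT^f$, whence $\Cay(H/E,T/E)^{f^{H/E}}=\Cay(K/E^f,T^f/E^f)$. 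As $T$ runs over $\Bs(\A)$ the sets $T/E$ run over $\Bs(\A_{H/E})$ and the sets $T^f/E^f$ run over $\Bs(\B_{K/E^f})$, so $f^{H/E}$ is the required normalized isomorphism.

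The hard part will be (iii): because $f$ is only a combinatorial, not a group, isomorphism, one cannot push the multiplication of $H/E$ through $f$ directly, so the adjacency in the quotient graph cannot be transported in an obvious way. The device that resolves this is to replace the quotient adjacency condition by membership in the inflated set $ET$ — an honest $\A$-subset, for which the product rule yields $(ET)^f=E^fT^f$ — and then to transport adjacency along the true graph isomorphism $f$ between the non-quotiented Cayley graphs $\Cay(H,ET)$ and $\Cay(K,E^fT^f)$.
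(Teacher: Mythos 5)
Your proof is correct. One preliminary remark: the paper does not prove this proposition at all — it is imported verbatim from \cite[Proposition~2.7]{HM} — so there is no in-paper argument to compare against; what follows assesses your proof on its own terms, using only the two facts the paper records before the statement (that $T^{f^*}=T^f$ for normalized $f$, and the product rule on basic sets). Your extension of the product rule to arbitrary $\A$-subsets is legitimate, since images of unions under a bijection are unions of images, and it is exactly what makes (i) work: $E^fE^f=(EE)^f=E^f$ together with finiteness gives that $E^f$ is a subgroup, and the identification of $\Bs(\A_E)$ with the basic sets of $\A$ inside $E$ makes the restriction claim routine. The component argument in (ii) is valid: $\Cay(H,E)$ is a union of the graphs $\Cay(H,T)$, $T\in\Bs(\A)$, $T\subseteq E$, hence is carried by $f$ onto $\Cay(K,E^f)$, and a digraph isomorphism sends the component $Eh$ of $h$ to the component $E^fh^f$ of $h^f$. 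In (iii), the inflation device — replacing the quotient adjacency $Ex\to Ey$ by the condition $yx^{-1}\in ET$, where $ET$ is an honest $\A$-subset satisfying $(ET)^f=E^fT^f$ — correctly transports adjacency through $f$ without ever needing $f$ to respect the group operations, and normality of $E$ and $E^f$ is invoked exactly where it is needed (to rewrite $(Ey)(Ex)^{-1}$ as $Eyx^{-1}$ and to form the quotient S-rings); the final observation that $T^f/E^f$ runs over $\Bs(\B_{K/E^f})$ as $T$ runs over $\Bs(\A)$ follows from the bijectivity of $f^*$. One cosmetic point: the statement's ``$\A_{E^f}$'' in (i) is a misprint for ``$\B_{E^f}$'', which your proof silently and correctly repairs.
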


In this paper, we will be interested exclusively in isomorphisms
between S-rings over the same group $H$. We adopt the notation used in \cite{HM}, and
denote by $\Iso(\A)$ the set of all isomorphisms from $\A$ to S-rings over $H,$ that is,
$$
\Iso(\A)=\{f\in Sym(H) : f \textrm{ is an isomorphism from  $\A$ onto an S-ring over }H  \};
$$
and let $\Iso_1(\A)=\{f \in \Iso(\A) : 1^f=1\}$.

Note that, $\Iso(\A) \subseteq \Sym(H),$  but it is not necessarily
a subgroup. It follows from the definition that for any $\gamma \in \Aut(\A)$ and
$\psi \in \Aut(H),$ their product $\gamma\psi$ is an isomorphism from $\A$ to an S-ring over $H$.
Therefore, $\Aut(\A)\Aut(H) \subseteq \Iso(\A)$. Now, we say that
$\A$ is a {\em CI-S-ring} or simply that $\A$ is CI, if
$\Iso(\A)=\Aut(\A)\Aut(H)$.
This definition was given by Hirasaka and Muzychuk in \cite{HM}, where the
following proposition was proved:

\begin{prop}{\rm (\cite[Theorem~2.6]{HM})}\label{HM4}
Let $G \le \Sym(H)$ be a $2$-closed group with $H_R \le G,$ and
let $\A=V(H,G_1)$. Then the following conditions are equivalent:
\begin{enumerate}[(i)]
\item $G$ is $H_R$-transjugate.
\item $\Iso(\A)=\Aut(\A)\Aut(H)$.
\item $\Iso_1(\A)=\Aut(\A)_1\Aut(H)$.
\end{enumerate}
\end{prop}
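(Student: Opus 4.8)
The plan is to reduce all three conditions to a single correspondence between $\Iso(\A)$ and the regular subgroups of $G$ that are conjugate to $H_R$ inside $\Sym(H)$, and then read off transjugacy from it. Throughout I would use three facts already available: since $G$ is $2$-closed and $H_R \le G$, one has $\Aut(\A)=\Aut(V(H,G_1))=G^{(2)}=G$; every S-ring over $H$ has $H_R$ in its automorphism group (because each $\Cay(H,T)$ does); and the normalizer $N_{\Sym(H)}(H_R)=H_R\rtimes\Aut(H)$ is the holomorph, with $\Aut(H)$ normalizing $H_R$ via $\psi^{-1}h_R\psi=(\psi(h))_R$.

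The key lemma I would isolate first is: for $f\in\Sym(H)$, one has $f\in\Iso(\A)$ if and only if $fH_Rf^{-1}\le G$. For the forward direction, if $f$ is an isomorphism from $\A$ onto an S-ring $\B$ over $H$, then a short check on arc sets gives $\Aut(\B)=f^{-1}\Aut(\A)f=f^{-1}Gf$, and since $H_R\le\Aut(\B)$ this yields $fH_Rf^{-1}\le G$. For the converse, suppose $fH_Rf^{-1}\le G$; then $f^{-1}Gf$ contains $H_R$ and is $2$-closed, being a $\Sym(H)$-conjugate of the $2$-closed group $G$, so I may set $\B:=V(H,(f^{-1}Gf)_1)$ and obtain $\Aut(\B)=f^{-1}Gf$. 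The heart of the matter is then to verify that $f$ is a genuine combinatorial isomorphism $\A\to\B$: using that $\A=V(H,G_1)$ is Schurian, each basic graph $\Cay(H,T)$, $T\in\Bs(\A)$, has arc set a single $G$-orbit on $H^2$, and applying $f$ coordinatewise sends $G$-orbits to $(f^{-1}Gf)$-orbits, hence to basic graphs of $\B$; as $T$ ranges over $\Bs(\A)$ this gives $\{\Cay(H,T)^f : T\in\Bs(\A)\}=\{\Cay(H,S) : S\in\Bs(\B)\}$, so $f\in\Iso(\A)$.

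Granting this lemma, the equivalence (i)$\Leftrightarrow$(ii) is quick. For (i)$\Rightarrow$(ii), take $f\in\Iso(\A)$; then $fH_Rf^{-1}\le G$, so transjugacy provides $\gamma\in G$ with $\gamma^{-1}H_R\gamma=fH_Rf^{-1}$, whence $\gamma f$ normalizes $H_R$ and lies in $H_R\Aut(H)$; since $H_R\le G=\Aut(\A)$ this forces $f\in\Aut(\A)\Aut(H)$, and the reverse inclusion is the one already noted in the text. For (ii)$\Rightarrow$(i), take any $g$ with $g^{-1}H_Rg\le G$; by the lemma $g^{-1}\in\Iso(\A)=\Aut(\A)\Aut(H)$, so $g^{-1}=\gamma\psi$ with $\gamma\in G$ and $\psi\in\Aut(H)$, and since $\psi$ normalizes $H_R$ we get $g^{-1}H_Rg=\gamma H_R\gamma^{-1}$, which is $G$-conjugate to $H_R$.

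Finally, (ii)$\Leftrightarrow$(iii) is elementary and does not use the lemma. For (ii)$\Rightarrow$(iii), a normalized $f\in\Iso_1(\A)$ is written $f=\gamma\psi$ with $\gamma\in\Aut(\A)$, $\psi\in\Aut(H)$; evaluating at $1$ and using $1^\psi=1$ shows $\gamma\in\Aut(\A)_1$. For (iii)$\Rightarrow$(ii), an arbitrary $f\in\Iso(\A)$ with $1^f=h$ is normalized by post-composing with $h_R^{-1}\in H_R\le\Aut(\B)$, reducing to $\Iso_1(\A)$, after which I would use the relation $\psi h_R=(\psi^{-1}(h))_R\,\psi$ to push the result back into $\Aut(\A)\Aut(H)$. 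I expect the only real obstacle to be the converse half of the key lemma, namely confirming that the coordinatewise action of $f$ matches the basic graphs of $\A$ with those of $\B$, since that is precisely where the $2$-closedness of $G$ and the Schurian description of $\A$ are genuinely needed; everything else is bookkeeping with the holomorph.
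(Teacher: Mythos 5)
Your proposal is correct. There is, however, nothing in the paper to compare it against: the proposition is stated as a quoted result from \cite[Theorem~2.6]{HM}, and this paper gives no proof of it. Your argument is a sound, self-contained reconstruction, and its core -- the lemma that for $f\in\Sym(H)$ one has $f\in\Iso(\A)$ if and only if $fH_Rf^{-1}\le G$ -- is precisely the Babai-type correspondence between S-ring isomorphisms and regular subgroups that drives the original proof in \cite{HM}. The individual steps all check out: since $G=H_RG_1$, the arc sets of the basic graphs of $\A=V(H,G_1)$ are exactly the $G$-orbits on $H^2$; coordinatewise application of $f$ carries the $G$-orbit partition of $H^2$ onto the $f^{-1}Gf$-orbit partition; a $\Sym(H)$-conjugate of a $2$-closed group is $2$-closed, so $\B=V(H,(f^{-1}Gf)_1)$ satisfies $\Aut(\B)=f^{-1}Gf$ and $f$ is an isomorphism $\A\to\B$; and the reductions (i)$\Leftrightarrow$(ii) and (ii)$\Leftrightarrow$(iii) via the holomorph $N_{\Sym(H)}(H_R)=H_R\rtimes\Aut(H)$ are then routine bookkeeping, as you say. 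One small calibration: you locate the essential use of $2$-closedness in the converse half of the key lemma, but that half (the orbital computation) goes through for any $G\ge H_R$; $2$-closedness is really needed in the forward half, where $\Aut(\A)=G^{(2)}=G$ is what turns $H_R\le\Aut(\B)=f^{-1}\Aut(\A)f$ into $fH_Rf^{-1}\le G$. That is a misplaced remark, not a gap.
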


Thus, the  CI$^{(2)}$-property for a group $H$ is equivalent to the CI-property
for all Schurian S-rings over $H$.
In the last lemma of this subsection we collect further properties of S-ring isomorphisms.

\begin{lem}\label{L-iso}
Let $\A$ be an S-ring over a group $H,$ and let $f \in \Iso_1(\A)$.
\begin{enumerate}[(i)]
\item If $H$ is abelian and $T^f=T$ for some $T \in \Bs(\A),$
then $(T^{(k)})^f=T^{(k)}$ for any integer $k$ coprime to $|H|$.
\item Let $E \le H$ be an $\A$-subgroup such
that $E \le \rad(T)$ for some $T \in \Bs(A)$. If $(TE)^f=TE$
then $T^f=T$.
\end{enumerate}
\end{lem}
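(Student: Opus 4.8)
The plan is to handle the two parts separately, with essentially all of the work going into part~(i); part~(ii) will follow formally from multiplicativity once the right structural facts are in place. Throughout I use that the normalized isomorphism $f$ induces an algebra isomorphism $f^*$ from $\A$ onto the target S-ring $\B$ with $\underline{T}^{f^*}=\underline{T^f}$, that $f^*$ carries basic-set sums to basic-set sums (so it is defined over $\Z$, preserves integral structure constants, and commutes with reduction modulo any integer), and that $(ST)^f=S^fT^f$ for basic sets. Crucially, since the isomorphisms considered here are between S-rings over the \emph{same} abelian group $H$, both the source and target group algebras are commutative.

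For part~(i) the heart of the matter is the congruence
$$
\underline{T}^{\,q}\equiv\underline{T^{(q)}}\pmod q
$$
in $\Z H$, valid for every prime $q\nmid|H|$. I would prove it by a freshman's-dream argument: let the cyclic group of order $q$ act by rotation on the $q$-tuples $(t_1,\dots,t_q)\in T^q$. Because $H$ is abelian the product $t_1\cdots t_q$ is rotation-invariant, so every non-constant tuple lies in an orbit of length $q$ and contributes $0$ modulo $q$, while the constant tuples contribute $\sum_{t\in T}t^q=\underline{T^{(q)}}$ (the map $t\mapsto t^q$ being injective as $\gcd(q,|H|)=1$). Since $\underline T\in\A$ and $\A$ is multiplicatively closed with integral structure constants, $\underline T^{\,q}\in\A\cap\Z H$. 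Applying $f^*$ and repeating the identical Frobenius computation on the target side yields
$$
\underline{(T^{(q)})^f}\equiv f^*(\underline T^{\,q})=(\underline{T^f})^{\,q}\equiv\underline{(T^f)^{(q)}}\pmod q.
$$
Both outer terms are simple quantities, i.e.\ $0/1$ vectors, so a congruence modulo $q\ge 2$ forces genuine equality, giving $(T^{(q)})^f=(T^f)^{(q)}$.

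From this prime case the full statement follows by iteration. Any integer $k$ coprime to $|H|$ may be replaced by a positive representative modulo $|H|$, since $h^{|H|}=1$ makes $T^{(k)}$ depend only on $k\bmod|H|$; such a representative factors into primes, each automatically coprime to $|H|$. Using $T^{(k_1k_2)}=(T^{(k_1)})^{(k_2)}$ and applying the prime case once per factor, I obtain $(T^{(k)})^f=(T^f)^{(k)}$ for every $k$ coprime to $|H|$; specializing to the hypothesis $T^f=T$ gives $(T^{(k)})^f=T^{(k)}$, as required. The hard part here is the mod-$q$ transport: one must verify carefully that $f^*$ is $\Z$-linear and commutes with reduction modulo $q$ and with the $q$-th power, so that the Frobenius identity really can be pushed across $f^*$; the abelianness of $H$ (hence commutativity of both group algebras) is exactly what makes the argument symmetric on the two sides.

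For part~(ii) the argument is short and formal. Since $E\le\rad(T)$ we have $TE=T$, so $(TE)^f=T^f$, and the hypothesis $(TE)^f=TE$ then reads $T^f=T$. To present this structurally I would extend the basic-set multiplicativity $(ST)^f=S^fT^f$ to arbitrary $\A$-subsets and invoke Proposition~\ref{HM3}(i), which guarantees that $E^f$ is a $\B$-subgroup; because the defining relation $\underline E\cdot\underline T=|E|\,\underline T$ of the inclusion $E\le\rad(T)$ is preserved by the algebra isomorphism $f^*$, one gets $E^f\le\rad(T^f)$, whence $(TE)^f=T^fE^f=T^f$. Thus no genuine obstacle arises in~(ii): once the facts that $f$ respects products of $\A$-subsets and maps $\A$-subgroups to $\B$-subgroups are recorded, the conclusion is immediate.
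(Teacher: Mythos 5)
Your proposal is correct, but for part (i) it takes a genuinely different route from the paper's. The paper's argument is two lines: from $T^f=T$ it concludes $f\in\Aut(\Cay(H,T))$, invokes Eq.~\eqref{Eq-auto} to get $f\in\Aut(\sgg{T})$, and then applies Theorem~\ref{T-1st-multi} to the S-ring $\sgg{T}$, so that $T^{(k)}\in\Bs(\sgg{T})$ and the normalized automorphism $f$ must fix $T^{(k)}$; the hypothesis $T^f=T$ is thus used at the outset to convert the isomorphism into an automorphism, after which the multiplier theorem is cited as a black box. You instead re-prove the multiplier theorem in a ``relative'' form: transporting the freshman's-dream congruence $\un{T}^{\,q}\equiv\un{T^{(q)}}\pmod q$ across the algebra isomorphism $f^*$, you obtain the stronger statement $(T^{(k)})^f=(T^f)^{(k)}$ for \emph{every} $f\in\Iso_1(\A)$, with no fixed-set hypothesis, and then specialize. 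This buys generality and self-containedness (no appeal to $\sgg{T}$, to Cayley graph automorphisms, or to Eq.~\eqref{Eq-auto}), at the price of essentially duplicating the proof of Theorem~\ref{T-1st-multi}. One point you should make explicit: in the transport step $\un{(T^{(q)})^f}\equiv f^*(\un{T}^{\,q})\pmod q$ you apply $f^*$ to $\un{T^{(q)}}$, which presupposes $\un{T^{(q)}}\in\A$; this is exactly Theorem~\ref{T-1st-multi} (alternatively, it can be read off from your own congruence, since $\un{T}^{\,q}$ lies in the $\Z$-span of the basic quantities of $\A$ and a $0/1$ vector congruent to it modulo $q$ must be constant on basic sets), and the same remark applies to the intermediate sets $T^{(q_1\cdots q_i)}$ in your iteration, which must be $\A$-subsets for the prime case to apply to them. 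For part (ii), your first sentence ($TE=T$ since $E\le\rad(T)$, hence $T^f=(TE)^f=TE=T$) is precisely the paper's proof; the structural elaboration via $\un{E}\cdot\un{T}=|E|\,\un{T}$ and Proposition~\ref{HM3}(i) is harmless but superfluous, since $TE$ and $T$ are literally the same subset of $H$ and no multiplicativity of $f$ is needed at all.
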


\begin{proof}
(i): Since $T^f=T,$ $f \in \Aut(\Cay(H,T))$. Let us consider the
S-ring $\sgg{T}$. By Eq.~\eqref{Eq-auto}, $f \in \Aut(\sgg{T})$. On the other hand, by Theorem~\ref{T-1st-multi}, $T^{(k)} \in \Bs(\sgg{T}),$ and (i) follows.
\smallskip

(ii): This follows from  $(TE)^f=TE$ and $ET=TE=T$ as  $E \le \rad(T)$.
\end{proof}

\subsection{$p$-S-rings}

We say that an S-ring $\A$ over a group
$H$ is a {\em $p$-S-ring} if $H$ is a $p$-group, and all basic sets $T \in \Bs(\A)$
have $p$-power size, see \cite{HM}. The following proposition follows from results about
$p$-schemes proved in \cite{Z} (see \cite[Theorem~3.3]{HM}).  For sake of
easier reading, we give a proof  using only the definition of an S-ring.

\begin{prop}\label{P-p-S1}
Let $\A$ be a $p$-S-ring over a $p$-group $H$. Then
\begin{enumerate}[(i)]
\item the thin radical $\O(\A)$ is non-trivial;
\item there exists a chain of $\A$-subgroups
$$
H_0=\{1\} < H_1 < \cdots < H_r = H,
$$
such that $|H_{i+1}:H_i|=p$ for all $i \in \{0,\ldots,r-1\}$.
\end{enumerate}
\end{prop}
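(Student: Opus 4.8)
The plan is to prove both parts together by finding a single non-trivial thin element, since statement (ii) follows from (i) by a short inductive quotient argument. The main object to control is the smallest non-trivial basic set. Let me lay out the strategy.

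First I would establish (i). The key observation is a counting/divisibility argument on the structure constants of the $p$-S-ring. Take any basic set $T\in\Bs(\A)$ with $T\neq\{1\}$ of minimal cardinality among the non-singleton basic sets; if $\O(\A)$ were trivial, every non-singleton basic set would have cardinality a positive power of $p$, in particular divisible by $p$. I would then examine the product $\un{T}\cdot\un{T^{-1}}$ in $\A$. The coefficient of $\un{\{1\}}$ in this product equals $|T|$, which is a power of $p$ and hence $\equiv 0\pmod p$. On the other hand, expanding $\un{T}\cdot\un{T^{-1}}=\sum_{T'\in\Bs(\A)}a_{T'}\un{T'}$ and counting the total mass $|T|^2=\sum_{T'}a_{T'}|T'|$, I would reduce everything modulo $p$. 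Since every non-singleton basic set has $p$-power size divisible by $p$, the only surviving contributions modulo $p$ come from the singleton basic sets, i.e.\ from $\O(\A)$. Matching the order-$|H|$ (a $p$-power) constraints forces $|\O(\A)|\equiv 0\pmod p$, so $\O(\A)$ cannot consist of $\{1\}$ alone; thus $\O(\A)$ is non-trivial. (The cleanest version is to argue that $\sum_{T'}a_{T'}|T'|=|T|^2$ with each non-singleton $|T'|$ divisible by $p$ while $|T|^2\equiv 0$, and then isolate the constant term.) Because $\O(\A)$ is an $\A$-subgroup and a non-trivial subgroup of a $p$-group, it contains an element of order $p$, which already suffices.

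Next I would derive (ii) from (i) by induction on $|H|=p^r$. By part (i) there is a non-trivial thin element, so $\O(\A)$ is a non-trivial $\A$-subgroup; choose inside it a central subgroup $H_1\le\O(\A)$ with $|H_1|=p$ (a central subgroup of order $p$ exists because $\O(\A)$ is a non-trivial $p$-group, and every subgroup of $\O(\A)$ is an $\A$-subgroup since $\O(\A)$ is abelian and each of its elements is a singleton basic set). Then $H_1$ is a normal $\A$-subgroup of $H$, and I would pass to the quotient S-ring $\A_{H/H_1}$, which is again a $p$-S-ring over the $p$-group $H/H_1$ of strictly smaller order. By the induction hypothesis there is a chain of $\A_{H/H_1}$-subgroups refining $H/H_1$ with successive indices $p$; pulling this chain back through the canonical projection $H\to H/H_1$ yields $\A$-subgroups $H_1<H_2<\cdots<H_r=H$, each with $|H_{i+1}:H_i|=p$. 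Prepending $H_0=\{1\}<H_1$ completes the chain.

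The main obstacle is part (i): making the modular counting rigorous. The subtlety is that a priori a non-singleton basic set could have size $1$ only if it is a singleton, which is excluded, so every non-singleton basic set does have size a positive $p$-power; the delicate point is ensuring that the modulo-$p$ reduction of $\un{T}\cdot\un{T^{-1}}$ genuinely forces a thin element rather than merely giving a vacuous congruence. I would handle this by working with the smallest non-singleton basic set $T$ and tracking the coefficient of $\un{\{1\}}$ together with the fact that $T^{-1}$ is also basic of the same size; the identity contribution is exactly $|T|$, and balancing the whole product against the $\A$-subgroup $\sg{T}$ (whose order is a $p$-power by the $\A$-subgroup property) pins down the count. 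Part (ii) is then routine once (i) is in hand, with the only care being the verification that every subgroup of the abelian thin radical is an $\A$-subgroup, which follows directly from the defining property~\eqref{Eq-eT} of $\O(\A)$.
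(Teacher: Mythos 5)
Your proof of part (i) hinges on a congruence that is in fact vacuous --- exactly the trap you flag but never escape. Assume $\O(\A)=\{1\}$ and write $\un{T}\cdot\un{T^{-1}}=\sum_{T'\in\Bs(\A)}a_{T'}\un{T'}$. The constant term is $a_{\{1\}}=|T|$ and the total-mass identity reads $|T|^2=\sum_{T'}a_{T'}|T'|$. Reducing mod $p$: $|T|^2\equiv 0$, every non-singleton $|T'|\equiv 0$, and the unique singleton $\{1\}$ contributes $a_{\{1\}}=|T|\equiv 0$, so the whole relation collapses to $0\equiv 0$. No choice of $T$, minimal or otherwise, can extract a contradiction from this product, and ``balancing against the $\A$-subgroup $\sg{T}$'' is not an argument. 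The repair is to drop the product entirely and count the partition of $H$ by basic sets, which is the paper's whole proof of (i): $\sum_{T\in\Bs(\A),\,T\ne\{1\}}|T|=|H|-1\equiv -1\pmod p$, whereas triviality of $\O(\A)$ would make every summand on the left a positive power of $p$, hence the left side $\equiv 0\pmod p$. This one-line count is also the correct source of your intermediate claim $|\O(\A)|\equiv 0\pmod p$, which simply does not follow from the expansion of $\un{T}\cdot\un{T^{-1}}$.

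Part (ii) has a more serious gap. To form the quotient S-ring $\A_{H/H_1}$ you need $H_1\trianglelefteq H$; quotients are only defined for normal $\A$-subgroups. You obtain $H_1$ of order $p$ inside $\O(\A)$ and call it ``central,'' justified by the claim that $\O(\A)$ is abelian. Both points fail in general: the proposition is about arbitrary $p$-groups $H$, and $\O(\A)$ need not be abelian, let alone central or normal in $H$ --- already $\A=\Q H$ with $H$ non-abelian gives $\O(\A)=H$. Centrality inside $\O(\A)$ is irrelevant; what you need is an order-$p$ $\A$-subgroup normal in $H$, e.g.\ a subgroup of $\O(\A)\cap Z(H)$, and the non-triviality of that intersection is a genuine statement requiring proof for general $p$-S-rings, which your proposal never addresses. (For abelian $H$, such as the $\Z_p^n$ used elsewhere in the paper, your induction is fine; your pull-back of the chain via Eq.~\eqref{Eq-eT} is also correct once normality is available.) This is precisely where the paper does its real work, and note that its proof of (ii) does not route through (i) at all: it chooses a maximal proper $\A$-subgroup $K<H$, partitions $H$ into the double cosets $KTK$, $T\in\Bs(\A)$, and counts sizes to produce a basic set $T_1\not\subseteq K$ with $|KT_1K|=|K|$; this forces every $t\in T_1$ to normalize $K$, so $K\trianglelefteq\sg{K,T_1}=H$ by maximality, and the induction then runs on $\A_K$ and $\A_{H/K}$ simultaneously. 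Some normality argument of this kind is unavoidable in your approach and is missing.
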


\begin{proof}
By definition, $\sum_{T\in \Bs(\A), T \ne \{1\}}|T|=|H|-1$. Since all cardinality $|T|$ as
well as $|H|$ are $p$-powers,  (i) follows.

We prove (ii) by induction on $|H|$. The statement is trivial for $|H|=p$. For
the rest of the proof it is assumed that $|H|>p$.
Choose a maximal non-trivial $\A$-subgroup $K < H,$ that is, $H$ is the only
$\A$-subgroup which contains properly $K$. Let $|K|=p^m$.
Let us consider the sets $KTK, \, T\in \Bs(\A)$.
These sets form a partition of $H$ because $T_1 \subseteq KT_2K$
or $T_1 \cap KT_2K = \emptyset$ for any basic sets $T_1,\, T_2 \in
\Bs(\A)$ by Eq.~\eqref{Eq-T/K}, and therefore,  $KT_1K=KT_2K$
or $KT_1K \cap KT_2K =\emptyset$.
Note that,  $p^m$ divides $|KTK|$ for all $T,$
and $KTK=K$ for all basic sets $T \subset K$. Thus,
$|H|=\sum_{T\in \Bs(\A), T \not\subseteq K}|KTK| + |K|,$
and so there exists a basic set
$T_1$ such that $T_1 \not\subseteq K$ and $|KT_1K|=p^m$. Then,
$Kt \subseteq KT_1K$ and $tK \subseteq KT_1K$ for all $t\in T_1$.
This together with $|Kt|=|tK|=|KT_1K|=p^m$ shows that any $t\in T_1$ normalizes
$K$. Thus, $K \trianglelefteq \sg{K,T_1}=H,$ where the latter equality follows by the maximality of $K$. Now, (ii) follows by applying the induction  hypothesis to
the S-rings $\A_K$ and $\A_{H/K}$.
\end{proof}

\begin{prop}{{\rm (\cite[Proposition~3.4(i)]{HM})}}\label{HM5}
Let $\A$ be a $p$-S-ring over an abelian $p$-group $H$.
If there exists a basic set $T \in \Bs(A)$ with $|T|=|H|/p,$ then
$\A$ decomposes to the wreath product $\A=\A_K \wr \A_{H/K},$ where
$K \le H$ is an $\A$-subgroup with index $|H:K|=p$.
\end{prop}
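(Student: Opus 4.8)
The plan is to locate $K$ explicitly as an index-$p$ $\A$-subgroup and to show that the given large basic set $T$, together with all its Schur multiples, exhausts the non-identity cosets of $K$. Since $|T|=|H|/p>1$ whenever $|H|>p$, we have $T\neq\{1\}$ and hence $1\notin T$ (the degenerate case $|H|=p$, $T=\{1\}$ makes the statement trivial with $K=\{1\}$). By Proposition~\ref{P-p-S1}(ii) there is an $\A$-subgroup $K\le H$ with $|H:K|=p$. As $\un{K}\in\A$, the set $K$ is a union of basic sets, so every basic set is either contained in $K$ or disjoint from it. Because $|T|=|K|$ and $1\notin T$, we cannot have $T\subseteq K$ (that would force $T=K\ni 1$); hence $T\cap K=\emptyset$, i.e.\ $T$ is contained in the union of the $p-1$ non-identity cosets of $K$.

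Next I would pin down that $T$ is in fact a single coset. By Eq.~\eqref{Eq-T/K} applied to $K$ and $T$, every coset of $K$ meets $T$ in the same number $k'$ of points; if $T$ meets $j$ of the cosets then $jk'=|T|=|H|/p$. Since $T$ avoids $K$ we have $j\le p-1$, while $j$ divides the $p$-power $|T|$, so necessarily $j=1$. Thus $T=Kx$ for some $x\notin K$, and consequently $\rad(T)=K$, an $\A$-subgroup of index $p$. This identification of $K$ with $\rad(T)$ is the step I expect to be the crux: it is exactly here that the $p$-S-ring hypothesis enters, through the existence of an index-$p$ $\A$-subgroup combined with the fact that $|T|$ being a power of $p$ forces the divisor $j$ to collapse to $1$.

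Finally I would use Schur's multiplier theorem (Theorem~\ref{T-1st-multi}) to propagate this coset structure to all cosets. For every integer $k$ coprime to $|H|$, the set $T^{(k)}=\{h^k:h\in T\}$ is a basic set; since $H$ is abelian and $k$ is coprime to $|K|$, the $k$-th power map permutes $K$, so $T^{(k)}=Kx^k$ is again a full coset of $K$. As $k$ ranges over the residues coprime to $p$, the image of $x^k$ runs over all non-identity elements of $H/K\cong\Z_p$, so the cosets $Kx^k$ range over all $p-1$ non-identity cosets of $K$. Hence every non-identity coset of $K$ is a basic set of $\A$, and therefore every basic set not contained in $K$ is one such full coset, which satisfies $K\le\rad(T')$. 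By the definition of the wreath product with $E=F=K$, this is precisely the condition that $\A=\A_K\wr\A_{H/K}$, completing the proof.
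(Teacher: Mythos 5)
Your proof is correct, but there is nothing in the paper to compare it against: Proposition~\ref{HM5} is quoted from \cite{HM} (Proposition~3.4(i)) and the paper gives no proof of it. Judged on its own, your argument is a valid and self-contained derivation from facts the paper does state: Proposition~\ref{P-p-S1}(ii) supplies an $\A$-subgroup $K$ of index $p$; since $K$ is an $\A$-subset and $1\notin T$, the basic set $T$ is disjoint from $K$; the regularity property Eq.~\eqref{Eq-T/K}, combined with the fact that $|T|=|H|/p$ is a $p$-power while $T$ meets at most $p-1$ cosets, forces $T$ to lie in a single coset $Kx$ and hence, by cardinality, to equal it; and Theorem~\ref{T-1st-multi} with $k=1,\ldots,p-1$ (all coprime to $|H|$, a power of $p$) shows that $T^{(k)}=Kx^k$ runs over all nonidentity $K$-cosets, so these cosets exhaust $\Bs(\A)$ outside $K$ and the $K/K$-wreath condition $K\le\rad(T')$ holds for every basic set $T'\subset H\setminus K$. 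Two minor remarks. First, in the degenerate case $|H|=p$ the basic set $T$ of size $1$ need not be $\{1\}$, but this is immaterial since the conclusion holds trivially with $K=\{1\}$ for any S-ring over $\Z_p$. Second, your argument yields slightly more than the statement: since the reasoning applies to an arbitrary index-$p$ $\A$-subgroup and concludes $K=\rad(T)$, the subgroup $K$ in the decomposition is in fact the unique $\A$-subgroup of index $p$.
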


The next result is the classification of all $p$-S-rings over $\Z_p^3$.

\begin{thm}\label{HMSW}
{\rm (\cite{HM,SW})}
Every $p$-S-ring over the group $\Z_p^3$ for an odd prime
$p$ is Cayley isomorphic to one of the S-rings given in Table~1.
\end{thm}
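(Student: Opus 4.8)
The plan is to prove the classification by induction on the rank, establishing the cases $\Z_p$ and $\Z_p^2$ first and then driving the analysis over $\Z_p^3$ by the sizes of the basic sets; throughout one works up to Cayley isomorphism, so that $\Aut(\Z_p^3)=GL(3,p)$-equivalent S-rings are identified. Over $\Z_p$ the only $p$-S-ring is $\Q\Z_p$: the $p-1$ non-identity elements split into basic sets of $p$-power size, and $p-1<p$ forces each to be a singleton. Over $\Z_p^2$ the basic-set sizes lie in $\{1,p\}$; if a set of size $p=|H|/p$ occurs, Proposition~\ref{HM5} yields $\A=\A_K\wr\A_{H/K}$ with $|K|=p$, so the base case for both factors gives $\A=\Q\Z_p\wr\Q\Z_p$, while otherwise every non-identity basic set is a singleton and $\A=\Q\Z_p^2$.

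For $H=\Z_p^3$ the basic-set sizes lie in $\{1,p,p^2\}$, and I would first dispose of the \emph{decomposable} situations. If some basic set has size $p^2=|H|/p$, then Proposition~\ref{HM5} gives $\A=\A_K\wr\A_{H/K}$ with $|K|=p^2$, where $\A_K$ is a $p$-S-ring over $\Z_p^2$ and $\A_{H/K}$ one over $\Z_p$; substituting the base-case possibilities produces exactly the wreath-product entries of Table~1. More generally, Proposition~\ref{P-p-S1}(ii) supplies a chain $\{1\}<H_1<H_2<H$ of $\A$-subgroups of successive index $p$, so one may test whether $\A$ is a generalized wreath product $\A_{H_2}\wr_{H_2/H_1}\A_{H/H_1}$, which happens precisely when every basic set $T$ outside $H_2$ satisfies $H_1\le\rad(T)$, i.e.\ is a single $H_1$-coset. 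Whenever such a decomposition exists (the tensor products being recognised through Lemma~\ref{L-tensor} as special cases), $\A$ is reconstructed uniquely from already-classified S-rings over $\Z_p$ and $\Z_p^2$, and the matching Table~1 entry is read off.

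The remaining, genuinely hard, case is the \emph{indecomposable} one, in which every non-identity basic set has size $1$ or $p$ and no nontrivial generalized wreath decomposition exists. The all-singletons S-ring $\Q H$ is the trivial such case; the substantive ones are forced to contain a basic set $T$ with $|T|=p$ that aligns with no single $\A$-subgroup --- in the extreme $\rad(T)=\{1\}$ and $\sg{T}=H$ --- so that $T$ obstructs every wreath and tensor cut. (A concrete witness is the orbit configuration of a regular unipotent element of $GL(3,p)$, which one checks is indecomposable.) The key device is Proposition~\ref{HM2}: in the Schurian case $\A=V(H,G_1)$, a size-$p$ basic set with $\sg{T}=H$ forces $|G|=p\,|H|$, hence $G_1=\sg{\sigma}$ is generated by a single permutation of order $p$ fixing $0$, whose orbits on $\Z_p^3$ are explicit and pin $\A$ down to a listed S-ring. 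I would combine this with the radicals $\rad(T)$ (each an $\A$-subgroup) and Schur's multiplier theorem (Theorem~\ref{T-1st-multi}) to control how the size-$p$ basic sets distribute among the $p+1$ order-$p$ subgroups.

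The principal obstacle is exactly this last step: ruling out any further exotic indecomposable $p$-S-ring over $\Z_p^3$, and in particular handling the \emph{non-Schurian} possibilities, to which Proposition~\ref{HM2} does not directly apply. This cannot be achieved by wreath/tensor bookkeeping alone and needs the finer structure theory of $p$-schemes from \cite{Z} together with the case-by-case verification of \cite{SW}. My plan is therefore to carry out the reductions above to isolate this core and then to invoke that input to complete the list.
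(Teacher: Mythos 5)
The first thing to say is that the paper does not actually prove this theorem: it is a quoted result, and the Remark immediately following it records the division of labour --- \cite{HM} classified the Schurian $p$-S-rings over $\Z_p^3$, and \cite{SW} proved that every $p$-S-ring over $\Z_p^3$ is Schurian. Since your plan, after its reductions, also terminates by invoking \cite{SW} and the ``case-by-case verification'' there, it is at bottom the same approach as the paper's, not a different one. What you add is nonetheless real: the rank~$1$ and rank~$2$ base cases, the reconstruction of decomposable S-rings over $\Z_p^3$ from the lower-rank list via Proposition~\ref{HM5} and the generalized wreath structure, and the identification of the Schurian indecomposable S-rings through Proposition~\ref{HM2} together with the Jordan form of an order-$p$ element of $GL(3,p)$ (type $(2,1)$ yielding the decomposable row~$4$ of Table~1, the regular type yielding row~$6$). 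This in outline reproves the \cite{HM} half of the statement and makes explicit that the only step where citation is genuinely unavoidable is Schurity.

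There is, however, one concrete gap in the portion you argue yourself, exactly at the point you hedge with ``in the extreme'': to apply Proposition~\ref{HM2} you must know that a nontrivial indecomposable $p$-S-ring $\A$ over $\Z_p^3$ possesses a basic set $T$ with $|T|=p$ \emph{and} $\sg{T}=H$, and you never establish this; without it, even granting Schurity, Proposition~\ref{HM2} cannot be run. The claim is true, but it needs an argument through the thin radical $\O(\A)$, which is nontrivial by Proposition~\ref{P-p-S1}(i) and proper since $\A\ne\Q H$. If $|\O(\A)|=p^2$, put $U=\O(\A)$; then by Lemma~\ref{L-p-S2}(i), Eq.~\eqref{Eq-eT} and Proposition~\ref{HM5} every basic set outside $U$ has size $p$ and is a coset of an order-$p$ subgroup of $U$, and all basic sets inside one $U$-coset are $U$-translates of each other, hence share their radical; multiplying the simple quantities of basic sets taken from two $U$-cosets with common radical $W$ shows that the $U$-coset of their sum (when nontrivial) again carries radical $W$, and since $H/U\cong\Z_p$ a short induction forces all these radicals to coincide, so $\A$ is a $U/W$-wreath product, contradicting indecomposability. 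If $|\O(\A)|=p$ and every basic set of size $p$ generated a proper subgroup, then for each such $T$ the group $\sg{T}$ is an $\A$-subgroup of order $p^2$ whose induced S-ring must be $\Q\Z_p\wr\Q\Z_p$ with thin radical $\O(\A)$, forcing $T$ to be an $\O(\A)$-coset; as sizes $p^2$ are excluded by Proposition~\ref{HM5}, every basic set outside $\O(\A)$ then has radical containing $\O(\A)$, so $\A=\A_{\O(\A)}\wr\A_{H/\O(\A)}$, again a contradiction. With this lemma inserted your treatment of the Schurian indecomposable case is complete; without it, the proposal closes only by retreating entirely to \cite{SW}, i.e.\ to the paper's own pure-citation treatment.
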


\begin{table}\label{Table}
\begin{tabular}{|c|c|c|c|} \hline
no.  & $p$-S-ring & Schurity & indecomposability \\ \hline
$1$. & $\Q\Z_p^3$ & yes & yes  \\ \hline
$2.$ & $\Q\Z_p^2 \wr \Q\Z_p$ & yes & no \\ \hline
$3.$ & $\Q\Z_p \wr \Q\Z_p^2$ & yes & no  \\ \hline
$4.$ & $(\Q\Z_p \wr \Q\Z_p) \otimes \Q\Z_p$ & yes & no  \\ \hline
$5.$ & $\Q\Z_p \wr \Q\Z_p \wr \Q\Z_p$ & yes & no \\ \hline
$6.$ & $V(\Z_p^3,\sg{x})$,
{\footnotesize $x=\begin{pmatrix} 1&1&0 \\ 0&1&1\\0&0&1 \end{pmatrix}$}
& yes & yes  \\ \hline
\end{tabular}
\caption{$p$-S-rings over $\Z_p^3$ for an odd prime $p$.}
\end{table}

\begin{rem}
Hirasaka and Muzcyhuk~\cite{HM} classified the Schurian S-rings, and it was proved later by Spiga and Wang~\cite{SW}
that all $p$-S-rings over $\Z_p^3$ are Schurian (see \cite[Theorem~1]{SW}).
\end{rem}

Let $H$ be a group isomorphic to $\Z_p^3$ for an odd prime $p$. An S-ring $\A$
over $H$ is called {\em exceptional} if it is Cayley isomorphic to the S-ring in the $6$th row of Table~1, see \cite{HM}. Exceptional S-rings will play an important role in later sections.

\begin{lem}\label{L-exp}
Let $\A$ be an exceptional S-ring over a group $H,\, H \cong \Z_p^3$.  Then $|\Aut(\A)|=p^4$ and $\Iso_1(\A)=\Aut(H)$.
\end{lem}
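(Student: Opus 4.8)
The plan is to reduce to the standard model and then handle the two assertions separately, the crux being a uniqueness statement about regular subgroups. Since both $|\Aut(\A)|$ and the validity of the identity $\Iso_1(\A)=\Aut(H)$ are invariant under Cayley isomorphisms, we may assume $H=\Z_p^3$ and that $\A=V(H,\sg{x})$ is precisely the S-ring of row~$6$ of Table~1, where $x$ denotes the indicated regular unipotent matrix. Set $G=\sg{H_R,x}=H_R\rtimes\sg{x}$. Since $x=I+N$ with $N^3=0$ and $p$ is odd, a short computation gives $x^p=I$, so $x$ has order $p$, whence $|G|=p^4$ and $G_1=\sg{x}$ (as $H_R$ is regular and $\sg{x}$ fixes the identity $0$); recall also that $\Aut(\A)=G^{(2)}\ge G$.

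To prove $|\Aut(\A)|=p^4$, I would exhibit a basic set generating $H$. Let $T$ be the $\sg{x}$-orbit of $e_3=(0,0,1)^{\top}$; since $xe_3=(0,1,1)^{\top}$ and $x^2e_3=(1,2,1)^{\top}$, the set $T$ contains three linearly independent vectors, so $\sg{T}=H$, while $e_3\notin\ker N$ forces $|T|=p$. As $\A$ is Schurian, its basic sets are the orbits of $\Aut(\A)_1=(G^{(2)})_1$, so $T$ is one such orbit. Applying Proposition~\ref{HM2} to the group $G^{(2)}\ge H_R$ then yields $|\Aut(\A)|=|G^{(2)}|=p\cdot|H|=p^4=|G|$; hence $\Aut(\A)=G=G^{(2)}$, and consequently $\Aut(\A)_1=G_1=\sg{x}\le\Aut(H)=\mathrm{GL}(3,p)$.

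For the second assertion the inclusion $\Aut(H)\subseteq\Iso_1(\A)$ is clear, since every group automorphism is a normalized Cayley isomorphism. For the reverse inclusion I would apply Proposition~\ref{HM4} to the $2$-closed group $G=\Aut(\A)$: its implication (i)$\Rightarrow$(iii) shows that $\Iso_1(\A)=\Aut(\A)_1\Aut(H)=\sg{x}\Aut(H)=\Aut(H)$, provided $G$ is $H_R$-transjugate. Now $H_R$ has index $p$ in $G$, so $H_R\trianglelefteq G$ and its only $G$-conjugate is $H_R$ itself; therefore transjugacy will follow once we show that $H_R$ is the unique subgroup of $G$ isomorphic to $\Z_p^3$, because then every $g\in\Sym(H)$ with $g^{-1}H_Rg\le G$ must satisfy $g^{-1}H_Rg=H_R$.

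The key step — and the one I expect to demand the most attention — is this uniqueness. Suppose $U\le G$ with $U\cong\Z_p^3$ but $U\ne V:=H_R$. From $|U|=|V|=p^3$, $U\ne V$, and $V\trianglelefteq G$ we get $UV=G$, whence $|U\cap V|=p^2$. Choose $g\in U\setminus V$, say $g=v_0x^k$ with $k\not\equiv0\pmod p$; since $U$ is abelian, $g$ centralizes $U\cap V$. But $C_V(g)=\ker(x^k-I)$, and because $x$ is regular unipotent the factorization $x^k-I=N\bigl(kI+\binom{k}{2}N\bigr)$ with $kI+\binom{k}{2}N$ invertible shows that $x^k-I$ has the same rank as $N$, namely $2$, so $|C_V(g)|=p$. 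This contradicts $U\cap V\le C_V(g)$ together with $|U\cap V|=p^2$, proving that $V=H_R$ is the unique $\Z_p^3$-subgroup of $G$; combined with the preceding paragraphs, $\A$ is a CI-S-ring and $\Iso_1(\A)=\Aut(H)$.
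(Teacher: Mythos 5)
Your proof is correct, and its first half is essentially the paper's own argument: both exhibit a basic set $T$ with $|T|=p$ and $\sg{T}=H$, use Schurity to identify $T$ with an $\Aut(\A)_1$-orbit, and apply Proposition~\ref{HM2} to conclude $|\Aut(\A)|=p^4$, whence $H_R \trianglelefteq \Aut(\A)$ and $\Aut(\A)_1 \le \Aut(H)$. Where you genuinely diverge is the second assertion. The paper simply invokes the known fact that $\Z_p^3$ is a CI$^{(2)}$-group, so that $\A$ is a CI-S-ring by Proposition~\ref{HM4} and $\Iso_1(\A)=\Aut(\A)_1\Aut(H)=\Aut(H)$. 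You instead prove the required $H_R$-transjugacy of $G=\Aut(\A)=H_R\rtimes\sg{x}$ from scratch: any subgroup $U\le G$ with $U\cong\Z_p^3$ and $U\ne H_R$ would satisfy $|U\cap H_R|=p^2$, yet any $g\in U\setminus H_R$ centralizes only a subgroup of $H_R$ of order $p$, because $x^k-I=N\bigl(kI+\tbinom{k}{2}N\bigr)$ has rank $2$ for $k\not\equiv 0 \pmod p$; hence $H_R$ is the unique $\Z_p^3$-subgroup of $G$, transjugacy is immediate, and Proposition~\ref{HM4}, (i)$\Rightarrow$(iii), finishes the proof. What your route buys is self-containedness: you do not need to import the CI$^{(2)}$-property of $\Z_p^3$ from the literature, and you obtain the sharper structural facts $\Aut(\A)=\sg{H_R,x}$ and the uniqueness of the regular elementary abelian subgroup as by-products. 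What the paper's route buys is brevity: two lines in place of your linear-algebra computation, at the price of resting on an external classification result.
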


\begin{proof}
Consider the S-ring in the $6$th-row of Table~1. Denote by $T$ its basic
set  containing the element $(1,0,0) \in \Z_p^3$. Since $p>2,$ it 
follows quickly that
$|T|=p$ and $\sg{T}=\Z_p^3$. This implies that $\A$ has also a basic set
$T'$ such that $|T'|=p$ and $\sg{T'}=H$. The S-ring $\A$ is Schurian.
Thus, $T'$ is  equal to an $\Aut(\A)_1$-orbit, and by
Proposition~\ref{HM2}, $|\Aut(\A)|=p^4$. Thus, $H_R \trianglelefteq \Aut(\A),$ and so
$\Aut(\A)_1 \le \Aut(H)$. Since $H$ is a CI$^{(2)}$-group, $\A$ is a CI-S-ring,
see Proposition~\ref{HM4}, and we can write
$\Iso_1(\A)=\Aut(\A)_1\Aut(H)=\Aut(H)$.
\end{proof}

We finish the subsection with further properties.

\begin{lem}\label{L-p-S2}
Let $\A$ be a $p$-S-ring over a group $H,$ $K\le H$ be an
$\A$-subgroup with index $|H:K|=p,$ and let  $T\in \Bs(\A)$. Then the following hold:
\begin{enumerate}[(i)]
\item $T$ is contained in a $K$-coset. In particular, $\rad(T) \le K$.
\item Let $L \le H$ be an $\A$-subgroup of order $p$ such that
$L \trianglelefteq H$ and $L \not\le \rad(T)$. Then for any $h \in T,$
$hL \cap T=Lh \cap T=\{h\}$.
\item If $H$ is an abelian group and  $|\O(\A) \cap K| |T| > |H|/p,$ then $\O(\A) \cap \rad(T) \ne \{1\}$.
\end{enumerate}
\end{lem}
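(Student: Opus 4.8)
My plan is to prove the three parts in order and to use part~(i) as the essential input for part~(iii).

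For (i), I would first record that $|H:K|=p$ is the smallest prime dividing the order of the $p$-group $H$, so that $K\trianglelefteq H$ and $H/K\cong\Z_p$. The point I would exploit is that the number of distinct cosets $Kh$ meeting $T$ equals $|T|/k$, where $k=|Kh\cap T|$ is the constant furnished by Eq.~\eqref{Eq-T/K}; since $k$ divides the $p$-power $|T|$ it is itself a $p$-power, and hence so is $|T/K|=|T|/k$. Because $K$ is an $\A$-subgroup it is a union of basic sets, so $T$ either lies in $K$ or is disjoint from it; in the latter case $T/K$ sits inside the $(p-1)$-element set $(H/K)\setminus\{K\}$, which forces the $p$-power $|T/K|$ to be $1$. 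Thus in all cases $T$ is contained in one coset $Kh$, and then every $g\in\rad(T)$ satisfies $gt\in T\subseteq Kh$ for $t\in T\subseteq Kh$, giving $g\in K$; this is the claimed inclusion $\rad(T)\le K$.

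For (ii), I would use that $L$ is a normal $\A$-subgroup of order $p$, so $Lh=hL$ and the two Eq.~\eqref{Eq-T/K} constants $k=|Lh\cap T|$ and $k'=|hL\cap T|$ agree. As before $k$ divides the $p$-power $|T|$ while $k\le|L|=p$, so $k\in\{1,p\}$. The case $k=p$ would mean $Lh\subseteq T$ for all $h\in T$, hence $LT=TL=T$ and $L\le\rad(T)$, contradicting the hypothesis; so $k=1$, and since $h$ itself lies in both $Lh\cap T$ and $hL\cap T$ the conclusion $Lh\cap T=hL\cap T=\{h\}$ follows.

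For (iii), I would set $D=\O(\A)\cap K$, a subgroup of thin elements contained in $K$, and study the action of $D$ on basic sets by translation. By Eq.~\eqref{Eq-eT} each $dT$ ($d\in D$) is again a basic set, and since $H$ is abelian the stabilizer of $T$ is $\{d\in D:dT=T\}=D\cap\rad(T)$. The distinct translates $dT$ are then pairwise disjoint basic sets of size $|T|$, so orbit--stabilizer gives
$$
|DT|=|T|\cdot\frac{|D|}{|D\cap\rad(T)|}.
$$
Invoking part~(i) to put $T$ inside a single $K$-coset and using $D\le K$, the union $DT$ stays inside that coset, so $|DT|\le|K|=|H|/p$. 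Comparing this with the hypothesis $|D|\,|T|>|H|/p$ forces $|D\cap\rad(T)|>1$, and since $D\le\O(\A)$ this yields $\O(\A)\cap\rad(T)\ne\{1\}$. The divisibility bookkeeping in (i) and (ii) is routine; the step I would watch most carefully is the count in (iii), where I must simultaneously check that every $dT$ is a basic set, that distinct translates are disjoint, and that the $D$-stabilizer of $T$ is exactly $D\cap\rad(T)$, since the bound supplied by part~(i) only becomes decisive once $|DT|$ is evaluated correctly.
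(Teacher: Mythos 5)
Your proof is correct and takes essentially the same route as the paper: part (i) rests on the coset-intersection constant of Eq.~\eqref{Eq-T/K} together with $p$-power cardinalities (the paper packages this same count as the statement that $\A_{H/K}$ is a $p$-S-ring over $\Z_p$, hence equal to $\Q\, H/K$), part (ii) is identical, and part (iii) translates $T$ by thin elements of $K$ inside the single $K$-coset provided by (i), your orbit--stabilizer formulation being just a repackaging of the paper's pigeonhole argument that two translates $eT=e'T$ must coincide. No gaps.
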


\begin{proof}
(i): Let us consider the quotient S-ring $\A_{H/K}$.
By Eq.~\eqref{Eq-T/K}, $\A_{H/K}$ is a $p$-S-ring.
Since $H/K \cong \Z_p,$ its only $p$-S-ring is $\Q\,  H/K,$ and so
$\A_{H/K}=\Q \, H/K$.  In particular, $|T/K|=1,$ hence
$T \subseteq K h$ for a coset $K h$ and (i) follows.
\smallskip

(ii) By Eq.~\eqref{Eq-T/K},  there is a positive integer $k$ such that
$|hL \cap T|=|Lh \cap T|=k$ for all $h \in T$.
As $|T|$ is a $p$-power, $k=1$ or $p$. If $k=p,$
then we find $LT=TL=T,$ so $L \le \rad(T),$ which is excluded by one of the assumptions. Thus, $k=1$ and (ii) follows.
\smallskip

(iii): Assume that $|\O(\A) \cap K| |T| > |H|/p$.
Let us consider the sets $eT, \, e \in \O(\A) \cap K$.
By Eq.~\eqref{Eq-eT} and (i), these are all basic sets contained
in a coset $Kh$.  If these are pairwise distinct, then
$|\O(\A) \cap K| |T|=\sum_{e\in \O(\A) \cap K}|eT| \le |Kh|=|H|/p,$ a contradiction.
Thus, $eT=e'T$ for distinct $e,e' \in \O(\A) \cap K$.
Using this and that $H$ is abelian, we find $e^{-1}e'T=Te^{-1}e'=T,$ and so
$e^{-1}e' \in \O(\A) \cap \rad(T),$ by which (iii) follows.
\end{proof}

\section{On CI-S-rings over $\Z_p^n$}

In this section we give three propositions about CI properties of S-rings over the groups $\Z_p^n$. The first one is a necessary condition for an S-ring to be non-CI. It is essentially contained in the proof of \cite[Proposition~3.9]{HM}.

\begin{prop}\label{P-regular-R}
Suppose that $\A=V(H,P_1)$ is a non-CI-S-ring, where
$H \cong \Z_p^n$  and $H_R \le P \le \Sym(H)$ is a $p$-group.
Then
the normalizer $N_{\Aut(\A)}(H_R)$ contains a subgroup
$K$ for which the following hold:
\begin{enumerate}[(i)]
\item $K \cong \Z_p^n,$ it is regular on $H,$ and $K \ne H_R$.
\item The stabilizer $(KH_R)_1$ is elementary abelian, and
$$
|C_{H_R}((KH_R)_1)| \ge |K \cap H_R|.
$$
\end{enumerate}
\end{prop}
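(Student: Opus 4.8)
The plan is to reduce the statement to the single task of producing one regular subgroup inside $N_{\Aut(\A)}(H_R)$, and then to extract both (i) and (ii) from that subgroup. First I would fix $G=\Aut(\A)$. Since $\A=V(H,P_1)$ with $P$ a $p$-group, we have $G=P^{(2)}$, which is again a $p$-group by Proposition~\ref{W} and is $2$-closed; moreover $\A=V(H,G_1)$. Because $\A$ is non-CI, Proposition~\ref{HM4} shows that $G$ is not $H_R$-transjugate, so there is some $g\in\Sym(H)$ with $R:=g^{-1}H_Rg\le G$ while $R$ is not conjugate to $H_R$ in $G$. Thus $G$ contains a regular subgroup $R\cong\Z_p^n$ that is not $G$-conjugate to $H_R$, and note $N_{\Aut(\A)}(H_R)=N_G(H_R)$.

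The key point is that (ii) is a formal consequence of (i). Suppose a regular $K\cong\Z_p^n$ with $K\le N_G(H_R)$ and $K\ne H_R$ has been found. Then $H_R\trianglelefteq KH_R\le N_G(H_R)$, the product $KH_R$ is transitive, and since $H_R$ is regular we get $(KH_R)_1\cap H_R=1$ and $KH_R=H_R(KH_R)_1$, whence $(KH_R)_1\cong KH_R/H_R\cong K/(K\cap H_R)$. As $K$ is elementary abelian, so is this quotient, proving $(KH_R)_1$ elementary abelian. For the centralizer bound, take any $z\in K\cap H_R$ and any $u\in(KH_R)_1$, and write $u=hk$ with $h\in H_R$ and $k\in K$ (possible since $KH_R=H_RK$). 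Because $z\in K$ and $K$ is abelian, $kzk^{-1}=z$; because then $z\in H_R$ and $H_R$ is abelian, $hzh^{-1}=z$; hence $uzu^{-1}=z$. Therefore $K\cap H_R\le C_{H_R}((KH_R)_1)$, giving $|C_{H_R}((KH_R)_1)|\ge|K\cap H_R|$. So it remains only to establish (i).

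To prove (i) I would run a minimality argument inside the $p$-group $G$. Consider the family of all subgroups $M$ with $H_R\le M\le G$ such that $M$ contains a regular subgroup isomorphic to $\Z_p^n$ not conjugate to $H_R$ in $M$; this family is non-empty since $\langle H_R,R\rangle$ lies in it. Choose such an $M$ of least order and let $R'\le M$ be a regular subgroup not $M$-conjugate to $H_R$; minimality then forces $M=\langle H_R,R'\rangle$. If $H_R\trianglelefteq M$, then $R'\le M=N_M(H_R)\le N_G(H_R)$ and $R'\ne H_R$, so $K:=R'$ works. Otherwise $N_M(H_R)$ properly contains $H_R$ and is a proper subgroup of $M$; by minimality it cannot contain a regular subgroup non-conjugate to $H_R$, yet if it contains \emph{any} regular subgroup other than $H_R$ we are again done, since (i) does not require $K$ to be non-conjugate to $H_R$.

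The remaining, and hardest, case is when $H_R$ is the unique regular subgroup of $N_M(H_R)$ while $H_R\ntrianglelefteq M$. Here I would descend to a quotient action: $Z(M)$ is nontrivial and, since the centralizer in $\Sym(H)$ of a regular abelian group equals that group, $Z(M)\le H_R\cap R'$; pick $Z_1\le Z(M)$ of order $p$. Passing to the block system $\delta=\Orb(Z_1,H)$ turns $H_R$ and $R'$ into regular subgroups of $M^\delta$ of rank $n-1$, seeding an induction on the rank. The main obstacle is exactly this descent: one must lift the regular subgroup obtained in $M^\delta$ back to a regular subgroup of $N_G(H_R)$, controlling the kernel $M_\delta$ of the action on $\delta$ and preserving normalization of $H_R$. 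This inductive lifting is the technical core of the argument and is where the bookkeeping in the cited proof of \cite[Proposition~3.9]{HM} is concentrated.
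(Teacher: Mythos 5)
Your reduction of (ii) to (i) is correct and coincides with the paper's argument: writing $Q=KH_R$, one has $Q_1\cong Q/H_R\cong K/(K\cap H_R)$, which is elementary abelian, and $K\cap H_R\le Z(Q)$ because both $K$ and $H_R$ are abelian, giving the centralizer bound. The gap is in (i). Your minimality setup correctly handles the cases where $H_R\trianglelefteq M$ or where $N_M(H_R)$ contains some regular subgroup other than $H_R$, but in the remaining case you only sketch a descent to the quotient action $M^\delta$ followed by an inductive lifting back to $N_G(H_R)$, and you explicitly leave that lifting --- which you yourself call the technical core --- unproved. As written, the proposal does not establish (i).

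What you are missing is that your ``hardest case'' is in fact vacuous, by a one-line $p$-group argument, and this is exactly how the paper proves (i). Suppose $H_R$ is not normal in $M$. Then $N_M(H_R)<M$, and since $M$ is a $p$-group (recall $G=P^{(2)}$ is a $p$-group by Proposition~\ref{W}), normalizers grow: $N_M(N_M(H_R))>N_M(H_R)$. Pick $g\in N_M(N_M(H_R))\setminus N_M(H_R)$ and set $K=(H_R)^g$. Since $H_R\le N_M(H_R)$ and $g$ normalizes $N_M(H_R)$, we get $K\le N_M(H_R)\le N_G(H_R)$; and $K\ne H_R$ because $g\notin N_M(H_R)=M\cap N_G(H_R)$ while $g\in M$. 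So $N_M(H_R)$ always contains a regular elementary abelian subgroup distinct from $H_R$, contradicting the assumed uniqueness, and your earlier cases already finish the proof. Indeed, with this observation the whole minimality apparatus becomes unnecessary: the paper argues directly on $G$ with $N=N_G(H_R)$. If $N=G$, then the non-CI hypothesis together with Proposition~\ref{HM4} supplies a regular subgroup of $G=N$ not conjugate (hence not equal) to $H_R$; if $N<G$, then choosing $g\in N_G(N)\setminus N$ and taking $K=(H_R)^g\le N$ does the job, and here the non-CI hypothesis is not even used. No induction on the rank, no passage to block systems, and no lifting is required.
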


\begin{proof}
Let $G=\Aut(\A)$ and $N=N_{\Aut(\A)}(H_R)$.
Note that, since $G=P^{(2)}$ and $P$ is a
$p$-group, $G$ is a $p$-group as well, see Proposition~\ref{W}.
We first show the existence of a subgroup $K \le N$ that has
all properties given in (i). If $G=N,$ then the existence
of the required subgroup $K$ follows
from the condition that $\A$ is a non-CI-S-ring. Now, suppose
that  $N < G$.
Then, since $G$ is a $p$-group, the normalizer
$N_G(N)\ne N,$  hence we may choose
some $g \in N_G(N) \setminus N$. We let $K=(H_R)^g$. It
is straightforward to see that $K$ has all properties given in (i).

Now, we turn to part (ii).  Consider the group $Q=K H_R$.
Clearly, $Q=Q_1 K=Q_1 H_R$ and $Q_1 \cong Q/H_R \cong K/(K \cap H_R)$. Thus, $Q_1$ is an elementary abelian group. Also,
as both $K$ and $H_R$ are abelian, $K \cap H_R \le Z(Q),$ implying
that $|C_{H_R}(Q_1)| \ge |K \cap H_R|$.  This completes the proof of part (ii).
\end{proof}

The second proposition will be a sufficient condition for an S-ring over $\Z_p^n$ to be CI.

\begin{defi}\label{D-teq}
We say that an S-ring $\A$ of a group $H$ is {\em $\teq$-minimal} if
$$
\{ X \le  \Sym(H) :  X \ge H_R \text{ and } X \teq \Aut(\A) \}=\{ \Aut(\A) \}.
$$
\end{defi}

For example, the full group algebra $\Q H$ is a $\teq$-minimal S-ring.
The obvious examples for non-$\teq$-minimal S-rings are the S-rings of rank $2$ (the two basic sets are $\{1\}$ and $H \setminus \{1\}$). Clearly, $\Aut(\A)=\Sym(H),$ and
thus $\Aut(\A) \teq X$ whenever $H_R \le X \le \Sym(H)$  and $X$ is $2$-transitive on $H$.

\begin{prop}\label{P-teq}
Let $\A$ be a Schurian $p$-S-ring over a group $H \cong \Z_p^n,$ and $K \le H$ be an $\A$-subgroup of order $p$ such that $\A_{H/K}$ is a $\teq$-minimal CI-S-ring. Then $\A$ is a CI-S-ring.
\end{prop}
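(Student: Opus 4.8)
The plan is to prove the equivalent assertion that $G:=\Aut(\A)$ is $H_R$-transjugate, which by Proposition~\ref{HM4} is the same as saying that $\A$ is a CI-S-ring. First I would record that $G$ is a $p$-group. Since $\A$ is Schurian, its basic sets are the orbits of $G_1$, and these have $p$-power size because $\A$ is a $p$-S-ring; hence, choosing a Sylow $p$-subgroup $P$ of $G$ with $H_R\le P$, a short computation with orbit lengths shows that $P_1$ and $G_1$ have the same orbits, so that $\A=V(H,P_1)$ and $G=\Aut(\A)=P^{(2)}$ is a $p$-group by Proposition~\ref{W}. I would then fix the block system $\delta=H/K=\Orb(K_R,H)$ of $G$ afforded by the $\A$-subgroup $K$, and write $G_\delta$ and $G^\delta$ for the kernel and the induced group of the action of $G$ on $\delta$.

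Next I would exploit the two hypotheses on $\A_{H/K}$. The quotient $\A_{H/K}$ is a Schurian $p$-S-ring over $H/K\cong\Z_p^{n-1}$ with $\Aut(\A_{H/K})=(G^\delta)^{(2)}$. Since $G^\delta\teq(G^\delta)^{(2)}=\Aut(\A_{H/K})$ and $G^\delta\ge (H/K)_R$, the assumed $\teq$-minimality forces $G^\delta=\Aut(\A_{H/K})$; in particular $G^\delta$ is $2$-closed, and as $\A_{H/K}$ is a CI-S-ring, $G^\delta$ is $(H/K)_R$-transjugate by Proposition~\ref{HM4}. Now take an arbitrary regular subgroup $R\le G$ with $R\cong\Z_p^n$; it suffices to conjugate $R$ onto $H_R$ inside $G$. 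Because $G$ is a nontrivial $p$-group and $G_\delta\trianglelefteq G$ is nontrivial (it contains $K_R$), the intersection $Z(G)\cap G_\delta$ is nontrivial; as every nontrivial element of $Z(G)$ centralizes the abelian regular group $R$ and $C_{\Sym(H)}(R)=R$, I get $1\ne Z(G)\cap G_\delta\le R\cap G_\delta$. Since $R\cap G_\delta$ is semiregular with orbits contained in blocks of size $p$, it has order exactly $p$, so $R^\delta$ is regular on $\delta$ and $R\cap G_\delta=Z(G)\cap G_\delta=H_R\cap G_\delta=K_R$. By the transjugacy of $G^\delta$ there is $\bar x\in G^\delta$ with $(R^\delta)^{\bar x}=(H/K)_R=(H_R)^\delta$; lifting $\bar x$ to $G$ and replacing $R$ by the corresponding $G$-conjugate, I may henceforth assume $R^\delta=(H_R)^\delta$.

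It then remains to conjugate $R$ onto $H_R$ inside $G$. At this point $R$ and $H_R$ both lie in $M:=H_RG_\delta$, which is $2$-closed by Proposition~\ref{HM1}(ii) applied with $F=H_R$, since $F^\delta=(H/K)_R$ is regular abelian and hence $2$-closed. Moreover $G_\delta$ embeds into $\prod_{\Delta\in\delta}\Sym(\Delta)$ with each projection a $p$-subgroup of $\Sym(\Delta)\cong\Sym(p)$, thus of order dividing $p$; so $G_\delta$ is elementary abelian and normal in $M$, while $M^\delta=(H/K)_R$ is regular. Consequently $R$ and $H_R$ are two regular abelian subgroups of $M$ with the same image modulo $G_\delta$ and with $R\cap G_\delta=H_R\cap G_\delta=K_R\le Z(M)$. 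The main obstacle is exactly this final conjugacy: $R$ and $H_R$ define a class in $H^1\bigl((H/K)_R,\,G_\delta/K_R\bigr)$, and I must show that it vanishes. I expect to resolve it by combining the elementary abelian $(H/K)_R$-module structure of $G_\delta$, which is controlled by the $2$-closedness of $M$, with the fact that both $R$ and $H_R$ are abelian and self-centralizing in $\Sym(H)$; once $R$ is shown to be $M$-conjugate to $H_R$, it follows that every regular $\Z_p^n$-subgroup of $G$ is $G$-conjugate to $H_R$, so $G$ is $H_R$-transjugate and $\A$ is a CI-S-ring.
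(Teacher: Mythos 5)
Your reduction down to the group $M=H_RG_\delta$ is correct, and it is in fact the same route the paper takes: you make $G=\Aut(\A)$ a $p$-group via Proposition~\ref{W}, identify $G^\delta$ with $\Aut(\A_{H/K})$ by playing $G^\delta \teq (G^\delta)^{(2)}$ against the assumed $\teq$-minimality (this rests on the identity $\A_{H/K}=V(H/K,(G^\delta)_{\bar{1}})$, which you assert rather than prove, but which is \cite[Proposition~2.8(ii)]{HM} and is reproved in the paper), use the CI-property of $\A_{H/K}$ together with Proposition~\ref{HM4} to conjugate an arbitrary regular subgroup $R\cong \Z_p^n$ into $M$, and invoke Proposition~\ref{HM1}(ii) to see that $M$ is $2$-closed. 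Your intermediate claims here (that $Z(G)\cap G_\delta=R\cap G_\delta=H_R\cap G_\delta=K_R$, that $G_\delta$ is elementary abelian, that $K_R\le Z(M)$) are all sound.

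The gap is the final step, and it is not a small one: you never prove that $R$ and $H_R$ are conjugate in $M$; you only translate this into the vanishing of a class in $H^1\bigl((H/K)_R,\,G_\delta/K_R\bigr)$ and say you ``expect to resolve it.'' That vanishing \emph{is} the content of the proposition --- everything before it is the routine part --- and it does not follow formally from the ingredients you list. For such modules $H^1$ is typically nonzero, and the abelianness and self-centralizing property of $R$ and $H_R$ only single out which classes are relevant (those whose associated regular subgroup is elementary abelian); they do not kill them. The $2$-closedness of $M$ must enter in an essential, combinatorial way, and this is exactly where the paper does its real work: it proves that the S-ring $\B=V(H,M_1)$ is a CI-S-ring (equivalently, by Proposition~\ref{HM4}, that $M$ is $H_R$-transjugate) by first establishing that $M_1=(G_\delta)_1$, that $K\le \O(\B)$, and that every basic set of $\B$ not contained in $\O(\B)$ is a $K$-coset, and then, for a given $f\in\Iso_1(\B)$, explicitly constructing $\varphi\in\Aut(H)$ on a minimal generating set adapted to $\O(\B)$ so that $T^{f\varphi}=T$ for all $T\in\Bs(\B)$, using $\B_{H/K}=\Q\,H/K$ and Proposition~\ref{HM3}. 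Until you supply an argument of this kind (or an actual cohomological proof of the vanishing, which would have to use the structure of the transitivity module of $M_1$ and not just the module structure of $G_\delta$), your proposal is incomplete precisely at its crux.
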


\begin{proof}
Let $G=\Aut(\A)$ and choose $L \le G$ such
that $L$ is regular on $H$ and $L \cong H$. Because of Proposition~\ref{HM4} it is enough to show that $L$ and $H_R$ are conjugate  in $G$.

We write $\bar{G}=G^{H/K}, \, \bar{H}_R=(H_R)^{H/K}$ and
$\bar{L}=L^{H/K}$. Note that $\bar{H}_R=(H/K)_R$.
The group $\bar{L}$ is abelian acting transitively on
$H/K$. It follows that it is regular, and $\bar{L}\cong \Z_p^{n-1}$.
Since $\A_{H/K}$ is a CI-S-ring,
$\bar{L}=(\bar{H}_R)^x$ for some $x\in \Aut(\A_{H/K})$.
For sake of simplicity we denote by $\bar{1}$ the identity of $H/K$.

We claim that $\Aut(\A_{H/K})=\bar{G}$. To settle this it is
sufficient to show that $\Aut(\A_{H/K}) \teq \bar{G},$ and
use the assumption that $\A_{H/K}$ is $\teq$-minimal.
We have to show that $\A_{H/K}=V(H/K,\bar{G}_{\bar{1}})$.
Here we copy the proof of \cite[Proposition~2.8(ii)]{HM}.
Since $K$ is an $\A$-subgroup, $K^g=K$ for
any $g \in G_1,$ and thus by Proposition~\ref{HM3}(ii), the coset $Kh$ is mapped by $g^{H/K}$ to $(Kh)^{g^{H/K}}=Kh^g$. A basic set of $\A_{H/K}$ is in the form $T/K$ where
$T \in \Bs(\A)$. Since $\A=V(H,G_1),$
we find that $T=h^{G_1}$ for some $h\in H$. Observe that $G_{\{K\}}=K_RG_1$, and for any $g \in G,$
$g^{H/K} \in \bar{G}_{\bar{1}}$ if and only if $g$ fixes setwise the subgroup $K$. This implies that $g=k_R g'$ for some $k\in K$ and $g' \in G_1$. Now, we can express the
$\bar{G}_{\bar{1}}$-orbit of $Kh$ as
\begin{eqnarray*}
(Kh)^{\bar{G}_{\bar{1}}}&=&
\{ (Kh)^{x^{H/K}} : x=k_R g, \, k\in K, \, g \in G_1 \}=
\{K (kh)^g : k \in K, \, g \in G_1\} \\
&=& \{K h^g : g \in G_1\}=h^{G_1}/K=T/K.
\end{eqnarray*}
We conclude that $T/K$ is an orbit of $\bar{G}_{\bar{1}},$
and the claim follows.

Recall that $\bar{L}=(\bar{H}_R)^x$ for some $x\in \Aut(\A_{H/K})=\bar{G}$.
Choose $g\in G$ such that $g^{H/K}=x^{-1}$. Then $L^g \le G_{H/K}H_R,$ where $G_{H/K}$ denotes the kernel of $G$ acting on $H/K$. Write $M=G_{H/K}H_R$.
Since both $G$ and $(H_R)^{H/K}$ are
$2$-closed groups, it follows by Proposition~\ref{HM1}(ii) that
$M$ is also $2$-closed. We are done if we show that
$M$ is $H_R$-transjugate, because then
$(L^g)^{g'} =H_R$ for some $g' \in M,$ showing that $L$ and $H_R$ are indeed conjugate in $G$.

Again, because of Proposition~\ref{HM4} we are done if we show that
the S-ring $\B=V(H,M_1)$ is CI. Then $\A \subseteq \B,$ and thus $K$ is also a $\B$-subgroup. Note that $G_{H/K}\cap H_R=K_R$ and $M=M_1H_R$. Then $|M_1|=|M|/|H_R|=|G_{H/K}H_R|/|H_R|=|G_{H/K}|/|K_R|$ and $|G_{H/K}|=|(G_{H/K})_1||K_R|$. It follows $|M_1|=|(G_{H/K})_1|$ and  hence $M_1=(G_{H/K})_1$. Since $(G_{H/K})_1\lhd G_1$ and all orbits of $G_{H/K}$ are the cosets of $K$ in $H$ which have order $p$,  we have $K \le  \O(\B)$ and
all basic sets of $\B=V(H,M_1)$ not contained in $\O(\B)$ are $K$-cosets.

Let $f \in \Iso_1(\B)$. In order to prove that $\B$ is a CI-S-ring, we have to find an automorphism $\varphi \in \Aut(H)$ such that
\begin{equation}\label{Eq-Tf}
T^{f \varphi}=T \text{ for all } T \in \Bs(\B).
\end{equation}

Choose a minimal generating set $\{h_1,\ldots,h_n\}$ of $H$ such that  $\{h_1,\ldots,h_\ell\}$, $\ell\leq n$, is a generating set of $\O(\B)$ with $h_1\in K$. By Proposition~\ref{HM3}, $K^f\leq H$ and $(Kh_i)^f=K^fh_i^f$. Since $1^f=1,$
$T^f \in \Bs(\B^f)$ for every basic set $T \in \Bs(\A)$.
Using this and that each $Kh_i$ is a $\B$-subset, we find that each  $K^fh_i^f$ is a $\B^f$-subset, and so
$\sg{K^fh_1^f,\ldots,K^fh_n^f} \le H$ is a $\B^f$-subgroup.
By Proposition~\ref{HM3}(i),
$|\sg{K^fh_1^f,\ldots,K^fh_n^f}|=
|\sg{K^fh_1^f,\ldots,K^fh_n^f}^{f-1}|$.
Thus,  $H=\sg{K^fh_1^f,\ldots,K^fh_n^f}$.
Since $h_1\in K,$ it follows that $K^fh_1^f=K^f,$ and
$\{h_1^f,\cdots,h^f_n\}$ is also a minimal generating set of $H$. Define $\varphi$ as the induced automorphism of $H$ by
$\varphi: h_i^f\mapsto h_i$  for $1\leq i\leq n$. Then $h_i^{f \varphi}=h_i$. To finish the proof, it suffices to show that Eq.~\eqref{Eq-Tf} holds.

Set $f_1=f \varphi$. Clearly, $f_1\in \Iso_1(\B)$. Recall that for any $S,T\in \Bs(\B)$, $(ST)^{f_1}=S^{f_1}T^{f_1}$ (see the paragraph preceding Proposition~\ref{HM3}). Then $f_1$ fixes each element in $\O(\B)$ because $f_1$ fixes a generating set of $\O(\B)$.
In particular, $K^{f_1}=K$ as $K \leq \O(\B),$ and
Eq.~\eqref{Eq-Tf} holds whenever $T \subset \O(\B)$.
Now, suppose that $T \not\subset \O(\B)$.
Let us consider the isomorphism $f_1^{H/K}$ of $\B_{H/K}$ induced by $f_1$
(for the definition of $f_1^{H/K},$ see Proposition~\ref{HM3}(iii)).
The quotient S-ring $\B_{H/K}=\Q\, H/K$. Since
$\Q\, H/K$ is a CI-S-ring and $\Aut(\Q \, H/K)=(H/K)_R$, it follows that $\Iso_1(\B_{H/K})=
\Aut(\Q\, H/K)_1\Aut(H/K)=\Aut(H/K)$. Also, $f_1^{H/K}
\in \Iso_1(\B_{H/K}),$ because $K^{f_1}=K,$ and so
$f_1^{H/K} \in \Aut(H/K)$.
On the other hand, as $f_1$ fixes all generators $h_i,$
$f_1^{H/K}$ fixes a generating set of $H/K,$ and so $f_1^{H/K}$ is the identity mapping.
Since $T \not\subset \O(\B),$
$T=Kh$ for some $h \in H \setminus K,$
and we can write
$T^{f_1}=(Kh)^{f_1}=(Kh)^{f_1^{H/K}}=Kh=T$.
\end{proof}

Proposition~\ref{P-teq} will be especially useful in conjunction with the fact that
all indecomposable Schurian $p$-S-rings over $\Z_p^4$ are $\teq$-minimal.
We prove the latter fact in Section~4.
\medskip

Recall that,  the  CI$^{(2)}$-property for a group $H$ is equivalent to the CI-property
for all Schurian S-rings over $H$ (see Proposition~\ref{HM4}).
The third  proposition is the following refinement:

\begin{prop}\label{P-iff}
Let $H$ be a group isomorphic to $\Z_p^n$ for an
odd prime $p$.
Then the following conditions are equivalent:
\begin{enumerate}[(i)]
\item $H$ is a CI$^{(2)}$-group.
\item All S-rings $V(H,A)$ are CI-S-rings where
$A < \Aut(H)$ is a $p$-group with $|C_{H_R}(A)| \ge p^2$.
\end{enumerate}
\end{prop}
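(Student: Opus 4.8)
The plan is to prove the equivalence (i)$\Leftrightarrow$(ii) by showing that the implication (i)$\Rightarrow$(ii) is essentially trivial, while the substantive content is the converse (ii)$\Rightarrow$(i), which I would establish contrapositively using Proposition~\ref{P-regular-R}. For the direction (i)$\Rightarrow$(ii), recall that the CI$^{(2)}$-property is equivalent to the CI-property for all Schurian S-rings over $H$ (Proposition~\ref{HM4}). Every S-ring of the form $V(H,A)$ with $A \le \Aut(H)$ a $p$-group is Schurian by construction, since $\Aut(H) \le \Sym(H)$ and $H_R \le \langle H_R, A\rangle$; more precisely $V(H,A)=V(H,(H_R A)_1)$ whenever $A$ fixes the identity of $H$, which it does as $A \le \Aut(H)$. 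Thus if all Schurian S-rings are CI, then in particular the special ones appearing in (ii) are CI, giving (i)$\Rightarrow$(ii) immediately with no extra hypothesis needed on $|C_{H_R}(A)|$.

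For the converse, I would argue the contrapositive: assuming $H$ is \emph{not} a CI$^{(2)}$-group, I produce a non-CI-S-ring of the precise form required in (ii). By Proposition~\ref{HM4}, failure of the CI$^{(2)}$-property yields a non-CI Schurian S-ring $\A=V(H,P_1)$, and by Proposition~\ref{W} we may take $P$ to be a $p$-group (pass to a Sylow $p$-subgroup of $\Aut(\A)$ containing $H_R$, noting $\Aut(\A)$ is itself a $p$-group). Now apply Proposition~\ref{P-regular-R}: the normalizer $N_{\Aut(\A)}(H_R)$ contains a regular subgroup $K \cong \Z_p^n$ with $K \ne H_R$, such that the stabilizer $A:=(KH_R)_1$ is elementary abelian and satisfies $|C_{H_R}(A)| \ge |K \cap H_R|$. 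Since $K \le N_{\Aut(\A)}(H_R)$ normalizes $H_R$ and $A$ fixes the identity, $A$ acts on $H$ by automorphisms, so $A \le \Aut(H)$ is a $p$-group. The key inequality $|C_{H_R}(A)| \ge p^2$ follows because $K \ne H_R$ are distinct regular subgroups of the $p$-group $KH_R$, forcing $|K \cap H_R| \ge p^2$: indeed if $|K\cap H_R| = p^{n-1}$ then $K \cap H_R$ is a common normal subgroup and the quotient argument would force $K = H_R$ unless the index drops further, and a direct order count on $KH_R$ rules out $|K\cap H_R|=p^{n-1}$ with $K\neq H_R$ when both are regular.

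It remains to check that the S-ring $\B := V(H,A)$ is non-CI and has the stated form. By construction $A < \Aut(H)$ is a $p$-group with $|C_{H_R}(A)| \ge p^2$, so $\B$ is among the S-rings in (ii). That $\B$ is non-CI should follow because both $H_R$ and $K$ are regular subgroups of $\Aut(\B)$ isomorphic to $H$, yet $K$ and $H_R$ are \emph{not} conjugate in $\Aut(\B)$: were they conjugate in $\Aut(\B)$ they would already be conjugate in $\Aut(\A)$ by the transjugacy machinery, contradicting the non-CI-ness of $\A$ supplied by Proposition~\ref{P-regular-R}. Translating non-conjugacy of regular subgroups into failure of $\Iso(\B)=\Aut(\B)\Aut(H)$ is again Proposition~\ref{HM4}.

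\medskip

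\noindent\textbf{Main obstacle.} The delicate point is the last paragraph: verifying that $\B=V(H,A)$ is genuinely non-CI rather than merely inheriting the relevant regular subgroups. The subtlety is that $\B \supseteq \A$ (fewer automorphisms means a larger S-ring, or rather $A \le P_1$ gives $V(H,A) \supseteq V(H,P_1)$), so $\Aut(\B) \le \Aut(\A)$, and I must ensure that $K$ and $H_R$ remain \emph{both present and non-conjugate} inside the smaller group $\Aut(\B)$. The containment $A=(KH_R)_1 \le \Aut(\B)_1$ guarantees $K, H_R \le \Aut(\B)$, but non-conjugacy in the smaller $\Aut(\B)$ is not automatic from non-conjugacy in $\Aut(\A)$; one must argue in the correct direction, namely that conjugacy in $\Aut(\B) \le \Aut(\A)$ would imply conjugacy in $\Aut(\A)$, which is the contradiction we want. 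I expect this careful bookkeeping of inclusions — together with pinning down the exact value of $|K \cap H_R|$ to extract the clean bound $p^2$ — to be where the real work lies, with everything else reducing to applications of Propositions~\ref{HM4}, \ref{W}, and \ref{P-regular-R}.
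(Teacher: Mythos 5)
Your direction (i)$\Rightarrow$(ii) is fine and coincides with the paper's (a one-line application of Proposition~\ref{HM4}, since $V(H,A)=V(H,(H_RA)_1)$ is Schurian). The converse, however, has two genuine gaps, located exactly at the two places you flagged as delicate, and neither can be closed with the tools you restrict yourself to. First, the inequality $|K\cap H_R|\ge p^2$ is simply false as a general statement about distinct regular elementary abelian subgroups of a common $p$-group, even when $K$ normalizes $H_R$: in $\Z_p\wr\Z_p\le\Sym(\Z_p^2)$, with $e=(1,1,\dots,1)$ in the base group, $c=(0,1,\dots,p-1)$, and $\sigma$ the cyclic shift, the subgroups $H_R=\sg{e,\sigma}$ and $K=\sg{e,c\sigma}$ are both regular and elementary abelian, $K$ normalizes $H_R$, yet $K\cap H_R=\sg{e}$ has order exactly $p$. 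In general two such subgroups are only guaranteed to share the centre of a Sylow $p$-subgroup of $\Sym(H)$, which has order $p$. In the paper the bound $p^2$ is not a counting fact but the content of Morris's theorem: Proposition~\ref{M} produces $\psi\in\sg{H_R,K}^{(2)}$ such that the \emph{conjugate} $K^\psi$ contains both $\tau_0$ and $\tau_1$, and only for this corrected conjugate does $|C_{H_R}((K^\psi H_R)_1)|\ge|K^\psi\cap H_R|\ge p^2$ hold. Since your proposal never invokes Proposition~\ref{M}, the hypothesis of (ii) is not available for your $A=(KH_R)_1$.

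Second, the non-CI-ness of $\B=V(H,(KH_R)_1)$ does not follow, because Proposition~\ref{P-regular-R} does \emph{not} assert that $K$ and $H_R$ are non-conjugate in $\Aut(\A)$; it asserts only $K\ne H_R$. Indeed, in the case $N_{\Aut(\A)}(H_R)<\Aut(\A)$ of its proof, $K$ is constructed as $(H_R)^g$ with $g\in\Aut(\A)$, so it \emph{is} conjugate to $H_R$ there, and the contradiction you want to extract evaporates. The paper avoids both problems by proving (ii)$\Rightarrow$(i) directly: starting from a $2$-closed $G$ and a regular $K\le G$, it passes to a Sylow $p$-subgroup $P$ (this is also where your reduction to a $p$-group should be justified --- your parenthetical claim that ``$\Aut(\A)$ is itself a $p$-group'' is wrong, since $\Aut(\A)$ need not be a $p$-group; one uses $P=P^{(2)}$ via Proposition~\ref{W} and Sylow's theorem instead), replaces $K$ by $K^y$ with $y$ given by Proposition~\ref{M}, forms $Q=\sg{H_R,K^y}$, and applies hypothesis (ii) to the stabilizer of the normalizer $N=N_{Q^{(2)}}(H_R)$ --- this $N_1$, not your $(KH_R)_1$, is the subgroup playing the role of $A$. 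The conclusion that $Q^{(2)}$ is $H_R$-transjugate then comes from Spiga's normalizer-climbing argument: if $N^{(2)}$ is $H_R$-transjugate but $N^{(2)}<Q^{(2)}$, choose $z\in N_{Q^{(2)}}(N^{(2)})\setminus N^{(2)}$, observe $(H_R)^z\le N^{(2)}$, and use transjugacy to force $z\in N^{(2)}$, a contradiction, whence $N^{(2)}=Q^{(2)}$. Both ingredients (Proposition~\ref{M} and this climbing argument) are indispensable, and both are absent from your outline; Proposition~\ref{P-regular-R} alone (which the paper uses for a different purpose, in Section~6) is too weak to carry the converse, whether phrased directly or contrapositively.
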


\begin{proof}
Notice that, the implication (i) $\Rightarrow$ (ii) is a
direct consequence of Proposition~\ref{HM4}.

Now, we turn to the implication (ii) $\Rightarrow$ (i).
Let $G \le \Sym(H)$ be a $2$-closed subgroup with $H_R \le G,$ and let $K \le G$ be a regular subgroup such that $K \cong H$. We have to show that $K$ and $H_R$ are conjugate in $G$.

Now, choose a Sylow $p$-subgroup $P$ of $G$ such that $H_R \le P$. Since $G$
is $2$-closed, by Proposition~\ref{W}, $P$ is $2$-closed, that is, $P^{(2)}=P$.
By Sylow Theorem, $K^x \le P$ for some $x \in G,$ hence we may
assume that $K \le P$. According to Proposition~\ref{M} there exists some $y \in \sg{H_R,K}^{(2)}\leq P^{(2)}=P$ such that
$|C_{H_R}(K^y)| \ge p^2$. Let $Q=\sg{H_R,K^y}$.
Then $Q^{(2)} \le P^{(2)}=P,$ and $Q^{(2)}$ is also a $p$-group.
It is sufficient to show that $Q^{(2)}$ is $H_R$-transjugate.

Let us consider the normalizer $N=N_{Q^{(2)}}(H_R)$. Since $|C_{H_R}(K^y)| \ge p^2,$ it follows that $|C_{H_R}(Q)| \ge p^2$. By Proposition~\ref{P-center-G2},  $C_{H_R}(Q)=H_R \cap Z(Q) \leq H_R \cap Z(Q^{(2)})=C_{H_R}(Q^{(2)})$.
Therefore, $|C_{H_R}(Q^{(2)})| \ge p^2,$ and as
$N^{(2)} \le Q^{(2)},$ it follows that
$|C_{H_R}(N^{(2)}_1)| \ge p^2$.
By the hypothesis in (ii), the S-ring $V(H,N^{(2)}_1)$ is a CI-S-ring. Equivalently,
$N^{(2)}$ is $H_R$-transjugate.

We finish the proof by showing that
$N^{(2)}=Q^{(2)}$. In doing this we use the same idea as in
the proof of \cite[Proposition~1]{Sp9}. Assume to the contrary
that $N^{(2)} < Q^{(2)}$. Since $Q^{(2)}$ is a $p$-group,
we can choose an element $z \in N_{Q^{(2)}}(N^{(2)}) \setminus N^{(2)}$. Then $(H_R)^z \le N^{(2)}$ . Since $N^{(2)}$ is $H_R$-transjugate, $(H_R)^z=(H_R)^{z'}$ for some $z' \in N^{(2)},$ and so we find $z'z^{-1} \in N_{Q^{(2)}}(H_R)=N,$  from which
$z \in N^{(2)},$ a contradiction. Therefore, $Q^{(2)}=N^{(2)},$
showing that $Q^{(2)}$ is $H_R$-transjugate, as required.
\end{proof}

In fact, we are going to derive Theorem~\ref{2} by showing that the condition in case (ii) of Proposition~\ref{P-iff} holds when $H \cong \Z_p^5$.

\begin{thm}\label{3}
Let $H \cong \Z_p^5$ for an odd prime $p$. Then all S-rings
$V(H,A)$ are CI-S-rings where $A < \Aut(H)$ is a $p$-group with
$|C_{H_R}(A)| \ge p^2$.
\end{thm}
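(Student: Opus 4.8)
The plan is to reinterpret the hypothesis in terms of the thin radical and then to reduce, as far as possible, to S-rings over $\Z_p^4$. Put $\A=V(H,A)$; since $A$ is a $p$-group, $\A$ is a Schurian $p$-S-ring, whose basic sets are the $A$-orbits on $H$. An element $h\in H$ forms a singleton basic set exactly when it is fixed by every $a\in A$, and a direct check shows that $h_R$ centralizes $A$ if and only if $h^a=h$ for all $a\in A$. Hence $C_{H_R}(A)=(\O(\A))_R$, and the assumption $|C_{H_R}(A)|\ge p^2$ is equivalent to $|\O(\A)|\ge p^2$. Thus the task becomes: every Schurian $p$-S-ring $\A$ over $H\cong\Z_p^5$ with $|\O(\A)|\ge p^2$ is a CI-S-ring. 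Note that every subgroup contained in $\O(\A)$ is an $\A$-subgroup, since its simple quantity is a sum of singleton basic sets.

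The principal reduction uses Proposition~\ref{P-teq}. For an order-$p$ subgroup $K\le\O(\A)$, $K$ is a normal $\A$-subgroup, and the quotient $\A_{H/K}$ is a Schurian $p$-S-ring over $H/K\cong\Z_p^4$. As $\Z_p^4$ is a CI-group, $\A_{H/K}$ is automatically a CI-S-ring; moreover, by the result to be established in Section~4, if $\A_{H/K}$ is indecomposable then it is $\teq$-minimal. Consequently, whenever we can choose $K\le\O(\A)$ of order $p$ with $\A_{H/K}$ indecomposable, Proposition~\ref{P-teq} yields at once that $\A$ is CI. The room provided by $|\O(\A)|\ge p^2$ is precisely what supplies several candidate subgroups $K$ to try, so the decisive question is: for which $\A$ is $\A_{H/K}$ forced to be decomposable for \emph{every} such $K$?

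This dichotomy motivates splitting the argument according to whether $\A$ itself is decomposable. When $\A$ is indecomposable, I would show that at least one order-$p$ subgroup $K\le\O(\A)$ produces an indecomposable quotient, after which the reduction above applies; the analysis rests on the structure theory of $p$-S-rings over elementary abelian groups, using the complete classification over $\Z_p^3$ (Theorem~\ref{HMSW} and Table~1) and the behaviour of the exceptional S-rings (Lemma~\ref{L-exp}) inside induced and quotient subrings. When $\A$ is decomposable, say $\A=\A_E\wr_{E/F}\A_{H/F}$ with $\{1\}\ne F\le E\ne H$, I would argue directly on the generalized-wreath structure: an isomorphism $f\in\Iso_1(\A)$ restricts to an isomorphism of $\A_E$ and induces one of $\A_{H/F}$, both over groups of rank at most $4$, where CI-ness and an explicit description of $\Iso_1$ are available; the two resulting automorphisms are then to be glued into a single $\varphi\in\Aut(H)$ realizing $f$.

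The main obstacle, in both branches, is the gluing/compatibility step rather than the reductions themselves. In the decomposable case one must match the automorphism obtained on $\A_E$ with the one obtained on $\A_{H/F}$ along the common section $E/F$; these need not agree on the nose and must be reconciled using the freedom available in $\Aut(H)$ together with the thin radical, which is again where $|\O(\A)|\ge p^2$ is exploited to furnish enough thin elements. In the indecomposable case the obstacle is the residual list of configurations in which every quotient by an order-$p$ subgroup of $\O(\A)$ happens to decompose; these must be isolated and treated by hand, again via the rank-$3$ classification and Proposition~\ref{HM5}. Once these finitely many exceptional structures are dispatched, Proposition~\ref{P-teq} closes the remaining cases and the theorem follows.
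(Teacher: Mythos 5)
Your setup is sound and coincides with the paper's: the identity $C_{H_R}(A)=(\O(\A))_R$, so that the hypothesis reads $|\O(\A)|\ge p^2$; the reduction via Proposition~\ref{P-teq} combined with Theorem~\ref{T-teq-min} (this is exactly Corollary~\ref{C-teq-min}: a non-CI $\A$ has a decomposable quotient $\A_{H/K}$ for \emph{every} $\A$-subgroup $K$ of order $p$); and the split into decomposable and indecomposable $\A$ are all the right moves, and they are the paper's moves. But everything after that --- which is the actual content of Theorems~\ref{31} and \ref{32}, i.e.\ Sections~5 and 6 --- is left as a plan, and in both branches the plan as stated has a genuine gap.

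In the indecomposable branch you propose to show that some order-$p$ subgroup $K\le\O(\A)$ produces an indecomposable quotient, with a fallback of treating ``finitely many exceptional structures'' by hand. Neither half of this is substantiated: you give no argument that such a $K$ exists, nor that the residual configurations (where every quotient decomposes) form a finite or classifiable list, and the paper does not proceed this way at all. What the paper actually does is assume $\A$ is non-CI and manufacture a contradiction: it first produces $L\le\Aut(\A)\cap\Aut(H)$ with $|L|=p^2$ and $|C_H(L)|=p^3$ (using Proposition~\ref{P-regular-R} and the centralizer bound of Lemma~\ref{L-center2}, which is where $|\O(\A)|\ge p^2$ really enters), then proves that $V(H,L)$ is itself a CI-S-ring by an explicit computation of its automorphism group, $|\Aut(V(H,L))|=p^8$, and a count of its regular subgroups (Lemmas~\ref{L-L1}, \ref{L-L2} and \ref{L-L3}), and finally excludes $A>L$ by a radical analysis resting on Lemma~\ref{L-UW}. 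None of this is visible from your sketch, and it is not a refinement of your route but a different one. In the decomposable branch, the ``gluing'' you defer is precisely the whole difficulty: for $f\in\Iso_0(\A)$ the quotient $\A_{H/F}$ controls $f$ on the basic sets outside $E$ only modulo $F$, so the automorphisms supplied by CI-ness of $\A_E$ and of $\A_{H/F}$ cannot simply be matched along $E/F$; the paper needs the technical normalization of Lemma~\ref{L-gwp}, the separate treatment of Lemma~\ref{L-AU-ind} when $\A_U$ is indecomposable, and the ``butterfly'' Lemmas~\ref{L-bfly1} and \ref{L-bfly2} for the configuration carrying two transversal wreath decompositions --- including the case of exceptional quotients $\A_{U/X}$, where naive gluing genuinely fails and additional automorphisms of $\A$ have to be constructed first. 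In short: correct skeleton, identical to the paper's, but the proof proper is missing in both cases, and in the indecomposable case the route you indicate is not known to close.
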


The proof of Theorem~\ref{3} will be given in Sections~5 and 6.

\section{Indecomposable Schurian $p$-S-rings over $\Z_p^n$ are
$\teq$-minimal for $n \le 4$}

We set some notation  that will be used throughout the rest of the paper:
\medskip

\noindent{\bf Notation}. \
From now on $p$ will stand for an odd prime, and
$H$ will denote a group isomorphic to $\Z_p^n$. The group $H$ will be regarded as the additive group of an $n$-dimensional vector space over the field $GF(p)$.
The elements of $H$ will be denoted by lower case letters $u, \, v,$ etc., while the subgroups of $H$ by upper case letters $U,\, V,$ etc. As usual, the identity element will be denoted by $0,$ and
the inverse of an element $u \in H$ by $-u$.
For an integer $k$ and a subset $T\subseteq H$ we write $T^{(k)}=kT=\{ ku : h \in T\},$ where $ku=u+\cdots+u,$ with $|k|$ summands if $k >0,$ and $ku=-(u+\cdots+u)$ otherwise.
\medskip

It turns out that all indecomposable $p$-S-rings over the group
$\Z_p^n$ are $\teq$-minimal for any odd prime $p$ and $n \le 3$. This is not hard to see for the groups $\Z_p$ and $\Z_p^2$. The full group algebra $\Q \Z_p$ is the only $p$-S-ring over $\Z_p;$ and up to Cayley isomorphisms, there are two $p$-S-rings over
$\Z_p^2:$ $\Q \Z_p^2$ and $\Q \Z_p \wr \Q \Z_p,$ and the latter one is decomposable. Theorem~\ref{HMSW} shows that, up to Cayley isomorphisms, there are two indecomposable $p$-S-rings over $\Z_p^3$: $\Q \Z_p^3$ and
the exceptional $p$-S-ring given in the $6$th row of Table~1. 
By Lemma~\ref{L-exp}, the automorphism group of an exceptional $p$-S-ring has  order $p^4,$ hence it is $\teq$-minimal. In this section, we extend this result to the Schurian indecomposable $p$-S-rings over $\Z_p^4$.

\begin{thm}\label{T-teq-min}
All indecomposable Schurian $p$-S-rings over the group
$\Z_p^4$ are $\teq$-minimal for any odd prime $p$.
\end{thm}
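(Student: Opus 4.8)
The plan is to first turn $\teq$-minimality into a statement about maximal subgroups of $G:=\Aut(\A)$, then translate that into a covering condition on two-point stabilizers, and finally verify the condition by reducing to the classified case $\Z_p^3$. Throughout, note that $G$ is a transitive $p$-group: since $\A$ is a Schurian $p$-S-ring, the basic sets are exactly the $G_1$-orbits on $H$ and all have $p$-power size, so any element of $G_1$ of order coprime to $p$ would have all orbits of $p$-power size, hence of size $1$, and would be trivial; thus $G_1$, and hence $G=H_RG_1$ with $|G|=p^n|G_1|$, is a $p$-group. Now suppose $\A$ is \emph{not} $\teq$-minimal. Then there is $X$ with $H_R\le X\le G$, $X\ne G$ and $X\teq G$ (here $X\le G$ because $G=G^{(2)}$ is the largest group in its class). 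Every subgroup between $X$ and $G$ is again $\teq G$, so I may enlarge $X$ to a maximal subgroup $M$; in the $p$-group $G$ this gives $M\lhd G$, $|G:M|=p$ and $H_R\le M$. Writing $M_1=M\cap G_1$, transitivity yields $M=H_RM_1$ and $|G_1:M_1|=p$, and $M\teq G$ becomes the assertion that $M_1$ and $G_1$ have the \emph{same} orbits on $H$. This $M$ is the object I aim to contradict.

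Next I reformulate. If $M\teq G$ then $M_1$ is transitive on every basic set $T$, i.e.\ $G_1=M_1(G_1)_t$, equivalently $(G_1)_t\not\le M_1$, for each $t\in T$. Let $\phi\colon G_1\to\Z_p$ have kernel $M_1$. Because $\Z_p$ is abelian and the stabilizers $(G_1)_t$, $t\in T$, are conjugate, $\phi((G_1)_t)$ is independent of the choice of $t\in T$; hence $(G_1)_t\not\le M_1$ for all $t\in T$ is equivalent to $N_T\not\le M_1$, where $N_T:=\sg{(G_1)_t : t\in T}$. Thus a bad $M$ is precisely an index-$p$ subgroup with $H_R\le M$ and $N_T\not\le M$ for \emph{every} basic set $T$. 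Equivalently, $\teq$-minimality is the statement that for every index-$p$ subgroup $M\lhd G$ with $H_R\le M$ there is a basic set $T$ with $N_T\le M$. Passing to the Frattini quotient $G/\Phi(G)$, this is a covering condition in linear-algebra terms: every hyperplane containing the image of $H_R$ must also contain the image of some $N_T$.

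To verify the covering condition I reduce to $\Z_p^3$. By Proposition~\ref{P-p-S1} the thin radical $\O(\A)$ is nontrivial, so there is an $\A$-subgroup $K\le\O(\A)$ of order $p$ (and, by Eq.~\eqref{Eq-thin} applied to the Schurian ring $\A$, one has $K_R\le Z(G)$, so the $K$-cosets form a block system on which $G$ acts tamely); the chain of Proposition~\ref{P-p-S1}(ii) also supplies an $\A$-subgroup $L$ of order $p^3$. Both $\A_{H/K}$ and $\A_L$ are $p$-S-rings over groups isomorphic to $\Z_p^3$, hence are governed by Theorem~\ref{HMSW}. When $\A_{H/K}$ is indecomposable it is $\teq$-minimal by the already-settled cases $n\le 3$ (including Lemma~\ref{L-exp} for the exceptional ring); since $M\teq G$ descends to the induced groups on $H/K$, this forces $M^{H/K}=G^{H/K}$, so the element of $G\setminus M$ lies in the kernel of the action on $H/K$ and moves points only within $K$-cosets. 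The resulting order-$p$ fiber analysis then exhibits a basic set $T$ with $N_T\le M$, contradicting the previous paragraph. The induced subring $\A_L$ is used symmetrically to localize the discrepancy between $M$ and $G$ away from $L$.

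\textbf{The main obstacle} is the case in which every order-$p$ quotient and every order-$p^3$ induced subring of $\A$ is decomposable, so that the $n\le 3$ $\teq$-minimality cannot be invoked through any subquotient. Here one must use the global indecomposability of $\A$ directly: by Proposition~\ref{HM5} there is no basic set of size $p^3$, and $\A$ admits no $E/F$-wreath decomposition, which together sharply restrict the sizes and radicals of the basic sets via Lemma~\ref{L-p-S2}. With these restrictions in hand one computes the subgroups $N_T$ explicitly and checks the covering condition for every hyperplane $M\supseteq H_R$ of $G/\Phi(G)$. Reconciling ``locally decomposable'' with ``globally indecomposable,'' and carrying out this verification uniformly in $p$ — with the exceptional $\Z_p^3$ subquotients, where $|\Aut|=p^4$, requiring the most delicate bookkeeping — is where the real work concentrates.
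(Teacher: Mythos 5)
Your opening reductions are fine: a failure of $\teq$-minimality can indeed be pushed up to a normal subgroup $M$ of index $p$ with $H_R\le M\lhd G$, and $M\teq G$ is correctly shown to be equivalent to ``$N_T\not\le M$ for every basic set $T$'' via the conjugacy-of-stabilizers argument. But this reformulation is only a change of language; the content of the theorem is precisely the verification of your covering condition, and that verification never happens. In the case you yourself flag as the main obstacle (all order-$p$ quotients and order-$p^3$ subrings decomposable), your text says only that ``one computes the subgroups $N_T$ explicitly and checks the covering condition'' --- that is the theorem restated, not a proof. The paper does not proceed by any such computation; it proves three structural facts for which your plan has no substitute: Lemma~\ref{L-kernel} (for indecomposable $\A$, the kernel of $\Aut(\A)$ acting on $H/W$, $|W|=p$, is exactly $W_R$, proved by constructing a connected Cayley graph meeting each $W$-coset at most once), Lemma~\ref{L-center1} (every nontrivial $x\in N_{\Aut(\A)}(H_R)$ fixing $0$ has $|C_H(x)|\le p^2$), and Lemma~\ref{L-normal} ($H_R\trianglelefteq\Aut(\A)$ and $|\Aut(\A)|\le p^6$). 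With these, non-$\teq$-minimality forces $|\Aut(\A)|=p^6$ and $\A=V(H,\sg{x})$ for a single matrix $x$ of order $p$, which is then eliminated by the centralizer bound (via Proposition~\ref{P-center-G2}) and, in the Jordan-block case $|C_H(x)|=p$, by Proposition~\ref{HM2}. Nothing in your proposal plays the role of these lemmas, so the hard case is simply open.

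Two further gaps, one of which infects even your ``good'' case. First, from $M^{H/K}=G^{H/K}$ you conclude that some element of $G\setminus M$ lies in the kernel of the action on $H/K$ and then invoke an unspecified ``order-$p$ fiber analysis''; but to get a contradiction you must know this kernel is small, and a priori it need not be --- for a wreath product $\A_K\wr\A_{H/K}$ the kernel contains the base group $(\Z_p)^{p^3}$. Bounding the kernel by $K_R$ is exactly the paper's Lemma~\ref{L-kernel}, which rests on indecomposability and a connectivity argument you neither state nor prove. Second, your one-line proof that $G$ is a $p$-group is a non sequitur: the cycles of an individual element of $G_1$ are not $G_1$-orbits, so there is no reason for them to have $p$-power length. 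The statement is true, but it needs Proposition~\ref{W} (the $2$-closure of a $p$-group is a $p$-group) together with a Sylow argument: a Sylow $p$-subgroup $P\ge H_R$ of $G$ is transitive on every basic set (since each basic set is a $G_1$-orbit of $p$-power size), hence $V(H,P_1)=\A$, $P\teq G$, and $G=G^{(2)}=P^{(2)}$ is a $p$-group. Since your whole setup (maximal subgroups are normal of index $p$, ``normalizers grow'') depends on $G$ being a $p$-group, this needs to be repaired as well.
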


The proof of the theorem will be given in the end of the section following three preparatory lemmas.

Recall that, if $\A$ is an S-ring over $H$ and $W \le H$ is an $\A$-subgroup,  then the $W$-cosets in $H$ form a block system for  $\Aut(H)$.

\begin{lem}\label{L-kernel}
Let $\A$ be an indecomposable S-ring over a group
$H \cong \Z_p^n,$ and let $W$ be an $\A$-subgroup with
$|W|=p$. Then the kernel
$$
\Aut(\A)_\delta = W_R=\{ w_R : w \in W\},
$$
where $\delta$ denotes the block system $H/W$.
\end{lem}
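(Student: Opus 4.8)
The plan is to prove the two inclusions separately. One direction is immediate: each $w_R$ with $w\in W$ maps a coset $W+x$ to $W+x+w=W+x$, so $W_R\le\Aut(\A)_\delta$. For the converse I would first record the key structural fact that $W\le\O(\A)$. Indeed, since $W\cong\Z_p$ admits only the full S-ring, the induced S-subring $\A_W$ equals $\Q W$, so every element of $W$ is a singleton basic set; in particular $\{w\}\in\Bs(\A)$ for each $w\in W$, whence every $\gamma\in\Aut(\A)$ lies in $\Aut(\Cay(H,\{w\}))$ and therefore commutes with $w_R$. Thus $W_R\le Z(\Aut(\A))$. Writing $G=\Aut(\A)$ and $[x]$ for the coset $W+x$, and using that $W_R$ is transitive on the block through $0$, I obtain $G_\delta=W_R\,(G_\delta)_0$, so it suffices to prove $(G_\delta)_0=\{\id\}$, where $(G_\delta)_0=G_\delta\cap G_0$.

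Next I would pin down the shape of an arbitrary $g\in(G_\delta)_0$. Such a $g$ fixes $0$, is an automorphism of every $\Cay(H,T)$ with $T\in\Bs(\A)$, and fixes every $W$-coset setwise. Since $g$ commutes with $W_R$, its restriction to each coset $C=W+x$ commutes there with the regular cyclic group $W_R|_C$; as the centralizer of a regular abelian group in the symmetric group is that group itself, $g|_C$ must be a translation by some $t_{[x]}\in W$. Hence $g$ has the form $g(x)=x+t_{[x]}$ for a function $t\colon\delta\to W$ with $t_{[0]}=0$, and the task becomes to show $t\equiv 0$.

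The crucial input is a saturation fact for basic sets: if $T\in\Bs(\A)$ and $W\not\le\rad(T)$, then, because $W\le\O(\A)$, the $p$ translates $iw+T$ ($w$ a generator of $W$) are, by~\eqref{Eq-eT}, pairwise distinct and hence pairwise disjoint basic sets; counting their intersections with a single coset $s+W$ inside their union $W+T$ forces $|T\cap(s+W)|=1$ for every $s\in T$. Feeding this into the arc condition for $g$ — namely $g(x+s)-g(x)=s+(t_{[x]+[s]}-t_{[x]})\in T$ whenever $s\in T$ — shows that both $s$ and $s+(t_{b+[s]}-t_b)$ lie in the singleton $T\cap(s+W)$, so $t_{b+[s]}=t_b$ for all $b\in\delta$. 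Thus $t$ is invariant under translation by $[s]$ for every $s$ belonging to a non-$W$-saturated basic set.

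Finally I would invoke indecomposability. Let $E$ be the $\A$-subgroup generated by $W$ together with all non-$W$-saturated basic sets. Then every basic set disjoint from $E$ is $W$-saturated, so if $E\ne H$ we would obtain the decomposition $\A=\A_E\wr_{E/W}\A_{H/W}$ with $W\ne\{1\}$ and $E\ne H$, contradicting indecomposability; hence $E=H$. Consequently the classes $[s]$ coming from non-$W$-saturated basic sets generate $\delta$, and since $\{a\in\delta : t_{b+a}=t_b\ \forall b\}$ is a subgroup of $\delta$ containing all of them, it equals $\delta$. Therefore $t$ is constant, equal to $t_{[0]}=0$, giving $g=\id$ and $(G_\delta)_0=\{\id\}$. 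I expect the main obstacle to be precisely the step that this argument circumvents: since $g$ is only a digraph automorphism and need not be additive, knowing that $g$ fixes a generating set of $H$ does \emph{not} by itself force $g=\id$. The saturation count $|T\cap(s+W)|=1$ is exactly the ingredient that upgrades the pointwise data into translation-invariance of $t$, making the period-subgroup argument go through.
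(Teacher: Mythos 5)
Your proof is correct, and although it rests on the same two pillars as the paper's --- the saturation property (a basic set $T$ with $W \not\le \rad(T)$ meets each $W$-coset in at most one point) and the fact that indecomposability forces $W$ together with the non-saturated basic sets to generate $H$ --- the mechanics are genuinely different. The paper argues combinatorially: it builds a connection set $S=T_1\cup\cdots\cup T_r$ greedily (starting from $T_1=\{w\}$ and repeatedly choosing a non-saturated basic set outside $\sg{T_1\cup\cdots\cup T_i}$), verifies that $S$ meets every $W$-coset at most once --- which needs a separate small argument comparing elements of \emph{different} $T_i$'s --- and then runs a connectivity induction on the connected Cayley graph $\Cay(H,S)$ to show that the point stabilizer of the kernel is trivial. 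You instead establish a normal form for kernel elements, $g(x)=x+t_{[x]}$, using $W_R\le Z(\Aut(\A))$ and the self-centralizing property of regular abelian groups, and then kill $t$ by a period-subgroup argument: saturation turns the arc condition into the statement that every class $[s]$, with $s$ in a non-saturated basic set, is a period of $t$, and by indecomposability these classes generate $\delta$, so $t\equiv t_{[0]}=0$. Your route buys a cleaner endgame (no greedy sequence, no cross-coset disjointness check between distinct basic sets, no induction along paths), at the price of two extra structural inputs ($W\le\O(\A)$ and the centralizer fact). Your derivation of saturation is also different: you get it from Eq.~\eqref{Eq-eT}, since the $p$ translates $iw+T$ are pairwise distinct basic sets, hence disjoint, and fill up $W+T$; the paper gets it from the coset-counting identity Eq.~\eqref{Eq-T/K} via Lemma~\ref{L-p-S2}(ii). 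Both are sound.

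One caveat, which you share with the paper rather than depart from: your justification of $W\le\O(\A)$ (``$\Z_p$ admits only the full S-ring'') is valid only for $p$-S-rings; over $\Z_p$ there are proper S-rings (e.g.\ of rank $2$), but no proper $p$-S-rings. So your argument, exactly like the paper's --- whose proof takes $\{w\}$ to be a basic set and invokes Lemma~\ref{L-p-S2}(ii), a statement about $p$-S-rings --- implicitly assumes $\A$ is a $p$-S-ring. This is the intended reading (every application of the lemma in the paper is to a $p$-S-ring), and some such hypothesis is genuinely needed: the rank-$2$ S-ring over $H=\Z_p$ is indecomposable, $W=H$ is an $\A$-subgroup of order $p$, yet the kernel of $\Aut(\A)=\Sym(H)$ acting on the one-block system $H/W$ is all of $\Sym(H)$. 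It would be worth stating the $p$-S-ring hypothesis explicitly in your write-up.
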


\begin{proof}
Let $K=\Aut(\A)_\delta$. It is clear that
$W_R \le K,$ and thus it is enough to prove that the stabilizer
$K_0$ is trivial. By Lemma~\ref{L-p-S2}(ii), for every basic set $T \in \Bs(\A),$
\begin{equation}\label{Eq-coset}
W \not\le \rad(T) \Rightarrow T \cap (W+u)=\{u\} \text{ for all } u\in T.
\end{equation}

We define recursively a finite sequence $T_1,\ldots,T_r$ of basic sets of $\A$ as follows. Let $T_1=\{w\}$ where $w$ is an arbitrary nonzero element in $W$.
Now, suppose that the sets $T_1,\ldots,T_i$ are already defined for $i \ge 1$.
If $H=\sg{T_1 \cup \cdots \cup T_i},$ then finish the procedure and let $r=i$. Otherwise,
choose $T_{i+1}$ to be a basic set in
$H \setminus \sg{T_1 \cup \cdots \cup T_i}$ such that
$W \not\le \rad(T_{i+1})$. Notice that, such $T_{i+1}$ does exist because $\A$ is indecomposable.

Let $S=T_1 \cup \cdots \cup T_r$ and consider the Cayley graph
$\Cay(H,S)$.  Note that, $\Aut(\A) \le \Aut(\Cay(H,S))$.
It is clear from the construction that $\sg{S}=H,$ hence
$\Cay(H,S)$ is connected. We claim that $S$ has the property that,  whenever a $W$-coset intersects $S$, it does intersect it at exactly one element.  Suppose to the contrary that there exist $u_1, u_2 \in S$ such that
$u_1 \ne u_2$ and $u_1-u_2 \in W$.
Then $u_1 \in T_i$ and $u_2 \in T_j$ for some $i,j \in \{1,\ldots,r\}$. It follows  from the construction of $S$ and Eq.~\eqref{Eq-coset}
that $i \ne j$, and we may assume w.\ l.\ o.\ g.\ that $i < j$. Thus, $u_2 \in \sg{T_i,W} \le  \sg{T_1,\ldots,T_i},$ and so $u_2 \in \sg{T_1 \cup \cdots \cup T_{j-1}}  \cap T_j,$  a contradiction.
Now, using that $\Aut(\A) \le \Aut(\Cay(V,S))$ and the above property of $S,$ we find that every element in $K_0$ fixes
all neighbors of $0$ in $\Cay(H,S)$. This and the connectedness of $\Cay(H,S)$ yield that $|K_0|=1$.
\end{proof}

For $x\in \Aut(H),$ we define  $C_H(x)=\{u\in H : u_R \in C_{H_R}(x)\}$.

\begin{lem}\label{L-center1}
Let $\A$ be an indecomposable S-ring over a group
$H \cong \Z_p^4$ and let $x \in N_{\Aut(\A)}(H_R)$
such that $x \ne \id_H$ and $0^x=0$. Then $|C_H(x)| \le p^2$.
\end{lem}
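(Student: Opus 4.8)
The plan is to exploit the fact that $x$ acts as a group automorphism of $H$. Since $x$ normalizes $H_R$ and fixes $0$, a direct computation gives $x^{-1}h_Rx=(x(h))_R$ for all $h\in H$, so $x$ is the linear map $h\mapsto x(h)$ on the $GF(p)$-space $H$, and $C_H(x)=\{h:x(h)=h\}=\Fix(x)$. Thus $|C_H(x)|=p^{\dim\Fix(x)}$, and it suffices to rule out $\dim\Fix(x)\ge 3$. Since $x\neq\id_H$ we would then have $\dim\Fix(x)=3$, i.e.\ $x$ fixes a hyperplane $U=\Fix(x)$ pointwise. Set $W=(x-\id_H)(H)$, the image of $x-\id_H$; then $\dim W=4-3=1$, so $|W|=p$, and $x(h)-h\in W$ for every $h$, i.e.\ $x$ acts trivially on the quotient $H/W$. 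The goal becomes to show that $W$ is an $\A$-subgroup and then invoke Lemma~\ref{L-kernel}.

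First I would record that $x$ is unipotent. Using the index-$p$ chain of $\A$-subgroups from Proposition~\ref{P-p-S1}(ii), $x$ preserves each member (point stabilizer elements of $\Aut(\A)$ fix every $\A$-subgroup), and each successive quotient is a $p$-S-ring over $\Z_p$, hence the full group algebra with all singletons basic; therefore the induced map of $x$ on each quotient fixes it pointwise. Consequently $x-\id_H$ is nilpotent, so $W\subseteq\Fix(x)=U$, and for every $T\in\Bs(\A)$ the $\sg{x}$-orbits meeting $H\setminus U$ are full $W$-cosets, so $T\setminus U$ is a union of $W$-cosets. The crux is then the claim that $W$ is an $\A$-subgroup. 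I would approach this through the largest $\A$-subgroup $M$ contained in $U$. Here one first shows $W\subseteq M$: otherwise the induced transvection $\bar x$ on $H/M$ is nontrivial with fixed hyperplane $U/M$, and by Proposition~\ref{P-p-S1}(i) the thin radical $\O(\A_{H/M})$ is a nonzero $\A_{H/M}$-subgroup lying inside $\Fix(\bar x)=U/M$ (as $\bar x$ fixes $\O(\A_{H/M})$ pointwise), which pulls back to an $\A$-subgroup strictly between $M$ and $U$, contradicting maximality of $M$. With $W\subseteq M$ and $\dim M\le 2$, the same thin-radical reasoning applied inside $\A_M$ (whose restriction $x|_M$ has image contained in $W$) identifies $W$ with $\O(\A_M)$, so $W=\O(\A_M)$ is an $\A$-subgroup.

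Once $W$ is known to be an $\A$-subgroup of order $p$, Lemma~\ref{L-kernel} applies with $\delta=H/W$: the kernel $\Aut(\A)_\delta$ equals $W_R$. Since $x$ acts trivially on $H/W$ we get $x\in\Aut(\A)_\delta=W_R$, and $0^x=0$ forces $x\in(W_R)_0=\{\id_H\}$, contradicting $x\neq\id_H$. Hence $\dim\Fix(x)\le 2$ and $|C_H(x)|\le p^2$. I expect the main obstacle to be the crux step, namely showing that $W$ is an $\A$-subgroup, and within it the case $\dim M=2$: there one must treat the possibility that $x|_M$ is trivial, where locating $W$ requires passing to the quotient by a minimal $\A$-subgroup $L\subseteq\O(\A)$ and combining the dichotomy above with the fact that a generalized wreath decomposition of $\A_{H/L}$ lifts to one of $\A$ (so that indecomposability of $\A$ is preserved or contradicted). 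This lifting of decompositions across the $\A$-subgroup $L$, together with the careful bookkeeping of radicals of basic sets that are not $L$-invariant, is where the real work lies.
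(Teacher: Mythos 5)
Your opening reductions are sound: $x$ is a linear automorphism fixing every basic set setwise; the unipotency argument via the chain in Proposition~\ref{P-p-S1}(ii) works (granting, as the paper's own proof implicitly does, that $\A$ is a $p$-S-ring); the step $W\subseteq M$ via the thin radical of $\A_{H/M}$ is correct and clever; and the closing appeal to Lemma~\ref{L-kernel} would indeed finish the proof if $W$ were known to be an $\A$-subgroup. The fatal problem is the crux step itself. Since $M$ is by definition the largest $\A$-subgroup contained in $U=C_H(x)$, we have $M\subseteq\Fix(x)$, so $x$ fixes $M$ \emph{pointwise}: $x|_M=\id_M$ always, and the image of $x|_M-\id_M$ is $\{0\}$, not $W$. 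Hence ``the same thin-radical reasoning applied inside $\A_M$'' is vacuous --- it yields only $\O(\A_M)\subseteq\Fix(x|_M)=M$ --- and there is no reason that $\O(\A_M)$ should equal $W$ (it could be all of $M$, e.g.\ when $\A_M=\Q M$). Your remark that one must ``treat the possibility that $x|_M$ is trivial'' reveals the misconception: $x|_M$ is trivial in every case, so no analysis of $x$ on $M$, nor of the internal structure of $\A_M$, can locate $W$ inside $M$. Worse, your intermediate target cannot follow from soft arguments at all: the paper proves, by exactly your Lemma~\ref{L-kernel} argument read in the contrapositive direction, that under the reductio hypothesis $W$ is \emph{not} an $\A$-subgroup. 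Thus, under that hypothesis, ``$W$ is an $\A$-subgroup'' is equivalent to the desired contradiction itself, and establishing it is precisely as hard as the whole lemma.

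What is missing is therefore the entire second half of the paper's proof. Having concluded that neither $W$ nor $U$ is an $\A$-subgroup (ruling out $U$, i.e.\ your case $\dim M=3$, already needs the indecomposability/wreath-product argument, which you suppress by asserting $\dim M\le 2$ without proof), the paper fixes an $\A$-subgroup $U'$ of order $p^3$, shows that a basic set $T_1\subset U'$ with $T_1\not\subset U$ must be a coset of an $\A$-subgroup $U''=W+W'=U\cap U'$, then takes a basic set $T$ with $T\cap U\ne\emptyset$ and $T\not\subset U\cup U'$, analyses how $T$ meets the $U''$-cosets, and derives a numerical contradiction by comparing coefficients of $\un{T}\cdot\un{(-T)}$ on $U''\setminus\{0\}$. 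Nothing in your proposal substitutes for this counting. Finally, the lifting principle invoked in your last paragraph is false: a generalized wreath decomposition of $\A_{H/L}$ need not lift to one of $\A$. For instance, the exceptional (indecomposable) S-ring over $\Z_p^3$ has an $\A$-subgroup $L$ of order $p$ with quotient Cayley isomorphic to $\Q\Z_p\wr\Q\Z_p$, which is decomposable; so indecomposability does not pass between $\A$ and $\A_{H/L}$ in the direction your sketch requires.
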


\begin{proof}
Since $x \ne \id_H,$ it follows that $|C_H(x)| \le  p^3$.
Assume to the contrary that $|C_H(x)|=p^3$.
Let $U=C_H(x),$ and for a fixed $v_1 \in H \setminus U,$ let $W = \sg{v_1^x-v_1}$. Then $|W|=p,$ and it follows that the orbit $v^{\sg{x}} = W+v$ for all $v \in H \setminus U$. Observe that, $W$ is not an $\A$-subgroup. For otherwise,
$x$ belongs to the kernel of $\Aut(\A)$ acting on $H/W,$ which is impossible
by Lemma~\ref{L-kernel}.

Let $U'$ be an $\A$-subgroup of order $p^3$. If $U=U',$ then  define
$V = \bigcap_{T \in \Bs(\A), \, T \subseteq H \setminus U} \rad(T)$.
Clearly, $V$ is an $\A$-subgroup such that $W \le V \le U,$ and
$\A$ is an $U/V$-wreath product, a contradiction.
Hence, $U' \ne U,$ and in particular, $U$ is not an $\A$-subgroup.

Let $T_1 \in \Bs(\A)$ such that $T_1 \subset U'$ and 
$T \not\subset U$. Then $|T_1| \ge p,$ and since $W$ is not an 
$\A$-subgroup, it follows that 
$|T_1|=p^2$. By Proposition~\ref{HM5}, $T_1$ is equal to a $U''$-coset for some $\A$-subgroup $U''$ such that $|U''|=p^2$ and 
$W< U''$.  We find
$U''=W+W'$ for an $\A$-subgroup $W'$ of order $p$.
Since $W' \le \O(\A),$ $W' < U,$ and it follows that $U''=U' \cap U$.

Now, choose $T \in \Bs(\A)$ such that $T \not\subset U \cup U'$ and
$T \cap U \ne \emptyset$.
Notice that such $T$ exists because $U$ is not an $\A$-subgroup.
It follows that $|T| > p$. Fix an element $v \in T \cap U$.
Then $T \subseteq U'+v,$ see Lemma~\ref{L-p-S2}(i).
Since $\A$ is indecomposable, it follows from
Proposition~\ref{HM5} that $|T|=p^2$. Let $v' \in T \setminus U$.
If $W' \le \rad(T),$ then we find $U''+v'=W'+(W+v') \subseteq W'+T=T$. Thus, $T=U''+v',$ contradicting that
$T \cap U \ne \emptyset$. Thus, $W' \not\le \rad(T),$ and by
Eq.~\eqref{Eq-T/K}, every $W'$-coset intersects $T$ in at most $1$ element . Consequently, any $U''$-coset intersects $T$ in at most $p$ elements. The $U''$-cosets contained in $U'+v$ can be listed as $U''+ku+v,$ where
$k\in \{0,1,\ldots,p-1\}$ and $u$ is any fixed element in $U' \setminus U$.
Let $T_i=T \cap (U''+iu+v), \, i \in \{0,1,\ldots,p-1\}$.
It follows that the sets $T_i$ form a partition of $T,$ $T_i$ is
a $W$-coset for all $i>0,$ and $|T_0|=p$.

Let us consider the product $\un{T} \cdot \un{(-T)}$ in $\Q H$. Writing
$\un{T} \cdot \un{(-T)}=\sum_{u\in H}a_u u,$ it follows quickly from the above
description of the sets $T_i$ that
$\sum_{u\in U'' \setminus \{0\}}a_u=p^{3}-p^2$.
On the other hand, $\un{T} \cdot \un{(-T)} \in \A,$ and it can be expressed as
the linear combination $\un{T} \cdot \un{(-T)}=\sum_{T' \in \Bs(\A)}b_{T'}\un{T'}$.
Let $w \in W, \, w \ne 0$. Since $W$ is not an $\A$-subgroup, it
follows that the coset
$W'+w$ is a basic set of $\A$. Let us denote the latter basic set
by $T(w)$. It also follows
from description of the sets $T_i$ that $a_w \ge p^2-p$. Thus,
$b_{T(w)} \ge p^2-p$ as well, and as $w$ was chosen arbitrarily from
$W \setminus \{0\},$ we arrive at a contradiction as follows:
$$
p^3-p^2=\sum_{u\in U'' \setminus \{0\}}a_u \ge
\sum_{w\in W\setminus \{0\}}b_{T(w)}|T(w)| \ge (p-1)(p^3-p^2).
$$
\end{proof}

Let $\A$ be a $p$-S-ring over $H$. In what follows, we call an ordered $n$-tuple $(v_1,\ldots,v_n)$ of generators of $H$ an {\em $\A$-basis} if all
subgroups in the chain below are $\A$-subgroups
$$
\{0\} < \sg{v_n} < \sg{v_{n-1},v_n} < \cdots < \sg{v_1,\ldots,v_n}=H.
$$
Notice that, if $x \in \Aut(A)$ normalizes $H_R$ and $0^x=0,$ then
$x \in \Aut(H),$ and it can be written in an $\A$-basis as an upper triangular matrix having $1$'s in the diagonal.

\begin{lem}\label{L-normal}
Let $\A$ be an indecomposable $p$-S-ring over a group $H \cong \Z_p^4$. Then
\begin{enumerate}[(i)]
\item $|N_{\Aut(\A)}(H_R|\le p^6;$
\item $H_R$ is normal in $\Aut(\A)$.
\end{enumerate}
\end{lem}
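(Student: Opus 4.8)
Write $G=\Aut(\A)$ and $N=N_G(H_R)$. Since $H_R\le N$ acts transitively and $N_0\cap H_R=1$, we have $N=N_0H_R$ and $|N|=p^4\,|N_0|$, where $N_0$ is the stabiliser of $0$ in $N$; thus part (i) amounts to showing $|N_0|\le p^2$. Every element of $N_0$ fixes $0$ and normalises $H_R$, so $N_0\le\Aut(H)$, and with respect to an $\A$-basis $(v_1,v_2,v_3,v_4)$ it is an upper unitriangular matrix; in particular it fixes $v_4$. By Lemma~\ref{L-center1}, $\dim C_H(x)\le 2$ for every $\id\ne x\in N_0$, i.e.\ $\rank(x-\id)\ge 2$. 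Applying Lemma~\ref{L-kernel} with $W=\sg{v_4}$ (so that $\Aut(\A)_\delta=W_R$ for $\delta=H/W$) gives $N_0\cap W_R=1$, so $N_0$ acts faithfully on $H/\sg{v_4}\cong\Z_p^3$; as this action is unitriangular, $N_0$ embeds into the unitriangular group of degree $3$ and $|N_0|\le p^3$.

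To finish (i) I would exclude $|N_0|=p^3$ by producing a wreath decomposition, contradicting indecomposability. Put $V_2=\sg{v_3,v_4}$ and let $N_0^{(2)}$ be the kernel of the (cyclic, unitriangular) action of $N_0$ on $V_2$; then $|N_0^{(2)}|\ge p^2$, and every $\id\ne x\in N_0^{(2)}$ fixes $V_2$ pointwise, so $C_H(x)=V_2$. Let $S\le N_0^{(2)}$ be the subgroup of elements acting trivially also on $H/V_2$. For $\id\ne x\in S$ the induced map $\overline{x-\id}\colon H/V_2\to V_2$ must be an isomorphism (otherwise $\rank(x-\id)\le1$); hence $S$ corresponds to an additive subgroup of $\mathrm{Mat}_2(GF(p))$ whose nonzero members are all invertible, forcing $|S|\le p^2$. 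A short analysis then yields $|S|=p^2$. Indeed, if $|N_0^{(2)}|=p^3$ then $S$ has index at most $p$ in $N_0^{(2)}$, so $|S|\ge p^2$ and hence $|S|=p^2$; and if $|N_0^{(2)}|=p^2$ then $N_0^{(2)}$ is abelian, and were some $x\in N_0^{(2)}$ to act nontrivially on $H/V_2$ it would commute with an element of $S$ inducing an isomorphism $H/V_2\to V_2$, which a direct computation excludes, so again $|S|=p^2$. With $|S|=p^2$, for each $u\notin V_2$ the $S$-orbit of $u$ is the whole coset $V_2+u$; as $S\le\Aut(\A)_0$ these orbits refine the basic sets, whence $V_2\le\rad(T)$ for every basic set $T\not\subseteq V_2$. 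Therefore $\A=\A_{V_2}\wr\A_{H/V_2}$ is decomposable, a contradiction. This proves $|N_0|\le p^2$, i.e.\ (i).

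For (ii), recall that $G=\Aut(\A)$ is a $p$-group (as the $2$-closure of a $p$-group, cf.\ Proposition~\ref{W}). Assume $H_R$ is not normal; then $N<G$, and since $G$ is a $p$-group I may choose $g\in N_G(N)\setminus N$ and set $K=(H_R)^g$. Exactly as in the proof of Proposition~\ref{P-regular-R}, $K$ is a regular subgroup of $N$ with $K\cong\Z_p^4$, $K\ne H_R$, and $H_R,K\trianglelefteq N$. Put $Q=H_RK\le N$ and $Q_0=Q\cap G_0\le N_0$; then $Q_0\cong K/(K\cap H_R)$ is elementary abelian and $K\cap H_R\le Z(Q)$, so $C_{H_R}(Q_0)\ge K\cap H_R$. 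As $Q_0\ne1$, taking $\id\ne x\in Q_0$ and invoking Lemma~\ref{L-center1} gives $|C_{H_R}(Q_0)|\le|C_H(x)|\le p^2$, whence $|K\cap H_R|\le p^2$ and $|Q_0|=p^4/|K\cap H_R|\ge p^2$. Together with (i) this forces $|Q_0|=|N_0|=p^2$, $Q_0=N_0$, $|K\cap H_R|=p^2$, and the $2$-dimensional subspace $W$ with $W_R=C_{H_R}(N_0)=K\cap H_R$ is fixed pointwise by $N_0$.

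It now suffices to run the argument of the second paragraph with $W$ in place of $V_2$: if $W$ is an $\A$-subgroup, then splitting $N_0$ according to its action on $H/W$ gives, by the same trichotomy (using that $N_0$ is abelian of order $p^2$ and that an element acting nontrivially on $H/W$ cannot commute with one inducing an isomorphism $H/W\to W$), that the $N_0$-orbits outside $W$ are full $W$-cosets, whence $\A=\A_W\wr\A_{H/W}$ is decomposable --- the desired contradiction. The main obstacle, and the technical heart of part (ii), is therefore to prove that the subspace $W=K\cap H_R$ is an $\A$-subgroup. For this I would exploit the second regular group $K$: since $W_R=H_R\cap K\trianglelefteq Q$ acts as the translations by $W$, the set of $W$-cosets is a block system for $Q$ on which $H_R$ and $K$ induce two regular subgroups isomorphic to $\Z_p^2$ meeting trivially, and I would combine the analysis of this block system with radical computations of products $\un T\cdot\un{(-T)}$ in the spirit of the proof of Lemma~\ref{L-center1}.
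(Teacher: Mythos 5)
Your part (i) is correct, and its second half takes a genuinely different route from the paper's. Both proofs open the same way: elements of $N_0$ are unitriangular in an $\A$-basis, and Lemma~\ref{L-kernel} makes the $N_0$-action on $H/\sg{v_4}$ faithful, so $|N_0|\le p^3$. To kill the case $|N_0|=p^3$ the paper picks $x\in Z(N_0)$ (whose image is central in the full unitriangular group of degree $3$), uses Proposition~\ref{HM5} and indecomposability to bound the orbit $v_1^{N_0}$ by $p^2$, extracts a nontrivial $y\in N_0$ fixing $v_1$, and then $[x,y]=\id$ forces $v_1,v_3,v_4\in C_H(y)$, contradicting Lemma~\ref{L-center1}. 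Your partial-spread argument (an additive subgroup of $\mathrm{Hom}(H/V_2,V_2)$ all of whose nonzero elements are invertible has order at most $p^2$, and at order exactly $p^2$ its orbits outside $V_2$ are full $V_2$-cosets) instead manufactures the wreath decomposition $\A_{V_2}\wr\A_{H/V_2}$ directly. Both are sound; yours isolates the linear-algebra mechanism more cleanly.

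Part (ii) has a genuine gap, and it is the one you flag yourself: everything is conditional on $W=C_H(N_0)$ being an $\A$-subgroup, and you never prove this --- the closing sentences (``I would exploit the second regular group $K$\dots radical computations of $\un{T}\cdot\un{(-T)}$'') are a plan, not an argument. The claim is not a side issue: $W$ is defined purely group-theoretically, and the unitriangularity you need to see that the kernel $S$ of $N_0$ acting on $H/W$ is nontrivial is only available relative to a flag of S-ring subgroups through $W$. The paper's proof is engineered precisely to avoid this point. It passes to the larger S-ring $\B=V(H,N_0)\supseteq\A$: since $N_0$ fixes $W$ pointwise, $W\le\O(\B)$, so $W$ and a $\B$-subgroup $V$ of order $p^3$ containing it are automatically $\B$-subgroups, and all flags and matrix computations take place in a $\B$-basis. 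Indecomposability of $\A$ is then imported only in forms that do not require $W$ to be an $\A$-subgroup: through Lemma~\ref{L-center1}, and through Proposition~\ref{HM5} (a $\B$-basic set equal to a $W$-coset which failed to be an $\A$-basic set would sit inside an $\A$-basic set of size $p^3$, forcing a wreath decomposition). From this the paper gets $v_1$ and $x\in N_0$ with $v_1^x-v_1\notin W$, a $\B$-basis in which $x$ is a single Jordan block, and a commutation computation inside the abelian group $N_0=\sg{x,y}$ that produces an element with centralizer larger than $p^2$ --- the contradiction.

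For what it is worth, your trichotomy can be repaired without the subgroup claim: the image of $N_0$ in $\mathrm{GL}(H/W)$ is a $p$-group, and Sylow $p$-subgroups of $\mathrm{GL}(2,p)$ have order $p$, so $|S|\ge p$ unconditionally. If $S=N_0$, your evaluation argument makes every $N_0$-orbit outside $W$ a full $W$-coset; since basic sets of $\A$ are unions of $N_0$-orbits and have $p$-power size, Proposition~\ref{HM5} then forces every $\A$-basic set to lie in $W$ or be a union of $W$-cosets, so $W$ is an $\A$-subgroup \emph{a posteriori} and $\A$ is decomposable; if $S\ne N_0$, your commutation computation applies verbatim. (A smaller shared wrinkle: both you and the paper use that $\Aut(\A)$ is a $p$-group to grow normalizers; your one-line appeal to Proposition~\ref{W} really needs the Sylow argument identifying $\A$ with $V(H,P_0)$ for a Sylow $p$-subgroup $P$, which is available in the Schurian case where the lemma is applied.) As written, however, part (ii) is incomplete at its self-declared technical heart.
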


\begin{proof}
Let $G=\Aut(\A)$ and $N=N_{\Aut(\A)}(H_R)$.

(i): Assume to the contrary that $|N| > p^6,$ that is, for
the stabilizer $N_0$ we have $|N_0|> p^2$. Let us fix an $\A$-basis $(v_1,v_2,v_3,v_4)$. This means that $\sg{v_4}$ is an $\A$-subgroup, and we can consider the action of $N_0$ on $H/\sg{v_4}$. By Lemma~\ref{L-kernel}, the latter action is faithful, and hence $N_0$ is isomorphic to a subgroup
of the group of all upper triangular $3 \times 3$ matrices
with each diagonal element equal to $1$.
Therefore, $|N_0|=p^3,$ and we can choose
$x \in Z(N_0)$ that can be written in the basis $(v_1,v_2,v_3,v_4)$ as
$$
x=
\begin{pmatrix}
1 & 0 & 1 &  a \\
0 & 1 & 0  & b \\
0 & 0 & 1 & c \\
0 & 0 & 0 & 1
\end{pmatrix}.
$$

Then, the orbit $v_1^{N_0}$  has size at most $p^2$. This
follows from Proposition~\ref{HM5} and the fact that
$\A$ is indecomposable. Therefore, there exists $y \in N_0$
such that $y \ne \id_H$ and $y$ fixes $v_1$. Using also that $[x,y]=1,$ we find that $(v_1^x)^y=(v_1^y)^x=v_1^x,$
hence $v_1^x=v_1+v_3+av_4 \in C_H(y)$. It follows
that each of $v_1,\, v_3$ and $v_4$ is in $C_H(y)$.
This, however,  contradicts Lemma~\ref{L-center1}.
\smallskip

(ii):  We have to show that $G=N$. Assume to the contrary that
$G > N$. Then $N_G(N) > N$. Choose $g \in N_G(N) \setminus N$ and let $P=(H_R)^g H_R$. Since
$P_0 \le N_0,$ $|P_0| \le p^2$.
If $|P_0|=p,$ then $|P| = |H_R| \cdot |P_0| = p^5,$ hence
$|(H_R)^g \cap H_R|=p^3$.
Then every $x \in P_0$  satisfies $|C_{H_R}(x)| \ge  |(H_R)^g \cap H_R|=p^3,$ a contradiction to Lemma~\ref{L-center1}.
Therefore, $|P_0|=p^2,$ $P_0=N_0$ and each $z \in N_0$ satisfies
$|C_{H_R}(z)| \ge |(H_R)^g \cap H_R|=p^2$.
By Lemma~\ref{L-center1}, $C_{H_R}(z)=(H_R)^g \cap H_R$ whenever $z \ne \id_H$.
Therefore, letting $U=\{ u \in H : u_R \in H_R^g \cap H_R\},$ we can write
\begin{equation}\label{Eq-CHz}
C_H(z)=U \text{ for for all } z\in N_0, \, z \ne \id_H.
\end{equation}

Let us consider the S-ring $\B=V(H,N_0)$.
Clearly, $U \le \O(\B)$. Fix a $\B$-subgroup $V$ such that $V$ has order $p^3$ and
$U < V$. Let $v \in  H \setminus V$ and $T \in \Bs(\B)$ be a basic set such that
$v \in T$. By Lemma~\ref{L-p-S2}(i), $v^z-v \in V$.
Suppose that $v^z-v \in U$  for all $z \in N_0$. This implies that
$T \subseteq U+v,$ and thus either $T=U+v,$ or $|T| \le p$.
In the latter case, however, it follows that $N_0$ contains a nontrivial element $z$ fixing some $v \in T,$ and hence  $C_H(z) \ge \sg{U,v} > U,$ which  contradicts
Eq.~\eqref{Eq-CHz}.  Observe that, if $T=U+v,$ then it is also a 
basic set of $\A$. For otherwise, $\A$ would have a basic set of 
size $p^3,$ contradicting that $\A$ is indecomposable (see Proposition~\ref{HM5}). Now, since $\A$ is not a $V/U$-wreath product, there  exists $v_1 \in H \setminus V$ and $x \in N_0$ for which
$v_1^x-v_1 \notin U$.

Now, define the elements $v_2=v_1^x-v_1,$ $v_3=v_2^x-v_2,$ and let $v_4 \in U$ be an element such that $U=\sg{v_3,v_4}$. It follows that $(v_1,v_2,v_3,v_4)$ is a $\B$-basis, in which
$$
x=
\begin{pmatrix}
1 & 1 & 0 &  0 \\
0 & 1 & 1  & 0 \\
0 & 0 & 1 & 0 \\
0 & 0 & 0 & 1
\end{pmatrix}.
$$

Let $y \in N_0$ such that $N_0=\sg{x,y}$.
By Eq.~\eqref{Eq-CHz}, $C_H(y)=U=\sg{v_3,v_4},$ and thus $y$ can be written in the basis $(v_1,v_2,v_3,v_4)$ in the form
$$
y=
\begin{pmatrix}
1 & a & b &  c \\
0 & 1 & d  & e \\
0 & 0 & 1 & 0 \\
0 & 0 & 0 & 1
\end{pmatrix}.
$$

Using also that $[x,y]=1,$ we find $d=a$ and $e=0$. Thus,
$v_2^y=v_2+dv_3$. On the other hand, $v_2^{(x^d)}=v_2+dv_3$ also holds, and we find $v_2 \in C_H(x^dy^{-1}),$ where $x^dy^{-1} \ne \id_H$.
This contradicts Eq.~\eqref{Eq-CHz}.
\end{proof}

All Schurian $p$-S-rings over $\Z_p^4$ are CI \cite[Theorem~3.1]{HM}. Combining this with Lemma~\ref{L-normal} yields the following
corollary (see also the proof of Lemma~\ref{L-exp}):

\begin{cor}\label{C-normal}
Let $\A$ be an indecomposable Schurian $p$-S-ring over a group
$H, \, H \cong \Z_p^4$ for an odd prime $p$. Then
$\Iso_0(\A)=\Aut(H)$.
\end{cor}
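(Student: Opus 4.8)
The plan is to assemble the corollary from two ingredients that are already available: that every Schurian $p$-S-ring over $\Z_p^4$ is CI by \cite[Theorem~3.1]{HM}, and that $H_R$ is normal in $\Aut(\A)$ by Lemma~\ref{L-normal}(ii). The argument then parallels the proof of Lemma~\ref{L-exp} almost verbatim. Here $\Iso_0(\A)$ denotes the set of normalized isomorphisms, that is, those $f \in \Iso(\A)$ fixing the identity element (now written $0$ in the additive notation of this section), and $\Aut(\A)_0$ denotes the corresponding point stabilizer.

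First I would dispatch the easy inclusion $\Aut(H) \subseteq \Iso_0(\A)$. Since $\Aut(\A)\Aut(H) \subseteq \Iso(\A)$ and every group automorphism fixes $0$, each $\varphi \in \Aut(H)$ is a normalized isomorphism of $\A$ onto an S-ring over $H$, so this direction is immediate.

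For the reverse inclusion I would exploit the Schurian hypothesis together with the CI-property. Because $\A$ is Schurian, its basic sets are exactly the $\Aut(\A)_0$-orbits, hence $\A=V(H,\Aut(\A)_0)$ with $\Aut(\A)$ being $2$-closed; this is precisely the setting required to apply Proposition~\ref{HM4}. As $\A$ is CI by \cite[Theorem~3.1]{HM}, part (iii) of that proposition yields $\Iso_0(\A)=\Aut(\A)_0\Aut(H)$. It therefore suffices to show that $\Aut(\A)_0 \le \Aut(H)$. At this point I invoke Lemma~\ref{L-normal}(ii): since $H_R \trianglelefteq \Aut(\A)$, every element of the stabilizer $\Aut(\A)_0$ simultaneously normalizes the regular subgroup $H_R$ and fixes $0$, and any permutation of $H$ with these two properties is a group automorphism. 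Hence $\Aut(\A)_0 \le \Aut(H)$, and consequently $\Iso_0(\A)=\Aut(\A)_0\Aut(H)=\Aut(H)$.

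Since the two substantial ingredients, namely the CI-property from \cite{HM} and the normality of $H_R$ from Lemma~\ref{L-normal}, are already established, no step here poses a genuine obstacle. The only point demanding care is the verification that Proposition~\ref{HM4}, which is phrased for transitivity modules $V(H,G_1)$ with $G$ $2$-closed, does apply to $\A$; this is guaranteed by the Schurian hypothesis, which ensures $\A=V(H,\Aut(\A)_0)$.
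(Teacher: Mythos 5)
Your proof is correct and follows exactly the route the paper intends: the paper derives this corollary by combining \cite[Theorem~3.1]{HM} with Lemma~\ref{L-normal}, mirroring the proof of Lemma~\ref{L-exp}, which is precisely your argument (CI-property plus Proposition~\ref{HM4} gives $\Iso_0(\A)=\Aut(\A)_0\Aut(H)$, and normality of $H_R$ forces $\Aut(\A)_0 \le \Aut(H)$). Your care in checking that the Schurian hypothesis makes $\A=V(H,\Aut(\A)_0)$ with $\Aut(\A)$ $2$-closed, so that Proposition~\ref{HM4} applies, is exactly the right point to verify.
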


Everything is prepared to prove the main result of this section.

\begin{proof}[Proof of Theorem~\ref{T-teq-min}]
Assume to the contrary that $\A$ is a Schurian indecomposable  $p$-S-ring over
$H,$ which is not $\teq$-minimal. Let $G=\Aut(\A)$. By Lemma~\ref{L-normal},
$H_R \trianglelefteq G$ and $|G|\le p^6$.
As $\A$ is not $\teq$-minimal, $|G|=p^6,$ and there exists
$x \in G_0$ such that $x$ has order $p,$ and  $\A=V(H,\sg{x})$.
In other words, $G \teq K$ where $K=\sg{H_R,x}$. Note that
$G=K^{(2)}$. Let $u \in C_H(x)$. Then $u_R \in Z(K),$ and by
Proposition~\ref{P-center-G2}, $u_R \in Z(K^{(2)})=Z(G),$ implying
that $u \in C_H(y)$ for any $y \in G_0$. We obtain that
$C_H(x) \le C_H(y)$ for all $y \in G_0$.
Also, as $\A=V(H,\sg{x}),$ every basic set of $\A$ has size at most $p$.
Thus, if $|C_H(x)|=p^2,$ then one can find $y\in G_0$ such that $y \ne \id_H$ and $|C_H(y)|=p^3,$ which is impossible by
Lemma~\ref{L-center1}. It remains to consider the case
when $|C_H(x)|=p$. Equivalently, $\rank(x-I)=3,$ and this implies
that, in a suitable basis, denoted by $(v_1,v_2,v_3,v_4),$ $x$ has the following Jordan normal form:
$$
x=
\begin{pmatrix}
1 & 1 & 0 & 0 \\ 0 & 1 & 1  & 0 \\ 0 & 0 & 1 & 1 \\ 0 & 0 & 0 & 1
\end{pmatrix}.
$$
Since $x$ has order $p,$ it follows that $p> 3$.
Let $T$ be the orbit of $v_1$ under $\sg{x}$ (hence under $G_0$). It is not hard to check that $|T|=p$ and $\sg{T}=H$. Then, by Proposition~\ref{HM2},
$|G_0|=p,$ a contradiction. This completes the proof of the theorem.
\end{proof}

We finish the section with a corollary of
Proposition~\ref{P-teq} and Theorem~\ref{T-teq-min}, which will be used several times in the next two sections.

\begin{cor}\label{C-teq-min}
Let $\A$ be a Schurian $p$-S-ring over the group
$H\cong \Z_p^5,$ and let $W$ be an $\A$-subgroup of order $p$. If $\A$ is a non-CI-S-ring, then the S-ring $\A_{H/W}$ is decomposable.
\end{cor}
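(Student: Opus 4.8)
The plan is to prove this by contradiction, assuming that $\A_{H/W}$ is \emph{indecomposable} and deducing that $\A$ must be a CI-S-ring — contrary to hypothesis. The statement is a corollary precisely because, once the setup is in place, it is a short deduction from Proposition~\ref{P-teq} and Theorem~\ref{T-teq-min}; the real work has already been carried out in the previous sections, and the task here is only to verify that their hypotheses are met and to chain them together.

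First I would record the quotient setting. Since $H\cong\Z_p^5$ is abelian, $W\trianglelefteq H$, so the quotient S-ring $\A_{H/W}$ is defined and lives over $H/W\cong\Z_p^4$. Because $\A$ is Schurian, its quotient is again Schurian: this is exactly the computation in the proof of Proposition~\ref{P-teq} establishing $\A_{H/K}=V(H/K,\bar{G}_{\bar{1}})$ for an $\A$-subgroup $K$ (with $\bar{G}=\Aut(\A)^{H/K}$), which uses nothing beyond the Schurity of $\A$. Moreover, by Eq.~\eqref{Eq-T/K} each basic set $T/W$ of $\A_{H/W}$ has size dividing $|T|$, hence of $p$-power size, so $\A_{H/W}$ is a Schurian $p$-S-ring over $\Z_p^4$.

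Now I would feed in the two structural inputs. On one hand, every Schurian $p$-S-ring over $\Z_p^4$ is a CI-S-ring by \cite[Theorem~3.1]{HM}, so $\A_{H/W}$ is CI. On the other hand, under the standing assumption that $\A_{H/W}$ is indecomposable, Theorem~\ref{T-teq-min} guarantees that $\A_{H/W}$ is $\teq$-minimal. Thus $\A_{H/W}$ is a $\teq$-minimal CI-S-ring, and Proposition~\ref{P-teq} applies verbatim with $K=W$: since $\A$ is a Schurian $p$-S-ring over $\Z_p^5$ and $W$ is an $\A$-subgroup of order $p$ with $\A_{H/W}$ a $\teq$-minimal CI-S-ring, we conclude that $\A$ itself is a CI-S-ring. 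This contradicts the assumption that $\A$ is non-CI, forcing $\A_{H/W}$ to be decomposable.

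I do not expect a genuine obstacle here, since the heavy lifting is precisely Theorem~\ref{T-teq-min} (the $\teq$-minimality of indecomposable Schurian $p$-S-rings over $\Z_p^4$), which is already proved. The only point that requires care is confirming that $\A_{H/W}$ is Schurian, so that both \cite[Theorem~3.1]{HM} and Theorem~\ref{T-teq-min} are applicable; as noted above, this is immediate from the construction of the quotient and the Schurity of $\A$. Everything else is bookkeeping in matching the hypotheses of Proposition~\ref{P-teq}.
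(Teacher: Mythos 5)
Your proposal is correct and is exactly the argument the paper intends: the corollary is stated as a consequence of Proposition~\ref{P-teq} and Theorem~\ref{T-teq-min}, and you chain them together in the contrapositive just as the paper does, including the two hypothesis checks (Schurity of $\A_{H/W}$ via the computation $\A_{H/K}=V(H/K,\bar{G}_{\bar{1}})$ from the proof of Proposition~\ref{P-teq}, and CI-ness of $\A_{H/W}$ via \cite[Theorem~3.1]{HM}, which the paper itself invokes just before Corollary~\ref{C-normal}). No gaps.
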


\section{Proof of Theorem~\ref{3} I: The decomposable S-rings}

We record all assumptions of Theorem~\ref{3} in the following  hypothesis:

\begin{hyp}\label{hyp}
$\A=V(H,A)$ is an S-ring over a group $H\cong \Z_p^5$ for
some odd prime $p,$ and for some subgroup
$A \le \Aut(H)$ with $|C_H(A)| \ge p^2$.
\end{hyp}

Our eventual goal is to show that, assuming Hypothesis~\ref{hyp},
the S-ring $\A$ is CI. In this section, we deal with the particular case when
$\A$ is decomposable.

\begin{thm}\label{31}
Assuming Hypothesis~\ref{hyp},
suppose that $\A$ is decomposable.
Then $\A$ is a CI-S-ring.
\end{thm}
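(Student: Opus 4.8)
The plan is to combine the reduction furnished by Corollary~\ref{C-teq-min} with a direct analysis of the wreath structure. Recall that $A$ is a $p$-group, so $\A=V(H,A)$ is a Schurian $p$-S-ring; its basic sets are the $A$-orbits on $H$. Each $u\in C_H(A)$ is fixed by $A$ and hence is a singleton basic set, so $C_H(A)\le\O(\A)$ and Hypothesis~\ref{hyp} gives $|\O(\A)|\ge p^2$. I would first record two easy structural facts. Any $\A$-subgroup $W$ of order $p$ lies in $\O(\A)$: the set $W\setminus\{0\}$ is a union of basic sets of $p$-power size summing to $p-1<p$, so they are all singletons. Next, fixing a decomposition $\A=\A_E\wr_{E/F}\A_{H/F}$ with $\{0\}\ne F\le E\ne H$, every basic set outside $E$ has size at least $|F|\ge p$, so $\O(\A)=\O(\A_E)\le E$; in particular $|E|\ge p^2$. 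Finally, using Proposition~\ref{P-p-S1} I may pick an $\A$-subgroup of order $p$ inside $F$, and since shrinking $F$ preserves the defining condition $F\le\rad(T)$, I may assume $|F|=p$, so that $H/F\cong\Z_p^4$ and $\A_{H/F}$ is a Schurian $p$-S-ring over $\Z_p^4$.

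First I would dispose of the case in which $\A_{H/F}$ is indecomposable. Then $\A_{H/F}$ is $\teq$-minimal by Theorem~\ref{T-teq-min}, and being a Schurian $p$-S-ring over $\Z_p^4$ it is a CI-S-ring by \cite{HM}; applying Proposition~\ref{P-teq} with $K=F$ (equivalently, Corollary~\ref{C-teq-min}) shows $\A$ is CI. It is worth emphasizing that this is essentially the only way Corollary~\ref{C-teq-min} can be applied here: for any order-$p$ $\A$-subgroup $W\le\O(\A)$ the quotient $\A_{H/W}$ again carries a generalized wreath decomposition, with bottom group $FW/W$ and top group $E/W$, and this decomposition is proper and nontrivial unless $W=F$. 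Hence the remaining work is concentrated in the case where $\A_{H/F}$ is itself decomposable.

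For that remaining case I would argue directly that every normalized isomorphism is a product of an automorphism of $\A$ and a group automorphism. Let $f\in\Iso_1(\A)$. Since $E$ and $F$ are $\A$-subgroups, Proposition~\ref{HM3} shows that $E^f,F^f$ are subgroups of the same orders with $F^f\le E^f$, so after composing $f$ with a suitable element of $\Aut(H)$ (which keeps it in $\Iso_1(\A)$) I may assume $E^f=E$ and $F^f=F$. Then $f$ restricts to $f_E\in\Iso_1(\A_E)$ and induces $f^{H/F}\in\Iso_1(\A_{H/F})$. Both $\A_E$ and $\A_{H/F}$ are Schurian $p$-S-rings over elementary abelian groups of rank at most $4$, hence CI by \cite{HM}; therefore $f_E$ agrees on basic sets with some $\sigma\in\Aut(E)$, and $f^{H/F}$ agrees on basic sets with some $\tau\in\Aut(H/F)$. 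If $\sigma$ and $\tau$ can be chosen to induce the same map on the section $E/F$, then a routine linear-algebra extension yields $\varphi\in\Aut(H)$ restricting to $\sigma$ on $E$ and inducing $\tau$ on $H/F$; the isomorphism $f\varphi^{-1}$ then fixes every basic set of $\A_E$ and of $\A_{H/F}$, and since $\A$ is reconstructed uniquely from these two S-rings, $f\varphi^{-1}\in\Aut(\A)$. Thus $f\in\Aut(\A)\Aut(H)$, and $\A$ is CI by Proposition~\ref{HM4}.

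The delicate point, and the one I expect to be the main obstacle, is exactly this coherence on the section $\A_{E/F}$. The maps $\sigma$ and $\tau$ descend to automorphisms $\bar\sigma,\bar\tau$ of $E/F$ that induce the same permutation of $\Bs(\A_{E/F})$ (namely the one induced by $f$), so $\bar\sigma\bar\tau^{-1}\in\Aut(\A_{E/F})$; but to glue $\sigma$ and $\tau$ into a single $\varphi\in\Aut(H)$ one must absorb this discrepancy by modifying $\sigma$ within its $\Aut(\A_E)$-coset and $\tau$ within its $\Aut(\A_{H/F})$-coset, that is, one must realize every element of $\Aut(\A_{E/F})$ through the section-restriction maps $\Aut(\A_E)\to\Aut(\A_{E/F})$ and $\Aut(\A_{H/F})\to\Aut(\A_{E/F})$. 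Establishing this surjectivity is where the hypotheses must be used in earnest: I would exploit that $|\O(\A)|\ge p^2$ with $\O(\A)\le E$ forces $\A_E$ to contain many singleton basic sets, so that $\Aut(\A_E)$ is correspondingly large, and I would run a case analysis on $|E|$ and on the isomorphism types of the small S-rings $\A_E$, $\A_{H/F}$ and $\A_{E/F}$, invoking the classification of $p$-S-rings over $\Z_p^2$ and $\Z_p^3$ (Theorem~\ref{HMSW}) together with the structural lemmas of Section~4.
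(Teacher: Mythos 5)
Your preliminary reductions are correct and match the paper's toolkit: $C_H(A)\le\O(\A)$ so $|\O(\A)|\ge p^2$, every $\A$-subgroup of order $p$ lies in $\O(\A)$, $\O(\A)\le E$, the reduction to $|F|=p$, and the disposal of the case where $\A_{H/F}$ is indecomposable via Theorem~\ref{T-teq-min}, \cite[Theorem~3.1]{HM} and Proposition~\ref{P-teq} (equivalently Corollary~\ref{C-teq-min}) are all sound. The gluing logic is also valid in the direction you use it: if $\varphi\in\Aut(H)$ restricts on $E$ to a map agreeing with $f$ on $\Bs(\A_E)$ and induces on $H/F$ a map agreeing with $f^{H/F}$ on $\Bs(\A_{H/F})$, then $f\varphi^{-1}$ fixes every basic set of $\A$, since every basic set outside $E$ is a union of $F$-cosets; and the extension of a coherent pair $(\sigma,\tau)$ to some $\varphi\in\Aut(H)$ is indeed routine linear algebra.

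The genuine gap is precisely the step you defer: the coherence claim that the discrepancy $\bar\sigma\bar\tau^{-1}$, which lies in the group of automorphisms of $E/F$ fixing every basic set of $\A_{E/F}$ setwise, can always be absorbed by modifying $\sigma$ within $\bigl(\Aut(\A_E)\cap\Aut(E)\bigr)\sigma$ and $\tau$ within $\bigl(\Aut(\A_{H/F})\cap\Aut(H/F)\bigr)\tau$. This is not a routine statement, and it cannot follow from softness (CI-ness of $\A_E$ and $\A_{H/F}$ alone): generalized wreath products of CI-S-rings over elementary abelian groups are in general not CI --- this failure is exactly the mechanism behind the non-CI examples of Nowitz, Muzychuk, Spiga and Somlai cited in the introduction --- so the hypotheses $n=5$ and $|C_H(A)|\ge p^2$ must be exploited through hard structural analysis. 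That analysis is the entire content of the paper's Section~5: Lemma~\ref{L-AU-ind} (the case $\A_E$ indecomposable), Lemma~\ref{L-gwp} (normalization of $f$), and above all Lemmas~\ref{L-bfly1} and \ref{L-bfly2}, which treat the configuration in which $\A$ carries two distinct generalized wreath decompositions with exceptional sections; there the conclusion is obtained not by realizing section automorphisms, but by pinning down $|N_{\Aut(\A)}(H_R)|\in\{p^8,p^9\}$, classifying the basic sets outside $U_1\cup U_2$, and computing explicitly with the functions $F_1,F_2$. Note also that the paper reaches that configuration by using Corollary~\ref{C-teq-min} \emph{structurally} in the decomposable case (to force $\A_{H/W}$ to be decomposable and thus produce a second wreath decomposition), a tool your proposal foregoes; and your surjectivity requirement (realizing \emph{every} element of the section's fixing group) is stronger than what is actually needed, since only discrepancies arising from isomorphisms of $\A$ must be absorbed, so it may even fail where the theorem holds. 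In short, your proposal reduces Theorem~\ref{31} to an unproved coherence lemma whose proof would require essentially all of the work the paper does, and whose validity in the hardest (butterfly) configuration you have not verified.
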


The theorem will be proved in the end of the section following four preparatory lemmas.
In the next three lemmas we study the S-ring $\A$ described in Theorem~\ref{31} which satisfies
additional conditions.

\begin{lem}\label{L-bfly1}
Assuming Hypothesis~\ref{hyp}, suppose that there exist  $\A$-subgroups $U_1,\, U_2, \, W_1$ and $W_2$ with
$|U_1|=|U_2|=p^4, \, |W_1|=|W_2|=p, \, U_1 \ne U_2, \, W_1 \ne W_2$ and $W_1+W_2 < U_1 \cap U_2,$ and also that the following hold:
\begin{enumerate}[(1)]
\item  $\A$ is both a $U_1/W_2$- and a $U_2/W_1$-wreath product.
\item  Both $\A_{U_1/W_1}$ and  $\A_{U_2/W_2}$ are indecomposable.
\item $|A_v| \ne 1$ for some $v \in H \setminus U_1 \cup U_2$.
\end{enumerate}
Then one of the following possibilities holds:
\begin{enumerate}[(i)]
\item $|N_{\Aut(\A)}(H_R)|=p^8,$ and there exists an $\A$-subgroup
$U_3$ such that $|U_3|=p^4, \, U_3 \ne U_i$ for $i \in \{1,2\},$ and
every basic set of $\A$ not contained in $U_1 \cup U_2 \cup U_3$ is equal to a $(U_1 \cap U_2)$-coset.
\item $|N_{\Aut(\A)}(H_R)|=p^9,$ and every basic set of $\A$ not contained in $U_1 \cup U_2$ is equal to a $(U_1 \cap U_2)$-coset.
\end{enumerate}
\end{lem}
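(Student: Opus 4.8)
The plan is to analyze the structure imposed by the two overlapping wreath decompositions and then compute the normalizer $N:=N_{\Aut(\A)}(H_R)$ together with the basic sets lying outside $U_1 \cup U_2$. Write $V=U_1 \cap U_2$; since $|U_1|=|U_2|=p^4$ and $U_1 \ne U_2$, we have $|V|=p^3$, and by hypothesis $W_1+W_2 < V$ with $|W_1+W_2|=p^2$. The two wreath conditions say that every basic set $T \subset H \setminus U_1$ satisfies $W_2 \le \rad(T)$, and every basic set $T \subset H \setminus U_2$ satisfies $W_1 \le \rad(T)$. My first step is to combine these: any basic set $T$ lying outside $U_1 \cup U_2$ has both $W_1, W_2 \le \rad(T)$, hence $W_1+W_2 \le \rad(T)$. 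I would then argue, using that $\A_{U_1/W_1}$ and $\A_{U_2/W_2}$ are indecomposable (condition (2)) together with Lemma~\ref{L-kernel} and Lemma~\ref{L-center1} applied to the quotients, that in fact $V \le \rad(T)$ for such $T$; the indecomposability is what prevents the radical from stabilizing at $W_1+W_2$ and forces it up to all of $V$. This is the crux of the ``$(U_1 \cap U_2)$-coset'' conclusion.

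**Locating the normalizer.**
Next I would study $N$ and its point stabilizer $N_0$. Because $H_R \trianglelefteq N$, each $x \in N_0$ acts as an element of $\Aut(H)$, and condition (3) guarantees $N_0 \ne 1$ via some $v \notin U_1 \cup U_2$ with $A_v \ne 1$. The wreath structure forces $U_1, U_2$ (hence $V$) to be invariant under $N_0$, so every $x \in N_0$ stabilizes the flag $V < U_i < H$ and acts trivially on the relevant quotients dictated by the wreath radicals. I would pass to the induced actions on $H/W_1$ and $H/W_2$ and invoke the bounds on $N_0$ coming from Lemma~\ref{L-normal}(i) applied to the indecomposable quotients $\A_{U_1/W_1}$ and $\A_{U_2/W_2}$. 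Counting the constraints on the upper-triangular matrix representing $x$ in a suitable $\A$-basis adapted to the chain $W_i < W_1+W_2 < V < U_i < H$ yields $|N_0| \in \{p^2, p^3\}$, i.e. $|N| \in \{p^8, p^9\}$. The dichotomy (i) versus (ii) then corresponds to exactly which direction of freedom survives in $N_0$: case (ii) is the ``generic'' larger stabilizer, while in case (i) one degree of freedom is spent and the extra structure reappears as a third $\A$-subgroup $U_3$ of order $p^4$.

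**Producing $U_3$ in case (i).**
The delicate part is manufacturing the subgroup $U_3$ in case (i) and showing it absorbs the remaining basic sets. When $|N_0|=p^2$, I would pick an element $x \in N_0$ realizing the nontrivial action on some $v \notin U_1 \cup U_2$ and set $U_3 = \sg{V, v^x - v}$ or a similarly defined $\A$-subgroup obtained as the span of $V$ together with a well-chosen coset representative fixed or shifted by $N_0$; the point is that $V \le \rad$ of the relevant basic sets forces any basic set meeting $H \setminus (U_1 \cup U_2 \cup U_3)$ to again be a full $V$-coset by the radical argument of the first paragraph, while those inside $U_3$ are governed by $\A_{U_3}$. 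Verifying that $U_3$ is genuinely an $\A$-subgroup distinct from $U_1, U_2$ — rather than coinciding with one of them or failing to be $\A$-closed — is where the computation is least automatic, and it is here that conditions (1)–(3) must be used in full. The main obstacle I anticipate is precisely disentangling cases (i) and (ii): pinning down $|N_0|$ requires carefully tracking how the commuting constraints among the $N_0$-matrices interact with the radical conditions, and ruling out that a would-be $U_3$ collapses back into $U_1 \cup U_2$. Once the coset description of the outside basic sets is established in both cases, the stated normalizer orders follow by the matrix count, completing the proof.
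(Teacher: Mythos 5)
Your central claim in the first paragraph --- that the indecomposability of $\A_{U_1/W_1}$ and $\A_{U_2/W_2}$ forces $U_1\cap U_2\le\rad(T)$ for \emph{every} basic set $T$ not contained in $U_1\cup U_2$ --- is false, and no radical-theoretic argument can establish it, because it is exactly what fails in case (i) of the lemma. In that case there is a third $\A$-subgroup $U_3=\sg{U_1\cap U_2,\,v}$ (note: the span of $V=U_1\cap U_2$ with the point $v$ itself from condition (3)), and the basic sets inside $U_3\setminus(U_1\cup U_2)$ are cosets of $W_1+W_2$, of size $p^2$, not $(U_1\cap U_2)$-cosets. This case genuinely occurs: taking $A$ to be the group generated by the three matrices $x,y_1,y_2$ of the paper's Eq.~\eqref{Eq-mtx} satisfies all hypotheses, and one checks directly that the basic set containing $-v_1+v_2$ is the $(W_1+W_2)$-coset $-v_1+v_2+W_1+W_2$. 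Thus whether the coset conclusion holds outside $U_1\cup U_2$ or only outside $U_1\cup U_2\cup U_3$ depends on the order of $N_0:=\left(N_{\Aut(\A)}(H_R)\right)_0$; the dichotomy must be derived \emph{from} the computation of $N_0$, not before it, so your plan runs backwards (and your first paragraph, which in effect asserts that case (ii) always holds, contradicts your own third paragraph). The paper's proof instead uses condition (3) to build an explicit $\A$-basis $(v_1,\ldots,v_5)$, writes $x$ as an explicit matrix, exhibits two further automorphisms $y_1,y_2\in N_0$ (giving $|N_0|\ge p^3$), and bounds $|N_0\cap N_{v_1}|\le p^2$ via the identity $\Aut(\A_{U_i/W_i})_0=\sg{x^{U_i/W_i}}$; the relevant input here about the indecomposable quotients over $U_i/W_i\cong\Z_p^3$ is Theorem~\ref{HMSW} and Lemma~\ref{L-exp} (exceptional S-rings), not Lemma~\ref{L-normal}(i), which concerns $\Z_p^4$ and does not apply. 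The coset description of the remaining basic sets is then read off from the $N_0$-orbits using Proposition~\ref{HM5} and Theorem~\ref{T-1st-multi}.

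Two further concrete defects. First, your proposed construction $U_3=\sg{V,\,v^x-v}$ collapses: by Lemma~\ref{L-p-S2}(i) every basic set not contained in $U_1\cup U_2$ lies inside a single $V$-coset, and since any element of $N_0$ preserves the basic set containing $v$, we get $v^x-v\in V$, whence $\sg{V,v^x-v}=V$ has order $p^3$, not $p^4$. Second, your count is internally inconsistent: $|N_{\Aut(\A)}(H_R)|=|H|\cdot|N_0|=p^5\,|N_0|$, so $|N_0|\in\{p^2,p^3\}$ would give normalizer order $p^7$ or $p^8$, not $p^8$ or $p^9$; the correct values are $|N_0|=p^3$ in case (i) and $|N_0|=p^4$ in case (ii). These are not cosmetic slips: the lower bound $|N_0|\ge p^3$ requires actually exhibiting the automorphisms $y_1,y_2$, which your sketch never produces, and without them the dichotomy cannot even be formulated, let alone resolved.
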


\begin{proof}
We let $N=N_{\Aut(\A)}(H_R), \, W=W_1+W_2$ and $U=U_1 \cap U_2$. Note that $\A=V(H,N_0),$ where $N_0$ denotes the stabilizer of
$0$ in $N$. Also notice that, $w^\gamma=w$ for all $w\in W$ and
$\gamma \in N_0$.
Fix a non-trivial element $x \in A_v,$ and some $u_1 \in U_1 \setminus U$ which is not fixed by $x$.
Then there exists an integer $k$ such that $ku_1+v \in U_2$. Since $v \notin U_1,$ it follows that
$ku_1+v \notin U$. We define the elements $v_1=ku_1, \, v_2=ku_1+v$ and $v_3=v_1^x-v_1$.
We find that $v_2^x=v_2+v_3$. For $i\in \{1,2\},$
let $T_i=v_i^A,$ the $A$-orbit containing $v_i$ (in other words, $T_i$ is the basic set of
$\A$ that contains $v_i$). Note that, since $\A_{U_i/W_i}$ is indecomposable, it follows that $\rad(T_i)=W_i$ for both $i=1,2$.
Since $v_3=v_i^x-v_i,$ it follows that $v_3 \in T_i-T_i,$ hence $v_3 \in U_i$.
We conclude that $v_3 \in U_1 \cap U_2$. Suppose that $v_3 \in W$. Then both $v_1$ and $v_1^x$ are in
$T_i \cap \sg{v_3}+v_1,$ and this together with Eq.~\eqref{Eq-T/K} shows that $|T_i \cap \sg{v_3}+v'|=p$ for all $v' \in T_i,$ and hence $\sg{v_3} \le \rad(T_i)$.
It follows that $\sg{v_3} \le \rad(T_1) \cap \rad(T_2)=W_1\cap W_2$.
Thus, $v_3=0,$ that is, $v_1^x=v_1$. This contradicts
that $v_1=ku_1$ and $u_1$ was chosen so that it is not fixed by $x$. We conclude that
$v_3 \in U \setminus W$.

Next, assume for the moment that $v_3^x-v_3 \in W_1$. Let us consider the automorphism
$x^{U_1/W_1}$. First, as $v_1^x-v_1=v_3 \notin W_1,$ we see that $x^{U_1/W_1}$ is not the identity mapping.
On the other hand, $x^{U_1/W_1}$ fixes $W/W_1$ pointwise and the element $W_1+v_3$ (here $W_1+v_3$ is regarded as
an element of the group $U_1/W_1$). By all these we find $|C_{U_1/W_1}(x^{U_1/W_1})|=p^2,$ which implies that
$\A_{U_1/W_1}$ is a nontrivial generalized wreath product, a contradiction to the assumption given in (2).
We conclude that $v_3^x-v_3 \notin W_1$.

Notice that, there is a symmetry between the conditions satisfied by the pairs $(U_1,W_1)$ and $(U_2,W_2)$. Therefore, any statement, which involves the subgroups $U_1,\, U_2,\, W_1$ and $W_2,$ and which can be derived from these conditions, gives always rise to yet another statement that is obtained by replacing $U_1$ with $U_2$ and $W_1$ with $W_2$. In what follows, we will refer to the new statement as {\em the dual counterpart}.
For instance, the statement $v_3^x-v_3 \notin W_1$ has dual counterpart: $v_3^x-v_3 \notin W_2$.
Now, as $v_3^x-v_3 \notin W_1 \cup W_2,$ we can choose elements $v_4 \in W_1$
and  $v_5\in W_2$ such that $v_3^x=v_3+v_4+v_5$.

Now, it follows from the above construction
that $(v_1,v_2,v_3,v_4,v_5)$ is an $\A$-basis.
In this basis, the automorphism $x$ is represented by the matrix as shown in Eq.~\eqref{Eq-mtx}.
\begin{equation}
\label{Eq-mtx}
x=
\begin{pmatrix}
1 & 0 & 1 & 0 & 0 \\
0 & 1 & 1 & 0 & 0 \\
0 & 0 & 1 & 1 & 1 \\
0 & 0 & 0 & 1 & 0 \\
0 & 0 & 0 & 0 & 1 \\
\end{pmatrix}
 \quad
y_1=
\begin{pmatrix}
1 & 0 & 0 & 1 & 0 \\
0 & 1 & 0 & 0 & 0 \\
0 & 0 & 1 & 0 & 0 \\
0 & 0 & 0 & 1 & 0 \\
0 & 0 & 0 & 0 & 1 \\
\end{pmatrix}
\quad
y_2=
\begin{pmatrix}
1 & 0 & 0 & 0 & 0  \\
0 & 1 & 0 & 0 & 1  \\
0 & 0 & 1 & 0 & 0  \\
0 & 0 & 0 & 1 & 0  \\
0 & 0 & 0 & 0 & 1  \\
\end{pmatrix}
\end{equation}

Furthermore, it is straightforward to check that each of $y_1$ and $y_2,$ defined in Eq.~\eqref{Eq-mtx}, acts on $H$ as an automorphism of $\A,$ and therefore, it belongs
to $N_0$. Let $M=\sg{x,y_1,y_2}$. Clearly, $M \le N_0,$ and for $i \in \{1,2\},$ the basic set $T_i$ is equal to the orbit $v_i^M$.

Now, let $z \in N_0 \cap N_{v_1}$. For $i \in \{1,2\},$ let us consider the automorphism $z^{U_i/W_i} \in
\Aut(\A_{U_i/W_i})_0$. The latter group is generated by the element $x^{U_i/W_i},$
and we find $z^{U_i/W_i} \in \big\langle x^{U_i/W_i} \big\rangle$. Moreover, as
$v_1^z=v_1,$ it follows that $z^{U_1/W_1}$ is the identity mapping. All these yield that
$z$ can be written in the following form:
\begin{equation}\label{Eq-mtx2}
z=
\begin{pmatrix}
1 & 0 & 0 & 0 & 0  \\
0 & 1 & a & a(a-1)/2 & b \\
0 & 0 & 1 & a & 0  \\
0 & 0 & 0 & 1 & 0 \\
0 & 0 & 0 & 0 & 1  \\
\end{pmatrix}, \ \ a,b\in GF(p).
\end{equation}
This shows that $|N_0 \cap N_{v_1}| \le p^2,$ and therefore,
$|N|=p^8$ or $p^9$.

Fix an element $v'=kv_1+v_2$ for some $k \in \{1,\ldots,p-1\},$
and let $T$ be the basic set of $\A$ that contains $v'$.
Since $\A=V(H,N_0)$ we can write $T=(v')^{N_0}$.
It follows from Eqs.~\eqref{Eq-mtx} and \eqref{Eq-mtx2} that
$W+v' \subseteq T \subseteq U+v'$. This together with
Lemma~\ref{HM5} yields that $T=W+v'$ or $T=U+v'$. By Eq.~\eqref{Eq-mtx}, $(v')^x=kv_1+kv_3+v_2+v_3=v'+(k+1)v_3$.

Assume at first that $k \ne p-1$. Then $(v')^x \notin v'+W,$ and hence $T=U+v'$. The latter condition together with Theorem~\ref{T-1st-multi} yields that every basic set of $\A$ contained in $\sg{U,v'} \setminus U$ is equal to a $U$-coset.

If $|N|=p^8,$ then
$v_3^{N_0}=v_3+\sg{v_4+v_5},$ and this with the previous paragraph implies that (i) holds with $U_3=\sg{U,-v_1+v_2}$. 
Finally, suppose that $|N|=p^9$. Then for $z$ with $a=1$ and
$b=0$ in Eq.~\eqref{Eq-mtx2}, we find
$(-v_1+v_2)^z=-v_1+v_2+v_3,$ thus $(-v_1+v_2)^{N_0}=
U+(-v_1+v_2),$ and (ii) follows.
\end{proof}

\begin{lem}\label{L-bfly2}
With the notation of Lemma~\ref{L-bfly1}, the S-ring $\A$ is CI.
\end{lem}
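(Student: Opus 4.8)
The plan is to verify directly the defining equality $\Iso_0(\A)=\Aut(\A)_0\Aut(H)$ for a CI-S-ring (recall Proposition~\ref{HM4}). So I would fix an arbitrary $f\in\Iso_0(\A)$ and put $\B=\A^f$. First I would use that a normalized isomorphism carries the lattice of $\A$-subgroups isomorphically onto that of $\B$, preserving both orders and inclusions: this follows from Proposition~\ref{HM3}(i) together with the identity $\un{E}\cdot\un{F}=|E|\,\un{F}$, which among subgroups characterizes the inclusion $E\le F$. Hence the images $U_1^f,U_2^f,W_1^f,W_2^f$ (and $U_3^f$ in case~(i)) are $\B$-subgroups with the same orders and incidences as the originals, and the basic-set description of Lemma~\ref{L-bfly1}(i)--(ii) transfers verbatim to $\B$. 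Since any two configurations of subspaces with these incidences are equivalent under $\Aut(H)=GL(5,p)$, I would choose $\varphi\in\Aut(H)$ with $U_i^{f\varphi}=U_i$, $W_i^{f\varphi}=W_i$ (and $U_3^{f\varphi}=U_3$), and replace $f$ by $f\varphi\in\Iso_0(\A)$. From now on $f$ fixes each of $U_1,U_2,W_1,W_2$, hence also $U=U_1\cap U_2$, $W=W_1+W_2$ and $U_3$.

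Next I would localize. Because $\A$ is a $U_2/W_1$- and a $U_1/W_2$-wreath product, Lemma~\ref{L-p-S2}(i) shows that every basic set inside $U_i$ lies in a single $U$-coset, and for $T\subset U_i\setminus U$ one has $W_i\le\rad(T)$; thus $\A_{U_i}=\A_U\wr_{U/W_i}\A_{U_i/W_i}$ is a Schurian $p$-S-ring over $U_i\cong\Z_p^4$, and by \cite[Theorem~3.1]{HM} it is a CI-S-ring. The restriction $f_{U_i}$ (Proposition~\ref{HM3}(i)) is then a normalized isomorphism of $\A_{U_i}$, so there is $\psi_i\in\Aut(U_i)$ with $f_{U_i}\psi_i\in\Aut(\A_{U_i})_0$, i.e.\ $f\psi_i$ fixes every basic set contained in $U_i$. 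The rigidity that will make the two local choices compatible comes from hypothesis~(2): each quotient $\A_{U_i/W_i}$ is an \emph{indecomposable} Schurian $p$-S-ring over $\Z_p^3$, hence by Theorem~\ref{HMSW} it is $\Q\Z_p^3$ or exceptional, and in the exceptional case $\Iso_0(\A_{U_i/W_i})=\Aut(U_i/W_i)$ by Lemma~\ref{L-exp}. This pins down $\psi_i$ modulo the automorphisms inducing the identity on $U_i/W_i$, which is exactly the freedom available along the common subgroup $U$.

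I would then glue. The restrictions $\psi_1|_U$ and $\psi_2|_U$ both correct $f_U$ to an automorphism of $\A_U$, so they differ by an element of $\Aut(\A_U)_0\cap\Aut(U)$; using the decomposition $\A_{U_2}=\A_U\wr_{U/W_2}\A_{U_2/W_2}$ I would extend this discrepancy to an automorphism of $\A_{U_2}$ and absorb it into $\psi_2$, so that afterwards $\psi_1$ and $\psi_2$ agree on $U$ and patch to a single automorphism of $\sg{U_1\cup U_2}$. Finally I would extend across $U_3$ (in case~(i)) and over all of $H$: every basic set not contained in $U_1\cup U_2\,(\cup\,U_3)$ is a full $U$-coset, hence is completely determined by the induced map on $H/U$, which is already fixed, while the automorphisms $x,y_1,y_2\in\Aut(\A)_0$ exhibited in Lemma~\ref{L-bfly1} supply the remaining slack needed to make $f$ fix each such coset. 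Assembling these steps produces $\psi\in\Aut(H)$ with $f\psi$ fixing every basic set, i.e.\ $f\psi\in\Aut(\A)_0$, whence $\Iso_0(\A)=\Aut(\A)_0\Aut(H)$ and $\A$ is CI.

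The hard part is the gluing: the corrections $\psi_1,\psi_2$ (and the piece over $U_3$) are chosen independently on the two overlapping copies of $\Z_p^4$, and one must show that they restrict compatibly to the shared subgroup $U$ and extend to a single global automorphism of $H$ respecting \emph{both} wreath decompositions simultaneously, all while fixing the rigid outside $U$-cosets. It is precisely the indecomposability assumptions in~(2) that provide enough rigidity for this reconciliation to go through; cases~(i) and~(ii) should differ only in whether the extra subgroup $U_3$ must be threaded through the argument.
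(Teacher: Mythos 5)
Your localization-and-gluing strategy (steps 1--3) can be pushed through, and it is a genuinely different organization from the paper's: the paper never invokes the CI-property of $\A_{U_1},\A_{U_2}$ from \cite[Theorem~3.1]{HM}, but instead normalizes $f$ to fix the basis elements $v_1,v_3,v_4,v_5,-v_2$ and checks directly (Claim~(a)) that such an $f$ already fixes every basic set inside $U_1\cup U_2$. Moreover, in case~(ii) of Lemma~\ref{L-bfly1} your plan does finish: since every basic set of $\A$ lies in a single $U$-coset, $\A_{H/U}=\Q\,H/U$, so the corrected $f$ induces an automorphism of $H/U$ fixing $U_1/U$ and $U_2/U$ pointwise, hence the identity; every remaining basic set is a $U$-coset and is therefore fixed. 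The genuine gap is case~(i), i.e.\ exactly the subgroup $U_3$ that you relegate to a closing remark. There the basic sets contained in $U_3\setminus U$ are cosets of $W=W_1+W_2$, which has index $p$ in $U$; knowing that $f$ fixes every basic set inside $U_1\cup U_2$ and fixes every $U$-coset setwise says nothing about how $f$ permutes the $p$ distinct $W$-cosets inside a given $U$-coset of $U_3$. Concretely, on the coset $U_2+v_1$ the corrected $f$ is determined only up to the wreath freedom of $\A_{U_2}$: it acts as $v_1+u\mapsto v_1+u^{x^k}+F_2(u)v_5$ for an unknown integer $k$ and function $F_2$ (the paper's Claim~(b)), and a direct computation shows that the basic sets in $U_3\setminus U$ are then translated by $-kv_3$ modulo $W$; they are fixed precisely when $k=0$. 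Nothing in your outline forces $k=0$.

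Your appeal to the ``slack'' provided by $x,y_1,y_2$ misreads the difficulty: what is needed is not extra freedom but a rigidity statement, and composing with powers of $x$ merely relocates the discrepancy (it breaks the normalization on $U_1\cup U_2$ that your gluing achieved, or equivalently shifts the $W$-cosets in $U_3$ once more). The paper kills $k$ by a two-sided argument with no counterpart in your proposal: it parametrizes the same $f$ both on $U_2+v_1$ (parameter $k$, function $F_2$) and, by the built-in duality between $(U_1,W_1,v_1)$ and $(U_2,W_2,-v_2)$, on $U_1-v_2$ (parameter $l$, function $F_1$); it proves $F_1,F_2$ are constant on $U$-cosets (Claims~(c),(d)); and it then evaluates $(v_1-v_2)^f$ and $(v_1-v_2+v_3)^f-(v_1-v_2)^f$ in both parametrizations on their common $U$-coset $U+v_1-v_2\subset U_3$. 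Comparison gives first $l=-k$ and then $v_3+k(v_4+v_5)=v_3-k(v_4+v_5)$, whence $k=0$ because $p$ is odd. This overlap computation is the heart of the lemma; without it, or some substitute for it, your proof fails exactly in case~(i).
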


\begin{proof}
We keep all notations from the previous proof, and let, in addition,
$W_3=\sg{v_4+v_5}$.
Let $f\in \Iso_0(\A)$ such that $f$ fixes all elements $v_i$ with $i \ne 2,$ and also $-v_2$. Recall that, $(v_1,\ldots,v_5)$ is the $\A$-basis defined in the proof of Lemma~\ref{L-bfly1}.
We settle the lemma by showing that $T^f=T$ for all basic sets $T \in \Bs(\A)$. This will be done in five steps.
\smallskip

\noindent{\bf Claim~(a).} {\it $T^f=T$ for all $T \in \Bs(\A),\, T \subset U_1 \cup U_2$.}
\smallskip

This is trivial for the basic sets $\{v_4\}$ and $\{v_5\},$ and hence
Claim~(a) follows for all basic sets
$T \subset W=\sg{v_4,v_5}$.
Let $N=N_{\Aut(\A)}(H_R)$.
It follows from the proof of Lemma~\ref{L-bfly1} that
any basic set $T \subset U \setminus W$ is in the form
$T=k(W_3+v_3+w)$ for some $w\in W$ and some $k \in \{1,\ldots,p-1\}$ if $|N|=p^8;$ whereas in the form $T=k(W+v_3)$ for some $k \in \{1,\ldots,p-1\}$ if $|N|=p^9$.

As both $W_3+v_3$ and $\{w\}$ are
basic sets, we can write  $(W_3+v_3+w)^f=(W_3+v_3)^f+w^f=W_3^f+v_3^f+w^f=
W_3+v_3+w,$ where the second equality follows by Lemma~\ref{HM3}(ii). Now, this together with Lemma~\ref{L-iso}(i) yields
$T^f=T$ in the case when $|N|=p^8$. Similarly,
$(W+v_3)^f=W+v_3,$ and this with Lemma~\ref{L-iso}(i) yields
$T^f=T$ if $|N|=p^9$.

Next, let us consider the isomorphism $f^{U_1/W_1}$. Since $\A_{U_1/W_1}$ is indecomposable, $f^{U_1/W_1} \in \Aut(U_1/W_1),$ see Lemma~\ref{L-exp}. This together with the fact that $f$ fixes
pointwise a generating set of $U_1$ shows that $f^{U_1/W_1}$ is the identity mapping.
Using this and that $\A_{U_1}$ is a $U_1/W_1$-wreath product, we deduce that $T^f=T$ for all basic sets
$T \subset U_1 \setminus U$. Observe that, the latter statement has dual counterpart: $T^f=T$
for all basic sets $T \subset U_2 \setminus U$. This completes the proof of Claim~(a).
\smallskip

\noindent{\bf Claim~(b).} {\it There exist an integer $k$ and a function $F_2 : U_2 \to \{0,1,\ldots,p-1\}$ such that}
\begin{equation}\label{Eq1-f}
(v_1+u)^f=v_1+u^{x^k}+F_2(u) v_5 \textit{ \ for all \ } u \in U_2.
\end{equation}

The S-ring $\A_{H/U}=\Q \, H/U$. Form this and the fact that $f$ fixes a basis of $H,$ we deduce that
$f^{H/U}$ is the identity mapping, and therefore, $f$ maps any
$U$-coset to itself. In particular, it fixes the coset $U_2+v_1$.
Let $\tilde{f}$ denote the permutation of $U_2+v_1$ induced by the action of $f$ on $U_2+v_1$.
Choose an arbitrary basic set $T \in \Bs(\A_{U_2}),$ and let $\Sigma$ be the
subdigraph of $\Cay(H,T)$ induced by the set
$U_2+v_1$. By Claim~(a), $T^f=T,$ and this in turn implies that
$f \in \Aut(\Cay(H,T)),$ and $\tilde{f} \in \Aut(\Sigma)$.
It is straightforward to check that
$(v_1)_R$ is an isomorphism between $\Cay(U_2,T)$ and $\Sigma$.  Therefore, the permutation $g \in \Sym(U_2),$ defined by $g=(v_1)_R \tilde{f} (-v_1)_R,$ belongs to
$\Aut(\Cay(U_2,T))$. As $T \in \Bs(\A_{U_2})$ was chosen arbitrarily,
by definition, $g \in \Aut(\A_{U_2})$. Furthermore, $0^g=0^{(v_1)_R \tilde{f} (-v_1)_R}=0$. We have
already shown that $\Aut(\A_{U_2/W_2})_0=\big\langle x^{U_2/W_2} \big\rangle,$ and thus we can write
$g^{U_2/W_2}=(x^k)^{U_2/W_2}$ for some integer $k$. This allows us to define the function
$F_2 : U_2 \to \{0,1,\ldots,p-1\},$ by letting $F_2(u)v_5=u^g-u^{x^k}$ for each $u \in U_2$.
Then, 
$u^{x^k}+F_2(u)v_5=u^g=u^{(v_1)_R \tilde{f}(-v_1)_R}=(u+v_1)^f-v_1,$ and Claim~(b) follows.
\smallskip

Recall that $N=N_{\Aut(\A)}(H_R)$.
\smallskip

\noindent{\bf Claim~(c).} {\it Suppose that $|N|=p^8$. Then for any
$u,u' \in U_2$ with $u-u' \in U,$ $F_2(u)=F_2(u')$.}
\smallskip

Suppose that $u,u' \in U_2$ such that $u-u' \in U$.
Recall that $g \in \Aut(\A_{U_2})$ and $g$ fixes any $U$-coset. Let us consider the automorphism
$g^{U_2/W_3}$. It can be easily seen from the proof of 
Lemma~\ref{L-bfly1}(i) that $\A_{U/W_3}=\Q \, U/W_3$. This implies that $g^{U_2/W_3}$ acts on the coset $(U+u)/W_3=(U/W_3)+(W_3+u)$ as a translation by
some element from $U/W_3$. This implies that, $u^g-u+u'-(u')^{\, g} \in W_3$. It follows from Eq.~\eqref{Eq-mtx} that
$u^{x^k}-u+u'-(u')^{\, x^k} \in W_3$ also holds, and by the definition of $F_2$ we can write
$$
(F_2(u)-F_2(u'))v_5=u^g-u^{x^k}-(u')^g+(u')^{x^k} \in W_3.
$$
On the other hand, $\sg{v_5} \cap W_3=\{0\},$ and this yields
Claim~(c).
\medskip

Notice that, the symmetry between the pairs $(U_1,W_1)$ and $(U_2,W_2)$ extends to the triples
$(v_1,T_1,v_4)$ and $(-v_2,-T_2,v_5)$. Here $-T_2=\{-v : v \in T_2\},$ which is also a basic set of
$\A$ (see the definition of an S-ring). As a consequence, the statements in Claims~(b) and (c) have the following dual counterparts:
\smallskip

\noindent{\bf Claim~(d).} {\it Suppose that $|N|=p^8$.
Then there exist an integer $l$ and a function $F_1 : U_1 \to \{0,1,\ldots,p-1\}$ such that}
\begin{equation}\label{Eq2-f}
(-v_2+u)^f=-v_2+u^{x^l}+F_1(u) v_4 \textit{ \ for all \ } u \in U_1.
\end{equation}
{\it Moreover, if $u,u' \in U_1$ with $u-u' \in U,$ then $F_1(u)=F_1(u')$.}
\medskip

We are ready to handle the remaining basic sets of $\A$.
\smallskip

\noindent{\bf Claim~(e).} {\it $T^f=T$ for all $T \in \Bs(\A)$.}
\smallskip

In view of Claim~(a), we can assume that $T \not\subset U_1 \cup U_2$. If $|N|=p^9,$ then $T$ is equal to a $U$-coset, see
Lemma~\ref{L-bfly1}(ii). Using this
and that $f$ maps any $U$-coset to itself, Claim~(e) follows at once
if $N|=p^9$.

In the rest of the proof it will be assumed that $|N|=p^8$.
Let us consider the coefficient of $v_3$ in the linear combination of
$(v_1-v_2)^f$. By Eq.~\eqref{Eq1-f}, this follows to be equal to $-k$. Then, applying Eq.~\eqref{Eq2-f}, we find that this is equal to $l$. We conclude that $l=-k$.

Next, let us consider the element $w=(v_1-v_2+v_3)^f-(v_1-v_2)^f$.
Using Eq.~\eqref{Eq1-f} and Claim~(c), we find
\begin{eqnarray*}
w &=& v_1+(-v_2)^{x^k}+v_3^{x^k}+F_2(-v_2+v_3)v_5-\big(v_1+(-v_2)^{x^k}+F_2(-v_2)v_5\big) \\
&=& v_3^{x^k}=v_3+kv_4+kv_5.
\end{eqnarray*}
On the other hand, using Eq.~\eqref{Eq2-f}, the fact that $l=-k$ and Claim~(d), we find
\begin{eqnarray*}
w &=& -v_2+(v_1)^{x^{-k}}+v_3^{x^{-k}}+F_1(v_1+v_3)v_4-
\big(-v_2+(v_1)^{x^{-k}}+F_1(v_1)v_4\big) \\
&=& v_3^{x^{-k}}=v_3-kv_4-kv_5.
\end{eqnarray*}
We conclude that $k=0$.

Let $U_3=\sg{U,v_1-v_2}$.
We have shown in the proof of Lemma~\ref{L-bfly1}(i) that the
all basic sets of $\A$ not contained in $U_1 \cup U_2 \cup U_3$ are
$U$-cosets, and all basic sets contained in $U_3 \setminus U$ are
$W$-cosets. Combining this with the fact that $f$ maps any
$U$-coset to itself, we get that Claim~(e) holds whenever $T \not\subset U_1 \cup U_2 \cup U_3$. It remains to check whether the basic sets contained in $U_3 \setminus U$ are fixed by $f$.
By Theorem~\ref{T-1st-multi}, any such a basic set can be written in the form
$T^{(k)}=kT=\{ku : u \in T\},$ where $k \in \{1,\ldots,p-1\}$ and $T\subset U_3 \cap (U_2+v_1)$.
Then, By Lemma~\ref{L-iso}(i),
we are done if we show that $T^f=T$. Since $T\subset U_2+v_1,$ there exists some $u\in U_2$ such that
$T=W+v_1+u$. Now, applying Eq.~\eqref{Eq1-f} and using that $k=0,$ we finally get
$$
T^f=W+(v_1+u)^f=W+v_1+u+F_2(u)v_5=W+v_1+u=T.
$$
This completes the proof of Claim~(e), and thus the proof of the lemma as well.
\end{proof}

\begin{lem}\label{L-AU-ind}
Assuming Hypothesis~\ref{hyp}, suppose that $\A$ is an $U/W$-wreath product, where $|U|=p^4, \, |W|=p$ and $\A_U$ is indecomposable.
Then the S-ring $\A$ is CI.
\end{lem}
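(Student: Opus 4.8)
My plan is to reduce the lemma, via Proposition~\ref{HM4}, to proving $\Iso_0(\A)=\Aut(\A)_0\Aut(H)$, and then to split on whether the quotient $\A_{H/W}$ is decomposable. Since $\A=V(H,A)$ is Schurian and $W$ is an $\A$-subgroup of order $p$, the contrapositive of Corollary~\ref{C-teq-min} disposes of the easy half at once: if $\A_{H/W}$ is indecomposable, then $\A$ is already a CI-S-ring. Hence the entire difficulty is concentrated in the case when $\A_{H/W}$ is decomposable, and this is precisely where the hypothesis that $\A_U$ is indecomposable must be used; I expect this case to be the main obstacle.

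To attack the remaining case I would first normalise an arbitrary $f\in\Iso_0(\A)$ by automorphisms of $H$. As $\Aut(H)$ is transitive on the subspaces of dimension $4$, I may assume $U^f=U$; then $f|_U$ is a normalised isomorphism of $\A_U$ by Proposition~\ref{HM3}(i), and since $\A_U$ is an indecomposable Schurian $p$-S-ring over $\Z_p^4$, Corollary~\ref{C-normal} gives $\Iso_0(\A_U)=\Aut(U)$; thus $f|_U$ extends to an element of $\Aut(H)$, and composing $f$ with its inverse I may assume that $f$ fixes $U$ pointwise. Because $H/U\cong\Z_p$ forces $\A_{H/U}=\Q\,H/U$, which is CI with trivial $\Aut(\Q\,H/U)_0$, the induced map $f^{H/U}$ lies in $\Aut(H/U)$, and one further composition by an automorphism fixing $U$ pointwise lets me assume in addition that $f$ maps every $U$-coset onto itself. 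After these reductions every basic set contained in $U$ is fixed, and since each basic set $T\subset H\setminus U$ satisfies $W\le\rad(T)$ and is therefore a union of $W$-cosets, it remains only to show that $f$ fixes every $W$-coset setwise; equivalently, that the induced isomorphism $f^{H/W}$ of $\A_{H/W}$ fixes every basic set lying outside $U/W$.

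The crux is thus to control $f^{H/W}\in\Iso_0(\A_{H/W})$, which by construction fixes $U/W$ pointwise and preserves each $(U/W)$-coset. A wreath decomposition of $\A_{H/W}$ pulls back, along $H\to H/W$, to a nontrivial wreath structure on $\A$ given by $\A$-subgroups $\{0\}\ne F\le E\ne H$; since $\A_U$ is indecomposable its induced decompositions are obstructed, which forces $E$ to be large --- essentially either $U\le E$, yielding a $U/F$-wreath refinement of $\A$ with $|F|\ge p^2$, or $E$ an order-$p^4$ subgroup different from $U$, yielding a butterfly configuration. The butterfly case should fall under Lemmas~\ref{L-bfly1} and \ref{L-bfly2} once the order-$p$ radicals $W_1,W_2$ are identified, and the extreme refinement $F=U$ is immediate, since then $\A=\A_U\wr\Q\,H/U$ and the basic sets outside $U$ are full $U$-cosets, automatically fixed by the normalised $f$. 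I expect the genuine obstacle to be the intermediate refinement $W<F<U$: here $\A$ is a $U/F$-wreath product that is neither trivial nor a butterfly, and the canonical isomorphism has to be assembled by hand in the style of Claims~(a)--(e) in the proof of Lemma~\ref{L-bfly2} --- tracking, on each coset $U+v$, the affine map $g_v\in\Aut(\A_U)$ induced by $f$ (available since $U_R\trianglelefteq\Aut(\A_U)$ by Lemma~\ref{L-normal}) and invoking Theorem~\ref{T-1st-multi} to force the maps $g_v$ to preserve the relevant basic sets.
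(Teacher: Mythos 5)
Your reduction and your easy half are correct and coincide with the paper's opening moves: Proposition~\ref{HM4}, Corollary~\ref{C-teq-min} when $\A_{H/W}$ is indecomposable, and the normalisation of $f\in\Iso_0(\A)$ (via Corollary~\ref{C-normal} applied to the indecomposable Schurian $p$-S-ring $\A_U$) so that $f$ fixes $U$ pointwise and every $U$-coset setwise. The gaps all lie in the case where $\A_{H/W}$ is decomposable, and they are genuine. The first is the ``pullback''. Write $\A_{H/W}$ as an $(E/W)/(F/W)$-wreath product. For a basic set $T$ of $\A$ with $T\subset H\setminus E$ one has $\rad(T/W)=\rad(T+W)/W$, so the decomposition only gives $F\le\rad(T+W)$, and this lifts to $F\le\rad(T)$ exactly when $W\le\rad(T)$ already holds, i.e.\ when $T\subset H\setminus U$. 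Hence if $E\le U$ you do get that $\A$ is a $U/F$-wreath product with $|F|\ge p^2$; but if $E\not\le U$ --- which nothing you say excludes, and where $|E|$ may be $p^2$, $p^3$ or $p^4$ --- you obtain no wreath structure on $\A$ at all. Your dichotomy ``$E=U$, or $E\ne U$ of order $p^4$'' is therefore unjustified, and a whole family of configurations is left untreated.

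Second, the butterfly route is a dead end under your standing hypothesis, so Lemmas~\ref{L-bfly1} and \ref{L-bfly2} are never available here: condition (1) of Lemma~\ref{L-bfly1} would make $\A$ a $U_2/W_1$-wreath product with $U_2\ne U$ of order $p^4$ and $\{0\}\ne W_1\le U\cap U_2$; then every basic set of $\A_U$ not contained in the $\A$-subgroup $U\cap U_2$ is disjoint from $U_2$, hence has $W_1$ in its radical, so $\A_U$ would be a $(U\cap U_2)/W_1$-wreath product, contradicting the indecomposability of $\A_U$. (This is precisely why the paper proves Lemma~\ref{L-AU-ind} first, and brings in the butterfly lemmas only later, in the proof of Theorem~\ref{31}, where $\A_U$ may by then be assumed decomposable.) Finally, the case you yourself single out as the genuine obstacle --- $\A$ a $U/F$-wreath product with $W<F<U$ --- is exactly where the paper's work lies, and your sketch does not carry it out: the paper first proves $|A^U|\le p$ for the restriction $A^U$ of $A$ to $U$ (using Lemma~\ref{L-center1} and indecomposability of $\A_U$), then reduces everything to showing $f^{H/V}=\id_{H/V}$ for $V=\bigcap_{T\in\Bs(\A),\,T\subset H\setminus U}\rad(T)$, and establishes this by a case analysis on $|A|$ ($=p$, $=p^2$, $\ge p^3$) that uses Proposition~\ref{HM2}, exceptional S-rings and Table~1 to identify $V$ with the minimal radical $\rad(T_1)$ of a basic set outside $U$. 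Coset-by-coset affine bookkeeping together with Theorem~\ref{T-1st-multi}, as you propose, does not by itself produce these identifications, so the core of the lemma remains unproved.
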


\begin{proof}
If $|A|=p,$ then each basic set has size at most
$p,$ and therefore, each basic set $T \subset H \setminus U$ is equal to a $W$-coset. This implies that $\A_{H/W} = \Q \, H/W$.
In particular, $\A_{H/W}$ is indecomposable, and we can apply Corollary~\ref{C-teq-min} to get that $\A$ is a CI-S-ring. For the rest of the proof we assume that $|A| \ge p^2$.

Let $A^U$ denote the group of automorphisms of $U$ induced by
restricting $A$ to $U$. We show next that $|A^U| \le p$.
Assume to the contrary that $|A^U| > p$. Since $\A_U$ is indecomposable, it follows by
Lemma~\ref{L-center1} that $|\O(\A)|=p^2$. Fix $u_1,u_2  \in \O(\A)$ such that
$\O(\A)=\sg{u_1,u_2}$. Now, let $V < U$ be an $\A$-subgroup such that
$|V|=p^3$ and $\O(\A) < V,$ and fix an element $u_3 \in V \setminus \O(\A)$.
If $A^U$ is not semiregular on the orbit $u_3^A,$ then it follows that
$|C_U(z)|=p^3$ for any nontrivial $z\in (A^U)_{u_3}$. This contradicts Lemma~\ref{L-center1}, and thus $u_3^{\, A}=\O(\A)+u_3$ and
$|A^U|=p^2$. There exist unique elements $x,y \in A$ that satisfy
$u_3^x=u_1+u_3$ and $u_3^y=u_2+u_3$. Now, let $u$ be an arbitrary element from
$U \setminus V$. Then both $u^x-u$ and $u^y-u$ are in $V$. Therefore,
there are integer numbers $k,l,m,k',l',m' \in \{0,1,\ldots,p-1\}$ for which
$$
u^x=u+ku_1+lu_2+mu_3 \text{ and } u^y=u+k'u_1+l'u_2+m'u_3.
$$
Since $|A^U|=p^2,$ the group $A^U$ is abelian, and
$u^{xy}=u^{yx}$. The coefficients of $u_1$ and $u_2,$ resp.,
have to be the same in both sides, and this results in the equalities:
$k+k'=k+k'+m'$ and $l+l'+m=l+l',$ resp., which gives that
$m=m'=0$. This implies that the orbit $u^{\sg{x,y}}=\O(\A)+u,$ and
therefore, $\rad(T) \ge \O(\A)$ for the basic set of $\A$ that contains $u$. As $u$ was chosen arbitrarily from $U \setminus V,$ we obtain finally that $\A_U$ is a
$V/\O(\A)$-wreath product. This contradicts the assumption that $\A_U$ is indecomposable, and by this we have proved that $|A^U| \le p$.

Let us fix $T_1 \in \Bs(\A)$ such that $T_1 \subset H \setminus U$ and
$|\rad(T_1)|$ is the the smallest among all
$|\rad(T)|$ where $T$ runs over the set of all basic sets
$T\in \Bs(\A), \, T \subset H \setminus U$.
Now, let $(v_1\ldots,v_5)$ be an $\A$-basis such that
$v_i \in U$ for all $i < 5$ and $v_5 \in T_1$.
Let $f \in \Iso_0(\A)$ such that $f$ fixes $v_i$ for all $i \in \{1,\ldots,5\}$.
We settle the lemma by showing that $T^f=T$ for all basic sets
$T \in \Bs(\A)$.

Since $f$ fixes all $v_i$ and $U=\sg{v_1,\ldots,v_4},$ the $\A$-subgroup $U$ is mapped to itself by $f$.
Then $f^U$ is a normalized isomorphism of $\A_U$. Since $\A_U$ is indecomposable, by Corollary~\ref{C-normal}, $f^U \in \Aut(U),$ which implies that $f^U$ is the identity mapping. In particular,
$T^f=T$ for all basic sets $T \subset U$.

Now, we turn to the basic sets contained in $H \setminus U$.
Let
$$
V=\bigcap_{T\in \Bs(\A), \, T \subset H\setminus U}\rad(T).
$$
Let us consider the S-ring $\A_{H/V}$.
Since $V \le U$ and $f$ fixes $U$ pointwise, we get $V^f=V,$
and thus $f^{H/V} \in \Iso_0(\A_{H/V})$ (see Proposition~\ref{HM3}(iii)).
We finish the proof by showing that
$f^{H/V}=\id_{H/V}$. Indeed, then any basic set $T \subset H \setminus U$ satisfies  $(T+V)^f=T+V,$ and since $V \le \rad(T),$ it follows by Lemma~\ref{L-iso}(ii) that $T^f=T$.
\medskip

\noindent{\bf Case~1.} $|A|=p^2$.
\smallskip

Let $V_1=\rad(T_1)$. Clearly, $V \le V_1$.
We show at first that we may choose $v_5$ such that
\begin{equation}\label{Eq-T1}
T_1=V_1+v_5 \text{ and } V=V_1.
\end{equation}

Since $|A^U|\le p,$ there exists $x \in A$ such that $C_H(x)=U$. Then the $\sg{x}$-orbits not contained in $U$
coincide with the cosets of a subgroup $W' < H$ of order $p,$ in particular, $\A$ is a
$U/W'$-wreath product. We may assume  w.~l.~o.~g. that $W'=W$. Let $A=\sg{x,y}$.
Let us consider the S-ring $\A_{H/W}$. It follows that
$\A_{H/W}=V(H/W,\sg{y^{H/W}}),$ where $y^{H/W}$ is
the automorphism of $H/W$ induced by the action of
$y$ on $H/W$. In view of Corollary~\ref{C-teq-min} we may assume
that $\A_{H/W}$ is decomposable. This implies that any
$\sg{y^{H/W}}$-orbit is contained in a coset of a fixed subgroup of
$H/W$ of order $p$. It is not hard to show that this implies
that $|C_{H/W}(y^{H/W})|=p^3$. Let $U'$ be the unique subgroup of $H$ that contains $W$ and for which
$U'/W=C_{H/W}(y^{H/W})$.

Suppose at first that $U'=U$. Then, any $\sg{y}$-orbit in $U$ is contained
in a $W$-coset, and hence $|C_U(y)| \ge p^3$.  On the other hand,
$\A_U$ indecomposable, and it follows
from Lemma~\ref{L-center1} that $y$ acts as the identity on $U$. Hence, $C_H(y)=U,$ and so $C_H(A)=U$. This shows that any basic set $T \subset H\setminus U$ is equal to a $V$-coset, in particular, Eq.~\eqref{Eq-T1} follows.

Next, suppose that $U' \ne U,$ and choose an element
$v \in U' \setminus U$. Then the basic set $T(v)$ containing $v$
is equal to the coset $W+v$. Thus, $|\rad(T(v))|=p,$ which is
clearly minimal among all orders $|\rad(T)|, \, T \subset H\setminus U$. Then choosing
$v_5$ to be $v,$ we get $T_1=T(v)=W+v_5$ and also $W=V_1=V,$ that is, Eq.~\eqref{Eq-T1} holds also in this case.

Let $U''=\sg{V,v_5}$.
The group $H/V$ decomposes to the internal direct sum $H/V=\bar{U}+\bar{U}'',$ where both factors $\bar{U}=U/V$ and $\bar{U}''=U''/V$ are $\A_{H/V}$-subgroups.
To simplify notation, we write $\bar{v}$ for the coset
$V+v,$ where $v$ is any element in $H,$ 
and $\bar{S}$ for the set $S/V \subseteq H/V,$ where
$S \subseteq H$. Notice that, $f^{H/V}$ fixes pointwise both $\bar{U}$ and
$\bar{U}''$. Let $\bar{v} \in H/V$ be an arbitrary element. Then, $\bar{U}+\bar{v}=\bar{U}+\bar{v}_1$ for some $\bar{v}_1 \in \bar{U}'',$ and we can write
$$
\bar{U}+\bar{v}^{f^{H/V}}=(\bar{U}+\bar{v})^{f^{H/V}}=
(\bar{U}+\bar{v}_1)^{f^{H/V}}=\bar{U}+\bar{v}_1=
\bar{U}+\bar{v}.
$$
Similarly, $\bar{U}''+\bar{v}=\bar{U}''+\bar{v}_2$ for
some $\bar{v}_2 \in \bar{U},$ and hence
$$
\bar{U}''+\bar{v}^{f^{H/V}}=(\bar{U}''+\bar{v})^{f^{H/V}}=
(\bar{U}''+\bar{v}_2)^{f^{H/V}}=\bar{U}''+\bar{v}_2=
\bar{U}''+\bar{v}.
$$
Therefore,
$\bar{v}^{f^{H/V}}-\bar{v} \in \bar{U} \cap \bar{U}'' = \{\bar{0}\}
,$ and $f^{H/V}=\id_{H/V},$ as claimed.
\medskip

\noindent{\bf Case~2.} $|A| \ge p^3$.
\smallskip

Let $K$ denote the kernel of $A$ acting on the set $U$.
Since $|A^U| \le p,$ $|K|\ge p^2$. Since $K \le \Aut(H)$ and each
element of $K$ fixes $U$ pointwise, there exists a subgroup $V' \le U$ such that $|V'|=|K| \ge p^2,$ and the $K$-orbits not contained in $U$ are equal to the $V'$-cosets not contained in $U$.
Note that, $V \ge V',$ and thus $|V_1|\ge |V| \ge p^2,$ where $V_1=\rad(T_1)$.

Suppose at first that $T_1 \ne V_1+v_5$.
By  Proposition~\ref{HM2}, $|T_1|=p^3,$  which implies that
$|V_1|=p^2,$ and hence $V=V_1$. Also, the S-ring $\A_{H/V}$ is an exceptional S-ring over the group $H/V,$ and it follows that $f^{H/V} \in \Aut(H/V)$. Since $f$ fixes all generators $v_i,$
it follows that $f^{H/V}=\id_{H/V}$.

Now, suppose that $T_1=V_1+v_5$. It is sufficient to
show that $V=V_1$. Then the conditions in Eq.~\eqref{Eq-T1} hold, and $f^{H/V}=\id_{H/V}$ follows as in Case~1.
Since  $V \le V_1, \, |V| \ge p^2$ and $|V_1| \le p^3,$ the equality $V=V_1$ follows if $|V_1|=p^2$. We are left with the case  when $|V_1|=p^3$. Then, for each basis set $T \subset H\setminus U,$ $|\rad(T)|=p^3,$
implying that $T$ is a coset of a subgroup that contains $V$.
Assume to the contrary that $V\ne V_1$. Then $|H/V|=p^3,$ and $\A_{H/V}$ has two basic sets which are cosets of distinct subgroups of order $p$. Since $|H/V|=p^3,$ the S-ring $\A_{H/V}$ is Cayley isomorphic to one of the S-rings given in Table~1. A quick look at the table shows that none of these S-rings has two basic sets which are cosets
of distinct subgroups of order $p$. Therefore, $V=V_1,$ and this completes the proof of the lemma.
\end{proof}

Before we prove Theorem~\ref{31}, one more technical lemma is needed.

\begin{lem}\label{L-gwp}
Assuming Hypothesis~\ref{hyp}, suppose that $\A$ is a $U/W$-wreath product, where $|U|=p^4$ and $|W|=p$.
Furthermore, let $V$ be an
$\A$-subgroup such that $W < V < U, \, |V|=p^3$ and $|\O(\A) \cap V| \ge p^2,$ and let $f \in \Iso_0(\A)$.
Then there exists $\varphi \in \Aut(H)$ such that $f \varphi$ maps each of $U$ and $W$ to itself, and the following conditions hold:
\begin{enumerate}[(i)]
\item $T^{f \varphi}/W=T/W$  for all  $T \in \Bs(\A)$.
\item $T^{f \varphi}=T$ for all  $T \in \Bs(\A)$ with
$T \subset V \cup (H\setminus U)$.
\item If either $V \subseteq \O(\A),$ or each
basic set of $\A$ contained in $V \setminus \O(\A)$ is equal to
a $W$-coset, then $f \varphi$ fixes pointwise $V$.
\end{enumerate}
\end{lem}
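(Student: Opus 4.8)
The backbone of my plan is the CI-property of the quotient $\A_{H/W}$. Since $\A=V(H,A)$ is Schurian, so is $\A_{H/W}$, which is a Schurian $p$-S-ring over $\Z_p^4$ and hence a CI-S-ring by \cite[Theorem~3.1]{HM}. First I would choose an automorphism realigning the images $U^f,W^f$ (which are $\B$-subgroups of orders $p^4,p$ with $W^f\le U^f$ by Proposition~\ref{HM3}) back onto $U,W$, using that $\Aut(H)=\mathrm{GL}_5(p)$ is transitive on flags of this type; this makes $f\varphi$ fix $U$ and $W$ setwise. Then I would apply CI of $\A_{H/W}$: writing $\bar f=(f\varphi)^{H/W}\in\Iso_1(\A_{H/W})$, there is $\bar\psi\in\Aut(H/W)$ with $\bar f\bar\psi^{-1}$ fixing every basic set of $\A_{H/W}$, and $\bar\psi^{-1}$ lifts (through the parabolic of $\Aut(H)$ stabilizing the flag $W<U$) to an automorphism preserving $W$ and $U$. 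Composing yields (i). I would record that any further correction by a \emph{transvection} (an automorphism of $H$ fixing $W$ pointwise and inducing the identity on $H/W$) preserves both (i) and the setwise stabilization of $U,W$.

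For (ii), the case $T\subset H\setminus U$ is immediate: the $U/W$-wreath hypothesis gives $W\le\rad(T)$, so (i) together with Lemma~\ref{L-iso}(ii) (applied with $E=W$) forces $T^{f\varphi}=T$. For $T\subset V$ I would exploit the abundance of thin elements, $E:=\O(\A)\cap V$ with $|E|\ge p^2$. Using that a normalized S-ring isomorphism is additive on the thin radical, I would first arrange by an automorphism that $f\varphi$ fixes $E$ pointwise; then $f\varphi$ commutes with every translation by an element of $E$, and combining this equivariance with (i) pins down each basic set lying in $V$.

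For (iii) I would first show, under either hypothesis, that $W\subseteq\O(\A)$, so that $\A_{V/W}$ is the full group algebra and $(f\varphi)^{V/W}=\id$; hence $f\varphi$ preserves every $W$-coset of $V$. When $V\subseteq\O(\A)$ the isomorphism is additive on all of $V$, and after one scaling correction making $f\varphi$ fix $W$ pointwise, fixing a thin basis of $V$ and propagating by additivity fixes $V$ pointwise. The hard case is the $W$-coset case, where $E$ has order exactly $p^2$, so $V=E\oplus\langle v_0\rangle$ with $v_0$ non-thin, and $f\varphi$ acts on each coset $W+kv_0$ as translation by some $\theta_k\in W$. Here is where I expect the main obstacle: the S-ring structure restricted to $V$ cannot detect $\theta$, since the translation lies inside the $W$-cosets and is absorbed by $W\le\rad$ in every relevant basic Cayley graph, so no amount of multiplicativity within $\A_V$ or $\A_{H/W}$ constrains it. Since a transvection can only alter $\theta$ by a linear function, the decisive point is to prove that $k\mapsto\theta_k$ is \emph{linear}, and this must be extracted from the global structure of $\A$ on $H$—the basic sets meeting $U\setminus V$ and the precise way the $U/W$-wreath product is reconstructed from $\A_U$ and $\A_{H/W}$—rather than from $\A_V$ alone. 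Once linearity is in hand, a single final transvection removes $\theta$ and fixes $V$ pointwise, completing the proof.
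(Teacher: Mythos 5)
Your treatment of (i), and of the sets outside $U$ in (ii), coincides with the paper's: realign the flag $W<U$, use that the Schurian $p$-S-ring $\A_{H/W}$ over $\Z_p^4$ is CI, lift the resulting automorphism of $H/W$ back to $H$, and dispose of every $T$ with $W\le\rad(T)$ via (i) and Lemma~\ref{L-iso}(ii). Up to that point the two arguments agree.

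The first genuine gap is in (ii) for the basic sets inside $V$. Making $f\varphi$ fix $E=\O(\A)\cap V$ pointwise and invoking translation-equivariance plus (i) does \emph{not} pin these sets down; it only yields $T^{f\varphi}=T+w_T$ for some $w_T\in W$. Concretely, let $W=\sg{v_5}$, $V=\sg{v_3,v_4,v_5}$, $U=\sg{v_2,v_3,v_4,v_5}$, and $\A=V(H,\sg{x,y})$ where $x\colon v_3\mapsto v_3+v_4$ and $y\colon v_1\mapsto v_1+v_5$ fix the other basis vectors; then the transvection $\chi\colon v_3\mapsto v_3+v_5$ satisfies every normalization you impose (it fixes $E$ pointwise, preserves all $W$-cosets, fixes $U$ and $W$), yet it moves the basic set $T_1=\sg{v_4}+v_3$ to $T_1+v_5\ne T_1$. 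What the paper supplies at exactly this point, and your sketch lacks, is: (a) one more correcting automorphism ($\varphi_3$ in the paper), preserving every $W$-coset---so (i) survives---and chosen so that the single basic set $T_1$ containing $v_3$ is fixed; and (b) Lemma~\ref{L-iso}(i), i.e.\ Schur's multiplier theorem applied to $\sgg{T_1}$, together with Eq.~\eqref{Eq-eT}, which propagate $T_1^{f\varphi}=T_1$ to all the sets $kT_1+v$ with $v$ thin. Without (a) and (b), (ii) does not close.

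For (iii) your diagnosis is right but the remedy you defer to cannot exist: the linearity of $k\mapsto\theta_k$ is false, and no amount of global structure will produce it. Take $\A=V(H,\sg{x,y})$ with $x\colon v_3\mapsto v_3+v_5$, $y\colon v_1\mapsto v_1+v_5$, and $U,W,V$ as above; all hypotheses of the lemma, including the $W$-coset hypothesis of (iii), hold. Writing $z=\sum_i z_iv_i$, the permutation $f\colon z\mapsto z+z_3^2\,v_5$ is an automorphism of every basic Cayley graph of $\A$, so $f\in\Aut(\A)_0\subseteq\Iso_0(\A)$; it satisfies all your normalizations, and it realizes $\theta_k=k^2v_5$, which is not linear for odd $p$. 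Since $(f|_V)^{-1}$ is not additive, no $\varphi\in\Aut(H)$ whatsoever can make $f\varphi$ fix $V$ pointwise. So the ``decisive point'' of your plan is unprovable; indeed this example shows that conclusion (iii), read literally, fails, and it also exposes the soft spot in the paper's own proof, whose reduction ``it suffices to fix $v_3,v_4,v_5$'' tacitly uses additivity of $f_1$ on $V$---valid when $V\subseteq\O(\A)$, but not in the $W$-coset case. The workable statement (and all that the application in Theorem~\ref{31} needs) is pointwise fixation of $V$ only up to composition with an element of $\Aut(\A)_0$: the non-linear residue you isolated lies in $\Aut(\A)_0$ and is harmless for the CI conclusion. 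In short, you located a real fault line, but the route you propose runs into it rather than around it.
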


\begin{proof}
Let us fix five elements of $H$ as follows:
$$
v_5 \in W, \, v_4 \in (V \cap \O(\A)) \setminus W,\, v_3 \in V \setminus \sg{v_4,v_5}, \, v_2 \in U \setminus  V
\text{ and } v_1 \in H \setminus U.
$$
Clearly, $(v_1,\ldots,v_5)$ is an $\A$-basis.
Let $\varphi_1 \in \Aut(H)$ be the automorphism defined
by $\varphi : v_i^f \mapsto v_i, \, i \in \{1,\ldots,5\},$ and
let $f_1=f \varphi$. It follows that $f_1 \in \Iso_0(\A),$ and
$f_1$ fixes all $v_i$. In particular, $W^{f_1}=W,$ and thus
$f_1^{H/W} \in \Iso_0(\A_{H/W})$.

The S-ring $\A_{H/W}$ is a CI-S-ring. Therefore, there exists some $\phi \in \Aut(H/W)$ such that $T^{f_1}/W=(T/W)^\phi$ for all $T\in \Bs(\A)$.  Let $\phi_1 \in \Aut(H)$ such that $\phi_1$ fixes $W$ pointwise, and
$\phi_1^{H/W}=\phi$. Then we have
$W+v_4^{\phi_1}=(W+v_4)^{\phi_1^{H/W}}=(W+v_4)^\phi=W+v_4^{f_1}=W+v_4$.
Thus, $v_4^{\phi_1}=v_4+w$ for some $w \in W$. Now, let $\phi_2 \in \Aut(H)$ be defined by
$$
\phi_2 : v_i \mapsto \begin{cases}
v_i & \text{if } i=1,2,3,5, \\
v_i-w & \text{if } i=4, \end{cases}
$$
and let $\varphi_2=\phi_1\phi_2$.
It follows that $\varphi_2$ satisfies the following:
\begin{enumerate}[(a)]
\item $u^{\varphi_2}=u$ for all $u \in \sg{v_4,v_5},$
\item$\varphi_2^{H/W}=\phi$.
\end{enumerate}

Let $T_1 \in \Bs(\A)$ such that $v_3 \in T_1$.
Then $T_1=X+v_3$ for some subgroup $X \le \sg{v_4,v_5},$ and
we find $T_1^{f_1}=X^{f_1}+v_3^{f_1}=
X+v_3=T_1$. Thus, $T_1^{\varphi_2}/W=(T_1/W)^{\varphi_2^{H/W}}
=(T_1/W)^{\phi}=T_1^{f_1}/W=T_1/W$. This gives $T_1^{\varphi_2}+W=T_1+W,$ and thus
$T_1^{\varphi_2^{-1}}$ is contained in the coset $T_1+W=X+W+v_3$.
By (a), $X^{\varphi_2}=X,$ implying that
$T_1^{\varphi_2^{-1}}=X+v_3^{\varphi_2^{-1}}$. This with the previous observation yields that $T_1^{\varphi_2^{-1}}=T_1+w_1$ for some $w_1 \in W$. Now, we define $\varphi_3 \in \Aut(H)$ by letting
$$
\varphi_3 : v_i \mapsto \begin{cases}
v_i & \text{if } i=1,2,4,5, \\
v_3+w_1 & \text{if } i=3 \text{ and } X \ne W, \\
v_3^{\varphi_2^{-1}} & \text{if } i=3 \text{ and } X=W.
\end{cases}
$$

Let $\varphi=\varphi_1\varphi_2^{-1}\varphi_3^{-1}$.
Then $f \varphi=f_1 \varphi_2^{-1} \varphi_3^{-1}$.
It is easily seen that $f_1$ maps each of
$U$ and $W$ to itself. The condition that $\varphi_2$ maps $W$ to itself
follows from (a) and the fact that $W=\sg{v_5}$. Then, $W \le U^{\varphi_2},$ and for
$U^{\varphi_2}=U$ it is enough to show that
$U^{\varphi_2}/W=U/W$. This follows along the line:
$U^{\varphi_2}/W=(U/W)^{\varphi_2^{H/W}}=(U/W)^\phi=U^{f_1}/W=U/W$.
The definition of $\varphi_3$ shows that it also maps $U$ and $W$ to itself, and therefore,
we obtain that $f \varphi$ maps each of $U$ and $W$ to itself.
We finish the proof by showing that all conditions (i)-(iii) hold for $f \varphi$. \medskip

(i): Recall that $f_1=f \varphi_1$.
It follows from (b) that $T^{f_1\varphi_2^{-1}}/W=T/W$ for all
$T \in \Bs(\A)$. We claim that
$(W+v_i)^{\varphi_3}=W+v_i$ for al $i\in \{1,\ldots,5\}$.
This is obvious if $i \ne 3,$ or $i=3$
and $X \ne W$.  Let $i=3$ and $X=W$. Then
$T_1=W+v_3,$ and since $T_1^{f_1}=T_1,$ it follows that
$T_1^{\varphi_2}/W=T_1^{f_1}/W=T_1/W,$ implying that
$W+v_3^{\varphi_2^{-1}}=W+v_3$.
Therefore, $(W+v_3)^{\varphi_3}=W^{\varphi_3}+v_3^{\varphi_3}=W+v_3^{\varphi_2^{-1}}=W+v_3$.
Since $H=\sg{v_1,\ldots,v_5},$  it follows that
$(W+x)^{\varphi_3}=W+x$ for all $x \in H$. Consequently,
$T^{f \varphi}/W=T^{f_1\varphi_2^{-1} \varphi_3^{-1}}/W=T^{\varphi_3^{-1}}/W=T/W$ for all
$T \in \Bs(\A),$ and (i) follows.
\medskip

(ii): Let $T$ be an arbitrary basic set of $\A$. Note that, if
$W \le \rad(T),$ then using that $f \varphi$ maps $W$ to itself,
we can write $W+T^{f \varphi}=(W+T)^{f \varphi}=T^{f \varphi}$.
Combining this with (i) yields $T^{f \varphi}=T^{f \varphi}+W=T+W=T$.
In particular, (ii) holds whenever $T \subset H \setminus U$ or
$T=T_1$ and $W=X$.
Also, by (a) and the definition of $\varphi_3$, $f \varphi$ fixes pointwise $\sg{v_4,v_5},$ and this gives that (ii) also
holds when $T \subset \sg{v_4,v_4}$. It remains to consider the
case when $T \subset V \setminus \sg{v_4,v_5}$. Observe that, $T$ can be written in the form $T=kT_1+v$ for some
$k \in \{1,\ldots,p-1\}$ and some $v \in \sg{v_4,v_5}$. In view of
Eq.~\eqref{Eq-eT} and Theorem~\ref{T-1st-multi},
in order to prove $T^{f \varphi}=T$ it is sufficient to show that
$T_1^{f \varphi}=T_1$. We have already shown above that this holds if
$X=W$. If $X \ne W,$ then the statement follows along the following line:
$$
T_1^{f \varphi}=T_1^{f_1\varphi_2^{-1}\varphi_3^{-1}}=
 T_1^{\varphi_2^{-1}\varphi_3^{-1}}=(T_1+w_1)^{\varphi_3^{-1}}=
(X+v_3+w_1)^{\varphi_3^{-1}}=X+v_3=T_1.
$$
\medskip

(iii): Since $V=\sg{v_3,v_4,v_5},$ we are done if we show that
$v_i^{f \varphi}=v_i$ for all $i \in \{3,4,5\}$. Using that $f \varphi=f_1 \varphi_2^{-1}
\varphi_3^{-1}$ and all $v_i$ are fixed by $f_1,$ it reduces to show
that $v_i^{\varphi_2^{-1}\varphi_3^{-1}}=v_i$ for all $i \in \{3,4,5\}$.
This follows immediately from (a) and the definition of $\varphi_3$ if $i=4$ or $i=5$.
The condition $V \subset \O(\A)$ is equivalent to $T_1=\{v_3\},$ and
therefore, $T_1=\{v_3\}$ or $T_1=W+v_3$.
Now, recall that $T_1^{\varphi_2^{-1}}=T_1+w_1$.
This shows that, if $T_1=\{v_3\},$ then $v_3^{\varphi_2^{-1}}=v_3+w_1,$ and so $v_3^{\varphi_2^{-1}\varphi_3^{-1}}=(v_3+w_1)^{\varphi_3^{-1}}=v_3$. Finally, if $T_1=W+v_3,$ then
$v_3^{\varphi_3}=v_3^{\varphi_2^{-1}},$ that is,
$v_3^{\varphi_2^{-1}\varphi_3^{-1}}=v_3$.
\end{proof}

Everything is prepared to settle the main result of the section.

\begin{proof}[Proof of Theorem~\ref{31}]
Since $\A$ is decomposable, it is a $U/W$-wreath product where
$W < U, \, |W|=p$ and $|U|=p^4$.  Furthermore,
we may assume because of Lemma~\ref{L-AU-ind} that
$\A_U$ is a $V/X$-wreath product where $X < V < U, \, |X|=p$
and $|V|=p^3$.

Let $f \in \Iso_0(\A)$. We have to show that
there exists some $\varphi \in \Aut(H)$ such that
$T^{f \varphi}=T$ for all basic sets $T\in \Bs(\A)$.
By Lemma~\ref{L-gwp}(i)-(ii), there exists some
$\varphi_1 \in \Aut(H)$ such that $f_1=f \varphi_1$
satisfies
\begin{equation}\label{Eq-cond1}
T^{f_1}/W=T/W  \text{ for all } T \in \Bs(\A),
\end{equation}
and
\begin{equation}\label{Eq-cond2}
T^{f_1}=T \text{ for all  } T \in \Bs(\A) \text{ with } T \subset V \cup H\setminus U.
\end{equation}

Now, if $W \le \rad(T)$ for any basic set $T \subset V \setminus U,$
then Eq.~\eqref{Eq-cond1} and Lemma~\ref{L-iso}(ii) imply that
$T^{f_1}=T$ also holds, and hence we are done by letting
$\varphi=\varphi_1$.
For the rest of the proof it will be assumed that there exists
some basic set $T_1 \subset U \setminus V$ such that $W \not\le \rad(T_1)$. Note that, this implies that
$|T_1|=p$ or $|T_1|=p^2$.
We consider below the two cases separately.
\smallskip

\noindent {\bf Case~1.} $|T_1|=p$.
\smallskip

In this case $T_1=X+u$ for some $u \in U \setminus V$.
Since $W \not\le \rad(T_1),$ it follows that $X \ne W$. Since $X < V,$ by Eq.~\eqref{Eq-cond2}, $X^{f_1}=X,$ and thus $T_1^{f_1}=X^{f_1}+u^{f_1}=X+u^{f_1}$. Using this and Eq.~\eqref{Eq-cond1}, we conclude that
 $T_1^{f_1}+W=T_1+W,$ implying that $T_1^{f_1}$ is contained in the coset $X+W+u$. Thus, $T_1^{f_1}=T_1+w_1$ for some
$w_1 \in W$. Choose an automorphism $\varphi_2 \in \Aut(H)$ satisfying the following:
$$
\varphi_2 :u \mapsto u-w_1,\;
|C_H(\varphi_2)|=p^4 \text{ and }
C_H(\varphi_2) \cap U=V.
$$

Let $f_2=f_1 \varphi_2$.  It is not hard to show that $f_2$ satisfies both Eqs.~\eqref{Eq-cond1} and \eqref{Eq-cond2}.
We show below that $T^{f_2}=T$ also holds for each $T\in \Bs(\A)$ with $T\subseteq U \setminus V,$
and therefore, will be done by letting $\varphi=\varphi_1 \varphi_2$.

First, $T_1^{f_2}=(T_1+w_1)^{\varphi_2}=(X+u+w_1)^{\varphi_2}=T_1$. Let us consider the S-ring $\A_{U/X}$ and its isomorphism $f_2^{U/X}$. Note that, the latter isomorphism belongs to 
$\Iso_0(\A_{U/X})$ because $f_2$ maps each of $U$ and $X$ to itself.
The group $U/X$ can be written as the internal direct sum
$V/X+\sg{X,u}/X$ where both subgroups $V/X$ and $\sg{X,u}/X$ are
$\A_{U/X}$-subgroups. By Lemma~\ref{L-tensor},
$\A_{U/X}=\A_{V/X} \otimes \A_{\sg{X,u}/X}$.
Also, $f_2^{U/X}$ fixes any basic
set $T' \in \Bs(\A_{U/X})$ if $T' \subset V/X$ or $T'\subset \sg{X,u}/X$. This together with $\A_{U/X}=\A_{V/X} \otimes \A_{\sg{X,u}/X}$ yields that $f_2^{U/X}$ fixes all basic sets of $\A_{U/X}$.
Then, using also that $\A_U$ is a $V/X$-wreath product, we conclude $f_2$ fixes all basic sets of $\A$ contained in
$U \setminus V,$ as required.
\smallskip

\noindent {\bf Case~2.} $|T_1|=p^2$.
\smallskip

Assume for the moment that $T_1$ is equal to a coset $X'+u$
for some subgroup $X' < V, \, |X'|=p^2,$ and $|W \cap X'|=1$.
It follows from Eq.~\eqref{Eq-cond1} that $T_1^{f_1}=T_1+w_1$ for some $w_1 \in W$. Repeating the argument used in Case~1, we find $\varphi_2\in \Aut(H)$ such that $f_2=f_1 \varphi_2$ satisfies
both Eqs.~\eqref{Eq-cond1} and \eqref{Eq-cond2}, and also $T_1^{f_2}=T_1$. Now, if $T \in \Bs(\A)$ is an arbitrary basic set such that $T \subset U \setminus V,$ then it can be written in the form $T=k(T_1+w_2)$ for some $k \in \{1,\ldots,p-1\}$ and some
$w_2 \in W$. In view of Lemma~\ref{L-iso}(i), $T^{f_2}=T$ follows
because $(T_1+w_2)^{f_2}=T_1^{f_2}+w_2^{f_2}=T_1+w_2$.
All these show that we are done by choosing $\varphi=\varphi_1 \varphi_2$.

For the rest of the proof it will be assumed that $T_1$ is not a coset.
Equivalently, $\A_{U/X}$ is an exceptional S-ring.
This implies that any basic set in $U \setminus V$ generates
$U,$ and therefore, $V$ is the only $\A$-subgroup
of order $p^3$ contained in $U$. This fact will be used later.

By Corollary~\ref{C-teq-min}, the S-ring $\A_{H/W}$ must be decomposable. Equivalently, there exist
$\A$-subgroups $Y$ and $U'$ such that
$W < Y <  U', \, |Y|=p^2$ and $|U'|=p^4,$ and
$\A_{H/W}$ is a $(U'/W)/(Y/W)$-wreath product.
\smallskip

\noindent {\bf Case~2.1.} $U'=U$.
\smallskip

Assume that $Y=W+X$. In this case $\A$ is a $U/X$-wreath
product, and we can finish this case by replacing first
$W$ with $X,$ and then apply the argument used right after
Eq.~\eqref{Eq-cond2}.

Now, suppose that $Y \ne W+X$. Since $W < Y,$ this gives $X \cap Y=\{0\}$ and $|X+Y|=p^3$. 
Then $Y < U'=U$. We obtain that $X+Y$ is an $\A$-subgroup of
order $p^3$ contained in $U$. As it was noted above, this forces that $X+Y=V,$ in particular,
$Y < V$. It follows that, either $V \subseteq \O(\A),$ or
any basic set contained in $V \setminus (W+X)$ is equal to
a $W$-coset. By Lemma~\ref{L-gwp}(iii), $f_1$ fixes pointwise $V$.
Let us consider $f_1^{U/X},$ the isomorphism of $\A_{U/X}$
induced by $f_1$ acting on $U/X$. Then,
$f_1^{U/X} \in \Iso_0(\A_{U/X}),$ and since $\A_{U/X}$ is exceptional, $f_1^{U/X} \in \Aut(U/X),$ see Lemma~\ref{L-exp}.
Also, as $f_1$ fixes pointwise $V,$
$f_1^{U/X}$ centralizes $V/X$. This implies that
$f_1^{U/X}$ acts on $U/X \setminus V/X$ as a translation by some element $X+u_1$ where $u_1\in V$. In particular, $T_1^{f_1}/X=(T_1+u_1)/X,$ and
thus $T_1^{f_1}+X=T_1+u_1+X$. Since $X \le \rad(T_1),$ it
follows that $X=X^{f_1} \le \rad(T_1^{f_1}),$  and we can
write $T_1^{f_1}=T_1^{f_1}+X=T_1+u_1+X=T_1+u_1$.
Using also that $|X \cap Y|=1,$ we can further write
$T_1+u_1=T_1+u_2$  for some $u_2 \in Y$.
Fix an element $u \in U \setminus V,$ and then choose an
automorphism $\varphi_2 \in \Aut(H)$ such that
$$
u^{\varphi_2}=u-u_2,\;
|C_H(\varphi_2)|=p^4 \text{ and }
C_H(\varphi_2) \cap U=V.
$$

Let $f_2=f_1\varphi_2$. Using that $u_2\in Y$ and $Y \le \rad(T)$
for all $T \in \Bs(\A)$ with $T \subset H\setminus U,$
it is not hard to show that $f_2$ satisfies Eq.~\eqref{Eq-cond2}.  Also, $T_1^{f_2}=(T_1+u_2)^{\varphi_2}=T_1$.
Finally, let $T \in \Bs(\A)$ be an arbitrary basic set such that
$T \subset U \setminus V$. Then, $T$ can be written in the form
$T=T_1+w_2, \, w_2 \in W,$ and thus we can write $T^{f_2}=
T_1^{f_2}+w_2^{f_2}=T_1+w_2=T$.
All these show that we are done by choosing $\varphi=\varphi_1 \varphi_2$.
\smallskip

\noindent {\bf Case~2.2.} $U' \ne U$.
\smallskip

Recall that, $V$ is the only
$\A$-subgroup of order $p^3$ contained in $U$. This implies
that $U \cap U'=V$. The radical $\rad(T_1/W)=(X+W)/W$.
On the other hand, $Y/W \le \rad(T_1/W),$ and thus $Y=X+W$.
This shows that $\A$ is a $U'/X$-wreath product.
The S-ring $\A_{U'}$ is a $U'/W$-wreath product, and we
may assume that $\A_{U'/W}$ is indecomposable. Observe that,
letting $U_1=U, \, U_2=U', \, W_1=X$ and $W_2=W,$ 
conditions (1) and (2) of Lemma~\ref{L-bfly1} hold. Therefore, in view of Lemma~\ref{L-bfly2},
we may assume that condition (3) does not hold, that is,
$A$ acts regularly on any of its orbits not contained in
$U \cup U'$. Now, fix an $\A$-basis $(v_1,\ldots,v_5)$ as follows:
$$
v_1\in U \setminus V, \; v_2 \in U'\setminus V,  \; v_3 \in
V \setminus (W+X), \; v_4 \in X \text{ and } v_5 \in W.
$$

Then, let $\psi$ be the automorphism of $H$ defined by
$\psi : v_i^f \mapsto v_i$ for all $i\in \{1,\ldots,5\},$ and let
$f_3=f \psi$. We finish the proof by showing that $T^{f_3}=T$ for all basic sets $T \in \Bs(\A)$. One can settle the equality $T^{f_3}=T$ for $T \subset U \cup U'$ by copying the argument used in the proof of 
Claim~(a) in the proof of Lemma~\ref{L-bfly2}. 

Now, suppose that $T \in \Bs(\A)$ with $T \subset H \setminus
(U \cup U')$.  By Lemma~\ref{L-p-S2}, $T$ is contained in both a
$U$-coset and a $U'$-coset, hence it is contained in a $V$-coset, recall that $V=U \cap U'$.  We claim that $T$ is, in fact, equal to a $V$-coset.
Assume to the contrary that $T$ is properly contained in a $V$-coset.
In particular, we have $|T| \le p^2$.
Let $G=H_R A,$ $N=N_{\Aut(\A)}(H_R),$ and $N_0$ be the stabilizer of $0$ in $N$. Then, $\Aut(\A)=G^{(2)}$ and $C_{H_R}(A) \le Z(G)$. By Proposition~\ref{P-center-G2}, $C_{H_R}(A) \le Z(\Aut(\A)),$ and
hence $C_{H_R}(A) \le C_{H_R}(N_0)$. The S-ring
$\A$ can be expressed as $\A=V(H,N_0)$ such that
$p^2 \le |C_H(N_0)|$. Now, we can replace $A$ with $N_0,$ and
assume, in addition, that $N_0$ is regular on $T$. Therefore,
$|N_0|=|T| \le p^2$. On the other hand, $N_0$ contains two
elements $x$ and $x'$ such that $C_H(x)=U$ and
$x$ acts on the elements $v \in H \setminus U$ as
the right translation $v^x=v+w$ for a fixed nonzero
$w \in W,$ and $C_H(x')=U'$ and
$x'$ acts on the elements $v \in H \setminus U'$ as
the right translation $v^x=v+w'$ for a fixed nonzero
$w' \in X$. It is easily seen that
$V(H,\sg{x,x'})  \ne \A=V(H,N_0),$ implying that
$\sg{x,x'} < N_0$. This, however, contradicts the previously
obtained bound $|N_0| \le p^2,$ and by this we have proved that
$T$ is indeed equal to a $V$-coset.

Finally, let us consider the S-ring $\A_{H/V},$ and the induced
isomorphism $f_3^{H/V} \in \Iso_0(\A_{H/V})$.  It is easily seen
that $\A_{H/V}=\Q \, (H/V),$
and therefore, $f_3^{H/V} \in \Aut(H/V)$. Using this and that
$f_3$ fixes all $v_i, \, i \in \{1,\ldots,5\},$ we find
$f_3^{H/V}=\id_{H/V}$. Thus, $(T+V)^{f_3}=T+V,$ and combining
this with $V \le \rad(T),$ Lemma~\ref{L-iso}(ii) gives that $T^{f_3}=
T$. This completes the proof of Case~2.2.
\end{proof}

\section{Proof of Theorem~\ref{3} II: The indecomposable S-rings}

In this section, we turn to the indecomposable S-rings in 
Theorem~\ref{3} and prove the following theorem:

\begin{thm}\label{32}
Assuming Hypothesis~\ref{hyp}, suppose that $\A$ is indecomposable.
Then $\A$ is a CI-S-ring.
\end{thm}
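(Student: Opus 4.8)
The plan is to reduce the statement, via the machinery already built, to the problem of producing a single well-behaved quotient, and then to exploit the hypothesis $|C_H(A)|\ge p^2$ against indecomposability. First note that $\A=V(H,A)$ is a Schurian $p$-S-ring, since its basic sets are the $A$-orbits and $A$ is a $p$-group; moreover $C_H(A)\subseteq\O(\A)$, so $|\O(\A)|\ge p^2$, and by Proposition~\ref{HM5} together with indecomposability no basic set has size $p^4$. The crucial engine is Corollary~\ref{C-teq-min}: if \emph{some} $\A$-subgroup $W$ of order $p$ satisfies that $\A_{H/W}$ is indecomposable, then $\A_{H/W}$ is an indecomposable Schurian $p$-S-ring over $\Z_p^4$, hence $\teq$-minimal by Theorem~\ref{T-teq-min} and CI by \cite[Theorem~3.1]{HM}, whence $\A$ is CI by Proposition~\ref{P-teq}. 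Therefore I assume for contradiction that $\A$ is non-CI; then Corollary~\ref{C-teq-min} tells me that $\A_{H/W}$ is decomposable for \emph{every} $\A$-subgroup $W$ of order $p$, and my goal becomes to show that this forces $\A$ itself to be decomposable, contradicting the hypothesis of the theorem.

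Next I would reformulate both indecomposability and the quotient data in terms of radicals. Fix an $\A$-subgroup $E$ of index $p$, which exists by Proposition~\ref{P-p-S1}(ii); by Lemma~\ref{L-p-S2}(i) every basic set lies in a single $E$-coset and $\rad(T)\le E$ for all $T\in\Bs(\A)$. Consequently $\A$ is indecomposable if and only if $\bigcap_{T\subset H\setminus E}\rad(T)=\{0\}$ for every index-$p$ $\A$-subgroup $E$, since a nonzero common radical $R\le E$ would exhibit $\A$ as the $E/R$-wreath product; it is precisely this vanishing that I must contradict. On the other hand, whenever $W\le\O(\A)$ has order $p$ and $\A_{H/W}$ is a wreath product, I may enlarge the first factor to an $\A$-subgroup $E_W$ of index $p$ and obtain $\A$-subgroups $W\subsetneq F_W\le E_W$ with $|F_W|\ge p^2$ and $F_W\le\rad(T+W)$ for every basic $T\subset H\setminus E_W$. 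The delicate point, which I would isolate early, is that this controls $\rad(T+W)$ rather than $\rad(T)$: when $W\le\rad(T)$ the two coincide and yield $F_W\le\rad(T)$, but when $W\not\le\rad(T)$ Lemma~\ref{L-p-S2}(ii) shows that $T$ meets each $W$-coset in one point, so $T+W$ is strictly larger than $T$ and the radical information is weakened.

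Finally I would combine the decomposition data across the $W\le\O(\A)$ of order $p$, of which there are at least $p+1$ because $|\O(\A)|\ge p^2$. The intended mechanism is to show that, for a suitably chosen index-$p$ $\A$-subgroup $E$, the collection of bounds $F_W\le\rad(T+W)$ (as $W$ ranges over the order-$p$ subgroups of $\O(\A)\cap E$ and $T$ over the basic sets outside $E$) cannot be reconciled with the equality $\bigcap_{T\subset H\setminus E}\rad(T)=\{0\}$: they are forced to deposit a common nonzero subgroup into that intersection, producing the forbidden wreath decomposition of $\A$. This combination step is the main obstacle, and it is where the rank being exactly $5$, the bound $|\O(\A)|\ge p^2$, and the absence of size-$p^4$ basic sets all enter decisively. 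Concretely, I expect the hardest work to be a case analysis on the possible orders $|F_W|\in\{p^2,p^3,p^4\}$ and on whether each basic set outside $E$ is a union of $W$-cosets, eliminating the configurations one by one with the help of Lemma~\ref{L-p-S2}(ii)--(iii), Proposition~\ref{HM5}, and coset-counting inside $\O(\A)$; these are exactly the estimates that fail in rank $\ge 6$, which is what makes the rank-$5$ hypothesis indispensable. Once the contradiction is secured, the required $W$ with $\A_{H/W}$ indecomposable must exist, and $\A$ is CI by the reduction of the first paragraph.
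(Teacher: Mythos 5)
Your opening reduction is correct, and it is indeed one ingredient of the paper's argument: if $\A$ is non-CI, then Corollary~\ref{C-teq-min} forces $\A_{H/W}$ to be decomposable for every $\A$-subgroup $W$ of order $p$. The fatal gap is the goal you then set yourself: to show that this quotient data, together with $|\O(\A)|\ge p^2$ and the absence of basic sets of size $p^4$, already forces $\A$ to be decomposable. That implication is \emph{false}, so the ``combination step'' you defer to a future case analysis cannot be carried out, however the cases are organized. A counterexample sits in the paper's own Section~6: the S-ring $\B=V(H,L)$ of Lemmas~\ref{L-L1}--\ref{L-L3}. Explicitly, write $H=\sg{v_1,\ldots,v_5}$ and let $L=\sg{x,y}$, where $x,y\in\Aut(H)$ fix $v_3,v_4,v_5$ and $v_1^x=v_1+v_3$, $v_2^x=v_2+v_4$, $v_1^y=v_1+v_4$, $v_2^y=v_2+v_5$; then $x$ and $y$ commute, $|L|=p^2$ and $C_H(L)=U:=\sg{v_3,v_4,v_5}$, so $\B$ satisfies Hypothesis~\ref{hyp}, is a Schurian $p$-S-ring, and $\O(\B)=U$ has order $p^3\ge p^2$. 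Its basic sets outside $U$ are the cosets $U_i+j\hat{v}_i+u$ (notation of the proof of Lemma~\ref{L-L2}) of $p+1$ subgroups $U_i<U$ of order $p^2$ whose triple intersections are trivial (Claim~(d) there). Hence $\B$ is indecomposable: in any decomposition $\B_E\wr_{E/F}\B_{H/F}$ with $F\ne\{0\}$, at most two parameters $i$ can have a basic set outside $E$ (otherwise $F$ lies in a trivial triple intersection), so for at least $p-1\ge 2$ values of $i$ every parameter-$i$ basic set lies in $E$, forcing $E\supseteq U+\sg{\hat{v}_i}$ for two distinct $i$ and thus $E=H$. Yet for \emph{every} $\B$-subgroup $W$ of order $p$ (all of which lie in $U$, since basic sets outside $U$ have size $p^2$), the quotient $\B_{H/W}$ \emph{is} decomposable: one has $U_i+W=U$ except for the at most two parameters with $W\le U_i$, so all remaining quotient basic sets are $U/W$-cosets, and taking $\bar E$ to be the image of $U+\sg{\hat{v}_{i_0}}$ for one exceptional parameter $i_0$ (or $\bar E=U/W$ if there is none) and $\bar F$ the image of $U_{i_1}$ for the other (or any nonzero subgroup of $U/W$) exhibits a generalized wreath decomposition. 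Since $\B$ is moreover a CI-S-ring (Lemma~\ref{L-L3}), the configuration you aim to rule out genuinely occurs, and it occurs for a CI-S-ring; so the non-CI hypothesis must enter the argument in some way beyond Corollary~\ref{C-teq-min}, which your plan never provides.

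That deeper use of non-CI-ness is the entire content of the paper's proof, and none of it appears in your outline. There, Proposition~\ref{P-regular-R} converts non-CI-ness into a second regular subgroup inside $N_{\Aut(\A)}(H_R)$, and the centralizer bounds of Lemmas~\ref{L-center2}, \ref{L-p7} and \ref{L-UW} turn this into a subgroup $L\le\Aut(\A)\cap\Aut(H)$ with $|L|=p^2$ and $|C_H(L)|=p^3$. The heart of the matter is then to prove that $V(H,L)$ --- exactly the S-ring displayed above --- is CI, which the paper does by computing $|\Aut(V(H,L))|=p^8$ and showing that this group contains precisely $p$ regular subgroups isomorphic to $H$, all conjugate (Lemmas~\ref{L-L2} and \ref{L-L3}). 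Since $\A\subseteq V(H,L)$ and $\A$ is assumed non-CI, it follows that $\A\ne V(H,L)$, and a final analysis of radicals of small basic sets (Lemma~\ref{L-UW} combined with Lemma~\ref{L-center2}) produces the contradiction. Separately from the counterexample, note that your third paragraph is an announced plan rather than an argument: the step you yourself identify as ``the main obstacle'' is precisely the one that is never performed, so even if its target statement had been true, the proposal would not constitute a proof.
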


\noindent
Our main result Theorem~\ref{3} follows then as the consequence of Theorems~\ref{31} and \ref{32}.
\medskip

Theorem~\ref{32} will be proved in the end of the section after six preparatory lemmas.
In the first two lemmas we derive some properties of indecomposable $p$-S-rings
with thin radical of order at least $p^2$.

\begin{lem}\label{L-center2}
Let $\B$ be an indecomposable $p$-S-ring
over a group $H \cong \Z_p^5$ such that $|\O(\B)|\ge p^2,$
and let $x \in N_{\Aut(\B)}(H_R)$ such that $x\ne \id_H$ and
$0^x=0$. Then $|C_H(x)| \le p^3$.
\end{lem}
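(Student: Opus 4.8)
The plan is to show that if, on the contrary, $|C_H(x)|=p^4$, then the S-ring $\B$ would be decomposable, contradicting the hypothesis. So I suppose $U:=C_H(x)$ has order $p^4$, and seek to construct a pair of $\B$-subgroups $F\le E$ with $E\ne H$, $F\ne\{0\}$ exhibiting $\B$ as an $E/F$-wreath product. Since $x\ne\id_H$ and $x$ fixes the hyperplane $U$ pointwise, $\rank(x-I)=1$, so $x$ acts on $H$ as $v\mapsto v+\lambda(v)w_0$ for a fixed $w_0\in H$ and a linear functional $\lambda$ with kernel $U$; here $w_0^x=w_0$, so $w_0\in U$, and $W_0:=\langle w_0\rangle$ is the image subspace. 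Every $\langle x\rangle$-orbit outside $U$ is then a coset of $W_0$, so that $W_0\le\rad(T)$ for every basic set $T\subset H\setminus U$.

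The first key step is to locate an $\A$-subgroup inside the thin radical that plays the role of $F$. Since $|\O(\B)|\ge p^2$ and $\O(\B)$ is a $\B$-subgroup, I would examine how $\O(\B)$ meets $U$ and whether $W_0\le\O(\B)$. The cleanest route is to pick a $\B$-subgroup $V$ of order $p^3$ inside $U$ and set $E$ to be a maximal $\B$-subgroup, then assemble the wreath decomposition from
$$
F=\bigcap_{\,T\in\Bs(\B),\,T\subset H\setminus U}\rad(T),
$$
which is a $\B$-subgroup (intersection of the $\A$-subgroups $\rad(T)$) satisfying $W_0\le F$ by the paragraph above. The remaining task is to produce an $\A$-subgroup $E$ with $F\le E<H$ and $F\le\rad(T)$ for all basic $T\subset H\setminus E$; taking $E=U$ is the natural candidate, since then the only basic sets to check are exactly those outside $U$, for which $F\le\rad(T)$ holds by construction. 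The obstacle is that $U$ itself need not be a $\B$-subgroup, so I must argue that $U$ is forced to be one — this is where $|\O(\B)|\ge p^2$ and the indecomposability of $\B$ must both be used, paralleling the argument in the proof of Lemma~\ref{L-center1}.

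More precisely, I would mimic the counting argument of Lemma~\ref{L-center1}: if $U$ is \emph{not} a $\B$-subgroup, then some basic set $T$ meets $U$ without being contained in it, and analysing the product $\un{T}\cdot\un{(-T)}$ via Eq.~\eqref{Eq-T/K}, together with the fact that $W_0\not\le\rad(T)$ forces each $W_0$-coset to meet $T$ in at most one point, yields the same type of numerical contradiction. The presence of $|\O(\B)|\ge p^2$ supplies enough thin elements $e$ with $e+T$ basic (Eq.~\eqref{Eq-eT}) to drive the count. Once $U$ is known to be a $\B$-subgroup, setting $E=U$ and $F$ as above realizes $\B$ as the $U/F$-wreath product $\B_U\wr_{U/F}\B_{H/F}$ with $U\ne H$ and $F\supseteq W_0\ne\{0\}$, contradicting indecomposability.

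The main difficulty I anticipate is exactly the step forcing $U=C_H(x)$ to be a $\B$-subgroup: unlike in Lemma~\ref{L-center1}, here $\dim(C_H(x))=4$ and $|H|=p^5$, so the exact coefficient bookkeeping in $\un{T}\cdot\un{(-T)}$ and the role of the order-$p^2$ thin radical need to be handled carefully to reach the contradiction $p^{4}-p^3\ge(p-1)(p^4-p^3)$ (or its analogue). Once that structural fact is secured, the wreath decomposition and the resulting contradiction with indecomposability are immediate, so I expect the entire weight of the argument to rest on this single counting lemma.
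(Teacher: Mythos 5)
Your opening moves coincide with the paper's: assuming $|C_H(x)|=p^4$, every basic set $T\subset H\setminus U$ is $x$-invariant, hence a union of full $W_0$-cosets, so $F=\bigcap_{T\in\Bs(\B),\,T\subset H\setminus U}\rad(T)$ is a nonzero $\B$-subgroup contained in $U$, and if $U$ were a $\B$-subgroup then $\B$ would be a $U/F$-wreath product, a contradiction. The paper draws from this the conclusion opposite to your stated goal: $U$ (and likewise $W_0$) \emph{cannot} be a $\B$-subgroup, and the entire remaining difficulty---in fact the whole content of the lemma---is to refute the case where $U$ is \emph{not} a $\B$-subgroup. Your proposal defers exactly this case to a counting argument ``mimicking Lemma~\ref{L-center1}'', and that argument rests on a false claim. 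You assert that $W_0\not\le\rad(T)$ forces each $W_0$-coset to meet $T$ in at most one point. The mechanism behind such statements is Eq.~\eqref{Eq-T/K} (equivalently Lemma~\ref{L-p-S2}(ii)), which applies only when the subgroup in question is a $\B$-subgroup; here $W_0$ is provably \emph{not} one (if it were, $x$ would lie in the kernel of $\Aut(\B)$ acting on $H/W_0$, contradicting Lemma~\ref{L-kernel}), and indeed the claim fails outright: for your $T$, which meets $U$ without being contained in it, the set $T\setminus U$ is a nonempty union of \emph{full} $W_0$-cosets, each meeting $T$ in exactly $p$ points. In the proof of Lemma~\ref{L-center1} the ``at most one point per coset'' step is applied not to $W=\sg{v_1^x-v_1}$ but to an auxiliary \emph{thin} $\A$-subgroup $W'$ of order $p$ with $W'\ne W$, while the $W$-cosets there do meet $T$ in $p$ points; you have transposed the roles of these two subgroups, and with $W_0$ in place of $W'$ the count collapses.

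Beyond this local error, the paper's own proof indicates that no direct transposition of the Lemma~\ref{L-center1} count is known to close the rank-$5$ case, and the hypothesis $|\O(\B)|\ge p^2$ is used structurally rather than to ``supply thin translates for the count''. The paper fixes $V_1\le\O(\B)$ of order $p^2$ and a $\B$-subgroup $U'$ of order $p^4$ containing $V_1$ (necessarily $U'\ne U$, since $U$ is not a $\B$-subgroup), applies Lemma~\ref{L-center1} to the restriction $\B_{U'}$ and later to a quotient $\B_{H/W_1}$, identifies $\B_{H/V_1}$ and $\B_{U'/W_1}$ as iterated wreath products $\Q\Z_p\wr\Q\Z_p\wr\Q\Z_p$, proves that basic sets lying outside $U'$ have trivial radical and meet each $V_1$-coset at most once, and finally contradicts this with the $(U'/W_1)/(V_1/W_1)$-wreath structure of $\B_{H/W_1}$. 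The inequality you anticipate, $p^4-p^3\ge(p-1)(p^4-p^3)$, never materializes in any form. So the step that you yourself describe as carrying ``the entire weight of the argument'' is not only unproved but, as sketched, unprovable; what is needed in its place is the two-page structural argument above.
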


\begin{proof}
Since $x\ne \id_H,$ it follows that $C_H(x) \le p^4$.
Assume to the contrary that $|C_H(x)|=p^4$.
Let $U=C_H(x),$ and for a fixed $v_1 \in H \setminus U,$ let
$W = \sg{v_1^x-v_1}$. Then $|W|=p,$ and it
follows that the orbit $v^{\sg{x}} = W+v$ for all $v \in H \setminus U$.
It can be shown in the same way as in the proof of
Lemma~\ref{L-center1} that neither $U$ nor $W$ are $\B$-subgroups.

Let $V_1 \le \O(\B)$ such that $|V_1|=p^2$ and let
$V_2=V_1+W$. Then $V_1 < U, \, |V_1 \cap W|=1,$ and
therefore $|V_2|=p^3$.
Let $U'$ be a $\B$-subgroup of order $p^4$ such that
$V_1 < U'$. Then $U \ne U',$ and thus there exists
$u \in U' \setminus U$. Let $T \in \Bs(\B)$ such that $u \in T$.
Then $W+u \subseteq T \subseteq U',$ implying that
$W \le  U'$. We conclude that $U \cap U'=V_1+W=V_2$.

Let  $x^{U'}$ denote the restriction of $x$ to the $\B$-subgroup
$U'$. Clearly, $C_{U'}(x^{U'})=V_2,$ and thus by Lemma~\ref{L-center1}, $\B_{U'}$ is a $V'/W'$-wreath product for
some $\B$-subgroups $0 < W' < V'< U',$ where $|W'|=p$ and
$|V'|=p^3$.  Let $T \in \Bs(\B)$ such that
$T \subseteq U' \setminus V'$ and $T \not\subset U$.
Then using that $T$ contains a $W$-coset and $W' \le \rad(T),$ it 
follows that $T$ contains a $(W+W')$-coset (recall that $W' \ne W$ as 
$W$ is not an $\A$-subgroup). This together with Proposition~\ref{L-p-S2}(ii)
yields that $W \le \rad(T) \le V',$ and thus $W <  V'$. On the other hand it is clear that $V_1 < V',$ and this with the previous
observation yields $V'=W+V_1=V_2$. In particular, $V_2$ is
a $\B$-subgroup. Since $U$ is not a $\B$-subgroup, the S-ring
$\B_{H/V_2} \cong \Q \Z_p \wr \Q \Z_p,$ and this implies that
$U'$ is the only $\B$-subgroup which has order $p^4$ and
contains $V_2$. This property will be used later.  Note also that,
\begin{equation}\label{Eq-W'+W}
W'+W \le \rad(T) \text{ for all } T \in \Bs(\B) \text{ such that }
T \subset U' \setminus V_2.
\end{equation}

Next, let us consider the S-ring $\B_{H/V_1}$.
Let $T \in \Bs(\B)$ such that $T \cap U \ne \emptyset$ and
$T \not\subset U$. The latter condition implies that $T$ contains
some $W$-coset, and thus $|T/V_1| \ge p^2$. It follows by
Proposition~\ref{HM5}, that $T/V_1$ is equal to a $U'/V_1$ coset.
It is easy to see that $\A_{U'/V_1} \cong \Q \Z_p \wr \Q \Z_p;$
and we conclude that
 $\B_{H/V_1} \cong \Q Z_p \wr \Q \Z_p \wr \Q \Z_p$.

Now, fix a coset $U'+v_1$ distinct from $U',$ and let
$T_1 \in \Bs(\B)$ such that $T_1 \subseteq U'+v_1$.
Let $T \in \Bs(\B)$ be another basic set contained in $U'+v_1$.
Since $\B_{H/V_1} \cong \Q Z_p \wr \Q \Z_p \wr \Q \Z_p,$
it follows that both $T_1 \cap V_1+v_1$ and $T \cap V_1+v_1$ are nonempty. Choose $u_1 \in T_1 \cap V_1+v_1$ and $u_2 \in T \cap V_1+v_1$. Then,  $u_2-u_1 \in V_1 \le \O(\B),$ hence by Eq.~\eqref{Eq-eT}, $T_1+u_2-u_1=T$. Thus,
$\rad(T)=\rad(T_1),$ and combining this with Theorem~\ref{T-1st-multi}, we conclude that $\rad(T')=\rad(T_1)$ for any basic set
$T' \not\subset U'$. Since $\B$ is indecomposable, it follows
that $\rad(T_1)$ is trivial.
This together with Lemma~\ref{L-p-S2}(ii) yields that
\begin{equation}\label{Eq-coset2}
|T_1 \cap V_1+v|=1 \text{ for all }  v \in U'+v_1.
\end{equation}
Indeed, if $u_1,u_2 \in T_1 \cap V_1+v,$ then for $W=\sg{u_1-u_2},$ 
$|T_1 \cap W+u_2| \ge 2,$ and so $W \le \rad(T_1),$ a contradiction. 
Thus, $|T_1 \cap V_1+v| \le 1,$ and since $|T_1 \cap V_1+v| \ge 1$ also holds, see the above paragraph, Eq.~\eqref{Eq-coset2} follows.

By Eq.~\eqref{Eq-W'+W}, we can choose a subgroup $W_1 < V_1$ which satisfies the following property: for all $T \in \Bs(\B), \, T \subset U' \setminus V_2,$ either
$|T|=p^3$ or  $W_1 \not< \rad(T)$. Notice that, this property
implies that $\B_{U'/W_1} \cong \Q Z_p \wr \Q \Z_p \wr \Q \Z_p$.

Let us consider $x^{H/W_1},$ the automorphism of $H/W_1$
induced by $x$ acting on $H/W_1$. By Lemma~\ref{L-kernel},
$x$ cannot be in the kernel of $\Aut(\B)$ acting on $H/W_1,$
and so $x^{H/W_1} \ne \id_{H/W_1}$. Then,
$C_{H/W_1}(x^{H/W_1})=U/W_1,$ and thus by Lemma~\ref{L-center1}, $\B_{H/W_1}$ is an $(X/W_1)/(Y/W_1)$-wreath product for some $\B$-subgroups $X$ and $Y,$ where
$|X|=p^4,$ $|Y|=p^2$ and $W_1 < Y < X$.
Notice that, $X \cap U'$ is a $\B$-subgroup of order $p^3$.
On the other hand, since $\B_{U'/W_1} \cong \Q Z_p \wr \Q \Z_p \wr \Q \Z_p,$ $V_2$ is the only $\B$-subgroup in $U'$ that has order $p^3$ and contains $W_1$.
Using this and that $W_1 < X \cap U',$ it follows that
$V_2=X \cap U'$. We have shown above that
$U'$ is the only $\B$-subgroup containing $V_2,$ hence $X=U'$.
Thus, $W_1 < Y < U',$ and using again that
$\B_{U'/W_1} \cong \Q Z_p \wr \Q \Z_p \wr \Q \Z_p,$ we can see
that $V_1$ is the only $\B$-subgroup in $U'$ that
has order $p^2$ and contains $W_1,$ and so $Y=V_1$.
To sum up, $\B_{H/W_1}$ is an
$(U'/W_1)/(V_1/W_1)$-wreath product.
Recall that, $T_1$ is a basic set contained in the coset
$U'+v_1$. Then, $T_1$ satisfies
$V_1/W_1 < \rad(T_1/W_1)$. In particular, for any $u \in T_1,$
$|T_1 \cap (V_1+u)| \ge |T_1/W_1 \cap (V_1+u)/W_1| = p,$ which,
however, contradicts Eq.~\eqref{Eq-coset2}.
This completes the proof of the lemma.
\end{proof}

\begin{lem}\label{L-p7}
Let $\B$ be an indecomposable non-CI $p$-S-ring
over a group $H \cong \Z_p^5$ such that $|\O(\B)| \ge p^2$.
Then $|N_{\Aut(\B)}(H_R)| \ge p^7$.
\end{lem}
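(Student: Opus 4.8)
The plan is to push everything into the point stabilizer and then combine Proposition~\ref{P-regular-R} with the centralizer bound of Lemma~\ref{L-center2}. Write $G=\Aut(\B)$ and $N=N_G(H_R)$, and let $N_0$ be the stabilizer of $0$ in $N$. Since $H_R\le N$ is regular, $N=N_0H_R$ with $N_0\cap H_R=1$, so $|N|=|N_0|\,p^5$; thus it suffices to prove $|N_0|\ge p^2$. Note also that each element of $N_0$ fixes $0$ and normalizes $H_R$, hence $N_0\le\Aut(H)$.

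First I would apply Proposition~\ref{P-regular-R}. In our setting $\B$ is a non-CI $p$-S-ring that can be written as $V(H,P_0)$ for a $p$-group $P\ge H_R$ (take $P=\Aut(\B)$, which is a $p$-group), so the proposition supplies a subgroup $K\le N$ with $K\cong\Z_p^5$ regular on $H$, $K\ne H_R$, and $|C_{H_R}((KH_R)_0)|\ge|K\cap H_R|$. Put $Q=KH_R$ and $Q_0=Q\cap G_0$. Since $K,H_R\le N$, we have $Q\le N$ and $H_R\trianglelefteq Q$; as $H_R$ is regular this gives $Q=Q_0H_R$ and $|Q_0|=|Q|/|H_R|=|K|/|K\cap H_R|=p^5/|K\cap H_R|$, with $Q_0\le N_0$. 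Because $K\ne H_R$, the subgroup $K\cap H_R$ is proper in $H_R\cong\Z_p^5$, so $|K\cap H_R|\le p^4$.

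Now I would split according to $|K\cap H_R|$. If $|K\cap H_R|\le p^3$, then $|N_0|\ge|Q_0|\ge p^2$ and we are done. So the only thing to rule out is $|K\cap H_R|=p^4$; here $|Q_0|=p$, say $Q_0=\sg{z}$ with $z$ of order $p$, so in particular $z\ne\id_H$, $z\in N_0$ and $0^z=0$. Proposition~\ref{P-regular-R}(ii) then gives $|C_H(z)|=|C_{H_R}(Q_0)|\ge|K\cap H_R|=p^4$, which contradicts Lemma~\ref{L-center2} — the latter forces $|C_H(z)|\le p^3$ precisely because $\B$ is indecomposable with $|\O(\B)|\ge p^2$. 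Hence $|K\cap H_R|\le p^3$ in all cases, and $|N|=|N_0|\,p^5\ge p^7$.

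The index bookkeeping for $Q$ is routine, and the whole argument hinges on the single observation that the bad case $|K\cap H_R|=p^4$ is exactly the one in which Proposition~\ref{P-regular-R}(ii) manufactures a nonidentity element of $N_0$ whose centralizer has size at least $p^4$, which Lemma~\ref{L-center2} forbids. The point that needs the most care is the applicability of Proposition~\ref{P-regular-R}, namely that $\B$ is presented as $V(H,P_0)$ with $P\ge H_R$ a $p$-group; this is where the standing assumption that the ambient automorphism group is a $p$-group is used.
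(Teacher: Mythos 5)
Your proof is correct and is essentially the same as the paper's: both rest on Proposition~\ref{P-regular-R}, the index computation $|(KH_R)_0|=|K|/|K\cap H_R|$, and the clash between the centralizer bound $|C_{H_R}((KH_R)_0)|\ge |K\cap H_R|$ from part~(ii) and Lemma~\ref{L-center2}, the paper merely running the argument contrapositively (a too-small normalizer forces $|K\cap H_R|\ge p^4$) where you rule out the case $|K\cap H_R|=p^4$ directly. Your closing remark about the applicability of Proposition~\ref{P-regular-R} matches the paper's implicit usage: the paper invokes the proposition on $\B$ without comment, which is legitimate because wherever the lemma is applied, $\B$ is of the form $V(H,A)$ for a $p$-group $A\le \Aut(H)$, exactly the presentation the proposition requires.
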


\begin{proof}
Let $N=N_{\Aut(\B)}(H_R)$. Assume to
the contrary that $|N| < p^6$. Let
$K \le N$ be the subgroup given
in Proposition~\ref{P-regular-R}.
The stabilizer $(KH_R)_0 \le N_0,$
hence we can write
$$
\frac{|K| |H_R|}{|K \cap H_R|}=|KH_R| =
|(KH_R)_0| \cdot |H| \le p \cdot |H|.
$$
Since $K \ne H_R,$ we can choose a non-identity element
$x \in (KH_R)_0$. Then the above inequality yields
$|C_H(x)| \ge |K \cap H_R| \ge |K|/p=p^4$.
This, however, contradicts Lemma~\ref{L-center2}.
\end{proof}

The key step in proving Theorem~\ref{32} will be to show that, if
$\A$ is non-CI, then $\Aut(\A) \cap \Aut(H)$ contains a subgroup
$L$ such that $|L|=p^2$ and $|C_H(L)|=p^3$.
The next three lemmas are devoted to the arising S-ring $V(L,H)$.

\begin{lem}\label{L-L1}
Assuming Hypothesis~\ref{hyp},
suppose that $\A$ is indecomposable,
and let $L \le \Aut(\A) \cap \Aut(H)$ such that $|L|=p^2$
and $|C_H(L)|=p^3$. Then the following conditions hold:
\begin{enumerate}[(i)]
\item The S-ring $V(H,L)$ is indecomposable.
\item Let $T$ be a basic set of $V(H,L)$ such that $|T|>1$.
Then $T$ is equal to an $X$-coset for some subgroup
$X < C_H(L)$ of order $|X|=p^2$.
\item Let $T$ and $T'$ be two basic sets of $V(H,L)$ of size $p^2$ for  which $\sg{T,C_H(L)} \ne \sg{T',C_H(L)}$. Then
$\rad(T) \ne \rad(T')$.
\end{enumerate}
\end{lem}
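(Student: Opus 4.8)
The plan is to study the transitivity module $\B:=V(H,L)$, whose basic sets are the $L$-orbits on $H$. Since $L\le\Aut(\A)$ fixes $0$, every basic set of $\A$ is a union of $L$-orbits, so $\A\subseteq\B$; moreover $\B$ is a $p$-S-ring with thin radical $\O(\B)=C_H(L)=:C$ of order $p^3$. The decisive preliminary remark is that, $\A=V(H,A)$ being Schurian, its basic sets are the $A$-orbits, whence $\O(\A)=C_H(A)$ and $|\O(\A)|\ge p^2$ by Hypothesis~\ref{hyp}. As $\A$ is indecomposable I may therefore apply Lemma~\ref{L-center2} to $\A$ itself: each $z\in L\setminus\{\id\}$ lies in $N_{\Aut(\A)}(H_R)$ and fixes $0$, so $|C_H(z)|\le p^3$; since $C\le C_H(z)$ and $|C|=p^3$ this forces $C_H(z)=C$ for all such $z$. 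In particular $L$ has no element of order $p^2$ (three Jordan blocks cannot produce order $p^2$ in dimension $5$), so $L\cong\Z_p^2$, $L$ acts semiregularly on $H\setminus C$, and every nontrivial basic set of $\B$ has size $p^2$.

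I would then prove (ii) by showing that the action of $L$ is quadratic, i.e.\ $[H,L]\le C$. Reducing modulo $C=\ker(z-I)$, the induced maps $\overline{z-I}$ on $H/C\cong\Z_p^2$ are commuting $2\times2$ nilpotents, hence pairwise proportional, so $z\mapsto\overline{z-I}$ is a homomorphism onto a single nilpotent line. If this were nonzero I could choose generators $x,y$ of $L$ with $\overline{y-I}=0$ and $\overline{x-I}\ne0$; then $(y-I)H\le C=\ker(x-I)$ gives $(x-I)(y-I)=0$, and commutativity yields $(y-I)(x-I)=0$, i.e.\ $(x-I)H\le\ker(y-I)=C$, contradicting $\overline{x-I}\ne0$. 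Hence $[H,L]\le C$, and for $v\notin C$ the identity $(I+(x-I))^i(I+(y-I))^j=I+i(x-I)+j(y-I)$ gives $v^L=v+\sg{(x-I)v,(y-I)v}$, a coset of $X:=\sg{(x-I)v,(y-I)v}$; semiregularity makes $|X|=p^2$, and $\dim X=2<3$ shows $X<C$, which is (ii).

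Part (iii) then follows quickly once (i) is available: if two basic sets $T=X+v$ and $T'=X+v'$ of size $p^2$ had the same radical $X$ while $\sg{T,C}\ne\sg{T',C}$, then $\{u\in H:(x-I)u,(y-I)u\in X\}$ would contain $C$, $v$ and $v'$, hence equal $H$, so $[H,L]\le X$. But then every nontrivial basic set is an $X$-coset and $\B=\B_C\wr_{C/X}\B_{H/X}$ is a proper generalized wreath product, contradicting the indecomposability of $\B$ that part~(i) provides. Thus $\rad(T)\ne\rad(T')$.

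The crux, and the step I expect to be by far the hardest, is (i): proving that $\B$ is indecomposable. Here one assumes a proper decomposition $\B=\B_E\wr_{E/F}\B_{H/F}$, reduces $F$ to order $p$ (so $F\le C$, since any $\B$-subgroup of order $p$ consists of fixed points), and must derive that $\A$ is decomposable, contradicting the hypothesis. The obstacle is that the $\B$-subgroups $E$ and $F$ need not be $\A$-subgroups, so the wreath structure cannot be transferred verbatim; one has to manufacture a genuine nonzero $\A$-subgroup lying in the radical of every relevant $\A$-basic set. When $|\O(\A)|=p^3$ one has $C=\O(\A)$, every $\B$-subgroup of $C$ is an $\A$-subgroup, $C$ is $\A$-invariant, and the descent is routine; the real difficulty is the case $|\O(\A)|=p^2<|C|$, where $C$ itself may fail to be an $\A$-subgroup and a minimal $F$ may meet $\O(\A)$ trivially. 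Resolving this case—by combining $C_H(z)=C$ for all $z\in L$ with the injectivity of the radical map $\sg{v,C}\mapsto\rad(v^L)$ coming from (ii)–(iii) to locate enough $\A$-subgroups inside $C$—is what I regard as the heart of the lemma.
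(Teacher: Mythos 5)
Your proposal leaves a genuine gap at part (i), and (i) is not a side issue: your own proof of (iii) rests on it, so as written the lemma is unproven. You only sketch a strategy for (i) -- assume $\B=V(H,L)$ decomposes as $\B_E\wr_{E/F}\B_{H/F}$ and try to push that decomposition \emph{down} to $\A$ -- and you explicitly leave its main case ($|\O(\A)|=p^2<|C_H(L)|$) unresolved, calling it the heart of the lemma. The obstacle you identify ($E$ and $F$ need not be $\A$-subgroups) is real, but it signals that the strategy itself is misdirected: from the inclusion $\A\subseteq\B$ it is automorphisms, not subgroups, that transfer, namely $\Aut(\B)\le\Aut(\A)$ -- a fact you yourself recorded. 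The paper's proof of (i) is three lines from there, and it uses exactly the application of Lemma~\ref{L-center2} to $\A$ with which you began. Suppose $\B$ is decomposable; by Proposition~\ref{P-p-S1}(ii) (applied to $\B_F$ and to $\B_{H/E}$) one may assume $|E|=p^4$ and $|F|=p$. Write $H=E+\sg{u}$, fix $0\ne v_1\in F$, and let $x\in\Aut(H)$ be the identity on $E$ with $u^x=u+v_1$, so that $x$ translates each coset $E+ku$ by $kv_1$. Every basic set of $\B$ outside $E$ lies in a single coset $E+ku$ (Lemma~\ref{L-p-S2}(i)) and has $F\le\rad(T)$, hence is fixed setwise by $x$, while basic sets inside $E$ are fixed pointwise; so $x\in\Aut(\B)\cap\Aut(H)\le\Aut(\A)\cap\Aut(H)$. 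But $x\ne\id_H$, $0^x=0$, $x$ normalizes $H_R$, and $|C_H(x)|=|E|=p^4$, contradicting Lemma~\ref{L-center2} applied to $\A$. The missing idea, in one sentence: convert the hypothetical wreath decomposition of $\B$ into a single unipotent automorphism with too large a fixed-point subgroup, instead of trying to manufacture $\A$-subgroups.

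For the record, your parts (ii) and (iii) are correct. Indeed your (ii) is a genuinely different and cleaner route than the paper's: applying Lemma~\ref{L-center2} to $\A$ itself (legitimate, since $\A$ is an indecomposable $p$-S-ring with $\O(\A)=C_H(A)$ of order at least $p^2$) gives $C_H(z)=C_H(L)$ for every nontrivial $z\in L$, and then the quadratic-action argument $[H,L]\le C_H(L)$ makes every nontrivial $L$-orbit a coset in one stroke; the paper instead needs (i) first (it applies Lemma~\ref{L-center2} to $\B$), then splits into cases $T\subset V$, $|\sg{T}|=p^3$, $|\sg{T}|=p^4$ using Lemma~\ref{L-p-S2}(iii), Lemma~\ref{L-kernel}, Proposition~\ref{HM2} and a tensor-product argument. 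Your (iii) is essentially the paper's: both turn $\rad(T)=\rad(T')$ into a $C_H(L)/X$-wreath decomposition of $\B$ and contradict (i). So the proposal's value is real, but until (i) is supplied the lemma does not follow.
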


\begin{proof}
Let $N=N_{\Aut(\A)}(H_R)$ and $N_0$ be the stabilizer of
$0$ in $N$. Note that, $L \le N_0$ and $\A=V(H,N_0)$.
Furthermore, we let $\B=V(H,L)$ and $U=C_H(L)$.
\medskip

(i): Assume to the contrary that $\B$ is a $X/Y$-wreath product, where $X > Y, \, |X|=p^4$ and $|Y|=p$.
Let $x \in \Aut(H)$ be defined
by $v^x=v$ for all $v \in X,$ and $v^x=v+v_1$ for all
$v \in H \setminus X,$ where $v_1 \in Y$ is a fixed nonzero element.  Then $C_H(x)=X$ and $x \in \Aut(\B)$. Also, as $L \le N_0,$
$\B=V(H,L) \supseteq V(H,N_0)=\A,$ and thus
$\Aut(\B) \le \Aut(\A)$ . In particular, $x \in \Aut(\A),$ which
contradicts Lemma~\ref{L-center2}.
\medskip

(ii): As $|L|=p^2,$ $|T| \le p^2,$ and if $|T| < p^2,$ then $L_v$ is nontrivial, where $v\in T,$ and $L_v$ is the stabilizer of $v$ in $L$.
Then for $x\in L_v,$ $v\in C_V(x),$ and thus
$|C_V(x)| \ge |\sg{U,v}|=p^4,$ which contradicts Lemma~\ref{L-center2}, recall that $\B$ is indecomposable. We deduce that
$|T|=p^2$. Note that, since there is no basic set of size $p,$
it follows that every $\B$-subgroup of order $p^2$ must be contained
in $U$. This fact will be used later.

Let $V < H$ be a $\B$-subgroup such that $U < V$ and $|V|=p^4$.
If $T \subset V,$ then it easy to see that $T$ is equal to a coset of a subgroup of $U$ of order $p^2,$ and (ii) follows.

Now, suppose that
$T \not\subset U$.
By Lemma~\ref{L-p-S2}(iii), $U \cap \rad(T) \ne \{0\},$ and thus we can choose $W < U$ such that $|W|=p$ and $W \le \rad(T)$.
Let us consider the S-ring $\B_{H/W}$.
Then $T/W$ is a basic set of $\B_{H/W}$ of size $|T/W|=p$.
Denote by $L^{H/W}$ the subgroup of $\Aut(\B_{H/W})$ induced
by $L$ acting on $H/W$.
By Lemma~\ref{L-kernel}, $|L^{H/W}|=p^2$. It follows from
this and Proposition~\ref{HM2} that
$T/W$ cannot generate $H/W$. This together with the fact that
$W \le \rad(T)$ shows that $\sg{T} \ne  H,$ and thus $|\sg{T}|=p^3$
or $|\sg{T}|=p^4$. If $|\sg{T}|=p^3,$ then it is easily seen that $T$ is equal to a coset of a subgroup of $U,$ and so (ii) follows.

Assume that $\sg{T}=p^4,$ and let $V'=\sg{T}$.
We show below that this case cannot occur.
If $U< V',$ then it is easy to see that $T$ is equal to a coset of a
subgroup of $U,$ contradicting that $\sg{T}=p^4$.
Thus, $|U \cap V'|=p^2,$ and $H$ can be expressed as the internal
direct sum $H=V' + X$ for some subgroup $X < U, \, |X|=p$.
Note that, as $X \le \O(\B),$ it follows from
Lemma~\ref{L-tensor} that $\B=\B_{V'} \otimes \B_X$.

Let $Y < V'$ be a $\B$-subgroup of
order $p^3$ such that $U \cap V' < Y$. Since $\sg{T}=V',$
$T \not\subset Y$. The radical $\rad(T) \ne U \cap V',$ for otherwise,
$T$ cannot generate $V'$. It follows that the basic sets of $\B$ contained in $V' \setminus Y$ are in the form
$k(T+u)$ for some $k \in \{1,\ldots,p-1\}$ and some
$u \in U\cap V'$. Since $W \le \rad(T),$ we obtain that
$\B_{V'}$ is a $Y/W$-wreath product. This implies that
$\B=\B_{V'} \otimes \B_X$ is a $(Y+X)/W$-wreath product, which contradicts (i).
\medskip

(iii): Assume to the contrary that $\rad(T)=\rad(T')$ and
$\sg{U,T} \ne \sg{U,T'}$. Let $X=\rad(T),$ and let us consider the
S-ring $\B_{H/X}$.
By (ii), both $T$ and $T'$ are $X$-cosets. Since $\sg{U,T} \ne \sg{U,T'},$ we find that the elements $T/X$ and $T'/X$ generate a subgroup of $H/X$ of order
$p^2,$ and this subgroup intersects $U/X$ trivially.
We conclude that $\B_{H/X} \cong \Q\, \Z_p^3$. Consequently, every basic set of $\B$ is contained in some $X$-coset.
This together with (ii) shows that $\B$ is an $U/X$-wreath product,
which contradicts (i).
\end{proof}

\begin{lem}\label{L-L2}
With the notation of Lemma~\ref{L-L1}, $|\Aut(V(H,L))|=p^8$ .
\end{lem}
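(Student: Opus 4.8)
The plan is to compute the order of $G:=\Aut(\B)$, where $\B=V(H,L)$, directly from the combinatorial structure of $\B$ established in Lemma~\ref{L-L1}. Since $H_R\le G$ is regular, $|G|=|G_0|\cdot p^5$, so it suffices to prove $|G_0|=p^3$, where $G_0$ is the stabiliser of $0$. Write $U=C_H(L)$, so that $\O(\B)=U$ and $|U|=p^3$. By Lemma~\ref{L-L1}(ii) every basic set of size $>1$ lies inside a single coset of $U$; hence $L$ fixes $H/U$ pointwise, and after fixing a complement $H=U\oplus C$ (so $H/U\cong C\cong\Z_p^2$) each $x\in L$ has the form $(u,c)\mapsto(u+\alpha_x(c),c)$ for a homomorphism $\alpha_x\colon C\to U$. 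Put $\mathcal L=\{\alpha_x:x\in L\}\le\mathrm{Hom}(C,U)$ and, for $c\in C$, $X_c=\{\alpha(c):\alpha\in\mathcal L\}$. Then $|\mathcal L|=p^2$, the $L$-orbit (basic set) through $(u,c)$ equals $(u,c)+X_c$, and by Lemma~\ref{L-L1}(ii) $|X_c|=p^2$; in particular each evaluation map $\mathrm{ev}_c\colon\mathcal L\to X_c$ is an isomorphism.

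Next I would pin down the shape of an arbitrary $g\in G_0$. Each singleton $\{u\}$ ($u\in U$) is a basic set, and an automorphism of $\Cay(H,\{u\})$ fixing $0$ must satisfy $g(x+u)=g(x)+u$; thus $g$ fixes $U$ pointwise and centralises $U_R$. Since $U$ is a $\B$-subgroup with $\B_{H/U}=\Q(H/U)$, the group $G$ acts on the block system $H/U$ as $(H/U)_R$, so $G_0$ fixes every $U$-coset. Consequently $g(u,c)=(u+\xi(c),c)$ for a function $\xi\colon C\to U$ with $\xi(0)=0$, and the requirement that $g$ preserve every remaining basic set $(u_0,d)+X_d$ translates exactly into $\xi(c+d)-\xi(c)\in X_d$ for all $c\in C$ and all $d\ne0$ (the case $c=0$ giving $\xi(d)\in X_d$). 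These $\xi$ form a $\Z_p$-space $\Xi$ isomorphic to $G_0$, so the whole lemma reduces to the assertion $\dim_{\Z_p}\Xi=3$.

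For the upper bound I would first exploit indecomposability. If $W^\ast=\bigcap_{c}X_c\ne0$, then $W^\ast\le\O(\B)$ is a $\B$-subgroup contained in $\rad(T)$ for every basic set $T\not\subset U$, making $\B$ a $U/W^\ast$-wreath product and contradicting Lemma~\ref{L-L1}(i). Hence $\bigcap_cX_c=0$, and writing the condition $u^\ast\in X_c$ as the vanishing of the binary quadratic form $c\mapsto\alpha(c)\wedge\beta(c)\wedge u^\ast$ (for a basis $\alpha,\beta$ of $\mathcal L$) shows that each $u^\ast\ne0$ lies in at most two of the planes $X_c$; thus any three distinct $X_c$ meet trivially. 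From this I would deduce that all second differences $\partial_a\partial_b\xi:=\xi(\cdot+a+b)-\xi(\cdot+a)-\xi(\cdot+b)+\xi(\cdot)$ are constant: a third difference in three pairwise non-parallel directions lands in a triple intersection, hence is $0$, and since $p\ge3$ enough such directions span $C$. Therefore $\xi$ is a polynomial map of degree $\le2$, with a symmetric Hessian bilinear form $B$ satisfying $B(\,\cdot\,,d)\in\mathcal L$ for every $d$. The linear solutions form exactly $\mathcal L$ (if a linear $\ell$ with $\ell(c)\in X_c$ kills $d_1$, then $\ell(d_2)$ would lie in three planes, so $\ell=0$; thus the linear solutions have dimension $2$), and the map $\xi\mapsto(B(d_1,\cdot),B(d_2,\cdot))$ has kernel the linear solutions and image inside $\{(\beta,\gamma)\in\mathcal L^2:\beta(d_2)=\gamma(d_1)\}$; the common value lies in the line $X_{d_1}\cap X_{d_2}$, so via the isomorphisms $\mathrm{ev}_{d_i}$ this target is $1$-dimensional. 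Hence $\dim\Xi\le2+1=3$.

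Finally, for the lower bound I would check that the image is all of that $1$-dimensional target: given $w\in X_{d_1}\cap X_{d_2}$, set $\beta=\mathrm{ev}_{d_2}^{-1}(w)$ and $\gamma=\mathrm{ev}_{d_1}^{-1}(w)$, extend to a symmetric bilinear $B$, and put $q(c)=\tfrac12 B(c,c)$ (using that $p$ is odd); since $\beta,\gamma\in\mathcal L$ one gets $B(c,c_0)\in X_{c_0}$ and $q(c)\in X_c$, so $q\in\Xi$. Thus $\dim\Xi=3$, $|G_0|=p^3$, and $|\Aut(\B)|=p^3\cdot p^5=p^8$. The main obstacle is the middle step: showing that the a priori arbitrary function $\xi$ is forced to be quadratic. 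This is exactly where indecomposability enters, through the triviality of triple intersections of the planes $X_c$ and the resulting constancy of second differences; once $\xi$ is known to be quadratic, the remaining count is routine linear algebra over $GF(p)$.
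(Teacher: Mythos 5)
Your proof is correct, and up to its midpoint it runs parallel to the paper's own argument: both reduce the statement to showing $|G_0|=p^3$ for the stabiliser $G_0$ of $0$ in $G=\Aut(V(H,L))$; both identify the elements of $G_0$ with functions from a complement of $U=C_H(L)$ into $U$ subject to the coset conditions $\xi(c+d)-\xi(c)\in X_d$ (this is the content of the paper's Claims (a)--(c), with $\Fun_0(V,U)$ playing the role of your $\Xi$); and both hinge on the fact that three of the planes $X_c$ attached to pairwise distinct lines of $C$ intersect trivially (the paper's Claim (d)). After that the two arguments genuinely diverge. The paper proves Claim (d) by direct computation in the explicit basis built from $L=\sg{x,y}$, gets the upper bound $|G_0|\le p^3$ by orbit--stabiliser combined with an element-by-element propagation argument pinning down $F(2v_1)$ (Claim (e)), and gets the lower bound by exhibiting one explicit automorphism $g_F$ (Claim (f)). You instead derive the triple-intersection property abstractly from indecomposability (Lemma~\ref{L-L1}(i)) via the zero set of a binary quadratic form, and then prove a structure theorem: vanishing of third differences in pairwise non-parallel directions forces every $\xi\in\Xi$ to be a quadratic map whose Hessian $B$ satisfies $B(\cdot\,,d)\in\mathcal{L}$, after which $\dim\Xi=2+1=3$ is linear algebra, the lower bound coming from $q(c)=\tfrac{1}{2}B(c,c)$. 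Your route yields more --- a complete description of $G_0$ as the space of quadratic solutions, which incidentally explains why the paper's explicit $F$ in Claim (f) is a quadratic polynomial --- at the price of finite-difference bookkeeping that your sketch leaves implicit. Two points should be written out: that the pure second differences $\partial_a\partial_a\xi$ are also constant and that $B$ is biadditive (both follow from the operator identity $\partial_{a+b}=\partial_a\partial_b+\partial_a+\partial_b$ together with your vanishing of third differences, using $p\ge 3$); and that $X_{d_1}\ne X_{d_2}$ for non-parallel $d_1,d_2$, so that $X_{d_1}\cap X_{d_2}$ really is a line --- this follows from Lemma~\ref{L-L1}(iii), or from the triple-intersection property itself. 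These are routine, so your proposal stands as a valid alternative proof.
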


\begin{proof}
As in the previous lemma, we let $\B=V(H,L)$ and $U=C_H(L)$.
We start with fixing a suitable $\B$-basis.
Fix an element $v_1 \in H \setminus U,$ and another
$v_2 \in H \setminus \sg{U,v_1}$. For $i=1,2,$ Let $T_i \in \Bs(\B)$ such that $v_i \in T_i$. By Lemma~\ref{L-L1}(ii)-(iii),
$T_i-v_i$ is a subgroup of $U$ of order $p^2$.
It will be convenient to denote these subgroups by $U_\infty$ and
$U_0,$ namely, we let $T_1=U_\infty+v_1$ and $T_2=U_0+v_2$.
Then $|U_\infty \cap U_0|=p$. Let $v_4 \in U_\infty \cap U_0, \,
v_4 \ne 0$. Then, there exist $x, y \in L$ satisfying $v_2^x-v_2=v_1^y-v_1=v_4$. Let $v_3=v_1^x-v_1$ and $v_5=v_2^y-v_2$. We prove next that $\sg{v_1,v_2,v_3,v_4,v_5}=H$ and $\sg{x,y}=L$.
For the first part it is enough to show that $\sg{v_3,v_4,v_5}=U$.
Now, $v_1^x \in T_1=U_\infty+v_1,$ hence $v_3 \in U_\infty$.
Suppose for the moment that $v_3=k v_4$ for some integer
$k$. Then
$(k v_2)^x-kv_2=kv_4=v_3=v_1^x-v_1,$
implying that $k v_2 - v_1$ is fixed by $x,$ and hence
$k v_2-v_1 \in C_H(x)=U,$ which is impossible. We conclude that
$U_\infty=\sg{v_3,v_4}$.
We obtain by a similar argument that $U_0=\sg{v_4,v_5},$ and therefore, $\sg{v_3,v_4,v_5}=U_\infty+U_0=U$.
For the second part, if $y=x^m$ for some integer $m,$ then
we can write  $v_2+v_5=v_2^{y}=v_2^{x^m}=v_2+m v_4,$
contradicting that $\sg{v_4,v_5}=U_0$ has order $p^2$.
Thus, $\sg{x,y}=L,$ as required. It is clear that
$(v_1,\ldots,v_5)$ is a $\B$-basis.

Let $V=\sg{v_1,v_2}$.
Then $H$ can be written as the internal direct sum $H=V+U$.
For $w\in H,$ let $w_V$ and $w_U$ denote the projection of
$w$ into $V$ and $U,$ resp.
Furthermore, let $I=GF(p) \cup \{\infty\};$ and for
$i \in I,$ define the elements $\hat{v}_i \in V,$ and subgroups
$U_i\le U$ as follows
$$
\hat{v}_i=\begin{cases} \
v_1 & \text{if } i=\infty \\
iv_1+v_2 & \text{otherwise},
\end{cases} \quad
U_i=\begin{cases} \
\sg{v_3,v_4} & \text{if } i=\infty \\
\sg{iv_3+v_4,iv_4+v_5} & \text{otherwise}.
\end{cases}
$$

Let $G=\Aut(\B)$ and $G_w$ be the stabilizer of an element $w$ in
$G$. Observe that, the lemma is equivalent to show that
$|G_0|=p^3$. We are going to derive this in six steps.
\medskip

\noindent{\bf Claim (a).} {\it The basic sets of $\B$ not contained in
$U$ are in the form
\begin{equation}\label{Eq-basic1}
U_i+j \hat{v}_i+u, \; i\in I, \; j \in GF(p) \setminus \{0\}, \; u \in U.
\end{equation}
}

By definition, the basic sets in question are equal to the $L$-orbits
$w^L, \, w \in H \setminus U,$ where $L=\sg{x,y}$. Now,
$w=j \hat{v}_i+u$ for some $j \in GF(p) \setminus \{0\}, \, i \in I$ and
$u\in U$. A direct computation yields that the $L$-orbit
$\hat{v}_i^L=U_i+\hat{v}_i$. This together with the fact that
$u \in C_H(L)$ yields Claim~(a).
\medskip

Let $\Fun_0(V,U)$ denote the set of all functions $F : V \to
U$ such that $F(0)=0$.  For $F \in \Fun_0(V,U),$ we define the permutation $g_F \in \Sym(H)$ as follows:
\begin{equation}\label{Eq-gF}
w^{g_F}=w+F(w_V), \; w \in H,
\end{equation}
where $w_V$ denotes the projection of $w$ to $V$ (recall that,
we have $H=V+U$).
\smallskip

\noindent{\bf Claim (b).} {\it For every $g \in G_{0},$
$g=g_F$ for some $F \in \Fun_0(V,U)$.}
\smallskip

Since $U$ is a $\B$-subgroup, the set $H/U$ form a
block system for $G$. Let us consider $g^{H/U},$ the permutation of $H/U$ induced by $g$ acting on $H/U$.
Then, $g^{H/U} \in \Aut(\B/U)$. It is easy to see that
$\B/U=\Q \, H/U,$ and thus we get that $g^{H/U}=\id_{H/U}$.
Equivalently, $g$ fixes setwise every $U$-coset.
On the other hand, by Eq.~\eqref{Eq-thin}, $g$ centralizes $u_R$
for all $u \in U$. These two facts imply Claim~(b).
\smallskip

\noindent{\bf Claim (c).} {\it For every $F \in \Fun_0(V,U),$
$g_F \in G_0$ if and only if the following conditions hold:
\begin{equation}\label{Eq-cond3}
F(v+\hat{v}_i)-F(v) \in U_i \text{ for all } v \in V \text{ and } i \in I.
\end{equation}
}

Let $F \in \Fun_0(V,U)$. By definition, $g_F \in G_0$ if and only if
$g_F \in \Aut(\Cay(H,T))$ for any basic set $T\in  \Bs(\B)$.
It is clear that $g_F$ centralizes $u_R$ for all $u\in U$.
Hence, $g_F \in \Aut(\Cay(H,T))$ whenever $T \subset U$.
Now, suppose that $T \not\subset U$. By Claim~(a),
$T=U_i+j \hat{v}_i+u$ for some $j\in GF(p) \setminus \{0\}, \,
i\in I$ and $u \in U$.
Therefore,  $g_F \in \Aut(\Cay(H,T))$ if and only if
$$
(U_i+j\hat{v}_i+u+w)^{g_F}=U_i+j\hat{v}_i+u+w^{g_F} \text{ for all
} w \in H.
$$
By Eq.~\eqref{Eq-gF}, this reduces to
$$
U_i+j\hat{v}_i+w+u+F(j\hat{v}_i+w_V)=U_i+j\hat{v}_i+w+u+F(w_V).
$$
Equivalently, $F(v+j \hat{v}_i)-F(v) \in U_i \text{ for all } v \in V$.
Since,
$$
F(v+j \hat{v}_i)-F(v)=
\sum_{k=1}^{j}\big( \, F(v+k\hat{v}_i)-F(v+(k-1)\hat{v}_i)\, \big),
$$
it follows that 
$F(v+j \hat{v}_i)-F(v) \in U_i \text{ for all } v \in V$ if and only if 
$F(v+\hat{v}_i)-F(v) \in U_i \text{ for all } v \in V,$
and Eq.~\eqref{Eq-cond3} follows.
\smallskip

\noindent{\bf Claim (d).} {\it If $i,j,k \in I$ are pairwise distinct,
and $u_1,u_2,u_3 \in U$ are arbitrary elements, then
$|U_i+u_1\cap U_j+u_2 \cap U_k+u_3|=1.$
}
\smallskip

It is not hard to show that Claim~(d) follows from
$U_i \cap U_j \cap U_k=\{0\}$.
Let $\alpha v_3+\beta v_4+\gamma v_5 \in U_i \cap U_j \cap U_k$.
Suppose at first that none of $i,j$ and $k$ is equal to $\infty$.
Then, using the definition of the subgroups $U_i, U_j$ and $U_k,$
we find $\alpha_1, \alpha_2$ and $\alpha_3$ in $GF(p)$ such
that
\begin{eqnarray*}
\alpha &=& \alpha_1 i \; = \;  \alpha_2 j \; = \; \alpha_3 k \\
\beta &=& \alpha_1+\gamma i \; = \; \alpha_2+\gamma j \; = \;  \alpha_3+\gamma k.
\end{eqnarray*}
Using also that $i,j$ and $k$ are pairwise distinct, we deduce that
$\beta=\gamma (i+j)=\gamma (i+k) = \gamma (j+k),$ and hence
$\alpha=\beta=\gamma=0,$ and so $U_i \cap U_j \cap U_k=\{0\}$.

Now, suppose that $k=\infty$.  Since $U_\infty=\sg{v_3,v_4},$
$\gamma=0,$ and
$\alpha v_3+\beta v_4=\alpha_1(i v_3+v_4)=\alpha_2(j v_3+v_4)$.
Since $i \ne j,$ it follows that
$\alpha_1=\alpha_2=\alpha=\beta=0,$ and $U_i \cap U_j \cap U_k=\{0\},$ as required.
\smallskip

\noindent{\bf Claim (e).} {\it $|G_0 \cap G_{v_1}| \le p$.}
\smallskip

Let $g \in G_0 \cap G_{v_1}$. By Claim~(b), $g=g_F$ for some
$F \in \Fun_0(V,U)$. Notice that, $F(0)=F(v_1)=0$.
Let us consider the image $F(2v_1)$.
Then, we can express $2v_1$ as $2v_1=v_1+\hat{v}_\infty,$ hence
by Eq.~\eqref{Eq-cond3}, $F(2v_1)-F(v_1)\in U_\infty$. Also,
$v_1+v_2=2v_1+\hat{v}_{-1}$ and $v_2=2v_1+\hat{v}_{-2},$ and using again Eq.~\eqref{Eq-cond3},
we find $F(v_1+v_2)-F(2v_1) \in U_{-1}$ and $F(v_2)-F(2v_1) \in U_{-2}$.
All these yield
\begin{equation}\label{Eq-cond4}
F(2v_1) \in  U_\infty+F(v_1) \; \cap \; U_{-1}+F(v_1+v_2) \; \cap \;
U_{-2}+F(v_2).
\end{equation}

On the other hand, $F(v_2) \in U_0 \cap U_{-1}=\sg{-v_4+v_5}$
and $F(v_1+v_2) \in U_0 \cap U_1=\sg{v_4+v_5}$.
Using also that $F(v_1+v_2)-F(v_2) \in U_\infty=\sg{v_3,v_4},$
we find $F(v_2)=\alpha (-v_4+v_5)$ and $F(v_1+v_2)=\alpha (v_4+v_5)$ for some $\alpha \in GF(p)$.
Substitute these in Eq.~\eqref{Eq-cond4}. After a
direct computation we find $F(2v_1)=2 \alpha v_3$.
This shows that the orbit of $2v_1$ under the group $G_0 \cap G_{v_1}$ has size at most $p$. Therefore,
$|G_0 \cap G_{v_1}| \le |G_0 \cap G_{v_1} \cap G_{2v_1}| \cdot p,$
and to derive Claim~(e) it is enough to show that
$|G_0 \cap G_{v_1} \cap G_{2v_1}|=1$.

Now, choose $g \in G_0 \cap G_{v_1} \cap G_{2v_1}$.
Then, $F(0)=F(v_1)=F(2v_1)=0,$ thus applying Eq.~\eqref{Eq-cond3} to $F(iv_1+v_2), \, i \in GF(p),$ and using Claim~(d),
we find $F(i v_1+v_2) \in U_i \cap U_{i-1} \cup U_{i-2}=\{0\}$.
Therefore, $F(iv_1+v_2)=0$ for all $i \in GF(p)$.
In particular, $F(v_2)=F(v_1+v_2)=F(2v_1+v_2)=0,$ and we
can repeat the same argument to have $F(iv_1+2v_2)=0$ for all
$i \in GF(p)$. Since $V=\sg{v_1,v_2},$
 the process can be continued to cover all $v \in V,$ and this leads to that $F(v)=0$ for all $v \in V,$ that is, $g=\id_H,$ as required.
\smallskip

\noindent{\bf Claim (f).} {\it $|G_0|=p^3$.}
\smallskip

The $G_0$-orbit of $v_1$ is the basic set $U_\infty+v_1$.
This together with Claim~(e) shows that $|G_0| \le p^2 \cdot
|G_0 \cap G_{v_1}| \le p^3$. To settle Claim~(f) it is enough to
find a non-trivial automorphism $g \in G_0 \cap G_{v_1}$.
We claim that $g_F$ is such an automorphism where $F$ is defined
as follows:
$$
F(iv_1+jv_2)=i(i-1)v_3+(2i-1)j v_4+j^2 v_5, \ \  i,j \in GF(p).
$$
Then, $F(0)=0,$ and by Claim~(c), we have $g_F \in G_0$ if
$F$ satisfies the conditions in Eq.~\eqref{Eq-cond3}.
This can be verified directly. After letting
$v=iv_1+jv_2$ and using the above definition of
$F,$ we compute for $k \in GF(p),$
\begin{eqnarray*}
F(v+\hat{v}_\infty)-F(v) &=& 2i v_3+2j v_4, \\
F(v+\hat{v_k})-F(v) &=& (2i-1+k)(k v_3+v_4)+(2j+1)(k v_4+v_5).
\end{eqnarray*}
These show that the conditions in Eq.~\eqref{Eq-cond3} hold, and
$g_F \in G_0$. Also, $F(v_1)=0$  and $F(2v_1)=2v_3,$ and hence
$g_F$ is a non-trivial element in $G_0 \cap G_{v_1}$. This completes
the proof of the lemma.
\end{proof}

\begin{lem}\label{L-L3}
With the notation of Lemma~\ref{L-L1}, $V(H,L)$ is a CI-S-ring.
\end{lem}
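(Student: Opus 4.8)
The plan is to apply the criterion of Proposition~\ref{HM4}: it suffices to show that for every $f\in\Iso_0(\B)$ there is some $\varphi\in\Aut(H)$ with $T^{f\varphi}=T$ for all $T\in\Bs(\B)$, for then $f\varphi\in\Aut(\B)_0$ and $f\in\Aut(\B)_0\Aut(H)$. I keep all the notation set up in the proof of Lemma~\ref{L-L2}: $U=C_H(L)=\O(\B)$ has order $p^3$, $H=V\oplus U$ with $V=\sg{v_1,v_2}$, and for $i\in I=GF(p)\cup\{\infty\}$ the plane $U_i\le U$ is the kernel of $\pi_i$, where $\pi_i(av_3+bv_4+cv_5)=a-bi+ci^2$ for finite $i$ and $\pi_\infty=c$; the basic sets of $\B$ outside $U$ are the cosets $U_i+w$ with $w$ of direction $i$.

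First I would put $f$ into a normal form. Since $U=\O(\B)$, the image $U^f=\O(\B^f)$ is a subgroup of order $p^3$, while $\B_{H/U}=\Q\,(H/U)$ (as each $U_i\le U$), so that $f^{H/U}\in\Iso_0(\Q\,H/U)=\Aut(H/U)$. Composing $f$ with suitable automorphisms of $H$, I may assume $U^f=U$ and $f^{H/U}=\id_{H/U}$, so that $f$ maps every $U$-coset onto itself. As $U\le\O(\B)$, each $e\in U$ is a singleton basic set, so the multiplicativity $(ST)^f=S^fT^f$ of normalized isomorphisms gives $(e+e')^f=e^f+e'^f$ for $e,e'\in U$; hence $f|_U\in GL(U)$, and composing once more with the automorphism of $H$ fixing $V$ pointwise and acting on $U$ as $(f|_U)^{-1}$, I may assume in addition that $f$ fixes $U$ pointwise. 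Now $f$ has the form $w\mapsto w+F(w_V)$ of \eqref{Eq-gF} for some $F\colon V\to U$ with $F(0)=0$, and since $f$ fixes $U$ pointwise and fixes each $U$-coset we get $\B^f=\B$.

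Next I would read off the functional equation forced on $F$. Writing out that $f$ carries $\Cay(H,U_i+w)$ to the basic set $U_i+w^f$ yields the cocycle identity $F(a+b)-F(a)-F(b)\in U_i$ whenever $b$ has direction $i$; as the left-hand side is symmetric in $a$ and $b$, in fact $S(a,b):=F(a+b)-F(a)-F(b)$ lies in $U_i\cap U_j$ when $a,b$ have directions $i,j$. The only freedom left for $\varphi$ is to compose $f$ with the map $g_{F'}$ of \eqref{Eq-gF} attached to a \emph{linear} $F'\colon V\to U$; every such $g_{F'}$ lies in $\Aut(H)$ and preserves $\B$, and $g_Fg_{F'}=g_{F+F'}$. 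By \eqref{Eq-cond3}, $g_{F+F'}$ fixes every basic set precisely when $(F+F')(a+\hat v_i)-(F+F')(a)\in U_i$ for all $a$ and $i$; using the cocycle identity together with the linearity of $F'$, this collapses to the single family of conditions $\pi_i\big((F+F')(\hat v_i)\big)=0$, i.e. $\pi_i\big(F'(\hat v_i)\big)=-\pi_i\big(F(\hat v_i)\big)$ for all $i\in I$.

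The heart of the proof, and its main obstacle, is solving this linear system for a linear $F'$. A direct computation shows that for linear $F'$ the quantity $\pi_i\big(F'(\hat v_i)\big)$ equals $P_{F'}(i)$ for a polynomial $P_{F'}$ of degree at most $3$ in $i$ whose leading coefficient is $\pi_\infty\big(F'(v_1)\big)$; conversely every such cubic is realized, so the system is solvable \emph{iff} the defect $i\mapsto\pi_i\big(F(\hat v_i)\big)$ agrees (over finite $i$) with a polynomial of degree at most $3$ whose $t^3$-coefficient equals $\pi_\infty\big(F(v_1)\big)$. For $p\ge 5$ this is an overdetermined system of $p+1$ equations inside the four-dimensional space of cubics, so the $p-3$ resulting compatibility conditions must be verified, and this is where the conic structure is essential. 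I would establish them by telescoping $F(\hat v_i)$ along the direction-$\infty$ line $v_2+\sg{v_1}$, which expresses the defect through the scalars $\pi_i\big(S(\hat v_{i-1},v_1)\big)$, and then use the cocycle identity $S(a,b)\in U_i\cap U_j$ and the triple-intersection property $U_i\cap U_j\cap U_k=\{0\}$ for distinct $i,j,k$ (shown in the proof of Lemma~\ref{L-L2}) to force $i\mapsto\pi_i\big(F(\hat v_i)\big)$ to be a cubic with the required leading term. Once the linear $F'$ is produced, $f\,g_{F'}=g_{F+F'}$ fixes every basic set, i.e. belongs to $\Aut(\B)_0$, and tracking back through the normalizing automorphisms gives $f\in\Aut(\B)_0\Aut(H)$, so $\B$ is a CI-S-ring.
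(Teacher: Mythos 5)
Your reduction of the lemma is set up correctly, and it follows a genuinely different route from the paper: the paper never analyses $\Iso_0(\B)$ directly, but instead combines $|\Aut(V(H,L))|=p^8$ (Lemma~\ref{L-L2}) with a count showing that $\Aut(V(H,L))$ contains exactly $p$ regular subgroups isomorphic to $H$, all conjugate. In your framework, the normalization of $f$ to the shape $g_F$ is essentially sound (one caveat: the step ``$f$ has the form $w\mapsto w+F(w_V)$'' does not follow from linearity of $f|_U$ alone; you need that $f$ commutes with translation by every $u\in U$, which holds because each $\{u\}$, $u\in U$, is a basic set, so $f$ is an isomorphism of $\Cay(H,\{u\})$ and hence $(x+u)^f=x^f+u$ for all $x$), the condition you extract on $F$ is exactly $F(a+b)-F(a)-F(b)\in U_i$ when $b$ has direction $i$, and solvability for a linear $F'$ is indeed equivalent to: the defect $i\mapsto\pi_i\bigl(F(\hat{v}_i)\bigr)$ is a polynomial of degree at most $3$ with leading coefficient $\pi_\infty\bigl(F(v_1)\bigr)$.

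The gap is that this compatibility, which in your approach carries the \emph{entire} content of the lemma, is never proved; it is only announced (``I would establish them by telescoping\dots''), and the tools you name do not suffice. Concretely, write $S(a,b)=F(a+b)-F(a)-F(b)$ and $S(v_1,v_1)=x_1v_3+y_1v_4\in U_\infty$. The telescoping you propose, together with the cocycle identities all of whose entries involve direction-$\infty$ increments, gives $S(\hat{v}_{k-1},v_1)=\bigl(x_1-(k-2)y_1\bigr)\bigl((k-1)v_3+v_4\bigr)$, and substituting this into the telescoped sum shows the defect is \emph{automatically} a polynomial of degree at most $3$: there are not $p-3$ conditions left, but exactly one, namely that its cubic coefficient, which comes out as $\pi_\infty\bigl(F(v_1)\bigr)+y_1/6$, equals $\pi_\infty\bigl(F(v_1)\bigr)$. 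So the lemma boils down to proving $y_1=0$, i.e.\ that $S(v_1,v_1)$ lies in the ``tangent'' line $\sg{v_3}$ and not merely in $U_\infty$; and neither the telescoping nor the triple-intersection property forces this. Every instance of the cocycle identity confined to the pairwise intersections $U_i\cap U_j$ that I tested (for example $(a,b,c)=(v_1,v_1,v_2)$, $(v_2,v_1,v_1)$, $(\hat{v}_1,\hat{v}_{-1},v_2)$) is consistent with an arbitrary value of $y_1$. What is actually needed is a second, transversal family of relations: for instance, after normalizing $F$ (subtract a multiple of the quadratic map from Claim~(f) of Lemma~\ref{L-L2} to make $x_1=0$, and a linear map to make $F(v_1)=F(v_2)=0$), compute $F(2\hat{v}_j)$ both as $F(\hat{v}_j+\hat{v}_j)$ and as $F(\hat{v}_{2j}+v_2)$ and impose $S(\hat{v}_j,\hat{v}_j)\in U_j$ for \emph{all} $j$; this yields $y_1j=2y_1-x_1'$ for every $j\neq 0$ (where $S(v_2,v_2)=x_1'v_5+y_1'v_4$), which forces $y_1=0$ when $p\ge 5$. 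Since nothing of this kind appears in your proposal, the proof is incomplete exactly at its mathematical heart.
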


\begin{proof}
We keep all notations from the previous proof, that is,
$$
L=\sg{x,y}, \; G=\Aut(V(H,L)), \;  U=\O(V(H,L))=\sg{v_3,v_4,v_5}, \; V=\sg{v_1,v_2}.
$$
In addition, let $N=N_G(H_R)$.
In view of Lemma~\ref{HM4}, it is enough to show that all
regular subgroups of $G$ isomorphic to $H$ are conjugate in $G$.
First, the number of subgroups of $G$ that are conjugate to $H_R$ is equal to
the index $|G:N|$. By Lemma~\ref{L-L2}, $|G|=p^8,$ and since
$H_RL \le N,$ it follows that $|N| \ge p^7$. If $G=N,$ then for every non-trivial element
$z \in G_0 \cap G_{v_1},$ $C_H(z)=\sg{v_1,v_3,v_4,v_5},$ contradicting Lemma~\ref{L-center2}.
Thus, $|N|=p^7,$ and there are exactly
$p$ subgroups of $G$ that are conjugate to $H_R$.
Therefore, to finish the proof it is sufficient to show that there are exactly $p$ regular subgroups of $G$ isomorphic to $H$.
Note that, we have $L=N_0$.

Let $K \le G$ be any regular subgroup isomorphic to $H_R$
such that $K \ne H_R$. Let $M=\sg{K,H_R}$. Since $K \ne H_R,$ $|M| \ge p^6$. This implies that $|M \cap L|>1$.
Indeed, if $|M|=p^6,$ then $H_R \trianglelefteq M,$ and hence
$M_0 \ne 1$ and $M_0 \le N_0=L$. If $|M| > p^6,$ then
$|M \cap L|>1$ follows because $|L|=p^2$ and $|LM| \le |G|=p^8$. Using also that
$K \cap H_R \le Z(M)$ and Lemma~\ref{L-center2}, we deduce that
$|K \cap H_R|\le p^3$.   On the other hand, since $K$ is regular and abelian, it follows that
$Z(G) \le K,$ and hence $K \cap H_R=U_R$. Note that, we have proved  that every regular subgroup isomorphic to $H$ intersects $H_R$ at $U_R,$ unless it is equal to $H_R$.
This fact will be used in the next paragraph.

We claim that $K \le N$. Suppose to the contrary that
there exists some $g \in K \setminus N$.
Then $g=z_1 v_R$ for some $z_1\in G_0 \setminus L$ and
$v \in H \setminus U$. On the other hand,
$|N \cap K|\ge p^4,$ and
and thus $K$ contains an element in the form
$z_2 w_R, \, z_2\in L$ and $w\in H \setminus U$. Clearly, since $U_R \le K,$ the element
$w$ cannot be in $U$. Then,
$z_1z_2(v_R)^{z_2}w_R=z_1v_R z_2w_R=z_2w_R z_1v_R=
z_2z_1(w_R)^{z_1}v_R$. By Lemma~\ref{L-L2}, $G_0$ is abelian, and
we get $(w_R)^{z_1}=(v_R)^{z_2}w_R(v_R)^{-1}=(v^{z_2}+w-v)_R$.
Thus, $(w_R)^{z_1} \in H_R,$ and since $w\notin U,$ $(w_R)^{z_1} \notin U_R,$ and
$|H_R^{z_1} \cap H_R|\ge p^4$. Now, it follows by the previous
paragraph that $H_R^{z_1}=H_R,$ and hence $z_1 \in L,$ a contradiction.

Now, there exist $z_1,z_2 \in L$ such that
$$
K=\big\langle   z_1(v_1)_R, \, z_2(v_2)_R, U_R \big\rangle.
$$

Recall that, the $L$-orbit of $v_1$ is in the form
$v_1^L=U_\infty+v_1,$ and the $L$-orbit of $v_2$ is in the form
$v_2^L=U_0+v_2$. Let $u=v_1^{z_2}-v_1$. Clearly, $u \in U_\infty$.
Then, since $z_1(v_1)_R$ and
$z_2(v_2)_R$ commute, $0^{z_1(v_1)_R z_2(v_2)_R}=v_1^{z_2}+v_2$
and $0^{z_2(v_2)_R z_1(v_1)_R}=v_2^{z_1}+v_1,$ it follows that $u=v_2^{z_1}-v_2$.
This shows that $u\in U_0$ also holds, and hence $u \in U_\infty \cap U_0 =\sg{v_4}$.
Now, as $L$ is regular on both orbits $v_1^L$ and $v_2^L,$ the automorphisms $z_1$ and $z_2$
are uniquely determined by $u,$ and thus $K$ is determined as well.
This yields that there are exactly $p$ regular subgroups of $G$
isomorphic to $H$. This completes the proof of the lemma.
\end{proof}

\begin{lem}\label{L-UW}
Assuming Hypothesis~\ref{hyp}, suppose that $\A$ is indecomposable,
and let $U$ and $W$ be
$\A$-subgroups such that $W < \O(\A) < U, \, |W|=p$
and $|U|=p^4$. Then $\A$ has a basic set $T$ such that
\begin{equation}\label{Eq-T}
T \subset H \setminus U, \; 1 < |T| \le p^2 \text{ and }
W \not\le  \rad(T).
\end{equation}
\end{lem}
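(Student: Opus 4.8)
The plan is to argue by contradiction: I assume that $\A$ has no basic set $T$ satisfying Eq.~\eqref{Eq-T}, and derive that $\A$ is decomposable. Since $\A$ is indecomposable, it is not a $U/W$-wreath product; as $W \le U$, $W \trianglelefteq H$, $W \ne \{0\}$ and $U \ne H$, this failure means precisely that $W \not\le \rad(T_0)$ for some basic set $T_0 \subset H \setminus U$. Every basic set contained in $H \setminus U$ has size larger than $1$ (it avoids $\O(\A) \le U$) and smaller than $p^4$ (a basic set of size $|H|/p = p^4$ would force a wreath decomposition by Proposition~\ref{HM5}, contradicting indecomposability), so its size lies in $\{p, p^2, p^3\}$. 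By the contrary assumption any such basic set with $W \not\le \rad(T)$ has size $> p^2$; hence $|T_0| = p^3$, and by Lemma~\ref{L-p-S2}(i) $T_0$ lies in a single coset $U + t_0$.

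Next I pass to the quotient. Since $W \le \O(\A)$, the quotient $\A_{H/W}$ is a $p$-S-ring over $H/W \cong \Z_p^4$ (Eq.~\eqref{Eq-T/K}). Because $W \not\le \rad(T_0)$, Lemma~\ref{L-p-S2}(ii) shows that $T_0$ meets each $W$-coset in at most one point, so $|T_0/W| = |T_0| = p^3 = |H/W|/p$ and $T_0/W$ is a basic set of $\A_{H/W}$. Applying Proposition~\ref{HM5} to $\A_{H/W}$ yields a wreath decomposition with base an $\A_{H/W}$-subgroup $K'$ of order $p^3$; since $T_0/W$ has size $p^3$ and avoids the identity coset, it must be a $K'$-coset, and comparing it with the coset $(U+t_0)/W$ of the same size that contains it forces $T_0/W = (U+t_0)/W$ and $K' = U/W$. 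As $(H/W)/(U/W)$ has order $p$, its only S-ring is the full one, so every $U/W$-coset different from $U/W$ is a single basic set of $\A_{H/W}$. Lifting this back, for each $t \notin U$ all basic sets of $\A$ inside $U + t$ satisfy $W + T = U + t$; a basic set with $W \le \rad(T)$ would then satisfy $W + T = T$, forcing $T = U + t$ of forbidden size $p^4$. Therefore every basic set $T \subset H \setminus U$ has $W \not\le \rad(T)$, and by the contrary assumption $|T| = p^3$.

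It remains to produce a common nonzero radical. Fix $t \notin U$; then $U + t$ is partitioned into exactly $p$ basic sets, each of size $p^3$, and translation by $\O(\A) \le U$ preserves $U + t$ and permutes these $p$ sets (Eq.~\eqref{Eq-eT}). This gives a homomorphism from $\O(\A)$ into the symmetric group on $p$ points whose image, being a $p$-group, has order at most $p$; hence its kernel $V_t = \O(\A) \cap \bigcap_{T \subset U+t}\rad(T)$ has order at least $|\O(\A)|/p \ge p$, using $|\O(\A)| \ge p^2$. For $k$ coprime to $p$ the automorphism $u \mapsto ku$ carries the basic sets of $U + t$ bijectively onto those of $U + kt$ (Theorem~\ref{T-1st-multi}) and fixes every radical among them (a subgroup $X$ satisfies $kX = X$), whence $V_{kt} = V_t$. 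Since the cosets outside $U$ are exactly the $U + kt$ with $k \in \{1, \ldots, p-1\}$, the subgroup $V := V_t$ satisfies $V \le \rad(T)$ for all basic sets $T \subset H \setminus U$ and $\{0\} \ne V \le \O(\A) \le U$. Consequently $\A$ is a $U/V$-wreath product with $U \ne H$ and $V \ne \{0\}$, i.e.\ $\A$ is decomposable, contradicting the hypothesis and completing the proof.

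The step I expect to be the main obstacle is ruling out the possibility that every basic set outside $U$ with $W \not\le \rad(T)$ has size exactly $p^3$: this is what the passage to $\A_{H/W}$ together with Proposition~\ref{HM5} is designed to handle, after which the thin-radical translation action and Schur's multiplier theorem (Theorem~\ref{T-1st-multi}) combine to manufacture the common radical $V$ that contradicts indecomposability.
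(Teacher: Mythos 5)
Your proof is correct, and it ends at the same contradiction as the paper's — a forced $U/V$-wreath decomposition violating indecomposability — but the middle of your argument runs along a genuinely different track. The paper never leaves $H$: starting from the same basic set $T_1\subset H\setminus U$ with $W\not\le\rad(T_1)$, forced to have size $p^3$, it applies Lemma~\ref{L-p-S2}(iii) with $K=U$ (via the inequality $|\O(\A)|\cdot p^3>|H|/p$) to produce a nonzero $W'\le\O(\A)\cap\rad(T_1)$ of order $p$; it then observes that the $p$ pairwise distinct basic sets $T_1+w$, $w\in W$ (distinct because $W\not\le\rad(T_1)$, and basic by Eq.~\eqref{Eq-eT} since $W\le\O(\A)$) tile the coset containing $T_1$, so $W'$ lies in the radical of every basic set of that coset, and it finishes with Theorem~\ref{T-1st-multi} exactly as you do. You instead pass to the quotient $\A_{H/W}$ and apply Proposition~\ref{HM5} there to show that every basic set of $\A$ outside $U$ becomes a full $U/W$-coset modulo $W$, hence has size $p^3$ and fails $W\le\rad(T)$; you then manufacture the common radical by an orbit count: $\O(\A)$ permutes the $p$ basic sets inside a fixed coset, its image in the symmetric group on $p$ points has order at most $p$, so the kernel $V_t$ is nontrivial. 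Your orbit count is in effect a strengthened reproof of Lemma~\ref{L-p-S2}(iii) (whose own proof is the same pigeonhole in weaker form), and your quotient step buys global information — the structure of all cosets outside $U$ — that the paper's tiling trick renders unnecessary, since one coset suffices once the multiplier theorem is available; the paper's route is accordingly shorter, while yours makes the mod-$W$ structure explicit. Two small repairs to your write-up: over $(H/W)/(U/W)\cong\Z_p$ the full group algebra is the only \emph{$p$-S-ring}, not the only S-ring (proper orbit S-rings of subgroups of $\Aut(\Z_p)$ exist), so you should either insert the "$p$-" qualifier — legitimate here, as quotients of $p$-S-rings are $p$-S-rings — or simply cite Lemma~\ref{L-p-S2}(i); and before declaring $\A$ a $U/V$-wreath product you should note that $V=V_t$ is an $\A$-subgroup, which holds because $\O(\A)$ and the radicals $\rad(T)$ are $\A$-subgroups and intersections of $\A$-subgroups are $\A$-subgroups.
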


\begin{proof}
Since $\A$ is indecomposable, there exists a basic set
$T_1 \subset H \setminus U$ such that $W \not\le \rad(T_1)$.
It is clear that $|T_1|>1$. 
We have to show that $|T_1| \le p^2$.  To the contrary
assume that $|T_1| \ge p^3$. If $|T_1|=p^4,$ then $\A$ is
decomposable, see Proposition~\ref{HM5}, thus $|T_1|=p^3$.
This together with $|\O(\A) \cap U|=|\O(\A)| \ge p^2$ gives
$ |\O(\A) \cap U| \cdot |T_1| > p^4=|H|/p,$ and
we can apply Lemma~\ref{L-p-S2}(iii) to obtain that
$\O(\A) \cap \rad(T_1) \ne \{0\}$. Let $W' \le \O(\A) \cap \rad(T_1) $ such that $|W'|=p$. Since $W \not\le \rad(T_1),$ we get, using
Eq.~\eqref{Eq-eT}, pairwise distinct
basic sets in the form $T_1+w, \, w \in W$. As the union of the
latter basic sets is equal to the coset $U+v_1,$ it follows that $W' \le \rad(T)$ for all  $T\in \Bs(\A)$ with $T \subset U+v_1$. This together with Theorem~\ref{T-1st-multi} yields  that $W' \le \rad(T)$ for all  $T\in \Bs(\A)$ with $T \not\subset U,$ that is, $\A$ is a $U/W'$-wreath product, a  contradiction.
\end{proof}

Everything is prepared to settle the main result of the section.

\begin{proof}[Proof of Theorem~\ref{32}]
Let $U=\O(\A),$ $N=N_{\Aut(\A)}(H_R)$ and $N_0$ be the stabilizer
of $0$ in $N$. Assume to the contrary that $\A$ is a non-CI-S-ring.
We prove first the following:
\begin{equation}\label{L}
\text{There exists } L \le N_0 \text{ such that }
|L|=p^2 \text{ and } |C_H(L)|=p^3.
\end{equation}

If $|U|\ge p^3,$ then we are done by choosing $L$
to be any subgroup of $N_0$ of order $p^2$ (see also
Lemma~\ref{L-p7}). Thus we assume for the moment that
$|U|=p^2$. Let $K \le N$ be the subgroup given in Proposition~\ref{P-regular-R}, and let $M=(KH_R)_0$. If $|M| \le p^2,$ then
$$
C_H(M) \ge |K \cap H_R|=\frac{|K| \cdot |H_R|}{|K H_R|}=
\frac{|K| \cdot |H_R|}{|M| |H|} \ge p^3.
$$
By Lemma~\ref{L-center2}, $M$ must have order $p^2,$ and therefore, we are done by choosing
$L$ to be $M$.

Let $|M|\ge p^3$.  It follows from Lemma~\ref{L-UW} that there exists a basic set $T$ such that $|T| \le p^2$ and $\rad(T) \ne U$.
Let $v \in T,$ and $M_v$ be the stabilizer of $v$ in $M$.
Then $C_H(M_v) \ge \sg{U,v}$. If $|T|=p$ or the orbit $v^M \ne T,$
then it follows that $|M_v| \ge p^2$. Using this and that
$|\sg{U,v}|=p^3,$ we can choose
$L$ to be any subgroup of $M_v$ of order $p^2$.
Now, suppose that $|T|=p^2,$ and the orbit $v^M=T$.
Choose a non-identity element $x_0 \in M_v,$ and let $u  \in T$ be
an arbitrary element. Then $u=v^x$ for some $x\in M,$ and since $M$ is abelian, we can write $u^{x_0}=v^{xx_0}=v^{x_0x}=
v^x=u$. As a corollary we find $C_H(x_0) \ge \sg{U,T}$.
Clearly, $|\sg{U,T}|\ge p^3;$ in fact, $\sg{U,T}|=p^3$ must hold by Lemma~\ref{L-center2}.  It follows that $T$ is equal to a
$U$-coset, that is, $\rad(T)=U$. This is a contradiction, and
Eq.~\eqref{L} follows.

By Lemma~\ref{L-L3}, the S-ring $V(H,L)$ is a CI-S-ring. Therefore,
$\A \ne V(H,L),$ hence $A > L,$ in particular, $|A|\ge p^3$.
For sake of simplicity we let $V=\O(V(H,L))$. Clearly, $U \le V$ and
$|V|=p^3$. Fix $W_1 < U, \, |W_1|=p$. By Lemma~\ref{L-UW}, there exists a basic set $T_1 \in \Bs(\A)$ such that $1 < |T_1| \le p^2$ and $W_1 \not< \rad(T_1)$. Since every basic set of $V(H,L)$ outside $V$ is a coset of a subgroup of $V$ of order $p^2,$ we find that either
$T_1$ is contained in $H \setminus V,$
$|T_1|=p^2$ and  $\rad(T_1) < V,$ or $T_1 \le V$.
In the former case $V=\rad(T_1)+W_1,$ whereas in the latter case
$V=\sg{T_1,W_1}$ because $W_1 \not< \rad(T_1)$. We conclude that   $V$ is an $\A$-subgroup. This shows that we may
choose the above $T_1$ such that
$T_1 \subset H \setminus V$.
Fix some $v_1 \in T_1$.
Since $|N_0| \ge p^3,$ there exists a non-identity element
$x \in N_0$ such that $v_1^x=v_1,$ and thus
$C_H(x) \ge \sg{U,v_1},$ and so $|C_H(x)| \ge p \cdot |U|$.
This together with Lemma~\ref{L-center2}
shows that $|U|=p^2,$ in particular, $U < V$.
Now, using also that $W_1 < U$ and $W_1 \not< \rad(T_1),$
we infer in turn that $|U \, \cap \, \rad(T_1)|=p,$
$\A_{\rad(T_1)}=\Q C_p \wr \Q C_p,$ and finally that,
$\A_V=(\Q C_p \wr \Q C_p) \otimes \Q C_p$.
 Let $W_2=U \, \cap \, \rad(T_1)$.
It is easy to see that every $\A$-subgroup of order $p^2$ contained in $V$ contains $W_2$.
(In fact, such an $\A$-subgroup intersects $\rad(T_1)$ at exactly $W_2$.)
 Now, apply Lemma~\ref{L-UW} with $W=W_2$.
We obtain that, there exists a basic set $T_2$ of $\A$ such that
$T_2 \not\subset V,\, |T_2|=p^2$ and $W_2 \not< \rad(T_2)$. As before, $\rad(T_2)$ is an $\A$-subgroup of order $p^2$ contained $V,$ contradicting our earlier observation that such an $\A$-subgroup must contain $W_2$. This completes the proof of the theorem.
\end{proof}

\end{document}